  \newenvironment{proof}{\vspace{1ex}\noindent{\bf Proof.}}{\hspace*{\fill}$\blacksquare$\vspace{1ex}}
  \newenvironment{proofof}[1]{\vspace{1ex}\noindent{\bf Proof of #1.}}{\hspace*{\fill}$\blacksquare$\vspace{1ex}}
  \newtheorem{theorem}{Theorem} 
  \newtheorem{lemma} [theorem] {Lemma}
  \newtheorem{corollary} [theorem] {Corollary}
  \newtheorem{proposition} [theorem] {Proposition}
  \newtheorem{definition} [theorem] {Definition}
  \newtheorem{conjecture} [theorem] {Conjecture}
  \newtheorem{problem} [theorem] {Problem}
  \newtheorem{fact} [theorem] {Fact}
\newcommand{\Ecal}[0]{\ensuremath{{\mathcal E}}}
\newcommand{\Ucal}[0]{\ensuremath{{\mathcal U}}}
\newcommand{\Xcal}[0]{\ensuremath{{\mathcal X}}}
\newcommand{\Zcal}[0]{\ensuremath{{\mathcal Z}}}
\newcommand{\eR}[0]{\ensuremath{\mathbb R}}
\newcommand{\Haa}[0]{\ensuremath{\mathbb H}}
\newcommand{\eN}[0]{\ensuremath{ \mathbb N}}
\newcommand{\Zed}[0]{\ensuremath{ \mathbb Z}}
\newcommand{\Dee}[0]{\ensuremath{\mathbb D}}
\newcommand{\Xtil}[0]{\tilde{X}}
\newcommand{\Escr}[0]{\ensuremath{{\mathscr E}}}
\newcommand{\Lscr}[0]{\ensuremath{{\mathscr L}}}
\newcommand{\Tscr}[0]{\ensuremath{{\mathscr T}}}
\newcommand{\Uscr}[0]{\ensuremath{{\mathscr U}}}
\newcommand{\norm}[1]{\ensuremath{\|#1\|}}
\newcommand{\Pee}[0]{\ensuremath{{\mathbb P}}}
\newcommand{\Ee}[0]{\ensuremath{{\mathbb E}}}
\newcommand{\isd}[0]{\hspace{.2ex} \raisebox{-.1ex}{$=$} \hspace{-1.5ex} 
\raisebox{1ex}{{$\scriptstyle d$}} \hspace{.8ex} }
 \newcommand{\eps}{\varepsilon}
\DeclareMathOperator{\dist}{dist}
\DeclareMathOperator{\diam}{diam}
\DeclareMathOperator{\area}{area}
\DeclareMathOperator{\dd}{d}
\definecolor{orange}{RGB}{255,127,0}
\definecolor{pink}{RGB}{255,150,150}
\DeclareMathOperator{\sector}{sect}
\newcommand{\ballR}[0]{\ensuremath{B_{\eR^2}}}
\newcommand{\ballH}[0]{\ensuremath{B_{\Haa^2}}}
\newcommand{\distH}[0]{\ensuremath{\dist_{\Haa^2}}}
\newcommand{\areaH}[0]{\ensuremath{\area_{\Haa^2}}}
\newcommand{\Zcalb}[0]{\ensuremath{\Zcal_{\text{b}}}}
\newcommand{\Zcalw}[0]{\ensuremath{\Zcal_{\text{w}}}}
\newcommand{\diamH}[0]{\ensuremath{\diam_{\Haa^2}}}
\newcommand{\sect}[3]{\sector\left(#1,#2,#3\right)}
\let\inf\relax \DeclareMathOperator*\inf{\vphantom{p}inf}
\DeclareMathOperator{\ahd}{ahd}
\DeclareMathOperator{\arcsinh}{arcsinh}
\newcommand{\BGab}{\ensuremath{B_{\text{Gab}}}}
\newcommand{\DD}{\ensuremath{DD}}
\DeclareMathOperator{\cert}{cert}
\begin{document}

\title{Poisson-Voronoi percolation in the hyperbolic plane \\ with small intensities}

\author{%
Benjamin T.~Hansen\thanks{Bernoulli Institute, 
Groningen University, The Netherlands. 
E-mail: {\tt b.t.hansen@rug.nl}.  Supported by the Netherlands Organisation for Scientific Research (NWO) 
under project no. 639.032.529.}
\and 
Tobias M\"uller\thanks{Bernoulli Institute, 
Groningen University, The Netherlands. 
E-mail: {\tt tobias.muller@rug.nl}. Supported in part by the Netherlands Organisation for Scientific Research (NWO) 
under project nos 612.001.409 and 639.032.529.}%
}

\date{\today}

\maketitle

\begin{abstract}
We consider percolation on the Voronoi tessellation generated by a homogeneous Poisson point process on the hyperbolic plane.  
We show that the critical probability for the existence of an infinite cluster is asymptotically equal to
$\pi \lambda/3$ as $\lambda\to0.$ 
This answers a question of Benjamini and Schramm~\cite{benjamini2001percolation}. 
\end{abstract}

\section{Introduction and statement of main result}

We will study percolation on the Voronoi tessellation generated by a homogeneous Poisson point process 
on the hyperbolic plane $\Haa^2$. That is, with each point of a constant intensity Poisson process on $\Haa^2$ we associate 
its Voronoi cell -- which is the set of all points of the hyperbolic plane that are closer to it than to any other point 
of the Poisson process -- and we colour each cell black with probability $p$ and white with probability $1-p$, independently 
of the colours of all other cells. 
See Figure~\ref{fig:simulations} for a depiction of computer simulations of this process.

\begin{figure}[h!]
 \begin{center}
  \includegraphics[width=.45\textwidth]{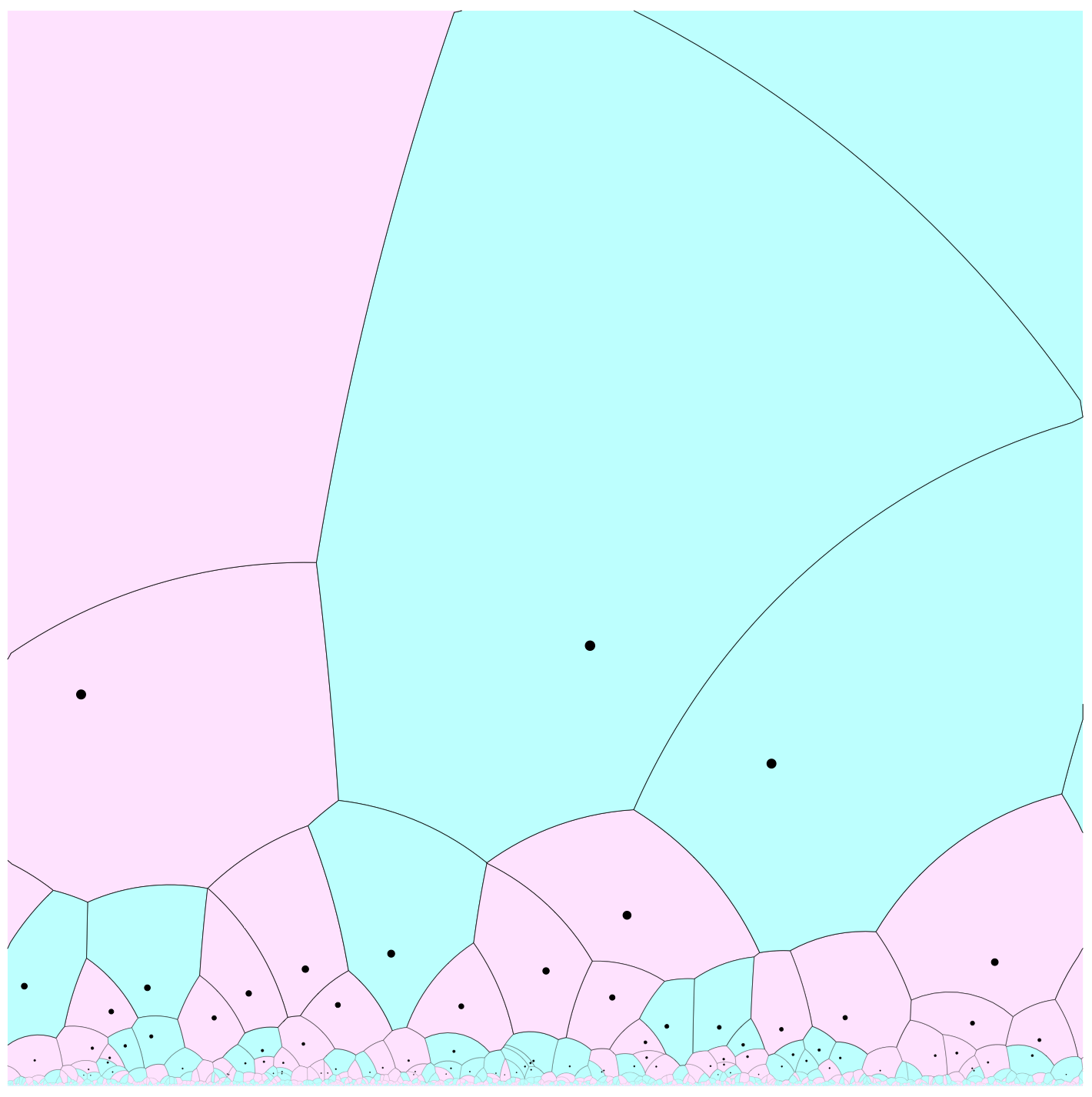}%
  \hspace{3ex}%
  \includegraphics[width=.45\textwidth]{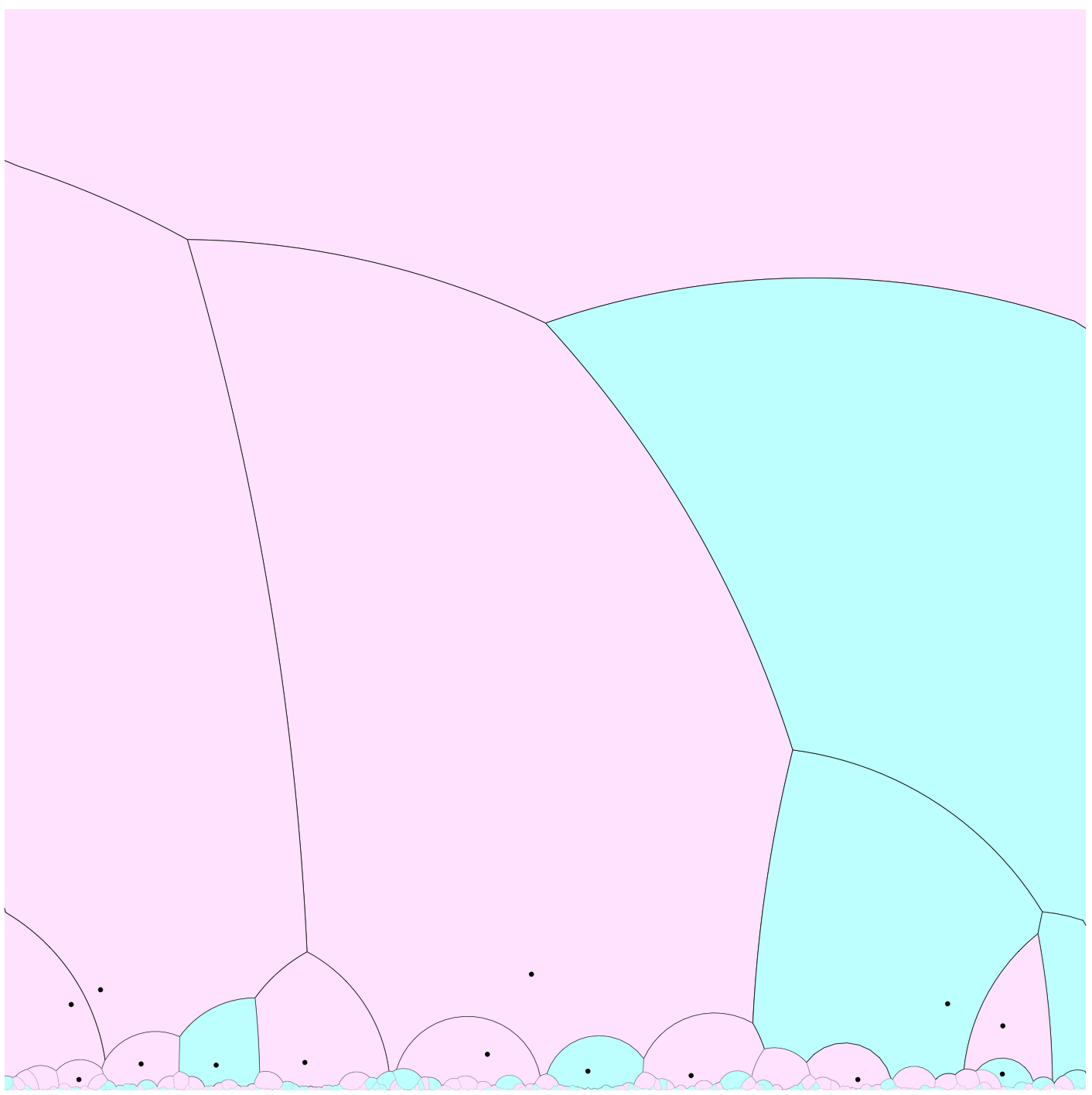}
  
  \vspace{3ex}
  
  \includegraphics[width=.45\textwidth]{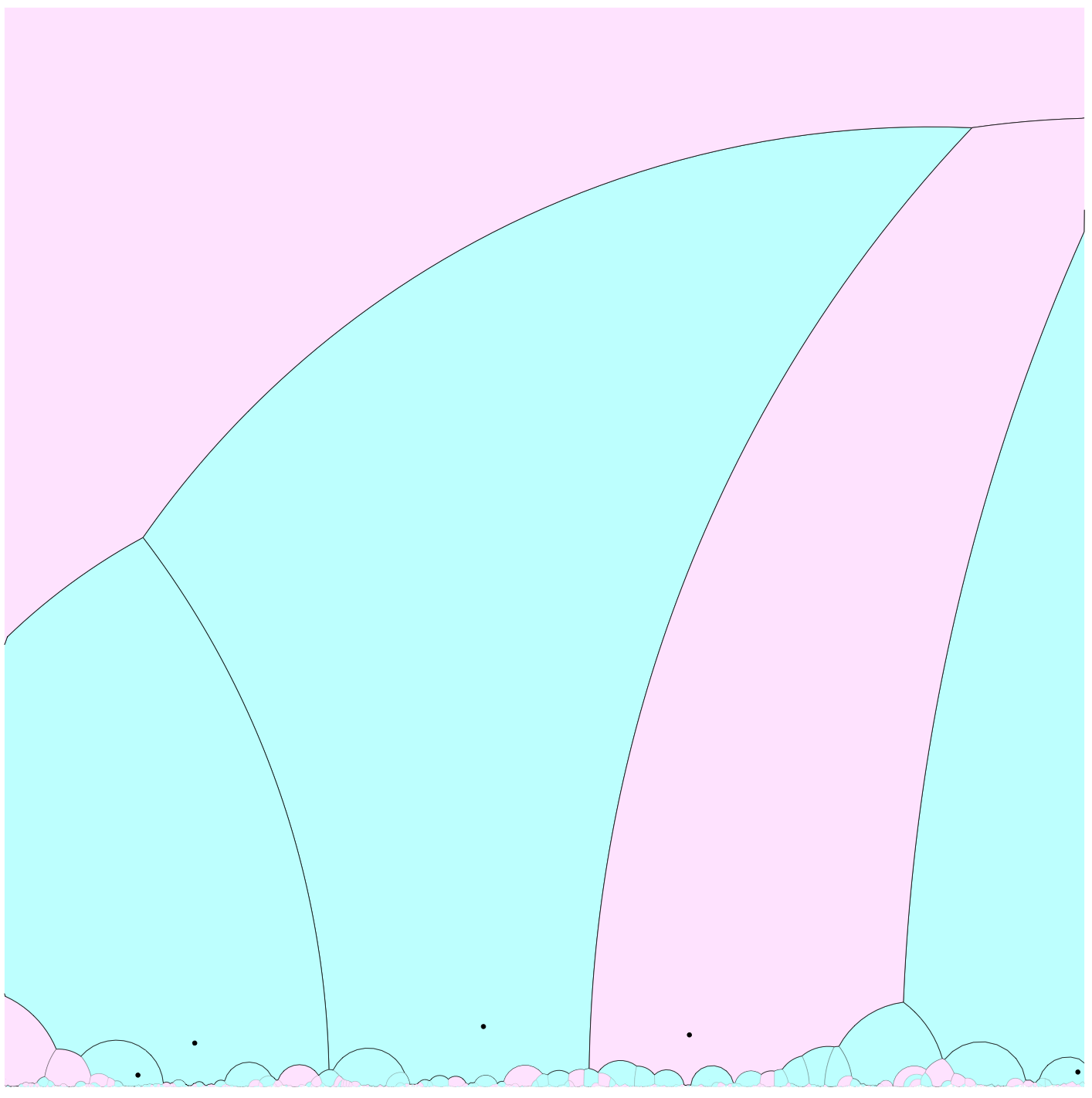}%
  \hspace{3ex}%
  \includegraphics[width=.45\textwidth]{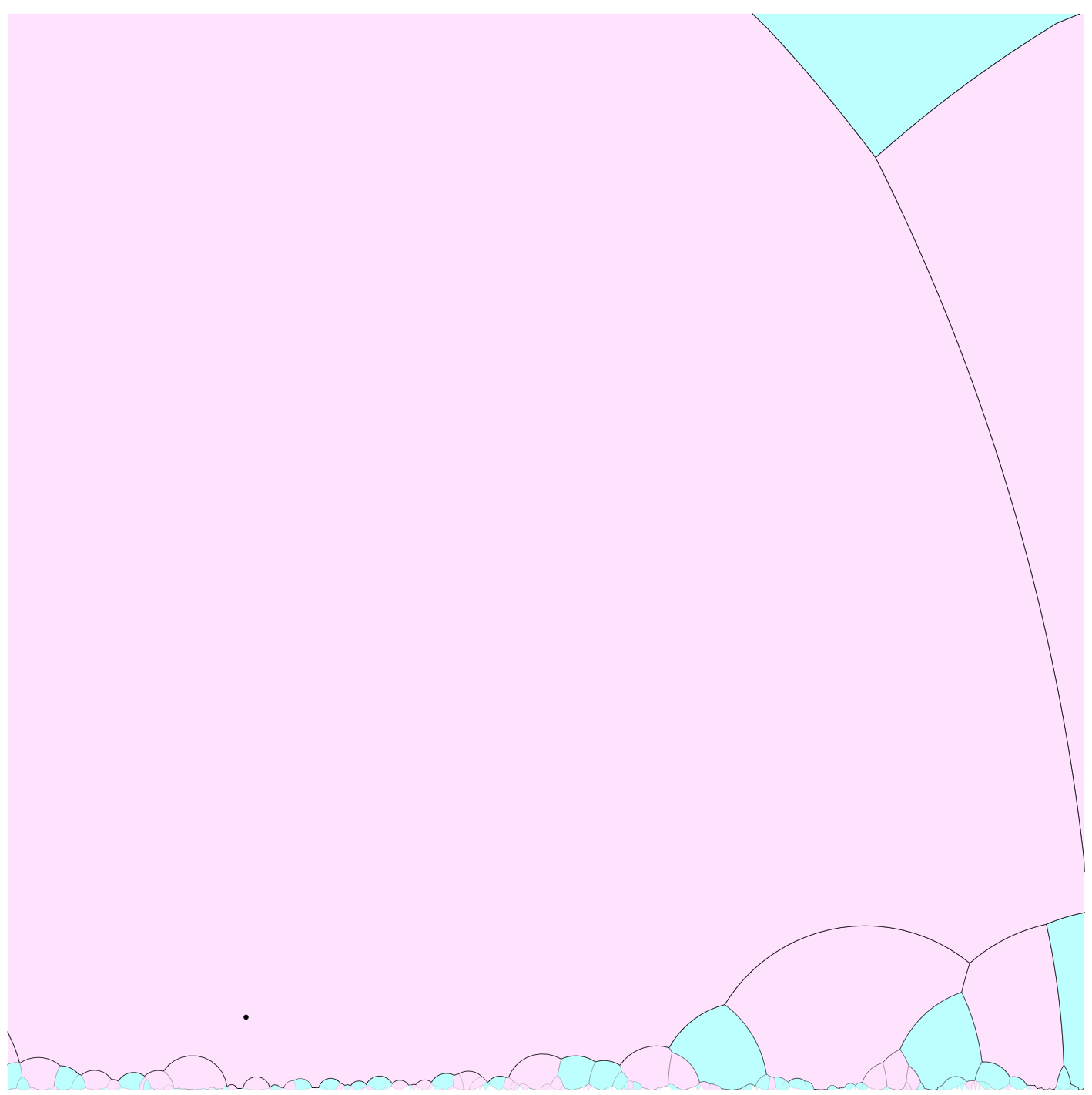}
  
  \caption{Simulations of hyperbolic Poisson-Voronoi percolation, depicted in the half-plane model.
  Top left: $\lambda=1$, top right: $\lambda=\frac{1}{10}$, bottom left: $\lambda=\frac{1}{25}$, 
  bottom right: $\lambda=\frac{1}{50}$; and $p=\frac12$ in all cases.
  \label{fig:simulations}}
 \end{center}
\end{figure}

%
%

We say that \emph{percolation} occurs if there is an infinite connected cluster of black cells. 
For each intensity $\lambda>0$ of the underlying Poisson process, the \emph{critical probability} is defined as 

$$ p_c(\lambda):=\inf\{p:\Pee_{\lambda,p}(\text{percolation})>0\}. $$

To the best of our knowledge, hyperbolic Poisson-Voronoi percolation was first 
studied by Benjamini and Schramm in the influential paper~\cite{benjamini2001percolation}.
Amongst other things, they showed that $0<p_c(\lambda)<1/2$ for all $\lambda>0$; that $p_c(\lambda)$ is 
a continuous function of $\lambda$; and that $p_c(\lambda)\to 0$ as $\lambda\searrow 0$. 

They also asked for the asymptotics of $p_c(\lambda)$ as $\lambda\searrow 0$, and specifically for 
``the derivative at $0$''. Here we answer this question, by showing:

\begin{theorem}\label{thm:main} 
$p_c(\lambda)=(\pi/3)\cdot\lambda + o(\lambda)$ as $\lambda\searrow 0.$ 
\end{theorem}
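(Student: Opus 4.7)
The plan is to match a heuristic coming from the expected degree of a typical Voronoi cell. Combining Euler's formula $V-E+F=O(\sqrt{A})$ on a large hyperbolic ball of area $A$ with the Gauss--Bonnet identity $\alpha+\beta+\gamma=\pi-\area(\triangle)$, summed over all Delaunay triangles and matched against the total angle $2\pi$ at each interior vertex, one obtains that the intensity of Delaunay triangles per unit area equals $2\lambda+1/\pi$, so that by the Mecke--Slivnyak formula the Palm-expected degree of a typical Voronoi cell is
\[
\Ee_\lambda[\deg] \;=\; 6 + \frac{3}{\pi\lambda} \;\sim\; \frac{3}{\pi\lambda}\qquad (\lambda\searrow 0).
\]
Thus exploring the open cluster of a fixed cell should resemble a Galton--Watson branching process of mean offspring $p\,(6+3/(\pi\lambda))$, whose criticality threshold sits precisely at $p=(\pi/3)\lambda+O(\lambda^2)$. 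This matches the constant in Theorem~\ref{thm:main}, and the proof naturally splits into matching lower and upper bounds on $p_c(\lambda)$.

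For the \textbf{lower bound} $p_c(\lambda)\ge (\pi/3)\lambda\,(1-o(1))$, I would explore the black cluster of the origin cell by breadth-first search. When a new cell is revealed, the number of fresh black neighbors is stochastically dominated by $p$ times its Palm degree (already-discovered cells can only reduce the count). Coupling the exploration with a Galton--Watson tree of offspring mean $p\,(6+3/(\pi\lambda))$ makes it subcritical as soon as $p\le(1-\eps)(\pi/3)\lambda$, and hence the cluster is a.s.\ finite. The technicalities here are transferring between the stationary and the Palm distributions via Mecke's formula and controlling the tail of the degree distribution, so that rare high-degree cells do not break the first-moment comparison.

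For the \textbf{upper bound} $p_c(\lambda)\le (\pi/3)\lambda\,(1+o(1))$, which is the main obstacle, one must produce a genuine infinite open cluster. The plan is to partition $\Haa^2$ into concentric hyperbolic annuli $A_k$ at a scale large compared to the typical cell diameter, and to identify inside the Voronoi graph a forward branching subgraph in which a typical cell in $A_k$ has $(1-o(1))\cdot 3/(\pi\lambda)$ neighbors lying in $A_{k+1}$, with the forward neighbors of distinct cells in $A_k$ nearly disjoint. A supercritical Galton--Watson comparison on this subgraph would then produce an infinite open cluster whenever $p\cdot 3/(\pi\lambda)>1+\eps$. One crucially exploits the exponential expansion of $\Haa^2$: most of a cell's Voronoi neighbors lie outward, and two cells at macroscopic hyperbolic distance should interact negligibly in their Voronoi environment.

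The hardest part will be making the upper bound quantitatively sharp, so that the constant comes out to exactly $\pi/3$ rather than just the correct order of magnitude. This forces one to show that essentially no branching is lost, whether to ``backward'' neighbors or to sharing of forward neighbors between different cells in $A_k$. The work then reduces to quantitative independence estimates for Poisson--Voronoi at the hyperbolic scale $\log(1/\lambda)$ together with moment bounds showing that the degree distribution concentrates tightly around its mean $3/(\pi\lambda)$, so that the tree approximation of the cluster growth is sharp on both sides.
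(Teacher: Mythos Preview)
Your lower bound argument contains a genuine gap, and in fact you have the relative difficulty of the two directions reversed. The claim that ``already-discovered cells can only reduce the count'' of fresh neighbours is false for Poisson--Voronoi. Adjacency of two points $u,v$ in the Delaunay graph is witnessed by a disk through $u$ and $v$ that is empty of Poisson points. When you explore part of the cluster you reveal the status of the Poisson process in some region; conditioning on that region being empty can \emph{create} Delaunay edges that would not exist otherwise. So there is no monotonicity that lets you dominate the exploration by an unconditioned Galton--Watson tree with mean offspring $p\,\Ee D$. This is exactly where Voronoi percolation differs from the Gilbert model or from $\Zed^d$, where the trivial branching comparison does give the sharp lower bound. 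The paper handles this by introducing ``pseudo-edges'' with explicit certificates, classifying edges as good or bad according to length and angle, and then controlling not paths but \emph{linked sequences of chunks}: good sub-paths terminated by a bad edge, linked when their certificates interact. The first-moment bound is carried out on these objects, and the point of the construction is precisely to decouple the certificate of each new edge from the previously revealed region up to a set of constant hyperbolic area. This is the technically most involved part of the paper.

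Your upper bound sketch is closer to what the paper does, though the mechanism is somewhat different. Rather than annuli, the paper runs an exploration that builds a $(r,w,\vartheta)$-tree with $r=2\ln(1/\lambda)$: at each step one adds only those black neighbours at distance roughly $r$, making a bounded angle with the incoming edge, and isolated in a ball of fixed radius $h$. The key hyperbolic fact (Proposition~\ref{prop:treedisksector}) is that from the viewpoint of the current vertex, everything previously explored lies inside a ball of constant radius together with a sector of arbitrarily small opening angle. Consequently the revealed region intersects the region needed for the next step only in a set of negligible area, and after the first generation the offspring counts are genuinely i.i.d.\ with mean exceeding $1$ when $p=(1+\eps)(\pi/3)\lambda$. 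Your annulus picture could likely be made to work along similar lines, but you would still need these sector-containment lemmas to get the constant exactly right.
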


For comparison, Benjamini and Schramm gave an upper bound $p_c(\lambda) \leq \frac12 - \frac{1}{4\pi\lambda+2}$, whose asymptotics
are $\pi \lambda + o(\lambda)$ as $\lambda\searrow 0$.

The results of Benjamini and Schramm highlight striking differences between Poisson-Voronoi percolation in 
the hyperbolic plane and Poisson-Voronoi percolation in the ordinary, Euclidean plane.
For starters, in the latter case it is known~\cite{Zvavitch,bollobas2006critical} that the critical probability equals
$1/2$ for all values of the intensity parameter $\lambda$.
More strikingly perhaps is the difference in the behaviour of the number of infinite, black clusters.
In the Euclidean case there are no infinite black clusters when $p\leq 1/2$ and 
precisely one infinite black cluster otherwise (almost surely). For the hyperbolic case, Benjamini and Schramm showed that
if $p\leq p_c(\lambda)$ all black clusters are finite; if $p\geq 1-p_c(\lambda)$ then there is precisely one 
infinite black cluster; but if $p_c(\lambda)<p<1-p_c(\lambda)$ then there are infinitely many, 
distinct, infinite, black clusters (almost surely).

\vspace{1ex}

{\bf Related work.} Percolation theory is an active area of modern probability theory with a considerable history, going back 
to the work of Broadbent and Hammersley~\cite{BroadbentHammersley57} in the late fifties. 
By now there is an immense amount of research articles, mostly centered on percolation on lattices. 
We direct the reader to the monographs~\cite{bollobas2006percolation,Grimmettboek} for an overview. 

Poisson-Voronoi tessellations (mostly in $d$-dimensional, Euclidean space) are one of the central models
studied in stochastic geometry. They are studied in connection with many different applications and have 
a long history going back at least to the work of Meijering~\cite{Meijering} in the early fifties. 
For an overview, see the monographs~\cite{SchneiderWeil,StoyanKendallMecke87} and the references therein.

In the early nineties, Vahidi-Asl and Wierman~\cite{VahidiaslWierman90} introduced
first passage percolation (a notion related to, but distinct from percolation as
we have defined it above) on planar, Euclidean Poisson-Voronoi tessellations.
A few years later, Zvavitch~\cite{Zvavitch} proved that in the Euclidean plane almost surely 
all black clusters are finite when $p\leq 1/2$ (and $\lambda>0$ arbitrary). 
About a decade after that Bollob\'as and Riordan were able to complement this result by showing that, 
almost surely, there exists an infinite black cluster when $p>1/2$ (and $\lambda>0$ arbitrary) -- thus establishing 
$p_c=1/2$ for planar, Euclidean Poisson-Voronoi percolation.
Since then planar, Euclidean Poisson-Voronoi percolation, especially ``at criticality'', has received a fair amount of 
additional attention. See e.g.~\cite{AhlbergBaldasso18,AhlbergEtAl16,Tassion16,Vanneuville19}.
Poisson-Voronoi percolation on the projective plane was studied by Freedman~\cite{Freedman97}, 
and Poisson-Voronoi percolation on more general two and three dimensional manifolds 
was studied by Benjamini and Schramm in~\cite{BSconformal}. Poisson-Voronoi percolation on 
higher dimensional Euclidean space has been studied as well, in~\cite{BalisterBollobasQuas05,BalisterBollobas10,Duminil19}.  

Fuchsian groups can be seen as the analogue in the hyperbolic plane of lattices in the Euclidean plane.
Lalley~\cite{Lalley98,Lalley01} studied percolation on Fuchsian groups and amongst other things established 
that the critical probabilities for ``existence'' and ``uniqueness'' of infinite clusters are distinct.
Works on continuum percolation models over Poisson processes in the hyperbolic plane 
include~\cite{visibility1,FlammantArxiv,Thale14,Tykesson07,visibility2}.
Aspects of hyperbolic Poisson-Voronoi tessellations besides percolation that have been studied include 
the (expected) combinatorial structure of their cells, random walks on them and anchored expansions -- see for 
example~\cite{Elliot2, Elliot, CalkaChapronEnriquez, Zakhar, Isokawa3d, Isokawa00}.
Percolation on hyperbolic Poisson-Voronoi tessellations was first studied specifically by Benjamini and Schramm 
in~\cite{benjamini2001percolation}. 
In the recent paper~\cite{HansenMuller1} the current authors showed that $p_c(\lambda)\to 1/2$ as $\lambda\to\infty$
for Poisson-Voronoi percolation on the hyperbolic plane, proving a conjecture from~\cite{benjamini2001percolation}.

Comparing Theorem~\ref{thm:main} with Isokawa's formula (stated as Theorem~\ref{thm:Isokawa} below), we
see that our result can be rephrased as : as $\lambda\searrow 0$, the 
critical probability is asymptotic to the reciprocal of the ``typical degree'' 
(defined precisely in Section~\ref{sec:VorDel} below).
A similar phenomenon happens in several Euclidean percolation models when one sends the dimension to infinity.
The most well-known result in this direction is probably that for both site and bond percolation 
on $\Zed^d$, we have that $p_c = (1+o_d(1)) \cdot (2d)^{-1}$, as was shown concurrently and independently
by Gordon~\cite{Gordon91}, Hara and Slade~\cite{HaraSlade90} and Kesten~\cite{Kesten90}. 
Prior to that there were non-rigorous derivations of the result in the physics literature and 
Cox and Durret~\cite{CoxDurrett83} had proved the analogous result for oriented percolation (which is technically easier to analyze).
Penrose~\cite{Penrose96} proved an analogous result for the Gilbert model on $d$-dimensional Euclidean space
as the dimension tends to infinity, and Meester, Penrose and Sarkar~\cite{MeesterPenroseSarkar97} extended this result
to the random connection model.
The analogous phenomenon was shown by Penrose~\cite{Penrose93} for spread out percolation in fixed dimension when the 
connections get more and more spread out.

\vspace{1ex}

{\bf Sketch of the main ideas used in the proof.} The intuition guiding the proof is that when $\lambda$ is small and $p$ 
is of the same order as $\lambda$ then black clusters are ``locally tree like'', in the sense that while there will be 
some short cycles in the black subgraph of the Delaunay graph, but these will be ``rare''.
(The Delaunay graph is the abstract combinatorial graph whose vertices are 
the Poisson points and whose edges are precisely those pairs of points whose Voronoi cells meet.)
This is also the intuition behind the results on high-dimensional and spread-out percolation mentioned above.
In fact, in several of the works cited it is in fact shown that if we scale $p$ as a constant $\mu$ divided by the degree
(so that the origin has $\mu$ black neighbours in expectation) then the cluster of the origin behaves more and more like a 
Galton-Watson tree with a Poisson$(\mu)$ offspring distribution as the dimension grows.

Before going further, it is instructive to informally discuss in a bit more detail the situation 
for the high-dimensional Gilbert model analyzed by Penrose in~\cite{Penrose96}. 
In that model, we build a random graph on a constant intensity Poisson process on $\eR^d$ by joining any pair 
of Poisson points at distance $<1$ by an edge.
We seek the {\em critical intensity} $\lambda_c$ such that there is a.s.~no percolation for $\lambda<\lambda_c$ and 
there is a positive probability of percolation when $\lambda > \lambda_c$.
We consider the scenario where $d$ is large and the intensity is $\lambda = \mu\cdot \pi_d^{-1}$ with 
$\pi_d$ the volume of the $d$-dimensional unit ball and $\mu>0$ a fixed constant. 
We add the origin $o$ to the Poisson process (note that this a.s.~does not change whether or not there is an infinite
component), and think of  ``exploring'' the cluster of the origin. 
We do this in an iterative fashion : we first add the neighbours of the origin to the cluster, then we consider each of these 
neighbours in turn and add their neighbours to the cluster, and so on.
Of course the neighbours of the origin are precisely those Poisson points that fall inside the $d$-dimensional unit ball $B$.
In particular their number follows a Poisson distribution with mean $\mu$.
Once we have already added some points to the cluster of the origin and we consider the neighbours of a previously added point $u$, we 
add those Poisson points that fall inside the ball of radius one around $u$ from which we have removed the union of all 
radius one balls around points we have processed already. 
Because of the high-dimensional geometry, the volume of this set difference is typically very close to the volume 
of a unit radius ball with nothing removed -- at least during the initial stages of the exploration. 
In other words, at least during the first few exploration steps, the number of new points that gets added in each exploration 
step is approximately distributed like a Poisson random variable with mean $\mu$. This naturally 
leads to the aforementioned connection with Galton-Watson trees with Poisson$(\mu)$ offspring distribution. 
Essentially, the geometric reason why this works out is the ``concentration of measure'' for high dimensional balls 
(see for instance~\cite{Matousekboek}, Chapter 13) : in high-dimensional Euclidean space, the volume of the unit ball is 
concentrated near its boundary. From this one can derive that, in a sense that can be made precise, 
most pairs of points connected by an edge in the Gilbert graph will have distance close to one. 
Moreover, the mass of a $d$-dimensional ball is also concentrated near its equator, from which one can 
derive that -- in a sense that can be made precise -- for most pairs of points with a common neighbour, their 
distance will be very close to $\sqrt{2}$ and, more generally, most pairs with graph distance $k$ will have Euclidean 
distance close to $\sqrt{k}$ (for $k$ fixed). 

A similar phenomenon, but maybe even more extreme in a sense, happens in the hyperbolic plane for disks 
of large radius $r$ : The area of a hyperbolic disk $B$ of large radius $r$ is concentrated near its boundary; and if we take
two points at random from $B$ then typically their distance is close to $2r$, the maximal possible distance 
between any two points in $B$.
In the current paper, we consider a Poisson point process $\Zcal$ on the hyperbolic plane with small intensity $\lambda>0$.
We again include the origin and try to analyze the black cluster of the Voronoi cell of the origin
in the Voronoi tessellation for $\Zcal\cup\{o\}$, where the cell of the origin is coloured black and all other cells are each coloured
black with probability $p$ and white with probability $1-p$.
By Isokawa's formula (stated as Theorem~\ref{thm:Isokawa} below) the expected number 
of cells that are adjacent to the cell of the origin is asymptotic to $\frac{3}{\pi\lambda}$ as $\lambda$ tends to zero.
Moreover, as can be seen either by looking more carefully at Isokawa's computations or by reading some 
of our arguments below, it can be shown that most neighbours of the origin are at distance close to 
$r := 2\ln(1/\lambda)$. Note that $r$ goes to infinity as $\lambda\searrow 0$.
Now suppose we take $p = \mu \cdot (\pi/3) \cdot \lambda$ with $\mu$ a constant and $\lambda$ small, so that 
the expected number of black cells neighbouring the cell of the origin is close to $\mu$.
If we follow an exploration process analogous to the one described above for the Euclidean Gilbert model, the 
black cluster will have the property that most pairs of points at graph distance $k$ have distance close 
to $2kr$ in the hyperbolic metric.

For the upper bound, we will show that if $p = (1+\eps)\cdot(\pi/3)\cdot \lambda$ and $\lambda$ is sufficiently small
(and $\eps>0$ is a fixed constant) then the size of the black cluster of the origin stochastically dominates
the size of a supercritical Galton-Watson tree. We will use an exploration procedure similar to what we described above for the 
high-dimensional Gilbert model. During the exploration, when we consider some point $z$ that has already 
been added to the tree, we make sure to only add those black neighbours of $z$ whose distance to $z$ is within some large constant of 
$r$. We also make sure to select a subset $\{z_1,\dots,z_k\}$ of these neighbours with the property that 
all angles $\angle z_izz_j$ are larger than some small constant, and moreover for each $z_i$ there is a ball
of radius some large (but fixed) constant that contains no other Poisson points. 
Essentially because of the geometric phenomena described earlier, it will turn out that in this version of 
the exploration procedure, for sufficiently small $\lambda$, the 
subgraph of the cluster of the origin we obtain follows the distribution of a certain supercritical Galton-Watson
process {\em exactly}. 
In contrast, in the high-dimensional Gilbert model the correspondence between the exploration process and a 
supercritical Galton-Watson 
process will eventually break down, after a (large) number of exploration steps that depends on the dimension, so that 
additional techniques and arguments were needed by Penrose~\cite{Penrose96}.

For hyperbolic Poisson-Voronoi percolation the lower bound is technically more involved than the upper bound. 
This is probably the most novel contribution in our paper, and we believe it might inspire similar arguments 
applicable to other problems in percolation, notably for high-dimensional models.
For percolation on $\Zed^d$ and the Gilbert model, a trivial (and ``sharp'' up to lower order corrections 
when $d\to\infty$) lower bound
is given by a comparison to branching processes : in the former model, the cluster of the origin is stochastically 
dominated by a Galton-Watson distribution with mean offspring $p\cdot(2d-1)$ and in the latter with mean offspring 
$\mu\cdot \pi_d$. So the critical probability satisfies $p_c \geq 1/(2d-1)$, respectively 
the critical intensity satisfies $\lambda_c \geq 1/\pi_d$.
(For percolation on $\Zed^d$ this was in fact already observed by Broadbent and Hammersley~\cite{BroadbentHammersley57}
and for the Gilbert model by Gilbert~\cite{Gilbert61}.)
In our case 
a similar argument does not seem feasible.
In the Gilbert model, imagine we have explored part of the cluster of the origin and in doing so have revealed 
the status of the Poisson process in some region. We now wish to add those neighbours that are not yet part of the 
cluster of a point that we have previously added to in the cluster. 
The number of new points added is stochastically dominated by the number of new points we add at the very start 
of the exploration, when add the neighbours of the origin. 
In the Poisson-Voronoi percolation model there is no obvious monotonicity of this kind. 
Once we have revealed the status of the Poisson point process in some region this can make 
new edges both more and less likely.
As a side remark, let us mention that using the methods in our paper it ought to be possible to show that 
when $p = (1-\eps)\cdot(\pi/3)\cdot\lambda$ then, as $\lambda\to 0$, the size of black cluster of the origin will 
converge in distribution to the size of a Galton-Watson tree with 
Poisson$(1-\eps)$ offspring distribution. We do not pursue this here however as it does not appear useful
for bounding $p_c(\lambda)$ : it will not exclude the possibility that -- for any fixed, small $\lambda > 0$ -- there is 
an extremely small, but positive probability that the origin is in an infinite component.

A naive approach that one might try is to compute the expected number of black paths of length $k$ starting at the origin
(for $\lambda$ small and $p=(1-\eps)\cdot(\pi/3)\cdot\lambda$), and hope to show that this expectation tends to 
zero as $k\to\infty$. Unfortunately this approach does not seem feasible either. Long paths might revisit the same
area many times which introduces dependencies that are difficult to deal with. 

In order to circumvent this issue, we introduce what we call {\em linked sequences of chunks}.
As we wish to show percolation does not occur, it suffices to show no infinite black cluster exists 
with a more generous notion of adjacency, in the form of what we call {\em pseudo-edges}, that makes the analysis simpler.
With each pseudo-edge we associate a ``certificate'', being the region of the hyperbolic plane that needs to 
be examined to verify it is indeed a pseudo-edge.
We'll say a pseudo-edge on a path $P$ is {\em good} if it has length close to $r$ and does not make a small 
angle with the previous pseudo-edge. Otherwise it is {\em bad}.
A linked sequence of chunks is a sequence of paths $P_1,\dots,P_n$ such that: {\bf 1)} on each path, every pseudo-edge except the 
last is good and the last is bad, and {\bf 2)} for each $i\geq 2$, either the first point of $P_i$ is close 
to the certificate of some pseudo-edge of $P_{i-1}$ or the certificate of the first pseudo-edge of $P_i$ 
intersects one of the certificates of $P_{i-1}$, and for every other pseudo-edge of $P_i$ its certificate
is disjoint from all certificates of all pseudo-edges on $P_1,\dots,P_{i-1}$.
We'll first show (Proposition~\ref{prop:lbprep} below) that if the cluster of the origin is infinite, then 
either there exists an infinite path all of whose edges are good, or there exists an infinite linked 
sequence of chunks starting from the origin. 
Unlike paths in general, it is technically feasible to give decent bounds on the expected number of good paths, respectively 
the number of linked sequences of chunks, of some given length $n$. We are able to obtain bounds that tend to zero 
with $n$, which implies no percolation occurs a.s.

\vspace{1ex}

{\bf Structure of the paper.} In the next section, we collect some notations, definitions, facts and tools that 
we will need in our proofs.
Section~\ref{sec:trees} contains some preparatory geometric observations needed in later sections.
Section~\ref{sec:ub} contains the proof that $p_c(\lambda) \leq (1+\eps)\cdot(\pi/3)\cdot\lambda$ for small 
enough $\lambda$ (and $\eps>0$ fixed), and Section~\ref{sec:lb} contains the proof that 
$p_c(\lambda) \geq (1-\eps)\cdot(\pi/3)\cdot\lambda$ for small enough $\lambda$.
We end the paper by suggesting some directions for further work in Section~\ref{sec:discussion}.

\section{Notation and preliminaries\label{sec:preliminaries}}

\subsection{Ingredients from hyperbolic geometry\label{sec:hyper_prelim}}

The hyperbolic plane $\Haa^2$ is a two dimensional surface with  constant Gaussian curvature $-1$. 
There are many models, i.e.~coordinate charts, for $\Haa^2$ including
the Poincar\'e disk model, the Poincar\'e half-plane model, and the Klein disk model.
A gentle introduction to Gaussian curvature, hyperbolic geometry and these representations 
of the hyperbolic plane can be found in~\cite{stillwell2012geometry}.

Even though we used the Poincar\'e half-plane model for the visualizations in Figure~\ref{fig:simulations}, 
from now on we will exclusively use the Poincar\'{e} disk model.  
(A computer simulation of Poisson-Voronoi percolation depicted in the Poincar\'e disk model can 
be found on the second page of our earlier paper~\cite{HansenMuller1}.)
We briefly recollect its definition 
and some of the main facts we shall be using in our arguments below, and refer the reader to~\cite{stillwell2012geometry}
for proofs and more information. 

The Poincar\'{e} disk model is constructed by equipping the open unit disk $\Dee \subseteq \eR^2$ with an appropriate metric
and area functional.  For points $u,v \in \Dee,$ the hyperbolic distance can be given explicitly by 

$$\dist_{\Haa^2}(u,v)=2\arcsinh\left(\frac{\norm{u-v}}{\sqrt{(1-\norm{u}^2)(1-\norm{v}^2)}}\right)$$ 

\noindent
where $\norm{\cdot}$ denotes the Euclidean norm. 
Straightforward calculations show that in particular, for $z\in \Dee$, the Euclidean and hyperbolic 
distance to the origin are related via: 

\begin{equation}\label{eq:distHtanh} 
\norm{z}=\tanh\left( \distH(o,z)/2 \right). 
\end{equation}

We will use the notations

$$ \ballH(p,r) := \{ u\in\Dee : \distH(p,u)<r \}, \quad \ballR(p,r) := \{ u\in\eR^2 : \norm{p-u}<r\}, $$

\noindent
for hyperbolic, respectively Euclidean, disks.
A standard fact that we will rely on in the sequel is: 

\begin{fact}\label{lem:conformal1} 
Every hyperbolic disk is also a Euclidean disk; and every Euclidean disk contained in the open unit disk $\Dee$  
is also a hyperbolic disk.
\end{fact}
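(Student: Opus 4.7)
My plan is to use two standard ingredients from~\cite{stillwell2012geometry}: that the orientation-preserving isometries of $\Haa^2$ in the Poincar\'e disk model act as M\"obius transformations preserving $\Dee$, and that M\"obius transformations of the Riemann sphere send generalized circles (Euclidean circles and lines) to generalized circles. Together with the identity~\eqref{eq:distHtanh}, which exhibits any hyperbolic disk centered at the origin as a concentric Euclidean disk, these tools handle both implications.

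For the forward direction, given $\ballH(p,r)$ I pick an isometry $\phi$ with $\phi(p)=o$. Then $\phi(\ballH(p,r))=\ballH(o,r)=\ballR(o,\tanh(r/2))$ by~\eqref{eq:distHtanh}, so applying $\phi^{-1}$ shows that $\ballH(p,r)$ is the interior of the image of a Euclidean circle under a M\"obius transformation. That image is a generalized Euclidean circle, and the fact that $\ballH(p,r)\subseteq\Dee$ is bounded rules out half-planes and exteriors of Euclidean disks, leaving only an open Euclidean disk.

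For the converse, let $D\subseteq\Dee$ be a Euclidean disk; if its Euclidean center is $o$ we are done, so assume otherwise and let $\ell$ be the Euclidean line through $o$ and the Euclidean center of $D$. Then $\ell$ is a Euclidean diameter of $\Dee$ and therefore a hyperbolic geodesic, meeting $\partial D$ in two points $a$ and $b$. I take $p$ to be the hyperbolic midpoint of $a$ and $b$ on $\ell$, and apply a hyperbolic translation $\phi$ along $\ell$ sending $p$ to $o$. Let $\sigma$ denote the Euclidean reflection across $\ell$, which is itself a hyperbolic isometry since $\ell$ is a geodesic through the origin. A direct check---both $\phi$ and $\sigma^{-1}\phi\sigma$ are orientation-preserving isometries of $\Haa^2$ that agree pointwise on $\ell$, hence coincide---shows that $\sigma\phi=\phi\sigma$, and hence $\phi(D)=\phi(\sigma(D))=\sigma(\phi(D))$ is Euclidean-symmetric across $\ell$, placing its Euclidean center on $\ell$. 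On $\ell$, the identity~\eqref{eq:distHtanh} gives $\phi(a)=-\phi(b)$ as vectors in $\eR^2$ (equal hyperbolic distance from $o=\phi(p)$, opposite sides), and the unique point of $\ell$ Euclidean-equidistant from $\pm\phi(a)$ is $o$ itself; so $\phi(D)$ is a Euclidean disk centered at $o$, which by~\eqref{eq:distHtanh} equals $\ballH(o,r)$ for some $r>0$. Applying $\phi^{-1}$ yields $D=\ballH(p,r)$.

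The only real obstacle lies in the converse: one has to locate the hyperbolic center of a given Euclidean disk intrinsically, without using any hyperbolic information visibly encoded in $D$. The symmetry trick above resolves this by singling out the hyperbolic center as the hyperbolic midpoint of the unique diameter of $D$ lying on a Euclidean line through $o$, after which the M\"obius machinery closes the argument with essentially no computation.
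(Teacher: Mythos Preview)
Your proof is correct. The paper does not actually prove Fact~\ref{lem:conformal1}; it is stated as a standard fact with a reference to~\cite{stillwell2012geometry}, so there is no ``paper's own proof'' to compare against. Your argument---reducing to disks centered at the origin via a M\"obius isometry, using that M\"obius maps send circles to generalized circles, and (for the converse) locating the hyperbolic center as the hyperbolic midpoint of the diametral chord through $o$---is one of the standard proofs and is carried out cleanly. The commutation $\sigma\phi=\phi\sigma$ is justified correctly (two orientation-preserving isometries agreeing on a geodesic coincide), and the deduction that $\phi(D)$ is a Euclidean disk centered at $o$ follows as you say.
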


\noindent
(But, the centre and radius of a disk with respect to the hyperbolic metric do not coincide with the
centre and radius with respect to the Euclidean metric.)

For any measurable subset $A \subseteq \Dee,$ its {\em hyperbolic area} is given by

$$\text{area}_{\Haa^2}(A) :=\int_{A} f(z)\dd z,$$

\noindent 
where

\begin{equation}\label{eq:fdef} 
f(u) := \frac{4}{(1-\norm{u}^2)^2}.
\end{equation}

From the formulas for hyperbolic distance and hyperbolic area one can derive that:
\begin{equation}\label{eq:ball_area}
\areaH\left(\ballH(p,r)\right)=2\pi\cdot (\cosh r -1).
\end{equation}
The {\em hyperbolic polar coordinates} $(\alpha,\rho)$ corresponding to a point $z\in\Dee$ 
are $\rho:=\distH(o,z)$ and $\alpha\in[0,2\pi)$ is the (counterclockwise) angle between the positive $x$-axis and the line segment $oz$.
Put differently, $z\in\Dee$ and $\rho\in[0,\infty)$ and $\alpha\in[0,2\pi)$ are such that

\begin{equation}\label{eq:hyppolcoordsub} 
z = \left(\tanh(\rho/2)\cdot\cos\alpha,\tanh(\rho/2)\cdot\sin\alpha\right). 
\end{equation}

In several computations in the paper we'll use the {\em change of variables to hyperbolic polar coordinates}.
By this we of course just mean applying the substitution~\eqref{eq:hyppolcoordsub}.
We will always apply it to integrals of the form $\int_\Dee g(z)f(z)\dd z$ with $f$ as given by~\eqref{eq:fdef} above, 
to obtain:

\begin{equation}\label{eq:hyppolcoord} 
\int_{\Dee} g(z)f(z)\dd z=
\int_{0}^\infty\int_0^{2\pi} g\left(\tanh(\rho/2)\cdot\cos\alpha,\tanh(\rho/2)\cdot\sin\alpha\right)
\sinh\rho \dd\alpha\dd\rho.
\end{equation}

A {\em hyperbolic geodesic} or {\em hyperbolic line segment} between two points $a, b \in \Dee$ is the shortest path between $a$ and $b$
with respect to the hyperbolic metric.
If there is a (Euclidean) line passing through $a,b$ and the origin $o$, then the hyperbolic geodesic between $a$ and $b$ is just 
the (ordinary) line segment between them. 
Otherwise, the hyperbolic geodesic between $a, b$ can be constructed geometrically as follows. 
We let $C \subseteq \eR^2$ be the unique (Euclidean) circle with $a, b \in C$ and such that 
it hits the boundary $\partial \Dee$ of the unit disk at right angles. The points $a,b$ divide $C\setminus\{a,b\}$ into 
two circle segments. The hyperbolic geodesic between $a$ and $b$ is the one contained in $\Dee$.

A {\em hyperbolic line} $\ell\subseteq \Dee$ is either the intersection of an Euclidean line through the origin with the 
open unit disk $\Dee$, or the intersection of $\Dee$ with a circle $C$ hitting $\partial \Dee$ at right angles.

If $a,b,c\in\Dee$ then we use $\angle abc$ to denote the angle the hyperbolic line segment between $a$ and $b$
and the hyperbolic line segment between $b$ and $c$ make in the common point $b$. (In general, when $a,b,c,\in\Dee$ do not lie on 
a hyperbolic line, there will be two possible interpretations of this angle, one in $(0,\pi)$ and one in $(\pi,2\pi)$. 
As is usual, we shall always take the smaller of the two.)
A {\em hyperbolic triangle} is the set of the three hyperbolic line segments defined by three (non-collinear) points 
$a,b,c\in\Dee$.
A critical tool is \emph{hyperbolic law of cosines}. A proof of this result can  
for instance be found in~\cite{thurston1997three} (page 81).

\begin{lemma}\label{lem:law_cosines}
For a hyperbolic triangle, with sides of length $a,b,c$ and respective opposite angles 
$\alpha,\beta,\gamma$ (see Figure~\ref{fig:hyper_triangle}):

$$\cosh(c)=\cosh(a)\cosh(b)-\sinh(a)\sinh(b)\cos(\gamma).$$

\end{lemma}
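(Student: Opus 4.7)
The plan is to prove the hyperbolic law of cosines by direct computation in the Poincar\'e disk model, after first using an isometry to move the triangle to a convenient position. Specifically, I would exploit the fact that the isometry group of $\Haa^2$ acts transitively on pointed frames (a point together with a unit tangent direction) to assume, without loss of generality, that the vertex $C$ at which the angle $\gamma$ is measured coincides with the origin $o$, and that the side of length $b$ lies along the positive $x$-axis. Since hyperbolic distances and angles are preserved by this move, the quantities $a,b,c,\gamma$ are unchanged.

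Once the triangle is in this position, using the hyperbolic polar coordinate substitution~\eqref{eq:hyppolcoordsub} the three vertices become
\[
C = o, \qquad A = \bigl(\tanh(b/2),\, 0\bigr), \qquad B = \bigl(\tanh(a/2)\cos\gamma,\, \tanh(a/2)\sin\gamma\bigr).
\]
Setting $s = \tanh(b/2)$ and $t = \tanh(a/2)$, one reads off directly that $\|A\|^2 = s^2$, $\|B\|^2 = t^2$, and
\[
\|A-B\|^2 = s^2 - 2st\cos\gamma + t^2.
\]
Using $1 - \tanh^2(x/2) = 1/\cosh^2(x/2)$, we have $(1-\|A\|^2)(1-\|B\|^2) = 1/(\cosh^2(a/2)\cosh^2(b/2))$.

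The third step is to feed these into the explicit distance formula from Section~\ref{sec:hyper_prelim}. Since that formula gives $\sinh(c/2) = \|A-B\|/\sqrt{(1-\|A\|^2)(1-\|B\|^2)}$, applying the identity $\cosh(c) = 1 + 2\sinh^2(c/2)$ yields
\[
\cosh(c) = 1 + 2\bigl(s^2 - 2st\cos\gamma + t^2\bigr)\cosh^2(a/2)\cosh^2(b/2).
\]
The final step is algebraic simplification via the half-angle identities $2\sinh(x/2)\cosh(x/2) = \sinh(x)$ and $2\cosh^2(x/2) - 1 = \cosh(x)$. The cross term becomes exactly $\sinh(a)\sinh(b)\cos\gamma$, and the remaining terms $1 + 2\sinh^2(b/2)\cosh^2(a/2) + 2\sinh^2(a/2)\cosh^2(b/2)$ collapse to $\cosh(a)\cosh(b)$ after expanding $\sinh^2 = \cosh^2 - 1$ and comparing with $(2\cosh^2(a/2)-1)(2\cosh^2(b/2)-1)$.

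The main obstacle is not any individual calculation but rather the justification of the very first reduction: one needs that $\Haa^2$ admits an isometry sending an arbitrary pointed frame to any other, which is standard but not explicitly stated earlier. The algebraic endgame is routine once one commits to writing out all half-angle expansions carefully; one can shorten it by doing the whole computation in terms of $\cosh(a/2),\cosh(b/2)$ from the outset rather than $s,t$. I would also briefly note that a more conceptual (but less self-contained) alternative is to work in the hyperboloid model, where the identity reduces to a statement about Minkowski inner products of unit timelike vectors; since the paper is firmly committed to the Poincar\'e disk, however, the direct computation sketched above fits better with the surrounding material.
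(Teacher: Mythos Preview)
Your proof is correct, but note that the paper does not actually prove this lemma: immediately before the statement it simply says ``A proof of this result can for instance be found in~\cite{thurston1997three} (page 81)'' and leaves it at that. So there is no paper proof to compare against; you have supplied a self-contained argument where the authors were content with a citation.

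A small remark on what you flag as the ``main obstacle'': the existence of the required isometry is in fact already recorded in the paper's preliminaries. The paragraph just after Lemma~\ref{lem:law_cosines} states that for any two points $x,y\in\Dee$ there is an isometry sending $x$ to the origin and $y$ to the positive $x$-axis, and that isometries preserve angles. Applying this with $x=C$, $y=A$ places the triangle exactly as you want, and since geodesics through the origin are Euclidean segments, the angle $\gamma$ at $C$ becomes the Euclidean angle at $o$, giving $B$ the polar coordinates $(a,\gamma)$ as you claim. So the reduction step is fully justified by material already in Section~\ref{sec:hyper_prelim}, and your direct computation fits neatly with the paper's commitment to the Poincar\'e disk model.
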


\begin{figure}[htb!]
\begin{centering}
\includegraphics[width=.4\textwidth]{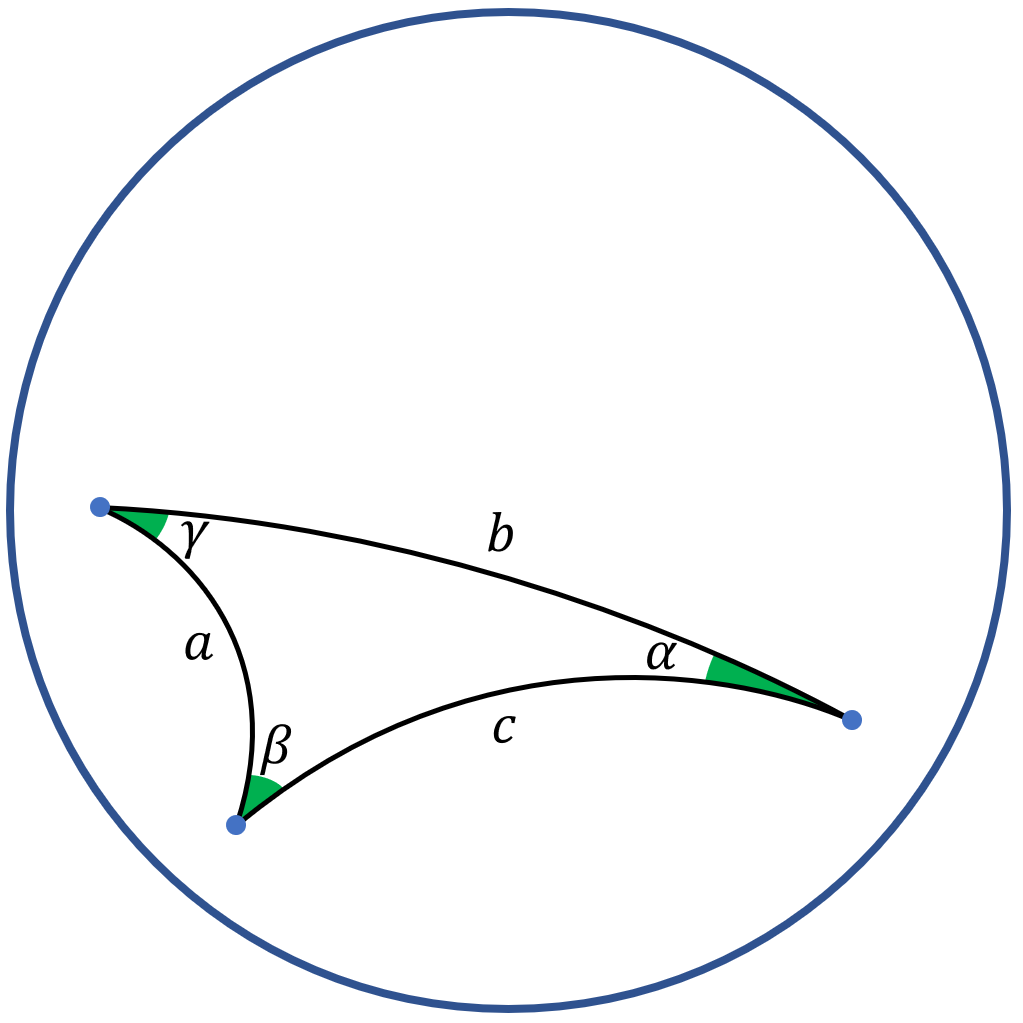}
\caption{A hyperbolic triangle with sides of length $a,b,$ and $c$ and respective opposite angles $\alpha,\beta,$ and $\gamma.$ 
The blue circle is the boundary of the Poincar\'e disk.\label{fig:hyper_triangle}}
\end{centering}
\end{figure}

A hyperbolic ray is defined analogously to a ray in Euclidean geometry. 
That is, if $\ell$ is a hyperbolic line then any $p \in \ell$ divides $\ell \setminus \{p\}$ into 
two connected components. Each of these is a {\em ray emanating from $p$}.
For distinct $p,s\in\Dee$ and $\vartheta\in(0,\pi)$ we let  $\sect{p}{s}{\vartheta}$ denote the 
set of all hyperbolic rays emanating from $p$ that make an angle of no more than $\vartheta$ with the 
ray emanating from $p$ through $s$.
In particular, in the Poincar\'e disk model, if $p = o$ is the origin then $\sect{p}{s}{\vartheta}$ looks 
like a (Euclidean) disk sector 
of opening angle $2\vartheta$ with bisector the ray emanating from $p$ through $s$. See Figure~\ref{fig:sector}.
For any other $p$, the set $\sect{p}{s}{\vartheta}$ can be obtained by applying a suitable hyperbolic 
isometry to such a disk sector.

\begin{figure}[htb]
\begin{center}
\includegraphics[width=.8\textwidth]{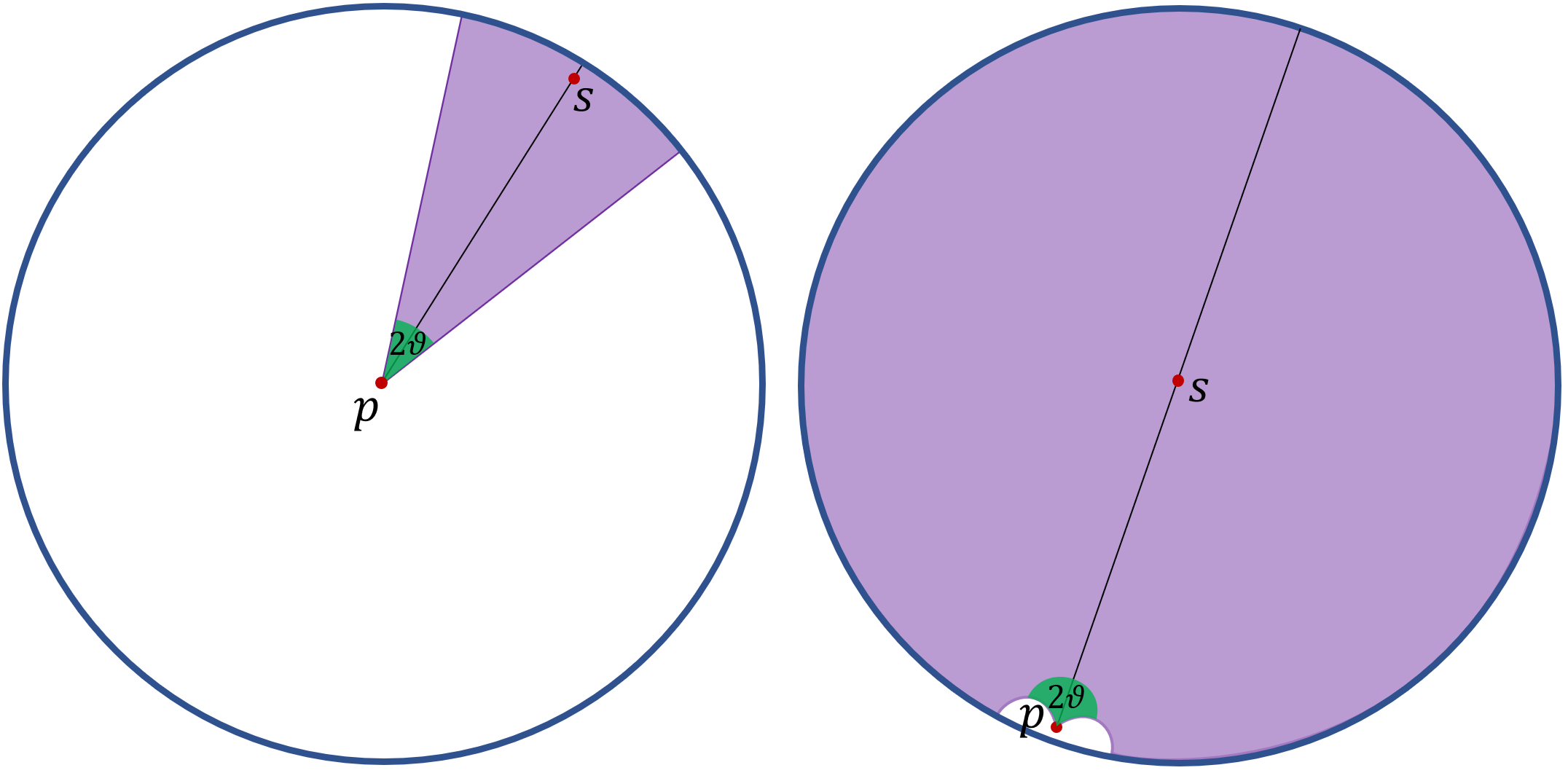}
\caption{The black segment is the ray emanating from $p$ through $s$ and the purple region is the set $\sect{p}{s}{\vartheta}.$ On the left, $p= o$.  
On the right, the sector after applying an isometry that maps $p$ away from the origin and $s$ to the origin.  
The blue circle is the boundary of the Poincar\'e disk.\label{fig:sector}} 
\end{center}
\end{figure}

A {\em hyperbolic isometry} is a bijection $\varphi : \Dee\to\Dee$ that preserves hyperbolic distance, i.e.~$\distH(u,v)
= \distH(\varphi(u),\varphi(v))$ for all $u,v \in \Dee$.
For any two points in $x,y\in \Dee$, there exists a unique hyperbolic line $\ell$ through $x$ and $y$, and
there exists a hyperbolic isometry $\varphi$ such that $\varphi[\ell]$ is an open interval given by the line segment between 
$(-1,0)$ and $(1,0)$, $\varphi(x)=o$ is the origin and $\varphi(y)$ is on the positive x-axis. 
In addition to distance, hyperbolic isometries preserve angles, in the sense that if $c_1,c_2 \subseteq \Dee$ are curves 
(e.g.~hyperbolic line segments, circles, ...) that meet in 
the point $p$ at angle $\alpha$, then the curves $\varphi[c_1],\varphi[c_2]$ meet in the point $\varphi(p)$ at the same
angle $\alpha$.
Hyperbolic isometries also preserve hyperbolic area. That is, 

\begin{equation}\label{eq:areapres} 
\areaH( \varphi[A] ) = \areaH( A ), 
\end{equation}

\noindent
for all (measurable) $A \subseteq \Dee$ and every hyperbolic isometry $\varphi$.
What is more, if $\varphi : \Dee \to \Dee$ is a hyperbolic isometry then, for any (integrable) $g: \Dee \to \eR$ we have

\begin{equation}\label{eq:isosubst} 
\int_{\Dee} g(z) f(z) \dd z = \int_{\Dee} g(\varphi(u)) f(u) \dd u, 
\end{equation}

\noindent
with $f$ as defined by~\eqref{eq:fdef}.
(An easy way to see this is to first note it follows trivially from~\eqref{eq:areapres} when $g$ is the indicator function 
of some measurable $A \subseteq \Dee$. From this it easily follows for step-functions $g = \sum_{i=1}^n a_i 1_{A_i}$. 
Then it follows for an arbitrary measurable function, by approximating it arbitrarily closely by step functions.)

\subsection{Hyperbolic Poisson point processes}

In the rest of this paper $\Zcal$ will  denote a homogeneous Poisson point process (PPP) on the hyperbolic plane. 
Analogously to homogeneous Poisson point processes on the ordinary, Euclidean plane, a homogeneous Poisson process $\Zcal$ of intensity 
$\lambda$ on the hyperbolic plane is characterized completely by the properties that
{\bf a)} for each (measurable) set $A\subseteq \Dee$ the random variable $|\Zcal \cap A|$ is Poisson distributed with mean 
$\lambda \cdot \areaH(A)$, and {\bf b)}
if $A_1, \dots, A_m \subseteq \Dee$ are (measurable and) disjoint then the random variables $|\Zcal\cap A_1|, \dots, |\Zcal\cap A_m|$ are 
independent.
In the light of the formula for $\areaH(.)$ above, we can alternatively view $\Zcal$ as an {\em inhomogeneous} Poisson point 
process on the ordinary, Euclidean plane $\eR^2$ with intensity function

$$ u \mapsto \lambda \cdot 1_{\Dee}(u) \cdot f(u), $$

\noindent
with $f$ given by~\eqref{eq:fdef} above.

Throughout the remainder,  we attach to each point of $\Zcal$ a randomly and independently chosen
colour. (Black with probability $p$ and white with probability $1-p$.)
We let $\Zcalb$ denote the black points and $\Zcalw$ the white points of $\Zcal$.
In the language of for instance~\cite{last2017lectures}, we can view $\Zcal$ as a {\em marked} Poisson point process,
the marks corresponding to the colours.

We will rely heavily on a specific case of the Slivniak-Mecke formula, which is our weapon of choice
for counting tuples of points $z_1,\dots,z_k \in \Zcal$ satisfying a given property.
Before stating it, we remind the reader that formally speaking a Poisson process on $\eR^2$ 
is a random variable that takes values in the space $\Omega_{\text{PPP}}$ of locally finite subsets of $\eR^2$, equipped 
with the sigma algebra 
generated by the family events of the form :  a given Borel $B$ set 
contains precisely $k$ points.
%
%
%

\begin{theorem}[Slivniak-Mecke formula]\label{thm:SlivMeck}
Let $\Zcal$ be a homogeneous hyperbolic Poisson point process of intensity $\lambda$, and let 
Let $g : \Dee^k \times \Omega_{\text{PPP}} \to [0,\infty)$ be a nonnegative, measurable function.
Then

$$ \begin{array}{c} 
\displaystyle
\Ee\left[\sum_{\substack{z_1,...,z_k\in\Zcal\\ \text{distinct}}} g(z_1,...,z_k,\Zcal)\right] \\
= \\
\displaystyle 
\lambda^k \int_\Dee\dots\int_\Dee \Ee\left[g(x_1,...,x_k,\Zcal\cup \{x_1,...,x_k\})\right] f(x_1)\dots f(x_k) \dd x_k\dots\dd x_1,
\end{array} $$

\noindent 
with $f$ given by~\eqref{eq:fdef}.
\end{theorem}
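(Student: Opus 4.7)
The plan is to reduce the statement to the standard Slivniak--Mecke formula for inhomogeneous Poisson point processes on $\eR^2$, which can be found in stochastic geometry texts such as Last--Penrose~\cite{last2017lectures}. As observed in the paragraph preceding the theorem, the hyperbolic Poisson process $\Zcal$ of intensity $\lambda$ can be identified with an inhomogeneous PPP on $\eR^2$ with intensity measure $\nu(\mathrm{d}u) := \lambda \cdot 1_\Dee(u) \cdot f(u)\, \mathrm{d}u$. The general Slivniak--Mecke formula then asserts
\begin{equation*}
\Ee\left[\sum_{\substack{z_1,\dots,z_k\in\Zcal\\ \text{distinct}}} g(z_1,\dots,z_k,\Zcal)\right]
= \int \cdots \int \Ee\left[g(x_1,\dots,x_k,\Zcal\cup\{x_1,\dots,x_k\})\right]\, \mathrm{d}\nu(x_1)\cdots\mathrm{d}\nu(x_k),
\end{equation*}
and substituting the explicit form of $\nu$ gives the stated identity (noting that $g$ is effectively supported on $\Dee^k$ since the integrand vanishes outside by restriction).

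For completeness, I would sketch the proof of the classical formula in two steps. First, I would establish the $k=1$ case (often called the Campbell--Mecke formula). Partition $\Dee$ into small disjoint Borel cells $B_1, B_2, \dots$ of diameter tending to zero, pick $x_i \in B_i$, and approximate
\begin{equation*}
\sum_{z\in\Zcal\cap\Dee} g(z,\Zcal) \;\approx\; \sum_i \mathbf{1}_{\{|\Zcal\cap B_i|=1\}} \cdot g(x_i, \Zcal).
\end{equation*}
Taking expectations and using that $|\Zcal\cap B_i|$ is Poisson with mean $\lambda\int_{B_i} f$ (hence $\Pr(|\Zcal\cap B_i|=1) = \lambda\int_{B_i} f + O((\int_{B_i} f)^2)$) together with independence of $\Zcal\cap B_i$ and $\Zcal\setminus B_i$, one obtains
\begin{equation*}
\Ee\Bigl[\sum_{z\in\Zcal} g(z,\Zcal)\Bigr] = \lambda \int_\Dee \Ee[g(x,\Zcal\cup\{x\})]\, f(x)\,\mathrm{d}x
\end{equation*}
in the limit (using the ``add-a-point'' property of Poisson processes to replace $\Zcal$ by $\Zcal\cup\{x\}$).

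The $k\geq 2$ case then follows by induction: peel off one variable at a time, applying the $k=1$ identity to the outermost sum while conditioning on the inner variables. Throughout, the monotone class / standard machine lets one reduce from general measurable $g \geq 0$ to indicators $g = \mathbf{1}_{A_1\times\cdots\times A_k\times M}$, for which the identity reduces to a direct Poisson moment computation.

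The main technical obstacle, such as it is, is the usual one for such abstract measure-theoretic statements: verifying that the function $(x_1,\dots,x_k) \mapsto \Ee[g(x_1,\dots,x_k,\Zcal\cup\{x_1,\dots,x_k\})]$ is measurable and that the ``add-a-point'' operation commutes nicely with integration. However, since we only ever apply the formula with $g$ an indicator of a reasonable geometric event (e.g., a configuration of Voronoi cells having prescribed combinatorial type, or a certain set being empty of Poisson points), all measurability issues are automatic, and the identity serves purely as a convenient bookkeeping device.
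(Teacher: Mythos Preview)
Your approach is correct and matches the paper's treatment: the paper does not prove Theorem~\ref{thm:SlivMeck} but simply cites it as the special case of the general Slivniak--Mecke formula (Corollary~3.2.3 in~\cite{SchneiderWeil}) obtained by viewing $\Zcal$ as an inhomogeneous Poisson process on $\eR^2$ with intensity density $\lambda\cdot 1_\Dee\cdot f$, exactly as you do. Your additional sketch of the classical proof is extra detail the paper omits, but it is correct and standard.
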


\noindent
As mentioned above, the version we state here is a specific case of a  more general result.
The general version can for instance be found in \cite{SchneiderWeil}, as Corollary 3.2.3. 
(The version we present here is the special case of Corollary 3.2.3 in~\cite{SchneiderWeil} when 
the ambient space $E=\eR^2$ and the intensity measure has density $\lambda \cdot 1_\Dee \cdot f$ with $f$ as in~\eqref{eq:fdef}.)
The Slivniak-Mecke formula is sometimes also called Mecke formula or Campbell-Mecke formula in the literature.

We shall be applying the following consequence of the Slivniak-Mecke formula, that is
tailored to our situation where $\Zcal = \Zcalb \cup \Zcalw$ and membership in $\Zcalb$ is determined via independent, $p$-biased
coin flips.

\begin{corollary}\label{cor:SlivMeck2col}
For $\lambda>0$ and $0\leq p\leq 1$, let $\Zcal = \Zcalb \cup \Zcalw$ be as above and let
$g : \Dee^k \times \Omega_{\text{PPP}} \to [0,\infty)$ be a nonnegative, measurable function.
Then

$$ \begin{array}{c} 
\displaystyle
\Ee\left[\sum_{\substack{z_1,...,z_k\in\Zcalb\\ \text{distinct}}} g(z_1,...,z_k,\Zcal)\right] \\
= \\
\displaystyle 
\left(p\lambda\right)^k \int_{\Dee}\dots\int_{\Dee} \Ee\left[g(u_1,...,u_k,\Zcal\cup \{u_1,...,u_k\})\right] 
f(u_1)\dots f(u_k) \dd{u_1}\dots\dd{u_k},
\end{array} $$

\noindent 
with $f$ given by~\eqref{eq:fdef}.
\end{corollary}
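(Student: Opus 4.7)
The plan is to reduce the two-colored statement to the unmarked Slivniak--Mecke formula (Theorem~\ref{thm:SlivMeck}) by exploiting the standard Bernoulli thinning property of Poisson point processes, namely that since each point of $\Zcal$ is coloured black independently with probability $p$, the sets $\Zcalb$ and $\Zcalw$ are \emph{independent} Poisson processes on $\Dee$ with intensity functions $p\lambda\cdot 1_\Dee\cdot f$ and $(1-p)\lambda\cdot 1_\Dee\cdot f$, respectively.

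Using this, I would define the auxiliary nonnegative measurable function
$$ h(z_1,\dots,z_k, Y) \ := \ \Ee\bigl[g(z_1,\dots,z_k,\, Y\cup \Zcalw)\bigr], $$
where the expectation is taken only over the white process $\Zcalw$ (measurability is a routine consequence of Fubini). By the tower property of conditional expectation, conditioning first on $\Zcalb$ and using the independence of $\Zcalb$ and $\Zcalw$,
$$ \Ee\Bigl[\sum_{\substack{z_1,\dots,z_k\in\Zcalb\\\text{distinct}}} g(z_1,\dots,z_k,\Zcal)\Bigr] \ = \ \Ee\Bigl[\sum_{\substack{z_1,\dots,z_k\in\Zcalb\\\text{distinct}}} h(z_1,\dots,z_k,\Zcalb)\Bigr]. $$

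Next, I would apply Theorem~\ref{thm:SlivMeck} directly to the Poisson process $\Zcalb$ (which, by the thinning step, is a homogeneous hyperbolic Poisson process of intensity $p\lambda$) and the function $h$. This transforms the right-hand side above into
$$ (p\lambda)^k \int_\Dee\!\!\cdots\!\!\int_\Dee \Ee\bigl[h(u_1,\dots,u_k,\,\Zcalb\cup\{u_1,\dots,u_k\})\bigr]\, f(u_1)\cdots f(u_k)\,\dd u_1\cdots\dd u_k. $$
Finally, unfolding the definition of $h$ and using independence of $\Zcalb$ and $\Zcalw$ one more time gives
$$ \Ee\bigl[h(u_1,\dots,u_k,\Zcalb\cup\{u_1,\dots,u_k\})\bigr] \ = \ \Ee\bigl[g(u_1,\dots,u_k,\,\Zcal\cup\{u_1,\dots,u_k\})\bigr], $$
so that combining the three displays yields the desired identity.

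There is no serious obstacle in this argument; the only subtleties are standard measure-theoretic bookkeeping (checking measurability of $h$ and the applicability of Fubini, and verifying that $\Zcalb$ indeed satisfies the hypotheses of Theorem~\ref{thm:SlivMeck} as a Poisson process of intensity $p\lambda$). Both of these are routine and follow from the thinning theorem together with the characterization of hyperbolic Poisson processes as inhomogeneous Euclidean Poisson processes with intensity $\lambda\cdot 1_\Dee\cdot f$ recorded in Section~\ref{sec:preliminaries}.
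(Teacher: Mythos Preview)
Your proof is correct, but it takes a different route from the paper's. The paper conditions on the full point set $\Zcal$ (positions only, not colours), observes that for any fixed realisation $\Ucal$ the conditional expectation of the black sum is exactly $p^k$ times the conditional expectation of the uncoloured sum (since each $k$-tuple is black with probability $p^k$), and then applies Theorem~\ref{thm:SlivMeck} once to the full process $\Zcal$ of intensity $\lambda$. You instead invoke the thinning theorem to split $\Zcal$ into independent processes $\Zcalb$ and $\Zcalw$, average out $\Zcalw$ to define $h$, and apply Theorem~\ref{thm:SlivMeck} to $\Zcalb$ as a process of intensity $p\lambda$. Your approach is arguably more conceptual and makes explicit why the factor $(p\lambda)^k$ appears as a single intensity; the paper's approach is marginally more elementary in that it avoids citing the thinning theorem and works purely with conditional expectations given the unmarked process. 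Both arrive at the same identity with essentially the same amount of work.
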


\begin{proof}
Let us write 

$$ S := \sum_{\substack{z_1,...,z_k\in\Zcal\\ \text{distinct}}} g(z_1,...,z_k,\Zcal), \quad
S_{\text{b}} := \sum_{\substack{z_1,...,z_k\in\Zcalb\\ \text{distinct}}} g(z_1,...,z_k,\Zcal). $$

We imagine first ``revealing'' the locations of the Poisson points $\Zcal$, 
but not yet the colours/coin flips.
For any fixed, locally finite $\Ucal \subseteq \eR^d$ we have 

$$ \Ee\left( S | \Zcal = \Ucal \right) = \sum_{{u_1,\dots,u_k \in \Ucal,}\atop\text{distinct}} f(u_1,\dots,u_k, \Ucal), $$

\noindent
while 

$$ \begin{array}{rcl} \Ee\left( S_{\text{b}} | \Zcal = \Ucal \right) 
& = & \displaystyle 
\sum_{u_1,\dots,u_k \in \Ucal,\atop\text{distinct}} f(u_1,\dots,u_k, \Ucal) \cdot 
\Pee( u_1,\dots,u_k \text{ are coloured black} ) \\[4ex]
& = & \displaystyle 
p^k \cdot \left( \sum_{u_1,\dots,u_k \in \Ucal,\atop\text{distinct}} f(u_1,\dots,u_k, \Ucal) \right)
=
p^k \cdot \Ee\left(S|\Zcal=\Ucal\right). 
\end{array} $$

\noindent
This holds for every locally finite $\Ucal$, which implies 

$$\Ee S_{\text{b}} = p^k \cdot \Ee S. $$

\noindent
The result now follows by applying the Slivniak-Mecke formula to $\Ee S$.
\end{proof}

\subsection{Hyperbolic Poisson-Voronoi tessellations\label{sec:VorDel}}

For $\Ucal \subseteq \Dee$ a countable point set and $u \in \Ucal$ we will denote the corresponding 
{\em hyperbolic Voronoi cell} of $u$ by:

$$ \begin{array}{c} 
\displaystyle C(u;\Ucal) := \{ v \in \Dee : \distH(u,v) \leq \distH(u',v) \text{ for all $u'\in\Ucal$}\}.
\end{array} $$

\noindent
We will usually suppress the second argument, and just write $C(u)$ for the 
Voronoi cell of $u$; and $\Ucal$ will usually be either a homogeneous hyperbolic Poisson process $\Zcal$ 
or $\Zcal \cup \{o\}$, such a Poisson process with the origin added in.

The {\em hyperbolic Delaunay graph} for $\Ucal$ is the abstract combinatorial graph with vertex set $\Ucal$ and 
an edge $uu'$ if the Voronoi cells $C(u), C(u')$ meet.
So we can  alternatively view hyperbolic Poisson-Voronoi percolation as site-percolation on the
hyperbolic Poisson-Delaunay graph.
We say that  $u, u' \in \Ucal$ are {\em adjacent} if they share an edge in 
the Poisson-Delaunay graph (in other words, the Voronoi cells $C(u), C(u')$ have at least one point in common).  
In this case we will also say that $u,u'$ are {\em neighbours}.

We point out that if $v \in C(u)\cap C(u')$ then it must hold that $\distH(v,u)=\distH(v,u')$ 
and $\ballH(v,\distH(v,u)) \cap \Ucal = \emptyset$.
In other words $u,u'$ are adjacent if and only if there is 
hyperbolic disk $B$ such that $u,u'\in\partial B$ and $B\cap\Ucal=\emptyset$.
Although we shall not be using this in the present paper, it may be instructive to the reader 
to point out the following. By this last observation, Fact~\ref{lem:conformal1} and 
some relatively straightforward probabilistic considerations : 
if $\Zcal$ is a homogeneous, hyperbolic Poisson process then, almost surely, the 
Voronoi tessellations for $\Zcal$ with respect to the ordinary, Euclidean metric and the Voronoi tessellation 
with respect to the hyperbolic metric as defined above have the same combinatorial structure, in the sense 
that $z,z'\in\Zcal$ are adjacent in the Euclidean tessellation if and only if they are adjacent in the 
hyperbolic tessellation. See Lemma 5 in our previous paper~\cite{HansenMuller1}.

In many of our arguments below we will be considering the Voronoi tessellation for $\Zcal\cup\{o\}$, a
homogeneous, hyperbolic Poisson process with the origin added in.
We refer to the origin as the {\em typical point} and to its Voronoi cell $C(o)$ as the {\em typical cell}.
See Figure~\ref{fig:typdeg} for computer simulations of the typical cell, shown in the
Poincar\'e disk model, for various choices of $\lambda$.
%

\begin{figure}[h!]
 \begin{center}
  \includegraphics[width=.3\textwidth]{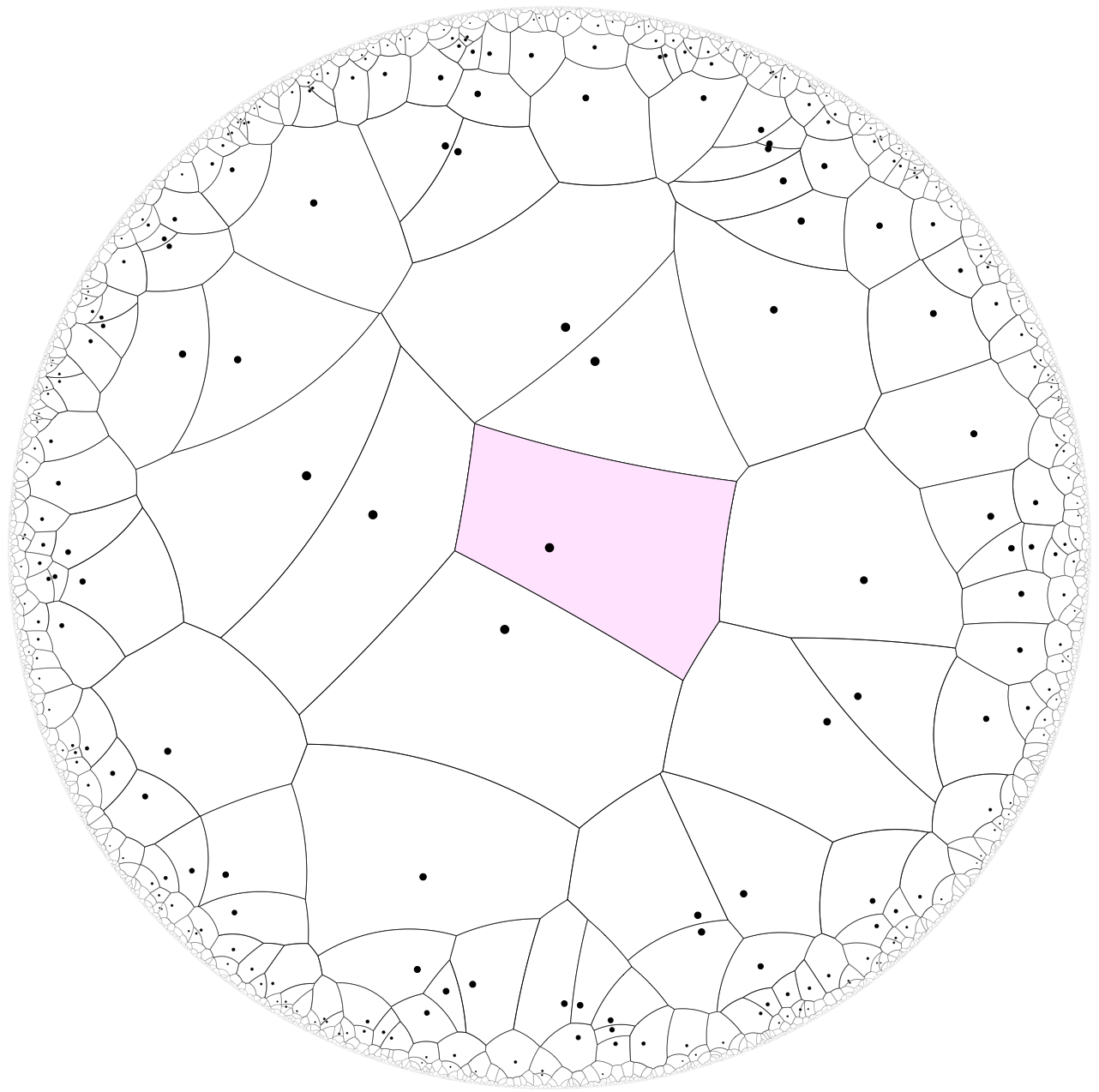}%
  \hspace{3ex}%
  \includegraphics[width=.3\textwidth]{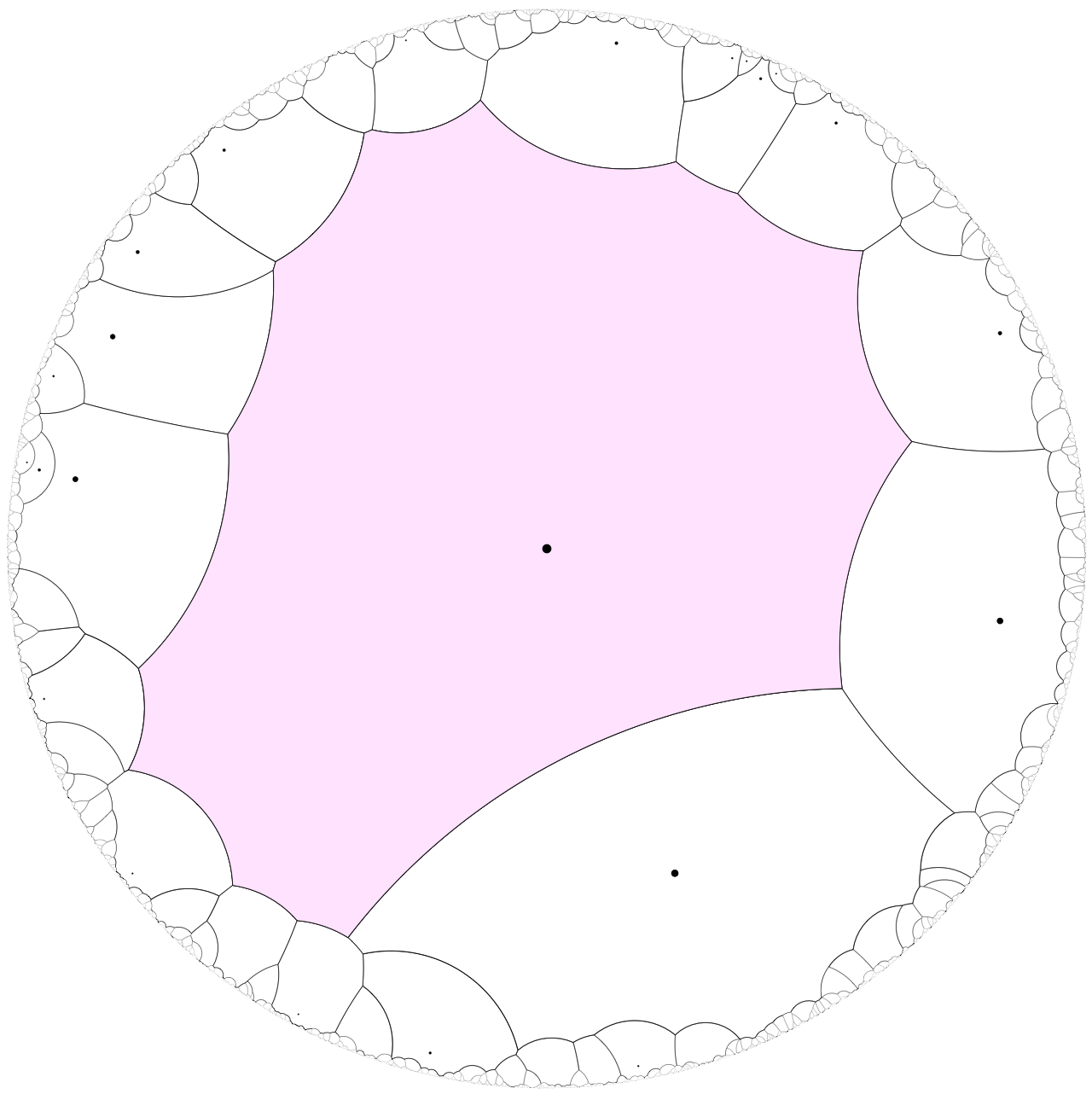}%
  \hspace{3ex}%
  \includegraphics[width=.3\textwidth]{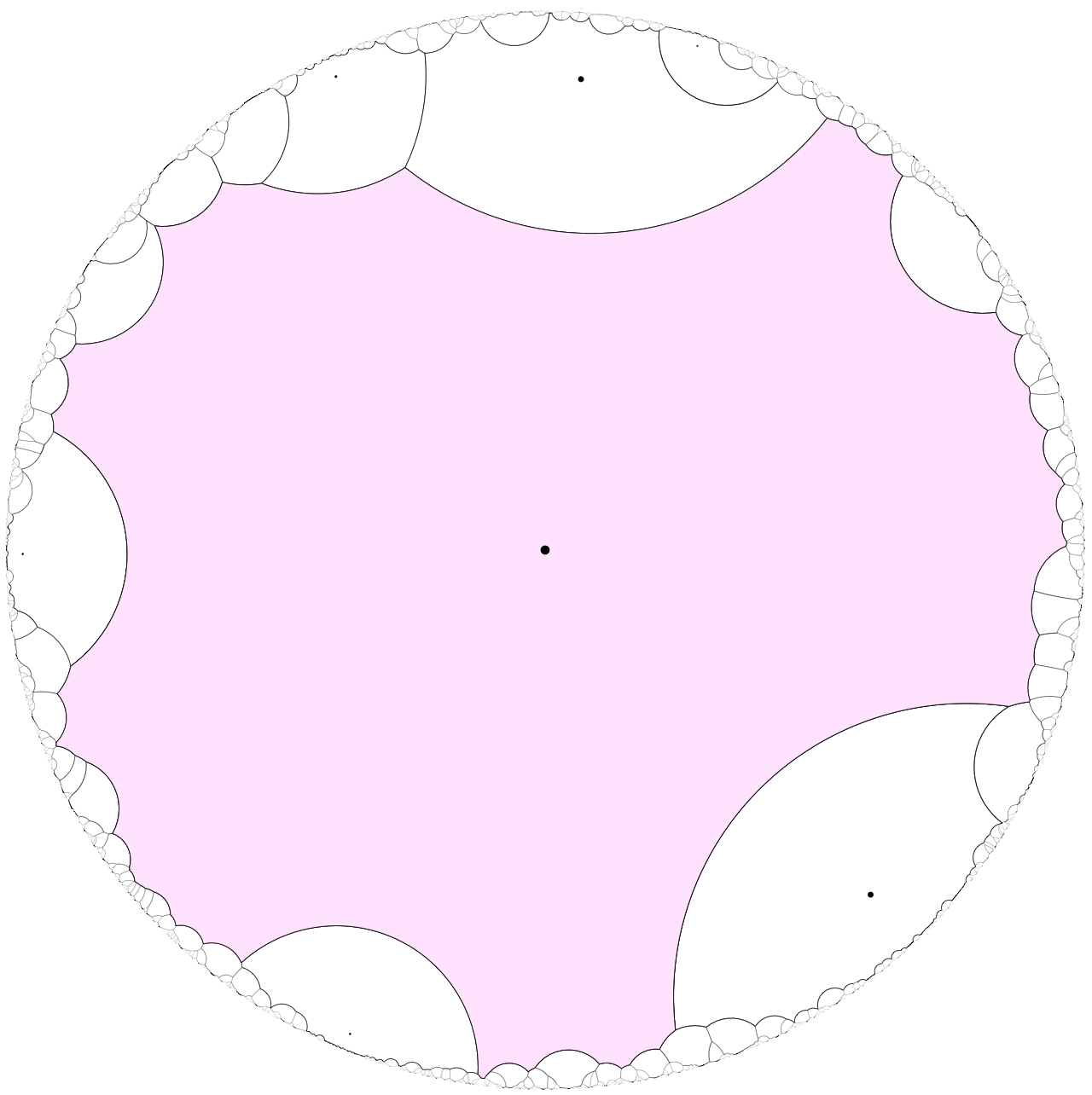}
  
  \caption{Computer simulations of the typical cell (highlighted), shown in the Poincar\'e disk model. Left: $\lambda=1$, 
  middle : $\lambda = \frac{1}{10}$, right: $\lambda=\frac{1}{50}$. \label{fig:typdeg}}
 \end{center}
\end{figure}

We define the {\em typical degree} $D$ as 

\begin{equation}\label{eq:typdegdef} 
D := \left|\left\{ z \in \Zcal : \text{$o,z$ are neighbours in the Voronoi tessellation for $\Zcal\cup\{o\}$}\right\}\right|. 
\end{equation}

The typical degree for homogeneous, hyperbolic Poisson processes has previously been studied by Isokawa~\cite{Isokawa00}. 
She gave the following exact formula for its expectation, valid for all values of the intensity parameter $\lambda>0$.

\begin{theorem}[Isokawa's formula]\label{thm:Isokawa} 
If $D$ is as given by~\eqref{eq:typdegdef} then

$$ \Ee D = 6 + \frac{3}{\pi\lambda}. $$

\end{theorem}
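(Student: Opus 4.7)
The plan is to derive the formula from the hyperbolic Gauss--Bonnet theorem combined with a Palm-type double counting, rather than by direct integration against Slivnyak--Mecke. Almost surely no four points of $\Zcal\cup\{o\}$ lie on a common hyperbolic circle, so $C(o)$ is a convex hyperbolic polygon whose number of edges and number of vertices both coincide with $D$; denote its total interior angle by $\sigma$. The hyperbolic Gauss--Bonnet theorem then gives $\areaH(C(o)) = (D-2)\pi - \sigma$, so after taking expectations I only need to identify $\Ee[\areaH(C(o))]$ and $\Ee[\sigma]$.

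The first is the standard identity $\Ee[\areaH(C(o))] = 1/\lambda$: the expected area of the typical cell of a homogeneous hyperbolic Poisson--Voronoi tessellation equals the reciprocal of the intensity. This follows from the tiling property together with isometry-invariance of $\Zcal$; a direct route is to apply Theorem~\ref{thm:SlivMeck} to $g(z,\Zcal) = \areaH(C(z;\Zcal)\cap W)$ on a rotation-invariant window $W$ centred at $o$ and use that isometry-invariance makes the resulting integrand constant in~$x$, so that $\areaH(W) = \lambda\,\areaH(W)\,\Ee[\areaH(C(o))]$. For the second, almost surely every Voronoi vertex is equidistant from exactly three points of $\Zcal\cup\{o\}$, lies in exactly three cells, and the three interior angles at it sum to $2\pi$. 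Since, conditional on the vertex location, the joint law of the three generators is invariant under permutation, the expected interior angle contributed at the vertex by any one of its three incident cells is $2\pi/3$. Combined with the identity $\lambda_V = \lambda\,\Ee[D]/3$ for the intensity of the induced vertex process (three cells at each vertex, $\Ee[D]$ vertices per cell on average), a further Slivnyak--Mecke computation over triples of generators yields $\Ee[\sigma] = (2\pi/3)\,\Ee[D]$.

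Substituting both identities into the expected Gauss--Bonnet relation gives
$$\frac{1}{\lambda} \;=\; \pi\,\Ee[D] - 2\pi - \frac{2\pi}{3}\,\Ee[D] \;=\; \frac{\pi}{3}\,\Ee[D] - 2\pi,$$
which rearranges to $\Ee[D] = 6 + 3/(\pi\lambda)$, as claimed.

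The main obstacle is formalising the Palm arguments: the isometry group of $\Haa^2$ is non-abelian and a hyperbolic window of large area has comparably large boundary, so naive Euclidean-style large-window averaging does not apply. One instead works intrinsically with the Palm distribution of $\Zcal$ (which by isometry-invariance equals the law of $\Zcal\cup\{o\}$) and with the Palm distribution of the induced vertex process; the key ingredient is the exchangeability of the three generators of a typical vertex, which holds because the conditional joint law of the three equidistant generators, given the vertex location, is symmetric under their permutation.
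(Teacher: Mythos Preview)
The paper does not give a proof of Theorem~\ref{thm:Isokawa}: it is simply quoted from Isokawa~\cite{Isokawa00}, and only the asymptotic $\Ee D\sim 3/(\pi\lambda)$ is used later. So there is no in-paper argument to compare against.

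Your Gauss--Bonnet route is correct and is the standard short derivation of the exact formula. Both ingredients can be justified cleanly with the tools already in Section~\ref{sec:preliminaries}. For the area identity, the window argument you sketch works but the direct computation is simpler: by Fubini,
\[
\Ee[\areaH(C(o))]=\int_\Dee \Pee\bigl(\Zcal\cap\ballH(y,\distH(o,y))=\emptyset\bigr)f(y)\,\dd y
=\int_0^\infty 2\pi\sinh(\rho)\,e^{-2\pi\lambda(\cosh\rho-1)}\,\dd\rho=\frac{1}{\lambda}.
\]
For the angle identity, write both $\Ee D$ and $\Ee\sigma$ via Slivnyak--Mecke (Theorem~\ref{thm:SlivMeck}) over ordered pairs $(z_1,z_2)\in\Zcal^2$, each vertex of $C(o)$ corresponding to the unordered pair of neighbours meeting there. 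The integrands are $\Ee[V(o,x_1,x_2)]$ and $\Ee[V(o,x_1,x_2)]\cdot\alpha_o(o,x_1,x_2)$ respectively, where $V$ is the indicator that the circumcircle of $o,x_1,x_2$ is empty of $\Zcal$ and $\alpha_o$ is the interior angle at the circumcentre belonging to the cell of $o$. Using the isometry substitution~\eqref{eq:isosubst} to swap the roles of $o$ and $x_1$ (respectively $x_2$) shows that the three integrals with $\alpha_o,\alpha_{x_1},\alpha_{x_2}$ coincide; since these angles sum to $2\pi$, each integral equals $(2\pi/3)$ times the integral of $V$, which is exactly $\Ee[\sigma]=(2\pi/3)\Ee D$. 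This is precisely the ``exchangeability of the three generators'' you invoke, and it needs only the isometry change-of-variables, not any large-window ergodic averaging; your caveat about hyperbolic boundary effects is well taken but can be sidestepped entirely.
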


\noindent
What is important for us is the asymptotics $\Ee D \sim \frac{3}{\pi\lambda}$ as $\lambda\searrow 0$.
An independent, alternative derivation of these asymptotics, using conceptually simple arguments that 
are similar to the ones we will use in the present paper, is given in Chapter 3 of the doctoral thesis
of the first author~\cite{Benthesis}.

It may be instructive for the reader (but does not enter into the arguments in the present paper) 
to point out that as $\lambda\to\infty$ the expected typical degree tends to 6, 
the value for the expected typical degree in planar, Euclidean Poisson-Voronoi tessellations~\cite{Meijering}.

It may also be instructive to have another look at the simulations in Figure~\ref{fig:typdeg} (and~\ref{fig:simulations}), 
which indeed suggest as $\lambda$ decreases the typical cell tends to have more adjacencies.

\section{Proofs}

\subsection{Drawing trees in the hyperbolic plane\label{sec:trees}}

We will find it useful to consider graphs embedded in the hyperbolic plane. 
From now on every graph $T$ in the rest of the paper, has vertices $V(T) \subseteq \Dee$ that are points in the Poincar\'e 
disk, and we identify each edge $uv \in E(T)$ with the (hyperbolic) geodesic line segment between $u$ and $v$.

\begin{definition}
For $\rho, w, \vartheta > 0$ we say that $T$ is a {\em $(\rho,w,\vartheta)$-tree} if 
it is connected and
\begin{enumerate}
 \item For every edge $uv \in E(T)$ we have $\rho-w < \distH(u,v) < \rho+w$;
 \item If the edges $uv, uw \in E(T)$ have a common endpoint $u$, then 
 the angle $\angle vuw$ they make at $u$ is at least $\vartheta$.
\end{enumerate}
\end{definition}

\noindent
We will speak of a {\em $(\rho,w,\vartheta)$-path} if the $(\rho,w,\vartheta)$-tree $T$ is a path.

The following proposition verifies that we are justified in using the word ``tree'' :
a $(\rho,w,\vartheta)$-tree is also a tree in the graph theoretical sense, at least when the $\rho$ 
parameter is sufficiently large.

\begin{proposition}\label{prop:treeistree}
For every $w,\vartheta>0$ there exists $\rho_0 = \rho_0(w,\vartheta)$ such that, for all $\rho \geq \rho_0$, every 
$(\rho,w,\vartheta)$-tree is acyclic.
\end{proposition}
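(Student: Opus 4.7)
The plan is to argue by contradiction. Suppose some $(\rho,w,\vartheta)$-tree $T$ contains a cycle $v_1 v_2 \cdots v_k v_1$ with $k\geq 3$. Deleting the edge $v_k v_1$ leaves a $(\rho,w,\vartheta)$-path $v_1,\ldots,v_k$. I will show that for $\rho$ large enough (depending on $w$ and $\vartheta$) one has $\distH(v_1,v_k) > \rho+w$, contradicting that $v_k v_1$ is an edge of $T$.

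The main engine is a quantitative corollary of the hyperbolic law of cosines (Lemma~\ref{lem:law_cosines}). Using $\cosh x,\sinh x = \tfrac{1}{2}e^x(1+O(e^{-2x}))$ as $x\to\infty$, a direct computation yields: for every $\gamma>0$ and $w>0$ there exist constants $M=M(\gamma,w)$ and $L_0=L_0(\gamma,w)$ such that whenever a hyperbolic triangle has two sides $a,b\geq \rho-w$ with $\rho\geq L_0$ meeting at an angle $\geq \gamma$, its third side $c$ satisfies $c \geq a+b-M$, and both of its remaining angles are at most $C(\gamma,w)\cdot e^{-\rho}$.

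I would then induct on $i$ along the path, maintaining two invariants: (a) $d_i := \distH(v_1,v_i) \geq (i-1)(\rho-w)-(i-1)M$; and (b) for $i\geq 3$, the angle $\phi_i := \angle v_1 v_i v_{i-1}$ at $v_i$ in the triangle $v_1 v_{i-1} v_i$ is at most $\vartheta/4$. The base case $i=3$ is immediate upon applying the engine to the triangle $v_1 v_2 v_3$, whose included angle at $v_2$ is the turn angle, hence at least $\vartheta$. For the inductive step, the key observation is: at $v_i$ the three rays $\overrightarrow{v_i v_1}$, $\overrightarrow{v_i v_{i-1}}$, $\overrightarrow{v_i v_{i+1}}$ have pairwise unsigned angles $\phi_i$, $\theta_i := \angle v_{i-1}v_iv_{i+1} \geq \vartheta$, and $\angle v_1 v_i v_{i+1}$. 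Unsigned angles in the tangent plane are distances on $S^1$ and hence obey the triangle inequality, so $\angle v_1 v_i v_{i+1} \geq \theta_i - \phi_i \geq 3\vartheta/4$. Applying the engine to the triangle $v_1 v_i v_{i+1}$, whose sides $d_i$ and $\distH(v_i,v_{i+1})$ meet at an angle $\geq 3\vartheta/4$ and are both $\geq \rho-w$, then gives both invariants for $i+1$.

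The contradiction follows immediately: for $k\geq 3$, invariant (a) gives $d_k \geq 2(\rho-w)-2M$, which exceeds $\rho+w$ as soon as $\rho > 3w+2M$. The main technical obstacle is to show that invariant (b) propagates uniformly in $i$; this is not an accumulation issue, however, because the engine's bound $\phi_{i+1}\leq C e^{-\rho}$ is independent of $i$, so a single choice of $\rho_0=\rho_0(w,\vartheta)$ makes the induction work simultaneously for cycles of all lengths.
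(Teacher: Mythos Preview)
Your argument is correct. The ``engine'' you describe follows from the hyperbolic law of cosines exactly as you say: Lemma~\ref{lem:cosines} gives $c\geq a+b-M$, and then the law of sines gives $\sin\phi_{i+1}=\frac{\sinh d_i}{\sinh d_{i+1}}\sin(\angle v_1v_iv_{i+1})\leq 2e^{M-\distH(v_i,v_{i+1})}\leq 2e^{M+w-\rho}$, while the angle-sum defect bounds $\phi_{i+1}<\pi-3\vartheta/4$, so $\phi_{i+1}$ is genuinely small and invariant~(b) propagates uniformly. Invariant~(a) also telescopes cleanly since a single constant $M=M(3\vartheta/4)$ works at every step.

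Your route differs from the paper's, however. The paper does not prove the distance lower bound first; instead it invokes the sector-containment Lemma~\ref{lem:VTuvsect} twice at a single vertex $v_2$ of a putative cycle, showing that $V(C)\setminus\{v_2\}$ lies in both $\sect{v_2}{v_1}{\vartheta'}$ and $\sect{v_2}{v_3}{\vartheta'}$, which are disjoint since $\angle v_1v_2v_3\geq\vartheta$. Your approach is more self-contained --- it needs only the cosine rule and the triangle inequality for angles, and in effect proves a path version of Proposition~\ref{prop:distHdistT} along the way, so you could merge the two propositions. The paper's route, by contrast, front-loads the work into the reusable sector machinery (Lemmas~\ref{lem:sector_contain} and~\ref{lem:VTuvsect}), which it then recycles for Propositions~\ref{prop:distHdistT} and~\ref{prop:treedisksector}; that modularity is the payoff for its slightly less direct argument here.
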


We have to postpone the proof until some necessary preparations are out of the way.
If $T$ is a $(\rho,w,\vartheta)$-tree then clearly 
the hyperbolic distance $\distH(u,v)$ between any two vertices $u,v \in V(T)$ is upper bounded 
by $\dist_T(u,v) \cdot (\rho+w)$. 
Here and in the rest of the paper, $\dist_T$ denotes the {\em graph distance}, i.e.~the number 
of edges on the (unique) $u,v$-path in $T$.

The following proposition shows that for $\rho$ sufficiently large this trivial upper bound is close to being tight.

\begin{proposition}\label{prop:distHdistT}
For every $w,\vartheta>0$ there exists $\rho_0 = \rho_0(w,\vartheta)$ and $K = K(w,\vartheta)$ such that,
for all $\rho\geq \rho_0$, every $(\rho,w,\vartheta)$-tree and for every two 
vertices $u,v \in V(T)$:

$$ \distH(u,v) \geq \dist_T(u,v) \cdot (\rho - K). $$

\end{proposition}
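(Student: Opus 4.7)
The plan is to reduce the tree statement to a path statement and then prove the path version by induction on path length, using the hyperbolic law of cosines together with a companion ``look-back angle'' estimate. For the reduction, given $u,v\in V(T)$, pick a shortest $T$-walk $v_0=u,v_1,\dots,v_n=v$ of length $n=\dist_T(u,v)$. This walk is simple, every edge has length in $(\rho-w,\rho+w)$, and consecutive edges share a vertex in $T$ and so make an angle $\geq\vartheta$, so it is itself a $(\rho,w,\vartheta)$-path. It therefore suffices to show that for any such path with $\rho$ sufficiently large, $\distH(v_0,v_n)\geq n(\rho-K)$. Writing $D_k:=\distH(v_0,v_k)$, $L_k:=\distH(v_k,v_{k+1})$, and letting $\gamma_k$ be the angle at $v_k$ in the hyperbolic triangle $v_0v_kv_{k+1}$ (i.e.~between segments $v_kv_0$ and $v_kv_{k+1}$), I will prove by induction on $k\geq 1$ the joint statement
$$ D_k \geq k(\rho-K) \qquad\text{and}\qquad \gamma_k \geq \vartheta/2, $$
for appropriate $K=K(w,\vartheta)$ and $\rho_0=\rho_0(w,\vartheta)$. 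The base $k=1$ is immediate: $D_1=L_0\geq\rho-w$ and $\gamma_1$ is literally the turning angle at $v_1$, hence $\geq\vartheta$.

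For the distance step, Lemma~\ref{lem:law_cosines} applied to the triangle $v_0v_kv_{k+1}$ gives
$$ \cosh(D_{k+1}) = \cosh(D_k)\cosh(L_k) - \sinh(D_k)\sinh(L_k)\cos(\gamma_k). $$
Rewriting the right-hand side as $\tfrac{1-\cos\gamma_k}{2}\cosh(D_k+L_k)+\tfrac{1+\cos\gamma_k}{2}\cosh(D_k-L_k)$ and using $\gamma_k\geq\vartheta/2$ yields $\cosh(D_{k+1})\geq c\cdot\cosh(D_k+L_k)$ with $c:=\tfrac{1-\cos(\vartheta/2)}{2}>0$. Since $\cosh^{-1}(c\cdot\cosh(x))=x+\ln c+o(1)$ as $x\to\infty$, for $\rho$ sufficiently large this gives $D_{k+1}\geq D_k+L_k-K_0$ with $K_0=K_0(\vartheta)$ a constant. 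Combined with $L_k\geq\rho-w$, the distance bound propagates with $K:=w+K_0$.

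For the angle step, let $\delta$ denote the angle at $v_{k+1}$ in the triangle $v_0v_kv_{k+1}$. By the hyperbolic law of sines (a standard consequence of Lemma~\ref{lem:law_cosines}),
$$ \sin\delta \;=\; \frac{\sinh(D_k)\sin\gamma_k}{\sinh(D_{k+1})} \;\leq\; \frac{\sinh(D_k)}{\sinh(D_k+\rho-w-K_0)}, $$
which is bounded by $2e^{-(\rho-w-K_0)}$. For $\rho_0$ chosen large enough, $\delta<\vartheta/2$. Since the turning angle at $v_{k+1}$ (between segments $v_{k+1}v_k$ and $v_{k+1}v_{k+2}$) is $\geq\vartheta$, while the segments $v_{k+1}v_0$ and $v_{k+1}v_k$ differ in direction at $v_{k+1}$ by only $\delta$, the triangle inequality for unsigned angles of rays in the tangent plane at $v_{k+1}$ gives $\gamma_{k+1}\geq\vartheta-\delta>\vartheta/2$, closing the induction.

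The main obstacle is the angle half of the induction: without an a priori lower bound on $\gamma_k$, the law-of-cosines bound on $D_{k+1}$ degrades badly and in principle could reduce to $|D_k-L_k|$, which would correspond to a path that folds back on itself. Carrying the angle invariant alongside the distance invariant, and exploiting the exponential divergence of hyperbolic geodesics to guarantee that ``looking back'' from $v_{k+1}$ toward $v_0$ is essentially the same direction as looking back toward $v_k$, is the quantitative heart of the argument and is what dictates the threshold $\rho_0(w,\vartheta)$.
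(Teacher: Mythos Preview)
Your argument is correct and takes a genuinely different route from the paper's. The paper also inducts along the path $v_0,\dots,v_n$ and uses the cosine-rule consequence (Lemma~\ref{lem:cosines}) for the distance step, but it controls the angle $\angle v_0v_1v_n$ via Lemma~\ref{lem:VTuvsect}, which in turn relies on the sector-containment Lemma~\ref{lem:sector_contain}. You instead carry an explicit angle invariant $\gamma_k\geq\vartheta/2$ through the induction and maintain it using the hyperbolic law of sines plus the triangle inequality for angles at $v_{k+1}$; this is more elementary and self-contained, needing only Lemma~\ref{lem:law_cosines} and its sine-rule corollary. The paper's route has the advantage that Lemmas~\ref{lem:sector_contain} and~\ref{lem:VTuvsect} are reused for Proposition~\ref{prop:treedisksector}, so there is no net economy for the paper as a whole; your route has the advantage of proving Proposition~\ref{prop:distHdistT} without appealing to Proposition~\ref{prop:treeistree} (you bypass acyclicity by taking a shortest walk, automatically simple). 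One small point worth making explicit in your write-up: when you pass from $\sin\delta$ small to $\delta$ small, you should note that the angle-sum deficit in a hyperbolic triangle forces $\delta<\pi-\gamma_k\leq\pi-\vartheta/2$, ruling out $\delta$ near $\pi$.
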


\noindent
Again we postpone the proof until we have made some additional observations.

If $G$ is a graph and $uv \in E(G)$ then we denote by $G_{u\setminus v}$ the 
connected component of $G\setminus \{v\}$ that contains $u$.
In other words, $G_{u\setminus v}$ is the subgraph induced on all nodes other than $v$ that can be reached from $u$ using a path that 
avoids $v$. 

The following observation is an important ingredient in the proof of our main result.

\begin{proposition}\label{prop:treedisksector}
For all $w,\vartheta_1,\vartheta_2>0$ there exist $\rho_0 = \rho_0(w,\vartheta_1,\vartheta_2)$ and 
$h = h(w,\vartheta_1,\vartheta_2)$ such that, for all $\rho \geq \rho_0,$ 
every $(\rho,w,\vartheta_1)$-tree $T$ and every edge $uv \in E(T)$:

$$ \bigcup_{x\in V(T_{u\setminus v})} \ballH( x, \rho+w ) \subseteq \ballH(v,h) \cup \sect{v}{u}{\vartheta_2}. $$

\end{proposition}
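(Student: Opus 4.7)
The plan is to proceed in two steps: first, show that every vertex of $T_{u\setminus v}$ lies in a narrow sector around the ray from $v$ through $u$; second, use the hyperbolic law of cosines to force any point within distance $\rho+w$ of such a vertex either to lie in the wider sector $\sect{v}{u}{\vartheta_2}$ or to lie within a bounded distance of $v$.

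For the angular control, I consider any $x\in V(T_{u\setminus v})$ together with the unique path $v=v_0, u=v_1, v_2,\dots,v_k=x$ in $T$, which is automatically a $(\rho,w,\vartheta_1)$-path. Let $\theta_i := \angle u v v_i$, measured with a consistent sign and with $\theta_1 = 0$. Applying the hyperbolic law of sines in the triangle $(v,v_{i-1},v_i)$ and using $\sin\le 1$ at $v_{i-1}$ gives
\[
\sin |\theta_i - \theta_{i-1}| \;\leq\; \frac{\sinh \distH(v_{i-1},v_i)}{\sinh \distH(v,v_i)} \;\leq\; \frac{\sinh(\rho+w)}{\sinh\!\bigl(i(\rho-K)\bigr)},
\]
where $K=K(w,\vartheta_1)$ is the constant from Proposition~\ref{prop:distHdistT}, which also supplies the lower bound $\distH(v,v_i)\ge i(\rho-K)$. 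For $\rho > K+1$ the right-hand side is bounded by $C(w,\vartheta_1)\,e^{-(i-1)\rho}$, so the telescoping sum yields $|\theta_k| \le C'(w,\vartheta_1)\,e^{-\rho}$ uniformly in $k$. A short induction checks that $|\theta_i|<\pi/2$ throughout, so the triangles are non-degenerate and the signed-angle accounting is coherent. For $\rho\ge \rho_0$ large enough this gives $|\theta_k|\le \vartheta_2/4$, i.e.\ $x\in\sect{v}{u}{\vartheta_2/4}$.

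For the ball containment, fix $y$ in the left-hand union, pick $x^*\in V(T_{u\setminus v})$ with $y\in \ballH(x^*,\rho+w)$, and assume $y\notin \sect{v}{u}{\vartheta_2}$. Together with Step~1 this forces $\psi := \angle x^* v y \ge 3\vartheta_2/4$. Writing $a:=\distH(v,x^*)$, $b:=\distH(v,y)$, $c:=\distH(x^*,y)\le \rho+w$, the elementary identity
\[
\cosh a\cosh b - \sinh a\sinh b\cos\psi
= \tfrac{1}{4}\Bigl[e^{a+b}(1-\cos\psi) + e^{a-b}(1+\cos\psi) + e^{b-a}(1+\cos\psi) + e^{-a-b}(1-\cos\psi)\Bigr]
\]
and Lemma~\ref{lem:law_cosines} give $\tfrac{1}{4}e^{a+b}(1-\cos\psi) \le \cosh c \le e^{\rho+w}$. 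Since $1-\cos\psi \ge 1-\cos(3\vartheta_2/4)>0$ and Proposition~\ref{prop:distHdistT} yields $a\ge \rho-K$, I get
\[
b \;\leq\; w + K + \ln\!\Bigl(\tfrac{4}{1-\cos(3\vartheta_2/4)}\Bigr) \;=:\; h,
\]
which depends only on $w,\vartheta_1,\vartheta_2$. Hence $y\in \ballH(v,h)$, and the containment is proved.

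The main subtlety lies in Step~1: one must verify that the angular increments really are summable uniformly over paths of arbitrary length, and that the signed-angle interpretation survives the inductive argument (ruling out wrap-around of the path about $v$). The geometric decay $\sinh(\rho+w)/\sinh(i(\rho-K))\sim e^{-(i-1)\rho}$ is exactly what makes this work, and this decay is in turn a direct consequence of the quasi-geodesic bound $\distH(v,v_i)\ge i(\rho-K)$ supplied by Proposition~\ref{prop:distHdistT}.
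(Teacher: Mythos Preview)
Your proof is essentially correct and takes a genuinely different route from the paper. The paper establishes the angular control (your Step~1) as a separate lemma (Lemma~\ref{lem:VTuvsect}), proved by induction on $|E(T)|$ together with the geometric fact that $\Dee\setminus\sect{u}{v}{\vartheta}\subseteq\sect{v}{u}{\vartheta}$ once $\distH(u,v)$ is large (Lemma~\ref{lem:sector_contain}). It then treats the ball containment by a case split: for $x\neq u$ one has $\distH(v,x)\geq 2(\rho-K)$, which forces $\ballH(x,\rho+w)$ entirely into the sector via Lemma~\ref{lem:sec_subset}; only the single ball $\ballH(u,\rho+w)$ requires the constant~$h$, produced by a horocycle-type argument (Lemma~\ref{lem:bounding}). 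Your approach instead telescopes angular increments along the path using the law of sines, and then handles the ball containment for \emph{all} $x$ in one stroke via the law of cosines. What you gain is a self-contained argument that does not rely on the chain of auxiliary lemmas; what the paper's modular route buys is that those lemmas are reused elsewhere in the paper.

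Two small repairs are needed. First, from $\sin(\angle v_{i-1}vv_i)\leq\sinh(\rho+w)/\sinh(i(\rho-K))$ you cannot directly conclude the angle is small, since a priori it could be close to~$\pi$; your ``short induction'' on $|\theta_i|<\pi/2$ does not by itself exclude this. The fix is immediate: in the same triangle, the law of cosines together with $\distH(v,v_{i-1})+\distH(v,v_i)\geq(2i-1)(\rho-K)\gg\rho+w\geq\distH(v_{i-1},v_i)$ forces $\cos(\angle v_{i-1}vv_i)$ close to~$1$, so the angle is acute and $\sin^{-1}$ gives the small value. Second, the displayed bound $C(w,\vartheta_1)\,e^{-(i-1)\rho}$ is not uniform in~$i$ as written; the ratio is actually bounded by $C(w,\vartheta_1)\,e^{-(i-1)(\rho-K)}$. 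Since $\rho>K$ the geometric series $\sum_{i\geq 2}e^{-(i-1)(\rho-K)}$ still converges and tends to~$0$ as $\rho\to\infty$, so your conclusion $|\theta_k|\leq\vartheta_2/4$ for $\rho\geq\rho_0$ is unaffected.
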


\noindent
Again we have to postpone the proof until we have made some more preparatory observations.
The next few Lemma's in this section are technical (hyperbolic) geometric considerations that are 
intermediate steps in the proof of the three propositions above. When reading the paper for the 
first time, the reader may wish to only read the lemma statements and skip over the proofs.

We start with a relatively straightforward, but for us useful, consequence of the hyperbolic cosine rule.

\begin{lemma}\label{lem:cosines}
For every $\gamma_0>0,$ there exists $K=K(\gamma_0)$ such that the following holds for every hyperbolic triangle.
Let the lengths of the sides be $a,b,$ and $c$ and the respective opposing angles $\alpha,\beta,\gamma$ 
as in Figure~\ref{fig:hyper_triangle}. 
If $\gamma\geq \gamma_0$ then 

$$ c\geq a+b-K. $$

\end{lemma}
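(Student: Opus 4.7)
The plan is to use the hyperbolic law of cosines (Lemma~\ref{lem:law_cosines}) directly and then extract a lower bound on $c$ using the monotonicity of $\cosh$ together with the standard asymptotic estimates $\frac{1}{2}e^x \leq \cosh(x) \leq e^x$ (valid for $x \geq 0$).

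First I would note that since $\gamma \geq \gamma_0$ and $\cos$ is decreasing on $[0,\pi]$, we have $\cos(\gamma) \leq \cos(\gamma_0)$, so Lemma~\ref{lem:law_cosines} yields
$$\cosh(c) \;\geq\; \cosh(a)\cosh(b) - \sinh(a)\sinh(b)\cos(\gamma_0).$$
Next I would rewrite the right-hand side using the product-to-sum identities $\cosh(a)\cosh(b) = \tfrac12(\cosh(a+b)+\cosh(a-b))$ and $\sinh(a)\sinh(b) = \tfrac12(\cosh(a+b)-\cosh(a-b))$, which gives
$$\cosh(c) \;\geq\; \tfrac{1-\cos(\gamma_0)}{2}\cosh(a+b) + \tfrac{1+\cos(\gamma_0)}{2}\cosh(a-b) \;\geq\; \tfrac{1-\cos(\gamma_0)}{2}\cosh(a+b),$$
where in the second inequality I dropped the (nonnegative) second term.

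Finally, using $\cosh(c) \leq e^c$ and $\cosh(a+b) \geq \tfrac12 e^{a+b}$, I obtain
$$e^{c} \;\geq\; \tfrac{1-\cos(\gamma_0)}{4}\, e^{a+b},$$
and taking logarithms gives $c \geq a+b - K$ with $K := \log\!\bigl(4/(1-\cos(\gamma_0))\bigr)$, which depends only on $\gamma_0$.

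There is essentially no obstacle here; the whole argument is a two- or three-line computation. The only minor point to watch is ensuring the constant $K$ is genuinely a function of $\gamma_0$ alone (which it is, since $1-\cos(\gamma_0) > 0$ is bounded away from zero for fixed $\gamma_0$) and that the chain of inequalities never requires a lower bound on $a$ or $b$, so the conclusion is valid for all hyperbolic triangles with $\gamma \geq \gamma_0$.
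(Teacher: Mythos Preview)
Your proof is correct and follows essentially the same route as the paper: apply the hyperbolic law of cosines, use monotonicity of $\cos$ to replace $\gamma$ by $\gamma_0$, bound below by a constant multiple of $e^{a+b}$, and take logarithms to arrive at the identical constant $K=\log\bigl(4/(1-\cos\gamma_0)\bigr)$. Your use of the product-to-sum identities is in fact slightly cleaner than the paper's version, since it avoids the paper's preliminary reduction to the case $\gamma_0\leq\pi/2$.
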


\begin{proof} 
We can assume without loss of generality that $\gamma_0 \leq \pi/2$.
For any hyperbolic triangle satisfying the hypothesis of the lemma, we have

\begin{align*}
e^c&\geq\cosh(c)\\
  &= \cosh(a)\cosh(b)-\sinh(a)\sinh(b)\cos(\gamma)\\
  &\geq \cosh(a)\cosh(b)-\sinh(a)\sinh(b)\cos(\gamma_0)\\
  &\geq\frac{e^{a+b}}{4}(1-\cos(\gamma_0)).
\end{align*}

The second line is the hyperbolic law of cosines (Lemma \ref{lem:law_cosines}).  
The third line follows as $0<\gamma<\pi$ and $\cos(.)$ is decreasing on $[0,\pi)$.
The fourth line uses that $\cosh(a)\cosh(b)\geq \frac{e^{a+b}}{4}$ and $\sinh(a)\sinh(b)\leq \frac{e^{a+b}}{4}$ and 
$\gamma_0 \leq \pi/2$ (by assumption).
Taking logs, we find

$$ c\geq a+b + \ln\left(\frac{1-\cos \gamma_0}{4}\right) =: a+b-K.$$

\end{proof}

Next, we show that if $u,v$ are at least $r+d_0$ apart with $d_0$ a (large) constant and $r$ arbitrary then, from 
the point of view of $v$ (i.e.~if we isometrically map $v$ to the origin of the Poincar\'e disk), a ball of 
large constant radius $r$ around $u$ will be contained in a sector of small opening angle.

\begin{lemma}\label{lem:sec_subset}
For every $\vartheta > 0$ there exists a $d_0=d_0(\vartheta)>0$ such that, for all $u,v \in \Haa^2$ and $r>0$,
if $\distH(u,v) > r+d_0$ then 

$$ \ballH(u,r)\subseteq \sect{v}{u}{\vartheta}.$$

(See Figure~\ref{fig:sector_cover}.)
\end{lemma}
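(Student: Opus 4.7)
The plan is to reduce the claim to Lemma~\ref{lem:cosines} via a triangle inequality argument. Given $\vartheta>0$, I would take $K = K(\vartheta)$ as furnished by that lemma and set $d_0 := K/2 + 1$ (or indeed any constant exceeding $K/2$).

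Fix $u, v, r$ as in the hypothesis and let $w \in \ballH(u,r)$ be arbitrary. The goal is to show that the ray from $v$ through $w$ lies in $\sect{v}{u}{\vartheta}$, i.e., that the angle $\alpha := \angle uvw$ is at most $\vartheta$. The degenerate case in which $u,v,w$ are collinear on a common hyperbolic line is straightforward: since $\distH(u,w) < r < \distH(u,v)$, the point $w$ must lie on the same ray emanating from $v$ that passes through $u$. So assume $u,v,w$ span a genuine hyperbolic triangle, and write $a := \distH(v,w)$, $b := \distH(u,v)$, and $c := \distH(u,w)$. From the hypothesis, $b > r + d_0$ and $c < r$, and the triangle inequality yields $a \geq b - c > d_0$.

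The main step is then a contradiction argument. Suppose $\alpha \geq \vartheta$. Since $c$ is the side opposite $\alpha$, Lemma~\ref{lem:cosines} gives $c \geq a + b - K$. Substituting $a \geq b - r$ and $b > r + d_0$ produces $r > c \geq 2b - r - K > r + 2d_0 - K$, which contradicts the choice $d_0 > K/2$. Hence $\alpha < \vartheta$, as required.

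The argument is essentially elementary once Lemma~\ref{lem:cosines} is in hand; there is no genuine obstacle. The only minor care required is to use the triangle inequality to turn the hypothesis $\distH(u,v) > r+d_0$ into a usable lower bound on $a = \distH(v,w)$, so that Lemma~\ref{lem:cosines} applied to the opposite side $c$ yields an inequality that cannot be reconciled with $c < r$.
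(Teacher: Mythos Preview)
Your proof is correct and takes a genuinely different route from the paper's. The paper argues in the Poincar\'e disk model: it places $v$ at the origin, notes that $\ballH(u,r)$ is also a Euclidean disk whose centre lies on the $x$-axis, and uses the relation~\eqref{eq:distHtanh} to confine that disk to a thin annulus $\Dee\setminus\ballR(o,1-\delta)$ near the boundary, from which the sector containment follows by an elementary Euclidean consideration. Your argument, by contrast, is entirely intrinsic: it invokes Lemma~\ref{lem:cosines} (itself a direct consequence of the hyperbolic cosine rule) and derives a contradiction from the assumption $\angle uvw \geq \vartheta$ via two applications of the triangle inequality. Your approach is arguably cleaner---it avoids any model-specific reasoning and yields an explicit $d_0$ in terms of the constant $K(\vartheta)$ from Lemma~\ref{lem:cosines}---whereas the paper's proof has the virtue of being self-contained and not depending on the prior lemma.
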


\begin{figure}[htb!]
\begin{center}
\includegraphics[width=.45\textwidth]{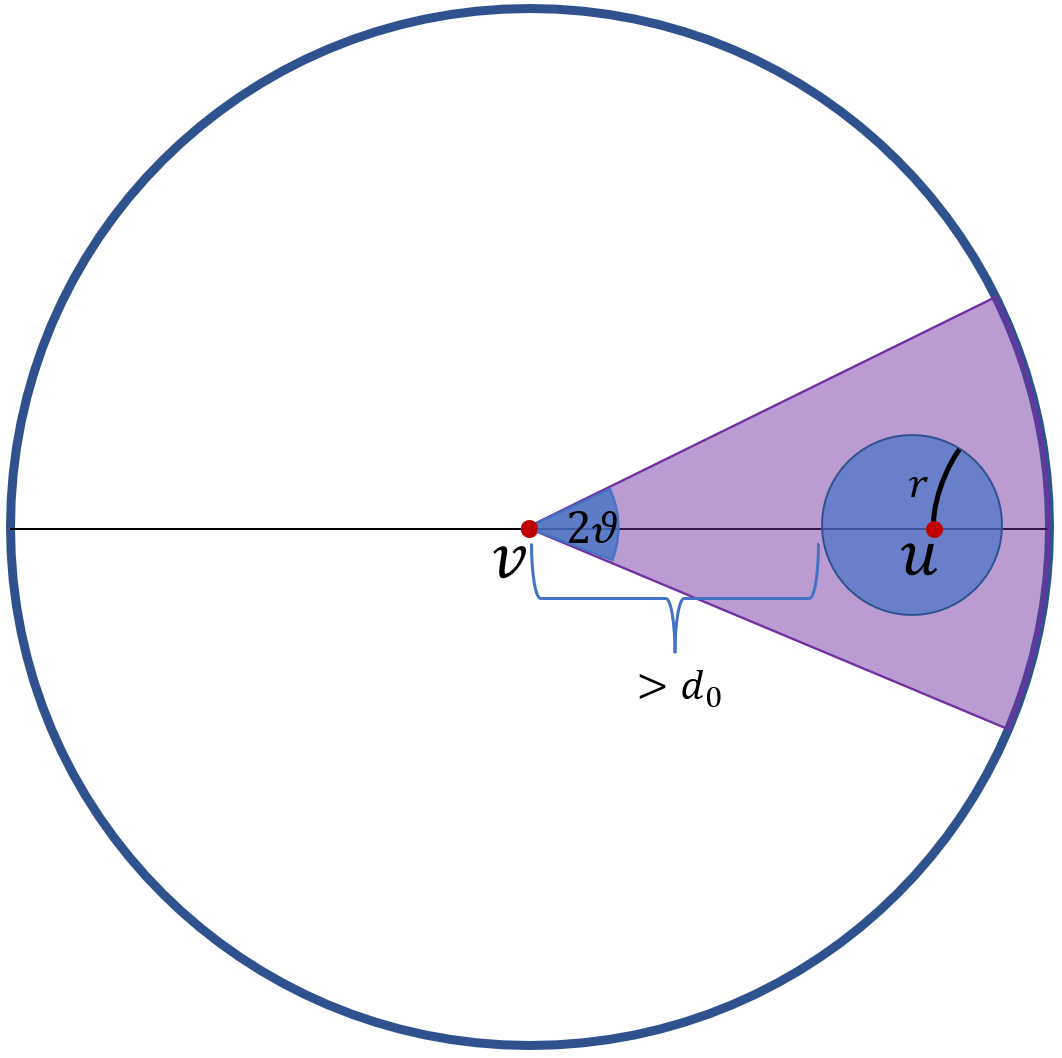}
\caption{An example of $\ballH(u,r)\subseteq \sect{v}{u}{\vartheta}$.\label{fig:sector_cover}}
\end{center}
\end{figure}

\begin{proof}
We let $d_0$ be a large constant, to be determined in the course of the proof.
Applying a suitable isometry if needed, we can assume without loss of generality that $v=o$ is the origin and 
$u$ lies on the positive $x$-axis.
We recall that $B := \ballH(u,r)$ is also a Euclidean disk. Its Euclidean center must lie on the $x$-axis. 
(This is for instance easily seen by noting that the reflection in the $x$-axis 
is a hyperbolic isometry that fixes $u$, and hence leaves $B$ invariant.)

By the triangle inequality and the assumption that $\distH(u,v) > r+d_0$, we have

$$ \distH(v,z) > d_0 \text{ for all $z\in B$. } $$

By~\eqref{eq:distHtanh}, for each $z\in B$ we have 

$$ 1 > \norm{z} > \tanh(d_0/2) \geq 1 - \delta, $$

\noindent
where $\delta$ is a small constant to be determined shortly, and the final inequality 
holds provided we have chosen $d_0$ sufficiently large.
Since $B$ is contained in the annulus  $\Dee \setminus \ballR(o,1-\delta)$, its Euclidean radius is no more than $\delta/2$.

As its Euclidean centre lies on the $x$-axis, it is clear that $\delta=\delta(\vartheta)$ 
can be chosen so that $D \subseteq \sect{v}{u}{\vartheta}$.
\end{proof}

Our next observation is another intermediate step in the proofs of Propositions~\ref{prop:treeistree},~\ref{prop:distHdistT}
and~\ref{prop:treedisksector}. It will be used only in the proof of Lemma~\ref{lem:sector_contain} that immediately succeeds 
it in the text.

\begin{lemma}\label{lem:euclprep1}
Suppose the (Euclidean) circle $C$ intersects the unit circle $\partial\Dee$ at right angles, and 
intersects the $x$-axis at an angle $\vartheta>0$.
If $r$ denotes the (Euclidean) radius of $C$ and $u$ denotes the intersection point of $C$ and the $x$-axis that falls inside $\Dee$, then

$$ r = \frac{1-\norm{u}^2}{2\norm{u}\sin\vartheta}. $$

\end{lemma}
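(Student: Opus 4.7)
The plan is to parametrize the center $c$ of $C$ in two ways and equate. First I recall the standard fact that a circle $C$ with center $c$ and radius $r$ meets the unit circle $\partial\Dee$ orthogonally if and only if $\|c\|^2 = 1 + r^2$ (from Pythagoras applied to the right triangle with vertices $o$, $c$, and an intersection point of $C$ with $\partial\Dee$).

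Next I exploit the angle condition at $u$. Without loss of generality assume $u$ lies on the positive $x$-axis, so $u = (\|u\|, 0)$. Since $C$ meets the $x$-axis at $u$ at angle $\vartheta$, the tangent to $C$ at $u$ makes angle $\vartheta$ with the $x$-axis; the radius $c-u$ is perpendicular to this tangent and has length $r$, so
\[
c = u + r(\sin\vartheta, -\cos\vartheta)
\]
(choosing the sign so that $c$ lies below the $x$-axis; the opposite orientation would force $r$ negative when combined with orthogonality below, so this sign is forced).

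Finally, I substitute into $\|c\|^2 = 1 + r^2$:
\[
(\|u\| + r\sin\vartheta)^2 + r^2\cos^2\vartheta = 1 + r^2,
\]
which, after expanding and using $\sin^2\vartheta + \cos^2\vartheta = 1$, simplifies to $\|u\|^2 + 2\|u\|\, r \sin\vartheta = 1$. Solving for $r$ yields the claimed identity $r = (1-\|u\|^2)/(2\|u\|\sin\vartheta)$.

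There is really no main obstacle here: the whole statement is a one-page computation in Euclidean coordinates using only the orthogonality characterization and the definition of the tangent angle. The only minor care needed is the sign choice when placing $c$ relative to $u$, which is fixed by requiring $r > 0$ together with $\|u\| < 1$.
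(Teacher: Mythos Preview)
Your proof is correct and essentially the same as the paper's: both compute $\norm{c}^2$ in two ways (once as $1+r^2$ from orthogonality with $\partial\Dee$, once from the position of $c$ relative to $u$) and equate. The only cosmetic difference is that the paper phrases the second computation via the Euclidean cosine rule on the triangle $o,u,c$ (observing the angle at $u$ is $\pi/2+\vartheta$), whereas you write $c$ explicitly in coordinates; these give the identical equation $\norm{u}^2 + 2\norm{u}\,r\sin\vartheta = 1$.
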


\begin{proof}
Let $c$ denote the centre of $C$. Since $C$ hits the unit circle at a right angle, we have $\norm{c} = \sqrt{1+r^2}$.
In the Euclidean triangle with corners $o,u,c$, the angle at $u$ equals $\pi/2+\vartheta$. See Figure~\ref{fig:euclprep1}.

\begin{figure}[h!]
\begin{center}
 \input{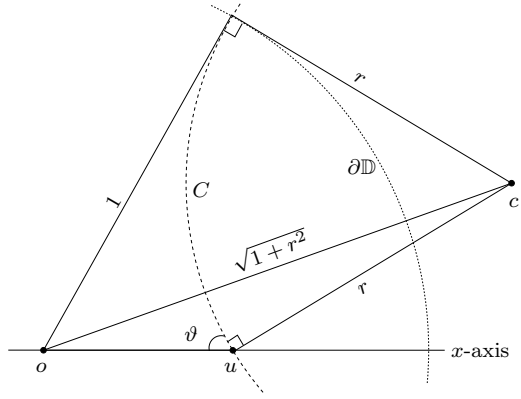}
\end{center}
\caption{Illustration of the proof of Lemma~\ref{lem:euclprep1}.\label{fig:euclprep1}}
\end{figure}

Applying the Euclidean cosine rule to the triangle with corners $o,u,c$ we find

$$ 1+r^2 = \norm{u}^2 + r^2 - 2 \norm{u} r \cos( \pi/2 + \vartheta )
= \norm{u^2} + r^2 + 2 \norm{u} r \sin( \vartheta ). $$

The claimed expression follows by reorganizing this last identity.
\end{proof}

The next lemma makes the following perhaps rather counterintuitive observation.
While $\Dee \setminus \sect{u}{v}{\vartheta}$ 
looks ``large'' from the vantage point of $u$ (i.e.~if we isometrically map $u$ to the origin of the
Poincar\'e disk, the sector looks only like a small ``slice''), provided $u,v$ are sufficiently far apart, 
from the vantage point of $v$ the set $\Dee \setminus \sect{u}{v}{\vartheta}\subseteq \sect{v}{u}{\vartheta}$ 
is actually contained in a small ``slice''. 
 
\begin{lemma}\label{lem:sector_contain}
For all $\vartheta>0$ there exists $d_0 = d_0(\vartheta)>0$ such that, for all $u,v\in\Dee$, 
if $\distH(u,v)\geq d_0$, then $\Dee\backslash\sect{u}{v}{\vartheta}\subseteq \sect{v}{u}{\vartheta}$.

(See Figure \ref{fig:hyper_containment}.)
\end{lemma}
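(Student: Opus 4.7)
The plan is to reduce to the case $v = o$ and $u$ on the positive $x$-axis by applying a suitable hyperbolic isometry (which preserves distances, angles, and therefore sectors). In these coordinates $\sect{v}{u}{\vartheta}$ becomes the Euclidean pie slice at $o$ of opening angle $2\vartheta$ bisected by the positive $x$-axis, so the target inclusion reduces to a purely Euclidean estimate.

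Next I would describe the boundary of $\sect{u}{v}{\vartheta}$ inside $\Dee$: it consists of two hyperbolic rays emanating from $u$, each lying on a hyperbolic line through $u$ that meets the $x$-axis at angle $\vartheta$. By Lemma~\ref{lem:euclprep1}, each of these hyperbolic lines is a Euclidean circle of radius
$$ r \;:=\; \frac{1-\|u\|^2}{2\|u\|\sin\vartheta}, $$
and because each such circle passes through $u$, it is contained in $\ballR(u,2r)$; in particular the two points on $\partial\Dee$ where the rays terminate lie in $\ballR(u,2r)$. The main (and rather counterintuitive, as emphasized in the text preceding the lemma) geometric observation is that $\Dee\setminus\sect{u}{v}{\vartheta}$ is the \emph{small} region cut out by these two arcs together with the short arc of $\partial\Dee$ joining their two endpoints -- the opposite region, which contains the ray from $u$ through $v$ and hence contains $v=o$, is $\sect{u}{v}{\vartheta}$ itself. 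Since both endpoints on $\partial\Dee$ lie within Euclidean distance $2r$ of $u$, and $u$ itself is within Euclidean distance $1-\|u\|=O(r)$ of $(1,0)\in\partial\Dee$, the short arc is likewise contained in a Euclidean $O(r)$-neighborhood of $u$; hence so is the entire complement:
$$ \Dee\setminus\sect{u}{v}{\vartheta} \;\subseteq\; \ballR(u,Cr) $$
for some constant $C=C(\vartheta)$.

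To conclude, by~\eqref{eq:distHtanh} we have $\|u\|=\tanh(\distH(o,u)/2)\to 1$ and hence $r\to 0$ as $\distH(u,v)\to\infty$. For any $z=(z_x,z_y)\in\ballR(u,Cr)\cap\Dee$ we have $|z_y|\leq Cr$ and $z_x\geq \|u\|-Cr$, so $|\arg z|\leq \arctan(Cr/(\|u\|-Cr))\to 0$ as $r\to 0$; choosing $d_0=d_0(\vartheta)$ so that this quantity is at most $\vartheta$ whenever $\distH(u,v)\geq d_0$ yields the desired containment. The step that will require the most care is the geometric identification of the complement as the small region near $u$ rather than the large region containing $o$; once this is pinned down, the remaining estimates are routine Euclidean calculations.
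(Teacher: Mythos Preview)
Your proposal is correct and follows essentially the same approach as the paper's proof: reduce to $v=o$ with $u$ on the positive $x$-axis, use Lemma~\ref{lem:euclprep1} to see that the boundary circles $C_1,C_2$ have Euclidean radius $r\to 0$ as $\distH(u,v)\to\infty$, deduce that the complement $\Dee\setminus\sect{u}{v}{\vartheta}$ is trapped in a small Euclidean ball near $(1,0)$, and conclude that this ball lies in the Euclidean pie slice $\sect{o}{u}{\vartheta}$. The paper encloses the complement in $\ballR((1,0),3\delta)$ while you use $\ballR(u,Cr)$, and you spell out the final angle estimate more explicitly, but these are cosmetic differences.
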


\begin{figure}[htb]
\begin{center}
\includegraphics[width=.4\textwidth]{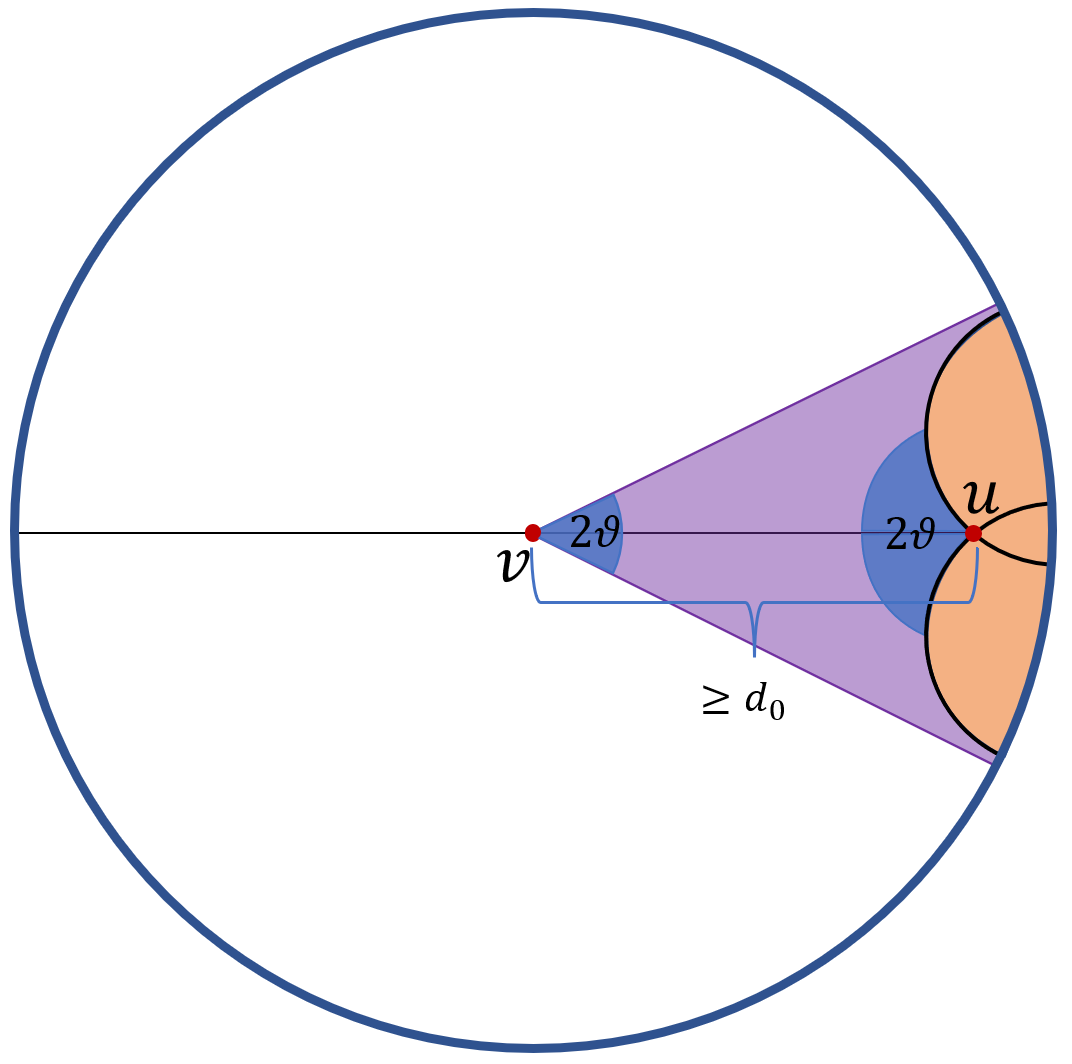}
\caption{The orange region is  $\Dee\backslash\sect{u}{v}{\vartheta}$.  
The union of the purple and the orange region is $\sect{v}{u}{\vartheta}$.\label{fig:hyper_containment}}
\end{center}
\end{figure}

\begin{proof}
Applying a suitable isometry if needed, we can assume without loss of generality that $v=o$ is the origin and $u$ lies on the
positive $x$-axis.

The boundary of $\sect{u}{v}{\vartheta}$ consists of two rays emanating from $u$ that each make an angle $\vartheta$ with the 
hyperbolic line through $u$ and $v$ at $u$ (and a circle segment that is part of the unit circle $\partial\Dee$).
In Euclidean terms, these two rays are circle segments of circles that each meet the $x$-axis in $u$ at 
an angle of $\vartheta$, and intersect the boundary of the unit circle $\partial \Dee$ at right angles.
(See Figure~\ref{fig:hyper_containment}.)
Let us call these circles $C_1, C_2$.
Both circles meet the $x$-axis in $u$ at angle $\vartheta$. 
Hence, by Lemma~\ref{lem:euclprep1}, both have Euclidean radius $(1-\norm{u}^2)/(2\norm{u}\sin\vartheta)$.

We have $\norm{u} \geq \tanh(d_0/2)$ which approaches one as $d_0$ approaches infinity.
In particular, for any constant $\delta>0$, we can choose the constant $d_0$ so that both $C_1,C_2$ have Euclidean radius 
$< \delta$, and both intersect the $x$-axis at a point within distance $\delta$ of $(1,0)$.
In other words, we'll have 

$$ \Dee \setminus \sect{u}{v}{\vartheta} \subseteq \ballR( (1,0), 3\delta ) \cap \Dee. $$ 

Clearly, if we take $\delta=\delta(\vartheta)$ small enough, it now follows  that
$\Dee \setminus \sect{u}{v}{\vartheta} \subseteq \sect{u}{v}{\vartheta}$, as desired.
\end{proof}

As our final preparation for the proofs of Propositions~\ref{prop:treeistree} and~\ref{prop:distHdistT},
we show that if $T$ is a $(\rho,w,\vartheta)$-tree and $uv \in E(T)$ an edge then 
the vertices of $T_{u\setminus v}$ are all contained in a small sector from the vantage point of 
$v$, provided $\rho$ is chosen sufficiently large.

\begin{lemma}\label{lem:VTuvsect}
For every $w,\vartheta_1,\vartheta_2>0$ there exists a $\rho_0 = \rho_0(w,\vartheta_1,\vartheta_2)$ such that,
for every $\rho\geq \rho_0$, every $(\rho,w,\vartheta_1)$-tree $T$ and every edge $uv \in E(T)$:

$$ V(T_{u\setminus v}) \subseteq \sect{v}{u}{\vartheta_2}. $$

\end{lemma}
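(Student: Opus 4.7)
The plan is to prove by strong induction on depth $k \geq 0$ in the rooted tree $T_{u\setminus v}$ (with root $u$) that, for $\rho$ sufficiently large in terms of $w, \vartheta_1, \vartheta_2$, every vertex $x$ at depth exactly $k$ satisfies both (i) a distance bound $\distH(v,x) \geq L_k := \rho - w + k(\rho - w - K)$, where $K := K(\vartheta_1/2)$ is the constant from Lemma~\ref{lem:cosines}, and (ii) an angular bound $\angle uvx \leq \sum_{j=1}^{k} \delta_j < \vartheta_2$ for a certain sequence $\delta_j > 0$. Condition (ii) is exactly the statement $x \in \sect{v}{u}{\vartheta_2}$. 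The base case $k = 0$ is immediate: $x = u$, $\distH(v, u) \geq \rho - w$ by the edge-length property, and $\angle uvu = 0$.

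For the inductive step, fix $y$ at depth $k$ with parent $z$ (set $z := v$ if $k = 0$) and a child $x$ of $y$ at depth $k+1$. The key observation is that, for $k \geq 1$, the triangle $vyz$ has two long sides of length at least $L_{k-1}$ and $L_k$ (by IH applied to $z$ and $y$) and a short side $\distH(y, z) \leq \rho + w$: a direct computation with the hyperbolic law of cosines yields $\angle vyz \leq C_1\sqrt{\rho+w}\cdot e^{-(\rho - w)}$, which is at most $\vartheta_1/2$ once $\rho_0$ is large enough. Combined with the tree-angle condition $\angle zyx \geq \vartheta_1$ (or, when $k = 0$, with $\angle vux \geq \vartheta_1$ directly from the tree property), this gives $\angle vyx \geq \vartheta_1/2$. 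Applying Lemma~\ref{lem:cosines} to the triangle $vyx$ with this angle at $y$ then produces $\distH(v, x) \geq \distH(v, y) + \distH(y, x) - K \geq L_{k+1}$, establishing (i) at depth $k+1$.

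For the angular bound (ii), I apply the hyperbolic law of cosines to the angle $\angle yvx$ at $v$ in the same triangle $vyx$: the identity $1 - \cos(\angle yvx) = (\cosh\distH(y,x) - \cosh(\distH(v,y) - \distH(v,x)))/(\sinh\distH(v,y)\sinh\distH(v,x))$, together with $\distH(y,x)\leq \rho+w$ and the distance bounds now available from (i), gives $\angle yvx \leq \delta_{k+1} := C_2\sqrt{\rho+w}\cdot e^{(\rho + w - L_k - L_{k+1})/2}$. The $\{\delta_j\}$ form a geometric sequence with common ratio $e^{-(\rho - w - K)}$, so $\sum_{j\geq 1}\delta_j \to 0$ as $\rho \to \infty$, and in particular the sum is less than $\vartheta_2$ for $\rho_0$ large enough; the triangle inequality for angles at $v$ then gives $\angle uvx \leq \angle uvy + \angle yvx \leq \sum_{j=1}^{k+1}\delta_j$, completing the inductive step. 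The main obstacle is producing the exponentially sharp bound on $\angle vyz$: it reflects the ``near-straightness'' of long hyperbolic triangles with one short side, and must be strong enough---exponential rather than polynomial in $\rho$---that its contribution is swamped by the constant $\vartheta_1$, so that the induction propagates indefinitely without degrading.
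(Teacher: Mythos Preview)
Your approach can be made to work, but as written there is a gap in the inductive step. The claim that ``a direct computation with the hyperbolic law of cosines yields $\angle vyz \leq C_1\sqrt{\rho+w}\cdot e^{-(\rho - w)}$'' does not follow from the hypotheses $\distH(v,y)\geq L_k$, $\distH(v,z)\geq L_{k-1}$, $\distH(y,z)\leq \rho+w$ alone. Writing $a=\distH(y,z)$, $b=\distH(v,z)$, $c=\distH(v,y)$, the law of cosines gives
\[
1-\cos(\angle vyz)=\frac{\cosh b-\cosh(c-a)}{\sinh c\,\sinh a},
\]
and to make this small you need an \emph{upper} bound on $b$ close to $c-a$. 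Your stated hypothesis supplies only a lower bound on $b$; nothing in it rules out $b$ being close to $c+a$, in which case $\angle vyz$ is close to $\pi$. (The naive upper bound $b\leq k(\rho+w)$ from summing edge lengths still leaves $b-(c-a)$ of order $k(2w+K)$, so the resulting angle bound blows up as $k\to\infty$.) The fix is to carry in the induction the stronger inequality you actually prove along the way, namely $\distH(v,y)\geq\distH(v,z)+\distH(z,y)-K$ for each vertex $y$ with parent $z$. This yields $b\leq c-(\rho-w)+K$, hence $b-(c-a)\leq 2w+K$ uniformly in $k$, and then the computation does give $1-\cos(\angle vyz)=O(e^{-2(\rho-w)})$ as you want.

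The paper's proof is quite different and sidesteps all of this bookkeeping. It inducts on $|E(T)|$ and uses Lemma~\ref{lem:sector_contain}: for $\distH(u,v)$ large, $\Dee\setminus\sect{u}{v}{\vartheta_2}\subseteq\sect{v}{u}{\vartheta_2}$. After reducing without loss of generality to $\vartheta_2<\vartheta_1/2$, the inductive hypothesis applied at $u$ places each subtree hanging off a neighbour $x_i\neq v$ inside $\sect{u}{x_i}{\vartheta_2}$; this sector is disjoint from $\sect{u}{v}{\vartheta_2}$ since $\angle x_iuv\geq\vartheta_1$, so $V(T_{u\setminus v})\subseteq\Dee\setminus\sect{u}{v}{\vartheta_2}\subseteq\sect{v}{u}{\vartheta_2}$. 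This single ``change of viewpoint'' from $u$ to $v$ replaces your summed-angle estimate entirely.
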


\begin{proof}
We can and do assume, without loss of generality, that $\vartheta_1 < \frac{1}{1000}$ and $\vartheta_2 < \vartheta_1/2$;
and we set $\rho_0 := w + d_0$ with $d_0 = d_0(\vartheta_2)$ as provided by Lemma~\ref{lem:sector_contain}.

We will use induction on the number of edges $m := |E(T)|$ of $T$.
The statement is clearly true when $m \leq 1$.
Let us thus assume that, for some $m \in \eN$, the statement holds for all $(\rho,w,\vartheta_1)$-trees with 
$< m$ edges, let $T$ be an arbitrary $(\rho,w,\vartheta_1)$-tree with $m$ edges, and pick an arbitrary edge $uv \in E(T)$. 
If $u$ is a leaf then $T_{u\setminus v} = \{u\}$ and the statement is trivial. 
So we can and do assume $u$ has at least two neighbours.
Let us denote the neighbours of $u$ as $N(u) = \{v, x_1,\dots,x_k\}$ and let
$T_{i}$ denote the connected component of $T \setminus ux_i$ containing $x_i$.
By the induction hypothesis $V(T_{i}) \subseteq \sect{u}{x_i}{\vartheta_2}$ for $i=1,\dots,k$.
Since the line segments $uv$ and $ux_i$ make an angle $\geq \vartheta_1$, it follows that 
$\sect{u}{v}{\vartheta_2} \cap \sect{u}{x_i}{\vartheta_2} = \emptyset$.
In other words 

$$ V(T_{u\setminus v}) \subseteq \{u\} \cup \bigcup_{i=1}^k V(T_{i}) \subseteq \Dee \setminus \sect{u}{v}{\vartheta_2}
\subseteq \sect{v}{u}{\vartheta_2}, $$

\noindent
where the last inclusion follows by Lemma~\ref{lem:sector_contain} and our choice of $\rho_0$.
(Using that $\distH(v,u) \geq \rho_0-w \geq d_0$ by choice of $\rho_0$.)
\end{proof}

\begin{proofof}{Proposition~\ref{prop:treeistree}}
We let $\rho_0$ be a sufficiently large constant, to be determined during the course of the proof.
Let $T$ be an arbitrary $(\rho,w,\vartheta)$-tree for some $\rho \geq \rho_0$, and suppose it 
contains a cycle $C$.  Observe that as $C$ is a connected subgraph of $T$, it is also a $(r,w,\vartheta)$-tree.
Let $v_1,v_2,v_3$ be three vertices that are consecutive on $C$. 
Clearly $C_{v_1\setminus v_2} = C \setminus v_2$ is just $C$ with $v_2$ and both incident edges removed.
We let $\vartheta'>0$ be a sufficiently small constant, to be determined more precisely shortly.
By the previous lemma,

\begin{equation}\label{eq:VTv21} 
V( C ) \setminus \{v_2\} \subseteq \sect{v_2}{v_1}{\vartheta'}, 
\end{equation}

\noindent
assuming we chose $\rho_0$ sufficiently large. 
By symmetry (considering $C_{v_3\setminus v_2}$) we also have 

\begin{equation}\label{eq:VTv22}
V(C) \setminus \{v_2\} \subseteq \sect{v_2}{v_3}{\vartheta'}.  
\end{equation}

Since $\angle v_1v_2v_3 > \vartheta$, provided we chose $\vartheta'$ sufficiently small, we have that 

$$ \sect{v_2}{v_1}{\vartheta'} \cap \sect{v_2}{v_3}{\vartheta'} = \emptyset. $$

Combining this with~\eqref{eq:VTv21} and~\eqref{eq:VTv22}, would imply that $V(C) = \{v_2\}$, contradicting our 
assumption that $C$ is a cycle.
\end{proofof}

\begin{proofof}{Proposition~\ref{prop:distHdistT}}
We let $\rho_0, K$ be sufficiently large constants, to be specified more precisely during the course of the proof.
We will use induction on $n := \dist_T(u,v)$.

The base case, when $n  = 1$ is trivial by definition of $(\rho,w,\vartheta)$-tree -- provided 
we chose $K \geq w$.

Let us then assume the statement is true for $n-1$ and let $u=v_0, \dots, v_n=v$ be a $(u,v)$-path in $T$.
By Proposition~\ref{prop:treeistree}, having chosen $\rho_0$ sufficiently large, we know there is 
precisely one path between any pair of vertices.
By the induction hypothesis

$$ \distH(v_1,v_n) \geq (n-1)\cdot(\rho-K), $$

\noindent
and by Lemma~\ref{lem:VTuvsect}, assuming we chose $\rho_0$ sufficiently large, we have that 

$$ v \in V(T_{v_2\setminus v_1}) \subseteq \sect{v_1}{v_2}{\vartheta/2}. $$

By definition of $(\rho,w,\vartheta)$-tree we have $\angle uv_1v_2 > \vartheta$.
It follows that $\angle uv_1v > \vartheta/2$.
Applying Lemma~\ref{lem:cosines}, we find

$$ \distH(u,v) \geq \distH(u,v_1) + \distH(v_1,v) - K', $$

\noindent
for some constant $K' = K'(\vartheta/2)$ provided by Lemma~\ref{lem:cosines}.
Hence,

$$ \distH(u,v) \geq (\rho-w) + (n-1)\cdot(\rho-K) - K' \geq n \cdot (\rho-C), $$

assuming (without loss of generality) we have chosen $K > w+K'$.
\end{proofof}

The final ingredient we need for the proof of Proposition~\ref{prop:treedisksector} is the following observation.
It states that if the distance between $u$ and $v$ is between $r-w$ and $r+w$, where $w$ is constant and $r$ arbitrary, then a ball of 
radius $r+w$ around $u$ is contained in the union of a ball of constant radius $h$ around $v$ and a sector of 
constant opening angle $\vartheta$

\begin{lemma}\label{lem:bounding}
For every $w,\vartheta>0$ there exists $h = h(w,\vartheta)$ such that
for all $r>0$ and all $u,v \in \Dee$ with $r-w < \distH(u,v) < r+w$ we have

$$ \ballH(u,r+w ) \subseteq \ballH(v,h) \cup \sect{v}{u}{\vartheta}. $$

\end{lemma}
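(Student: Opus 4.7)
The plan is to take an arbitrary point $x \in \ballH(u, r+w)$ and argue that if $x$ does not lie in $\ballH(v,h)$ for a suitable constant $h = h(w,\vartheta)$ to be chosen, then the angle $\gamma := \angle uvx$ must satisfy $\gamma < \vartheta$, which places $x$ in $\sect{v}{u}{\vartheta}$. The main tool is the hyperbolic law of cosines (Lemma~\ref{lem:law_cosines}) applied to the (possibly degenerate) triangle with vertices $v, u, x$.

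Setting $a := \distH(v,u)$, $b := \distH(v,x)$ and $c := \distH(u,x)$, Lemma~\ref{lem:law_cosines} rearranges to
$$
\cos\gamma \;=\; \coth(a)\coth(b) \;-\; \frac{\cosh c}{\sinh a \,\sinh b}.
$$
Since $\coth(a)\coth(b) > 1$ always, it suffices to show the subtracted fraction can be made arbitrarily small by taking $h$ large. Under the hypotheses we have $a \geq r-w$, $b > h$ and $c \leq r+w$, so in particular $c-a \leq 2w$ \emph{uniformly in $r$}. Provided $a \geq 1$, using the elementary estimates $\cosh(t) \leq e^t$ and $\sinh(t) \geq e^t/3$ (valid for $t \geq 1$), one obtains a bound of the form
$$
\cos\gamma \;\geq\; 1 - 9 \cdot e^{2w - h}.
$$
Choosing $h$ large enough (depending only on $w$ and $\vartheta$) so that the right-hand side exceeds $\cos\vartheta$ forces $\gamma < \vartheta$, and hence $x \in \sect{v}{u}{\vartheta}$.

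It remains to handle the small-$r$ regime $r < 1+w$, where the requirement $a \geq 1$ may fail. But in that regime every $x \in \ballH(u, r+w)$ satisfies, by the triangle inequality,
$$
\distH(v,x) \;\leq\; \distH(v,u) + \distH(u,x) \;\leq\; 2(r+w) \;\leq\; 2(1+2w),
$$
so enlarging $h$ to also exceed $2(1+2w)$ trivially gives $\ballH(u,r+w) \subseteq \ballH(v,h)$ in this case. I do not anticipate any real obstacle: the one conceptual point to emphasize is that $c - a \leq 2w$ is bounded independently of $r$, which is precisely what allows the constant $h$ to be chosen uniformly.
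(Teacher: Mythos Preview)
Your proof is correct. The key observation that $c - a \leq 2w$ uniformly in $r$ is exactly the heart of the matter, and your application of the hyperbolic law of cosines to bound $\cos\gamma$ from below is clean and valid. The splitting into the regimes $r \geq 1+w$ (where the law-of-cosines estimate works) and $r < 1+w$ (where the whole ball has bounded diameter) is also handled properly.

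Your approach is genuinely different from the paper's. The paper argues geometrically in the Poincar\'e disk model: after normalizing so that $v=o$ and $u$ lies on the positive $x$-axis, it observes that the leftmost point of the Euclidean disk $\ballH(u,r+w)$ lies within Euclidean distance $\tanh(w)$ of the origin, and hence $\ballH(u,r+w)$ is contained (for every $r$) in a single fixed horocycle $D$ through $(-\tanh(w),0)$ and $(1,0)$. Then $h$ is taken to be the supremum of $\distH(o,z)$ over $z \in D\setminus\sect{o}{u}{\vartheta}$, which is finite because this set stays away from $\partial\Dee$. Your argument bypasses the horocycle construction entirely and works directly with the triangle $vux$ via Lemma~\ref{lem:law_cosines}. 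This is more elementary and arguably more transparent, and it yields an explicit choice of $h$ (roughly $2w + \ln(9/(1-\cos\vartheta))$, enlarged to at least $2(1+2w)$). The paper's route, on the other hand, is more pictorial and fits the visual style used elsewhere (e.g.\ in Lemmas~\ref{lem:sec_subset} and~\ref{lem:sector_contain}), but does not give an explicit $h$.
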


\begin{proof}
Applying a suitable isometry if needed, we can assume without loss of generality $v=o$ is the origin and that 
$u$ lies on the positive $x$-axis.

\begin{figure}[htb!]
\begin{center}
\includegraphics[width=1.01\textwidth]{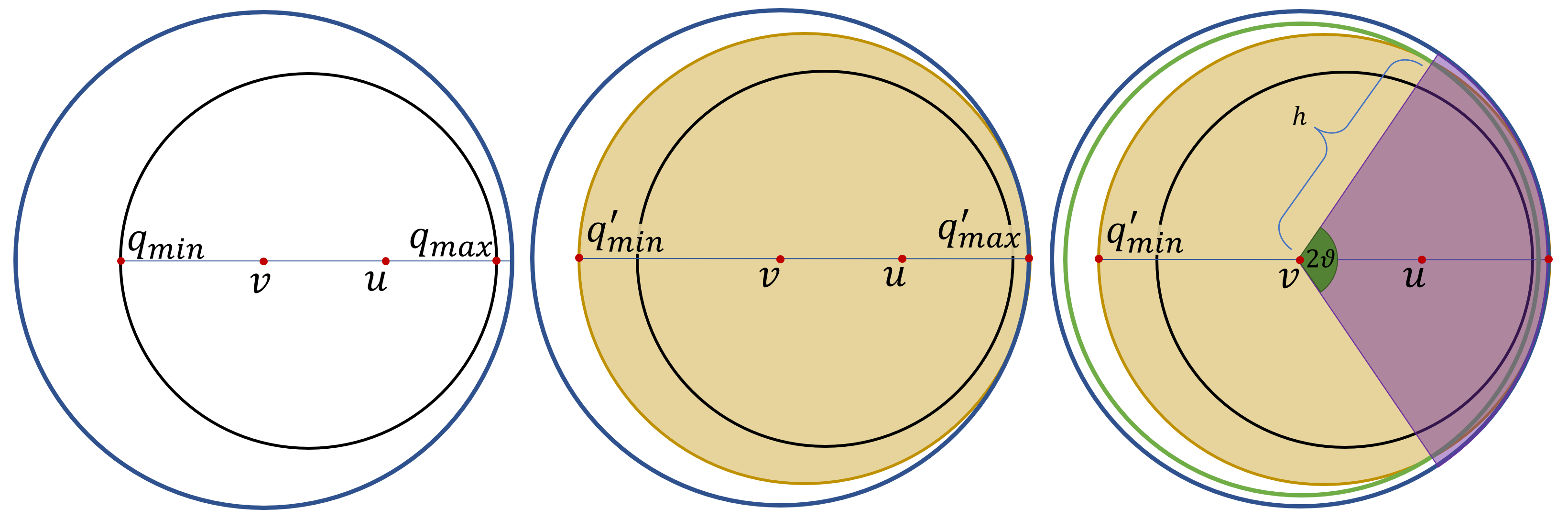}
\caption{On the right, the disk with the black boundary is $\ballH(u,r+w)$. In the centre, the gold disk is $D$.  
On the left, the purple sector is $\sect{v}{u}{\vartheta}$ and the disk with the green boundary is $\ballH(v,h)$.  
\label{fig:disk_and_sector}}
\end{center}
\end{figure}

We recall that the hyperbolic disk $\ballH(u,r+w)$ is also a Euclidean disk $\ballR(z,t)$.
The Euclidean center $z$ must lie on the $x$-axis (an easy way to see this is that reflection in the $x$-axis
is a $\Haa^2$-isometry that leaves $\ballH(u,r+w)$ invariant).
Let $q_{\min} = (x_{\min},0)$ and $q_{\max} = (x_{\max},0)$ be the points where the circle $\partial \ballH(u,r+w)$ 
intersects the $x$-axis. See Figure~\ref{fig:disk_and_sector}, left.
(So the Euclidean center of $\ballH(u,r+w)$ is the midpoint $(q_{\min}+q_{\max})/2$ between these two points.)
Since $v=(0,0) \in \ballH(u,r+w)$ we have $x_{\min} \leq 0 \leq x_{\max}$.
As the points $q_{\min}, v=o, u$ lie on the $x$-axis, which is a hyperbolic line, we have

$$ \distH(q_{\min},u) = \distH(q_{\min},v) + \distH(u,v), $$

\noindent
giving 

$$ \distH(q_{\min}, o ) = \distH(q_{\min},u) - \distH(u,v) \leq r+w - (r-w) = 2w. $$

By~\eqref{eq:distHtanh} it follows that $\norm{q_{\min}} \leq \tanh(w) < 1$. 
In other words, $-\tanh(w) \leq x_{\min} \leq 0$.

We set  

$$ q_{\min}' := (-\tanh(w),0), \quad q_{\max}' := (1,0), $$ 

\noindent
and let 

$$D := \ballR\left( \left(\frac{1-\tanh(w)}{2},0\right), \frac{1+\tanh(w)}{2} \right), $$

\noindent 
be the Euclidean disk with center $(q_{\min}'+q_{\max}')/2 = \left((1-\tanh(w))/2,0\right)$ and radius $(1+\tanh(w))/2$.
Put differently, $D$ is the disk with $q_{\min}', q_{\max}'$ on its boundary, that meets the $x$-axis at a 
right angle at both points. 
This is not a hyperbolic disk (it is what is called a horocycle), but we do have that $\ballH(u,r+w) \subseteq D \subseteq \Dee$.
See Figure \ref{fig:disk_and_sector}, middle. 

Let $R := D \setminus \sect{v}{u}{\vartheta}$ be the part of $D$ that is not contained in the sector $\sect{v}{u}{\vartheta}$.
Then 

$$ h := \sup\{ \distH(z,o) : z \in R \} < \infty, $$

\noindent
since all points of $R$ are at least some positive Euclidean distance away 
from the boundary of the unit disk. See Figure \ref{fig:disk_and_sector}, right.
Evidently we have

$$ \ballR(u,r+w) \subseteq D \subseteq R \cup \sect{v}{u}{\vartheta} \subseteq \ballH(v,h) \cup \sect{v}{u}{\vartheta}, $$

\noindent
as desired. (Note that $h$ depends only on $w$ and $\vartheta$.)
\end{proof}

\begin{proofof}{Proposition~\ref{prop:treedisksector}}
We let $\rho_0$ and $h$ be large constants, to be determined in the course of the proof.
Let $\rho\geq \rho_0$, $T$ be an arbitrary $(\rho,w,\vartheta_1)$-tree, $uv \in E(T)$ an arbitrary edge
and $x \in T_{u\setminus v}$ an arbitrary vertex.

We first assume that $x\neq u$. Then $\dist_T(u,x) \geq 2$ by Proposition~\ref{prop:treeistree}, 
assuming without loss of generality we have chosen $rho_0$ sufficiently large.
And, assuming $\rho_0$ was chosen appropriately, by 
Proposition~\ref{prop:distHdistT} we have 

$$ \distH(v,x) \geq 2(\rho-K), $$ 

\noindent
where $K$ is as provided by the proposition.

By Lemma~\ref{lem:VTuvsect}, provided we chose $\rho_0$ appropriately, we have 

$$ x \in \sect{v}{u}{\vartheta_2/2}. $$

Applying Lemma~\ref{lem:sec_subset}, provided we chose $\rho_0$ sufficiently large, we have

$$ \ballH(x,r+w) \subseteq \sect{v}{x}{\vartheta_2/2} \subseteq \sect{v}{u}{\vartheta_2}. $$

Let us thus consider the situation when $x=u$.
In this case we can apply Lemma~\ref{lem:bounding} to show that, provided we chose $\rho_0$ and $h$ appropriately large,

$$ \ballH(u,r+w) \subseteq \ballH(v,h) \cup \sect{v}{u}{\vartheta_2}. $$

This concludes the proof.
\end{proofof}

\subsection{The upper bound\label{sec:ub}}

Here we will show the following proposition, which constitutes half of our main result. 

\begin{proposition}\label{prop:ub} 
For every $\eps>0$, there exists a $\lambda_0=\lambda_0(\eps)>0$ such that 
for all $0<\lambda<\lambda_0$, we have $p_c(\lambda) \leq (1+\eps)\cdot (\pi/3)\cdot \lambda$.
\end{proposition}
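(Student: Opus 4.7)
I'd follow the strategy outlined in the paper's introduction: build a subtree $T$ of the black cluster of the origin by a BFS exploration, arranged so that the offspring process stochastically dominates a supercritical Galton-Watson tree. Fix $\eps>0$, set $r:=2\ln(1/\lambda)$ and $p:=(1+\eps)(\pi/3)\lambda$, and adjoin $o$ to $\Zcal$ (colouring it black; this does not affect percolation almost surely). Choose small $\vartheta_1>\vartheta_2>0$, large $w>0$, the constant $h=h(w,\vartheta_1,\vartheta_2)$ from Proposition~\ref{prop:treedisksector}, and a large isolation radius $h'$. The tree $T$ will be an $(r,w,\vartheta_1)$-tree in the sense of Section~\ref{sec:trees}, hence acyclic for $\lambda$ small enough by Proposition~\ref{prop:treeistree}. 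At each BFS step, with current vertex $v$ and parent $u$ (suitably modified at the root), I would examine the Poisson configuration inside the \emph{admissible region}
\[ A_v := \bigl\{z\in\Dee : r-w<\distH(v,z)<r+w\bigr\}\setminus\bigl(\ballH(v,h)\cup\sect{v}{u}{\vartheta_2}\bigr), \]
and select those Poisson points $z\in A_v$ such that (a) $\ballH(z,h')$ is empty of other Poisson points; (b) an appropriate local empty-disc certificate shows $vz$ is a Delaunay edge; and (c) $z$ is black. From these candidates, greedily pick a subset $\{z_1,\dots,z_k\}$ pairwise at $v$-angle at least $\vartheta_1$ and each at $v$-angle at least $\vartheta_1$ to the direction $vu$, and declare them children of $v$ in $T$.

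The crucial geometric point is that Proposition~\ref{prop:treedisksector} applied with $\rho=r$ to the $(r,w,\vartheta_1)$-tree $T$ and the edge $uv$ gives $\bigcup_{x\in V(T_{u\setminus v})}\ballH(x,r+w)\subseteq\ballH(v,h)\cup\sect{v}{u}{\vartheta_2}$; thus $A_v$ (together with any bounded auxiliary region needed to verify (b)) is disjoint from any Poisson information revealed while processing previously-handled vertices, and the offspring counts $(Y_v)_{v\in T}$ are i.i.d. To compute $\Ee Y_v$, I would apply Corollary~\ref{cor:SlivMeck2col} and the polar change of variables~\eqref{eq:hyppolcoord} centred at $v$. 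The central sub-task is to quantify the introduction's claim that most Delaunay-neighbours of $v$ lie at distance $\approx r$: one shows that the expected number of Delaunay-neighbours of $v$ at distance in $(r-w,r+w)$ satisfying (a) and (b) is $(1-o_\lambda(1))\cdot 3/(\pi\lambda)$ as $\lambda\to 0$ (and then $w\to\infty$), matching Isokawa's asymptotic (Theorem~\ref{thm:Isokawa}). Excluding the small sector $\sect{v}{u}{\vartheta_2}$ drops at most an $\vartheta_2/\pi+o_\lambda(1)$ fraction by rotational symmetry of the Delaunay structure around $v$, and the angular pruning at $v$ a further $O(\vartheta_1)$ fraction, so choosing $\vartheta_1,\vartheta_2$ small yields a mean unmarked candidate count of at least $(1-\eps/3)\cdot 3/(\pi\lambda)$ and hence $\Ee Y_v\ge(1+\eps)(1-\eps/3)\ge 1+\eps/2$.

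Since the $(Y_v)$ are i.i.d.\ with mean exceeding $1$, $T$ dominates a supercritical Galton-Watson tree, so is infinite with positive probability, yielding $p_c(\lambda)\le p$ for $\lambda$ small enough. The most delicate step is the choice of certificate in (b). The naive choice ``disc of diameter $vz$ is empty'' turns $vz$ into a Gabriel edge and captures asymptotically only $2/(\pi\lambda)$ of the $3/(\pi\lambda)$ Delaunay-neighbours (a direct Slivniak-Mecke computation), giving $\Ee Y_v\to 2(1+\eps)/3<1$ for small $\eps$ and a sub-critical process; a more liberal certificate is therefore needed, one that captures $(1-o_\lambda(1))$ of the Delaunay-neighbours while remaining verifiable using only the Poisson configuration in a bounded enlargement of $A_v$. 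The hyperbolic concentration-of-measure at scale $r$ that was highlighted in the introduction is the essential input: for $\distH(v,z)\approx r$, any witnessing empty-disc for Delaunay-adjacency has centre close (in a sense that becomes increasingly stringent as $\lambda\to 0$) to the midpoint of $vz$, so that a sufficiently large but constant-sized neighbourhood of the geodesic $vz$ already determines Delaunay-adjacency up to events of probability $o_\lambda(1)$, legitimising the exact Galton-Watson coupling mentioned in the introductory sketch.
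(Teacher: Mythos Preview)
Your overall architecture matches the paper's exactly: an $(r,w,\vartheta)$-tree exploration, using Proposition~\ref{prop:treedisksector} to make the revealed regions at successive vertices disjoint and reduce to a Galton-Watson comparison. You also correctly pinpoint the crux as the choice of certificate in~(b), and your computation that the Gabriel certificate captures only $\sim 2/(\pi\lambda)$ of the neighbours (hence black mean offspring $\to 2(1+\eps)/3<1$) is right.

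However, you do not actually supply the certificate, and your closing heuristic is geometrically off. It is true that a typical witnessing disk has its centre within $O(1)$ of the midpoint of $vz$, but that disk has radius $\approx r/2$ and is \emph{not} contained in any constant-width tube about the geodesic (its farthest point sits at perpendicular distance $r/2$ from the geodesic). The paper's certificate is simpler: declare $vz$ certified if some empty disk $B$ with $v,z\in\partial B$ has $\diamH(B)<r+w$. Any such $B$ lies in $\DD(v,z,r+w)\subseteq\ballH(v,r+w)\cap\ballH(z,r+w)$, which is exactly the scale at which Proposition~\ref{prop:treedisksector} operates (balls of radius $r+w$ around tree vertices). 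The paper then shows (Lemma~\ref{lem:XVI}, via the half-regions $\DD^\pm$) that the expected number of black Delaunay neighbours in the annulus that \emph{require} a witness of diameter $\ge r+w$ is at most $1000\,e^{w}e^{-e^{w/2}}$; together with the analogous bound for neighbours outside the annulus (Lemma~\ref{lem:XV}) and Isokawa's formula, this gives $\Ee X>1$ once $w$ is a sufficiently large constant (Proposition~\ref{prop:EXgr1}).

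A minor correction: the offspring variables are independent but not identically distributed. At the root $o$ no isolation ball has been pre-imposed, whereas at every later vertex $u_i$ one has already conditioned on $\Zcal\cap\ballH(u_i,h)=\{u_i\}$. The paper observes that extra points in $\ballH(o,h)$ can only destroy, not create, certified edges out of $o$, so the root's offspring is stochastically \emph{dominated} by the generic one; Strassen's theorem then furnishes the required coupling to a single Galton-Watson law.
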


In order to prove this, we add the origin $o$ to the Poisson point process $\Zcal$ and colour it black, and consider
the black component of $o$ in the Delaunay graph for $\Zcal \cup \{o\}$.
It suffices to show that, when $p\geq (1+\eps)\cdot (\pi/3)\cdot \lambda$ and $\lambda$ is sufficiently small, 
with positive probability the origin will be in an infinite black component.
This is because all edges not involving the origin are also in the Delaunay graph for $\Zcal$,
and the origin a.s.~has only finitely many neighbours (by Isokawa's formula), so if the origin is in an infinite component 
then removing the origin may split its component into several components but at least one of these will be infinite. 

For $u,v \in \Dee$ and $r,w,\vartheta > 0$ we define the region

$$ C(u,v,r,w,\vartheta) := 
\{ x \in \Dee : r-w < \distH(u,x) < r+w \text{ and } \angle vux > \vartheta \}. $$

\vspace{15ex} 

\begin{figure}[hbt]
\begin{center}
\begin{picture}(100,100)
\put(-30,0){\includegraphics[width=.4\textwidth]{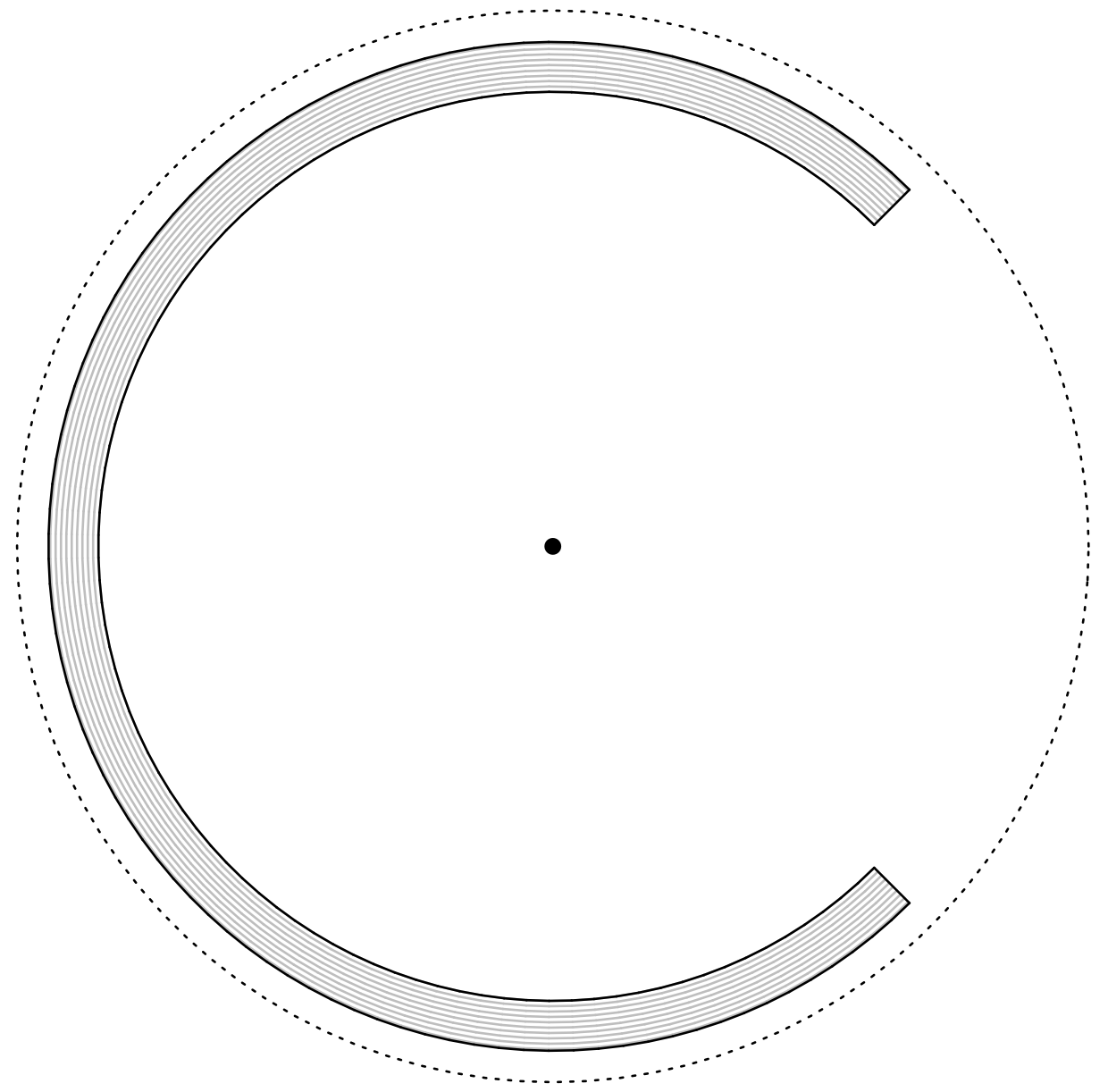}}
\put(50,80){$o$}
\end{picture}
\end{center}
\caption{The C-shaped region $C(o,(1/2,0),3,1/2,\pi/4)$.\label{fig:C}}
\end{figure}

\noindent
We choose to use the notation $C(.,.,.,.)$ because the region is shaped like the letter C, at least for 
some choices of the parameters. See Figure~\ref{fig:C}.

Given $u \neq v \in \Dee$ and $r,w,\vartheta, h>0$ we define $\Xcal = \Xcal(u,v,r,w,\vartheta,h)$
and $X = X(u,v,r,w,\vartheta,h)$ by:

$$ \Xcal := \left\{ z \in \Zcal_b : 
\begin{array}{l} 
z \in C(u,v,r,w,\vartheta) \text{ and;} \\
\text{$\angle z'uz > \vartheta$ for all $z'\neq z \in \Zcal_b \cap C(u,v,r,w,\vartheta)$, and; } \\
\Zcal \cap \ballH(z,h) = \{z\}, \text{ and; } \\
\text{$\exists$ a disk $B$ such that $u,z \in\partial B, B \cap \Zcal = \emptyset, \diam_{\Haa^2}(B) < r+w$.} 
\end{array}\right\}, $$

\begin{equation}\label{eq:Xdef} X := |\Xcal|. \end{equation}

\noindent
We point out that the probability distribution of $X$ 
does not depend on the choice of $u$ and $v$ (as long as they are distinct -- otherwise the definition does not make sense).

\begin{proposition}\label{prop:GWcouple}
For every $w, \vartheta > 0$ there exist $h=h(w,\vartheta)$ and $r_0 = r_0(w,\vartheta)$ such that, 
for all $\lambda>0, 0<p<1$ and $r > r_0$, the size of the black cluster of $o$ stochastically dominates 
the size of a Galton-Watson branching process with offspring distribution $X$.
\end{proposition}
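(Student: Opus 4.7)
The plan is to build, by a breadth-first exploration, a random subtree $T$ of the black Delaunay component of $o$ whose vertex set is distributed as the genealogy of a Galton--Watson process with offspring law $X$. Since $|T|$ is then bounded by the size of the black cluster of $o$, stochastic domination follows. First I fix $\vartheta_2 := \vartheta/3$ and let $h = h(w,\vartheta,\vartheta_2)$ and $\rho_0 = \rho_0(w,\vartheta,\vartheta_2)$ be supplied by Proposition~\ref{prop:treedisksector}, enlarging $\rho_0$ if necessary so that Propositions~\ref{prop:treeistree} and~\ref{prop:distHdistT} both apply for $\rho \ge \rho_0$; set $r_0 := \rho_0$. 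Add $o$ to $\Zcal$, colour it black, and attach a fictitious parent direction $v_\star \in \Dee$. The exploration processes vertices breadth-first: when $z$ with parent $u$ (where $u=v_\star$ if $z=o$) is popped, declare the children of $z$ in $T$ to be the points of $\Xcal(z,u,r,w,\vartheta,h)$.

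Three easy observations. Every edge of $T$ joins two black Poisson points and is a Delaunay edge of $\Zcal\cup\{o\}$, because the empty-disk clause in $\Xcal$ is exactly the Delaunay adjacency criterion recalled in Section~\ref{sec:VorDel}; $T$ is a $(r,w,\vartheta)$-tree, since edge lengths lie in $(r-w,r+w)$ by the definition of $C$ and the angular clauses of $\Xcal$ give angles $\ge \vartheta$ at every shared vertex; therefore $T$ is acyclic by Proposition~\ref{prop:treeistree}, so the exploration never revisits a vertex and $T$ is genuinely a subtree of the black component of $o$.

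The substantive step is to show $T$ has the GW$(X)$ distribution. For each processed $z$ with parent $u$ I introduce a certificate region $\mathcal R_z\subseteq\Dee$ consisting of all points whose Poisson status is inspected when enumerating $\Xcal(z,u,r,w,\vartheta,h)$: the C-shape $C(z,u,r,w,\vartheta)$ together with the ball $\ballH(y,h)$ around each candidate $y$ and every admissible witness disk $B$ with $z$ on its boundary and $\diam_{\Haa^2}(B)<r+w$. A direct geometric bound puts $\mathcal R_z$ inside the union of hyperbolic $(r+w)$-balls around the vertices of $T_{z\setminus u}$, so Proposition~\ref{prop:treedisksector} applied to the edge $uz$ gives
\[
  \mathcal R_z \ \subseteq \ \ballH(u,h) \, \cup \, \sect{u}{z}{\vartheta_2}.
\]
Because $\vartheta_2<\vartheta/2$, the forward cones $\sect{u}{z_i}{\vartheta_2}$ at a common parent $u$ are pairwise disjoint, and Proposition~\ref{prop:distHdistT} combined with Lemma~\ref{lem:cosines} shows that every point of $\mathcal R_z\setminus\ballH(u,h)$ lies at hyperbolic distance at least $r-O(1)$ from every strict ancestor of $z$, hence outside all ancestral certificates. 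The residual piece $\ballH(u,h)$ was already inspected when $u$ was added to $T$ (via the clause $\ballH(u,h)\cap\Zcal=\{u\}$), so re-reading it introduces no fresh conditioning. Conditional on the exploration history, the Poisson process on the fresh part of $\mathcal R_z$ then has its unconditional law, and hyperbolic invariance of the intensity measure (equation~\eqref{eq:isosubst}) shows the number of children of $z$ is distributed exactly as $X$, independently of the past. Iterating, $T$ has the GW$(X)$ genealogy.

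The technical heart is disjointness of the $\mathcal R_z$ across the whole exploration. The concern is that the empty-witness-disk clause in $\Xcal$ a priori reaches toward the parent $u$, threatening overlap with the parent's certificate; the diameter bound $\diam_{\Haa^2}(B)<r+w$ is what traps the witness disk in a bounded neighbourhood of the edge, and combined with Proposition~\ref{prop:treedisksector} it confines each certificate to a forward cone. That is the main obstacle; once it is established, the rest is standard Poisson bookkeeping.
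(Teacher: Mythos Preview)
Your exploration strategy and the idea of tracking certificate regions $\mathcal R_z$ match the paper's approach, but the argument as written has two genuine gaps.

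\textbf{The disjointness argument is taken from the wrong vantage point.} You bound $\mathcal R_z \subseteq \ballH(u,h) \cup \sect{u}{z}{\vartheta_2}$ from the \emph{parent} $u$'s perspective and then assert that the only overlap with the previously-revealed region lies in $\ballH(u,h)$. This fails already for the parent's own certificate $\mathcal R_u$: when $u$ was processed, both the $C$-shape $C(u,\cdot,r,w,\vartheta)$ and the ball $\ballH(z,h)$ (since $z$ was one of $u$'s candidates) were revealed, and these sit in the forward cone $\sect{u}{z}{\vartheta_2}$ near $z$, not in $\ballH(u,h)$; they certainly intersect $\mathcal R_z$ (witness disks through $z$ sweep this region). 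Your clause ``distance at least $r-O(1)$ from every strict ancestor, hence outside all ancestral certificates'' does not rescue this, since ancestral certificates themselves reach out to distance $r+w$ from the ancestor. The paper repairs this by doing the bookkeeping from the \emph{current} node's vantage: it shows $R_1\cup\dots\cup R_{i-1}\subseteq\ballH(u_i,h)\cup\sect{u_i}{v_i}{\vartheta'}$ and $R_i\subseteq\ballH(u_i,h)\cup\bigl(\Dee\setminus\sect{u_i}{v_i}{\vartheta-\vartheta'}\bigr)$, and these two sets meet only in $\ballH(u_i,h)$, the ball around the node being processed.

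\textbf{The offspring law at non-root nodes is not $X$.} Once the overlap is correctly identified as $\ballH(u_i,h)$, the relevant conditioning is that $\Zcal\cap\ballH(u_i,h)=\{u_i\}$, which was established when $u_i$ was added to the tree. This is not ``no fresh conditioning''; it changes the offspring law to $\tilde X:=(X\mid \Zcal\cap\ballH(o,h)=\emptyset)$, since points of $\Zcal$ inside $\ballH(o,h)$ can block witness disks. So the exploration tree is \emph{not} exactly GW$(X)$: the root has $X$ children and every other node has $\tilde X$ children. The paper closes the gap by observing that emptying $\ballH(o,h)$ can only create, never destroy, admissible witness disks, whence $\tilde X$ stochastically dominates $X$; it then invokes Strassen's theorem to couple the sequence $X_1,X_2,\dots$ to an i.i.d.\ $X$-sequence and deduce that the exploration tree dominates a genuine GW$(X)$ tree. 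Without this monotonicity-plus-Strassen step your conclusion does not follow.
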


\begin{proof}
We can assume without loss of generality $\vartheta<\pi$ (otherwise $X=0$ by definition, and the theorem holds trivially).
We set $\vartheta' \ll \vartheta$ be a small constant to be determined 
during the course of the proof, let $h=h(w,\vartheta,\vartheta')$ be as provided by Proposition~\ref{prop:treedisksector}, 
and we let $r_0$ be a large constant, to be determined more precisely during the course of the proof.

We consider an ``exploration process'' that iteratively constructs a $(r,w,\vartheta)$-tree $T$ rooted at the origin.
Recall that $r$ will be chosen larger than $r_0$.
At every iteration there will be a (finite) number of nodes, some of which are {\em explored} and some {\em unexplored}. 
The node $v$ having been explored means that all children of $v$ have already been added to the tree. 

For each iteration $i\geq 1$, let $T_i$ denote the tree we have constructed at the end of iteration $i$. 
We let $\Escr_i$ denote the set of nodes of $T_i$ that have been explored at the end of iteration $i$ 
and $\Uscr_i$ be the set of nodes in $T_i$ that 
have not yet been explored; and we set $\Ecal_0 = \emptyset, \Ucal_0 = \{o\}$.

If in any iteration there are no unexplored nodes, i.e.~$\Ucal_i = \emptyset$, then the construction of the tree stops and the 
final result is $T=T_i$. Otherwise, if in each iteration we always have at least one unexplored node, then we continue indefinitely
in which case of course $T := \bigcup_i T_i$.

In the first iteration we add the set $\Xcal_1 := \Xcal(o,(\tanh(r/2),0),r,w,\vartheta,h)$ defined above to the tree, as 
the children of the origin.
To be more precise we define $T_1$ by $V(T_1) = \{o\} \cup \Xcal_1, E(T_1) = \{ oz : z \in \Xcal_1 \}$.
We set $\Ecal_1 = \{o\}, \Uscr_1 = \Xcal_1$. 
In particular, the number of children of the origin will be $X_1 \isd X$.

In any subsequent iteration $i$, assuming $\Uscr_{i-1}\neq\emptyset$ (otherwise the construction process will have finished), 
we pick an arbitrary unexplored node $u_i \in \Uscr_{i-1}$, and we denote by $v_i$ its parent in $T_{i-1}$.
The children of $u_i$ will be $\Xcal_i := \Xcal(u_i,v_i,r,w,\vartheta,h)$, 
and we let $X_i := |\Xcal_i|$ denote the number of children of $u_i$.
We update by defining $T_i$ via $V(T_i) = V(T_{i-1}) \cup \Xcal_i, E(T_i) = E(T_{i-1}) \cup \{ u_iz : z \in \Xcal_i \}$, and
setting $\Ecal_i = \Ecal_{i-1} \cup \{u_i\}, \Uscr_i = (\Uscr_{i-1} \setminus \{u_i\}) \cup \Xcal_i$.

We point out that, since $T_i$ clearly is a $(r,w,\vartheta)$-tree, it follows from 
Proposition~\ref{prop:treeistree} that we do not include any point twice (put differently that 
$\Xcal_i \cap \Xcal_j = \emptyset$ for all $i\neq j$), provided $r_0$ was chosen sufficiently large. 

It remains to see that the size of the tree $T$ constructed via this process dominates 
the size of a Galton-Watson tree with offspring distribution $X$. 
In order to do this, we will first establish that $X_1, X_2, \dots$ are independent, and then that 
$X_2, X_3,\dots$ are i.i.d.~and finally that $X_2$ stochastically dominates $X=X_1$.

We will consider, for each iteration $i$, a (random) region $R_i$ that will be 
{\em revealed} by the exploration process during the $i$-th iteration. 
Here we mean by ``reveal'' that the exploration process will use information about 
$\Zcal \cap R_i$ in order to determine $\Xcal_i$, but -- crucially -- after the $i$-th iteration the exploration process 
will not have uncovered any information on the status of the Poisson point process $\Zcal$ outside of $R_1\cup\dots\cup R_i$.

We define $\Xcal_i^+ \supseteq \Xcal_i$  by 

$$\Xcal_i^+ := \left\{ z \in \Zcal_b : \begin{array}{l} z \in C(u_i,v_i,r,w,\vartheta) \text{ and;} \\
\text{$\angle z'uz > \vartheta$ for all $z'\neq z \in \Zcal_b \cap C(u_i,v_i,r,w,\vartheta)$.} 
\end{array}\right\}, $$

\noindent
setting $u_1 = o, v_1 := (\tanh(r/2),0)$ so that the definition also applies for $i=1$.
We let $T_i^+$ be the tree on vertex set $\{o\} \cup \Xcal_1^+ \cup \dots \cup \Xcal_i^+$, rooted at $u_1=o$ and where the children 
of $u_j$ are $\Xcal_j^+$ for each $j=1,\dots,i$. Clearly $T_i^+$ is also a $(r,w,\vartheta)$-tree for each $i$.

Given $\Xcal_1^+, \dots, \Xcal_{i-1}^+$ and $u_i$, we can determine 
$\Xcal_i^+$ by revealing the status of the Poisson process inside $C(u_i,v_i,r,w,\vartheta)$.
In order to now determine $\Xcal_i \subseteq \Xcal_i^+$ we need to determine, for each $z \in \Xcal_i^+$ 
whether $\ballH(z,h) \cap \Zcal = \{z\}$ and there exists a disk $B$ such that 
$u_i,z\in\partial B, \Zcal\cap B = \emptyset$ and $\diam_{\Haa^2}(B) < r+w$. 
Any such $B$ is contained in $\ballH(u_i,r+w) \cap \ballH(z,r+w)$.
Hence, given $\Xcal_1^+, \dots, \Xcal_{i-1}^+$ and $u_i$, we can determine $\Xcal_i$ by revealing the 
status of the Poisson process inside the region

$$ R_i :=  \bigcup_{z\in C(u_i,v_i,r,w,\vartheta)} \left(\ballH(u_i,r+w)\cap \ballH(z,r+w)\right) 
\cup \bigcup_{z\in \Xcal_i^+} \ballH(z,h). $$

\noindent
By Proposition~\ref{prop:treedisksector} (applied to the tree consisting of a single edge $u_iz$) and 
the choice of $h, r_0$, we have 

$$ \ballH(z,r+w) \subseteq \ballH(u_i,h) \cup \sect{u_i}{z}{\vartheta'}, $$

\noindent
for all $z \in C(u_i,v_i,r,w,\vartheta)$.
It follows that 

\begin{equation}\label{eq:Ri} 
\begin{array}{rcl} R_i 
& \subseteq & \displaystyle 
\ballH(u_i,h) \cup \left( \bigcup_{z \in C(u_i,v_i,r,w,\vartheta)} \sect{u_i}{z}{\vartheta'} \right) \\[4ex]
& \subseteq & \displaystyle 
\ballH(u_i,h) \cup \left( \Dee \setminus \sect{u_i}{v_i}{\vartheta-\vartheta'} \right) \\
& =: & \displaystyle 
S_i,
\end{array} 
\end{equation}

\noindent
(We point out that the arguments giving~\eqref{eq:Ri} also apply to the case when $i=1$, showing that 
$\Xcal_1$ is determined completely by $\Zcal\cap S_1$.)

On the other hand, we also have 

$$ R_j \subseteq \ballH(u_j,r+w) \cup \bigcup_{z\in\Xcal_j^+} \ballH(z,h), $$

\noindent
for every $j$. Hence 

\begin{equation}\label{eq:cupRj} 
\begin{array}{rcl} 
R_1 \cup \dots \cup R_{i-1} 
& \subseteq & \displaystyle 
\left( \bigcup_{j=1}^{i-1} \ballH(u_i,r+w) \right)
\cup \left( \bigcup_{v \in \{o\} \cup \Xcal_1^+ \cup \dots \cup \Xcal_{i-1}^+} 
\ballH(v,h) \right)
\\[6ex]
& \subseteq & \displaystyle 
\ballH(u_i,h) \cup \left( \bigcup_{v \in \{o\} \cup \Xcal_1^+ \cup \dots \cup \Xcal_{i-1}^+, \atop v \neq u_i} 
\ballH(v,r+w) \right) \\[6ex]
& = & \displaystyle 
\ballH(u_i,h) \cup \left( \bigcup_{v \in V( (T_{i-1}^+)_{v_i\setminus u_i} )} \ballH(v,r+w) \right) \\[6ex]
& \subseteq & \displaystyle 
\ballH(u_i,h) \cup \sect{u_i}{v_i}{\vartheta'}, 
\end{array}
\end{equation}

\noindent 
where we apply Proposition~\ref{prop:treedisksector} in the last line, and we assume we chose $r_0$ sufficiently large.

Combining~\eqref{eq:Ri} and~\eqref{eq:cupRj}, having chosen $\vartheta'$ sufficiently small, we see that 

$$ (R_1\cup\dots\cup R_{i-1}) \cap S_i \subseteq \ballH(u_i,h). $$

Moreover, for $i>1$, by construction of the exploration process we have

$$ \Zcal \cap \ballH(u_i,h) = \{u_i\}. $$

Hence, for $i>1$, given $\Xcal_1,\Xcal_1^+, \dots, \Xcal_{i-1}, \Xcal_{i-1}^+, u_1,\dots,u_i$, the random set 
$\Xcal_i$ is completely determined by $\Zcal \cap S_i'$ where

$$ S_i' :=  S_i \setminus \ballH(u_i,h) 
=  \Dee \setminus \left( \ballH(u_i, h) \cup \sect{u_i}{v_i}{\vartheta-\vartheta'} \right). $$ 

\noindent
(For $i=1$ it might be the case that $\ballH(o,h)$ contains point of $\Zcal$. So we cannot say that 
$\Xcal_1$ is completely determined by $\Zcal \cap S_1'$. It is however completely determined by 
$\Zcal \cap S_1$.)

We consider an isometry $\varphi:\Dee\to\Dee$ satisfying $\varphi(u_i)=o$ and that $\varphi(v_i)$ lies on the positive $x$-axis.
Thus

$$ \varphi\left[ S_i' \right] =  \Dee \setminus \left( \ballH(o,h) \cup \sect{o}{v_1}{\vartheta-\vartheta'}\right) = S_1', $$

\noindent
and of course $\varphi\left[ \ballH(u_i,h)\right] = \ballH(o,h)$.

Since $S_i'$ does not not intersect the areas $R_1,\dots,R_{i-1}$ revealed by previous iterations and 
it is isometric to $S_1'$ for each $i>1$, we find that $X_1, X_2, \dots$ are independent, 
and for $i>1$ in fact 

$$ X_i \isd \left( X_1 {\Big|} |\Zcal\cap\ballH(o,h)|=0 \right) \quad \text{ (for $i>1$.) } $$

\noindent
(We write $|\ballH(o,h) \cap \Zcal| = 0$ and not $\ballH(o,h) \cap \Zcal = \{o\}$ since $o \not\in \Zcal$.)

If $X_1, X_2, \dots$ were i.i.d.~then $T$ would be a Galton-Watson tree with offspring distribution $X_1$. 
In our case, we can describe the situation by saying the tree $T$ consists of a root attached to $X_1$ independent copies of a 
Galton-Watson tree with offspring distribution $\tilde{X_1} = \left( X_1 | \Zcal\cap\ballH(o,h)=\emptyset\right)$.
We also point out that the sequence $X_1, X_2, \dots$ completely determines the size of the tree $T$, via

$$ |V(T)|= \inf\{ n : X_1+\dots+X_n\leq n-1\}. $$ 

(See for instance Sections 1.5 and 1.6 of~\cite{mongolia}.
Note that while the discussion there focuses on $X_1,X_2,\dots$ i.i.d., the above equation holds 
much more generally. See the remark following Definition 1.14 in~\cite{mongolia}.)

To conclude the proof, we note that, provided we chose $r_0$ sufficiently large, 
$\ballH(o,h) \cap C(u_1,v_1,r,w,\vartheta) = \emptyset$.
Therefore, any point of $\Zcal$ in $\ballH(o,h)$ can only ``prevent'' the formation of edges between $o$ and some
$z \in \Zcal_b \cap C(u_1,v_1,r,w,\vartheta)$. 
In particular, for any $k, \ell  \in \eN \cup \{0\}$ we have 

$$ \Pee\left( X_1 \geq k {\Big|} |\Zcal \cap \ballH(o,h)| = \ell \right) 
\leq \Pee\left( X_1 \geq k {\Big|} |\Zcal \cap \ballH(o,h)| = 0 \right) 
= \Pee( \tilde{X_1} \geq k ), $$

\noindent 
which gives

$$ \Pee( X_1 \geq k ) 
= 
\sum_{\ell=0}^\infty \Pee\left( X_1 \geq k {\Big|} |\Zcal \cap \ballH(o,h)| = \ell \right) 
\Pee\left( |\Zcal\cap\ballH(o,h)| = \ell \right)
\leq 
\Pee( \tilde{X_1} \geq k ). $$

In other words, $\tilde{X_1}$ stochastically dominates $X_1$. 
By Strassen's theorem (\cite{Strassen65}; an elementary proof of the version we need can for instance 
be found in Section 2.3 of~\cite{VdHofstadvol1}) there is a coupling of the sequence $X_1, X_2, X_3, \dots$ and 
an i.i.d.~sequence $Y_1, Y_2, \dots$ such that $Y_i \isd X_1$ for all $i$ and $X_i \geq Y_i$ almost surely.
The sequence $Y_1,Y_2, \dots$ can be used to generate a Galton-Watson tree $T'$ with offspring distribution $X_1$.
We have $|V(T')| = \inf\{ n\geq 1 : Y_1+\dots+Y_n \leq n-1\}$.
Hence, (under the coupling, almost surely) $|V(T')| \leq |V(T)|$.
In particular the size of the black cluster of the origin (which is at least $|V(T)|$) stochastically dominates $|V(T')|$.
This is what needed to be shown.
\end{proof}

Having established Proposition~\ref{prop:GWcouple}, in order to prove Proposition~\ref{prop:ub} it suffices to 
show that, for $\lambda$ sufficiently small and $p = (1+\eps)\cdot (\pi/3)\cdot \lambda$, 
there is a choice of $w,\vartheta,h,r$ such that 
($h = h(w,\vartheta), r \geq r_0(w,\vartheta)$ with $h(.,.), r_0(.,.)$ as specified in Proposition~\ref{prop:GWcouple}, and)
$\Ee X > 1$.
More specifically, we'll keep $w,\vartheta$ (and $h$) constant, but we'll let $r$ depend on $\lambda$.

We break the argument down in a series of relatively straightforward lemmas.

\begin{lemma}\label{lem:XI}
For every $w, \lambda>0$ and all $p \leq 10\lambda$
the following holds, setting $r := 2\log(1/\lambda)$.
Writing

$$ X_{\bf{I}}  := 
 \left|\Zcal_b \cap \ballH(o,r-w) \right|, $$

\noindent
we have

$$ \Ee X_{\bf{I}} \leq 1000 e^{-w}. $$

\end{lemma}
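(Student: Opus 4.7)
The plan is to compute $\Ee X_{\bf{I}}$ essentially exactly using the Campbell/Mecke formula for the Poisson process $\Zcal$, together with Isokawa-style thinning for the colouring, and then bound the hyperbolic ball area by an elementary exponential estimate.

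First I would observe that since each point of $\Zcal$ is independently coloured black with probability $p$, the black process $\Zcal_b$ is itself a homogeneous hyperbolic Poisson process of intensity $p\lambda$. Applying the mean formula from the first bullet in the characterization of PPPs gives
\begin{equation*}
\Ee X_{\bf{I}} = p\lambda \cdot \areaH\!\left(\ballH(o,r-w)\right).
\end{equation*}
If $r \leq w$ the ball is empty, so $X_{\bf{I}} = 0$ and the claim holds trivially. So I may assume $r > w$, in which case equation~\eqref{eq:ball_area} gives $\areaH(\ballH(o,r-w)) = 2\pi(\cosh(r-w) - 1)$.

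Next I would estimate $\cosh(r-w) - 1 \leq \cosh(r-w) \leq e^{r-w}$, and substitute the choice $r = 2\log(1/\lambda)$, which yields $e^{r-w} = e^{-w}/\lambda^2$. Plugging everything in,
\begin{equation*}
\Ee X_{\bf{I}} \;\leq\; p\lambda \cdot 2\pi \cdot \frac{e^{-w}}{\lambda^2} \;=\; \frac{2\pi p\, e^{-w}}{\lambda}.
\end{equation*}

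Finally, using the hypothesis $p \leq 10\lambda$, this gives $\Ee X_{\bf{I}} \leq 20\pi e^{-w} \leq 1000 e^{-w}$, as required. There is no genuine obstacle here; the only thing to be careful about is the trivial degenerate case $r \leq w$ (which includes $\lambda \geq 1$), where the ball $\ballH(o,r-w)$ is empty and so the stated bound holds vacuously.
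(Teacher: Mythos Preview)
Your proof is correct and follows essentially the same approach as the paper: compute $\Ee X_{\bf I}=p\lambda\cdot\areaH(\ballH(o,r-w))$, handle the degenerate case $r\le w$ separately, bound $\cosh(r-w)-1\le e^{r-w}$, and substitute $r=2\log(1/\lambda)$ together with $p\le 10\lambda$. The only cosmetic difference is that you explicitly invoke the thinning property to view $\Zcal_b$ as a Poisson process of intensity $p\lambda$, whereas the paper just writes down the mean directly.
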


\begin{proof}
If $r-w < 0$ then clearly $X_{\bf{I}} = 0$ almost and we are done. So we can assume $r-w\geq 0$.
Clearly 

$$ \begin{array}{rcl} 
\Ee X_{\bf{I}} 
& = & p \lambda \cdot \areaH\left( \ballH(o,r-w) \right) \\
& = & p \lambda \cdot 2\pi\left(\cosh(r-w)-1\right) \\
& \leq & 1000 \lambda^2 e^{r-w} \\
& = & 1000 e^{-w}, 
\end{array} $$

\noindent
using that $\cosh(x)-1 \leq e^x$ for $x \geq 0$, and the choice of $r$.
\end{proof}

\begin{lemma}\label{lem:XII}
For every $w,\lambda, \vartheta >0$ and all $p \leq 10\lambda$
the following holds, setting $r := 2\log(1/\lambda)$ and letting
$v \in \Dee\setminus\{o\}$ be an arbitrary (fixed) point.
Writing 

$$ X_{\bf{II}}  := 
 \left|\Zcal_b \cap \ballH(o,r+w) \cap \sect{o}{v}{\vartheta} \right|, $$

we have

$$ \Ee X_{\bf{II}} \leq 1000 \vartheta e^{w}. $$

\end{lemma}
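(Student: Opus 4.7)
The plan is to follow essentially the same template as the proof of Lemma~\ref{lem:XI}: use that $\Zcalb$ is a hyperbolic Poisson process with intensity $p\lambda$, so $\Ee X_{\bf{II}} = p\lambda\cdot\areaH\bigl(\ballH(o,r+w)\cap\sect{o}{v}{\vartheta}\bigr)$, and then bound the area explicitly.

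The region $\ballH(o,r+w)\cap\sect{o}{v}{\vartheta}$ is a hyperbolic ``pie slice'' centred at the origin. Indeed, since $o$ is the origin, $\sect{o}{v}{\vartheta}$ is exactly the set of points $z\in\Dee$ whose hyperbolic polar coordinate $\alpha$ lies within $\vartheta$ of the polar angle of $v$. So after applying the change of variables~\eqref{eq:hyppolcoord}, the hyperbolic area of the intersection is
\begin{equation*}
\int_0^{r+w}\int_{\alpha_0-\vartheta}^{\alpha_0+\vartheta}\sinh\rho\,\dd\alpha\,\dd\rho \;=\; 2\vartheta\cdot(\cosh(r+w)-1),
\end{equation*}
where $\alpha_0$ is the polar angle of $v$. (The sector could be truncated by the range $[0,2\pi)$ of $\alpha$, but that only makes the integral smaller.)

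Combining with $p\leq 10\lambda$ and using $\cosh(x)-1\leq e^x$ for $x\geq 0$ as in the previous lemma gives
\begin{equation*}
\Ee X_{\bf{II}} \;\leq\; 10\lambda\cdot\lambda\cdot 2\vartheta\cdot e^{r+w} \;=\; 20\vartheta\cdot\lambda^2 e^{r}\cdot e^w \;=\; 20\vartheta e^w,
\end{equation*}
by the choice $r=2\log(1/\lambda)$, which is comfortably below the claimed $1000\vartheta e^w$. There is really no obstacle here: the only thing to be a bit careful about is that $\sect{o}{v}{\vartheta}$ is indeed the origin-based pie slice (so that the polar-coordinate computation applies without any isometry), which is immediate from the definition of $\sect{\cdot}{\cdot}{\cdot}$ given in Section~\ref{sec:hyper_prelim}.
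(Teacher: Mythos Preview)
Your proof is correct and essentially identical to the paper's: both compute $\Ee X_{\bf II}=p\lambda\cdot\areaH\bigl(\ballH(o,r+w)\cap\sect{o}{v}{\vartheta}\bigr)$, evaluate the area of the pie slice as $\frac{2\vartheta}{2\pi}\cdot 2\pi(\cosh(r+w)-1)=2\vartheta(\cosh(r+w)-1)$, and finish with $\cosh(x)-1\le e^x$ and $\lambda^2e^r=1$. The only cosmetic difference is that you integrate in polar coordinates explicitly whereas the paper invokes rotational symmetry to get the $\tfrac{2\vartheta}{2\pi}$ factor.
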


\begin{proof}
Clearly 

$$ \begin{array}{rcl} \Ee X_{\bf{II}} & = & p \lambda \areaH\left( \ballH(o,r+w) \cap \sect{o}{v}{\vartheta} \right) \\
& = & p \lambda \cdot \frac{2\vartheta}{2\pi} \cdot 2\pi \left(\cosh(r+w)-1\right) \\
& \leq & 1000 \vartheta \lambda^2 e^{r+w} \\
& = & 1000 \vartheta e^{w}, 
\end{array} $$

\noindent
as before using that $\cosh(x)-1 \leq e^x$ and the choice of $r$.
\end{proof}

\begin{lemma}\label{lem:XIII}
For every $w,\lambda, \vartheta >0$ and all $p \leq 10\lambda$
the following holds, setting $r := 2\log(1/\lambda)$.
Writing 

$$ X_{\bf{III}}  := 
 \left|\left\{ (z_1,z_2) \in \Zcal_b\times\Zcal_b : \begin{array}{l} z_1,z_2 \in \ballH(o,r+w), \text{ and;} \\
 z_1\neq z_2, \text{ and;}, \\
 \angle z_1oz_2 < \vartheta.
 \end{array} \right\}\right|, $$

we have

$$ \Ee X_{\bf{III}} \leq 1000 \vartheta e^{2w}. $$

\end{lemma}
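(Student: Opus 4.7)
My plan is to proceed in complete analogy with the proofs of Lemmas~\ref{lem:XI} and~\ref{lem:XII}, applying the two-coloured Slivniak-Mecke formula (Corollary~\ref{cor:SlivMeck2col}) with $k=2$ to the pair $(z_1,z_2)$, and then computing the resulting double integral in hyperbolic polar coordinates.

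More concretely, I would write
\[
\Ee X_{\bf{III}} \;=\; (p\lambda)^2 \int_{\Dee}\int_{\Dee} \mathds{1}\bigl[u_1,u_2 \in \ballH(o,r+w),\ \angle u_1 o u_2 < \vartheta \bigr] f(u_1) f(u_2) \dd u_1 \dd u_2,
\]
where the indicator does not depend on the added points $u_1, u_2$, so the inner expectation from Corollary~\ref{cor:SlivMeck2col} is trivial. Since $o$ is the origin of the Poincaré disk, hyperbolic lines through $o$ are Euclidean lines through $o$, so the hyperbolic angle $\angle u_1 o u_2$ equals the angular difference $|\alpha_1-\alpha_2|$ (mod $2\pi$, taken as the smaller value) of the hyperbolic polar coordinates. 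Changing to hyperbolic polar coordinates via~\eqref{eq:hyppolcoord} separates the integral into a radial and an angular part:
\[
\Ee X_{\bf{III}} \;=\; (p\lambda)^2 \left(\int_0^{r+w}\!\sinh\rho\,\dd\rho\right)^{\!2} \cdot \int_0^{2\pi}\!\int_0^{2\pi}\! \mathds{1}\bigl[\text{angular diff.}<\vartheta\bigr] \dd\alpha_1 \dd\alpha_2.
\]
The radial integral evaluates to $\cosh(r+w)-1 \leq e^{r+w}$, and the angular integral is at most $4\pi\vartheta$ (for each fixed $\alpha_1$, the set of admissible $\alpha_2$ has Lebesgue measure at most $2\vartheta$).

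Combining these, and using $p\leq 10\lambda$ together with the choice $r = 2\log(1/\lambda)$ (so that $\lambda^2 e^r = 1$), I get
\[
\Ee X_{\bf{III}} \;\leq\; 100\lambda^2 \cdot \lambda^2 \cdot e^{2(r+w)} \cdot 4\pi\vartheta \;=\; 400\pi\vartheta e^{2w} \;\leq\; 1000\vartheta e^{2w},
\]
as desired. There is no real obstacle here: the argument is a routine application of Slivniak-Mecke plus the polar-coordinate substitution, identical in structure to the two preceding lemmas but with one extra (angular) factor of $\vartheta$ coming from the constraint $\angle z_1oz_2<\vartheta$ and one extra factor of $e^w$ from the second radial integration. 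The only small care needed is to observe that at the origin the hyperbolic angle coincides with the Euclidean angular coordinate, so the condition $\angle z_1 o z_2<\vartheta$ translates directly to a simple linear condition on the polar angles.
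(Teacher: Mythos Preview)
Your approach is essentially identical to the paper's: apply Corollary~\ref{cor:SlivMeck2col}, use rotational symmetry at the origin to extract a factor of~$\vartheta$, and then bound the remaining hyperbolic area by~$e^{r+w}$. The paper phrases the inner integral as $\areaH(\ballH(o,r+w)\cap\sect{o}{z_1}{\vartheta})$ rather than passing to polar coordinates explicitly, but the content is the same.

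There is, however, a small numerical slip at the very end: $400\pi \approx 1257$, so your final inequality $400\pi\vartheta e^{2w} \leq 1000\vartheta e^{2w}$ is false. The culprit is your bound $\cosh(r+w)-1 \leq e^{r+w}$, which is valid but wasteful by a factor of~$2$. Since $\cosh x - 1 = \tfrac12(e^x + e^{-x}) - 1 \leq \tfrac12 e^x$ for all $x\geq 0$, the radial integral is at most $\tfrac12 e^{r+w}$, and squaring gives $\tfrac14 e^{2(r+w)}$. With this sharper bound your computation yields $100\pi\vartheta e^{2w} < 1000\vartheta e^{2w}$, exactly as in the paper (which uses the equivalent estimate $\areaH(\ballH(o,x)) \leq \pi e^{x}$).
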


\begin{proof}
By Corollary~\ref{cor:SlivMeck2col}:

$$ \Ee X_{\bf{III}} = p^2 \lambda^2 \int_\Dee\int_\Dee \Ee\left[g(z_1,z_2,\Zcal\cup\{z_1,z_2\})\right] f(z_1)f(z_2)\dd z_2\dd z_2, $$

\noindent
where $f$ is as given by~\eqref{eq:fdef} and 

$$ g(u_1,u_2,\Ucal) = 1_{\left\{\begin{array}{l} 
u_1,u_2 \in \ballH(o,r+w), \text{ and;} \\
u_1\neq u_2, \text{ and;}, \\
\angle u_1ou_2 < \vartheta.
 \end{array}\right\}}. $$
 
In other words

$$ \begin{array}{rcl} 
\Ee X_{\bf{III}} 
& = & \displaystyle
p^2 \lambda^2 \int_{\ballH(o,r+w)} \int_{\ballH(o,r+w)\cap\sect{o}{z_1}{\vartheta}} f(z_1)f(z_2)\dd z_2\dd z_1 \\[2ex]
& = &  \displaystyle
p^2 \lambda^2 \int_{\ballH(o,r+w)} \areaH\left( \ballH(o,r+w)\cap\sect{o}{z_1}{\vartheta}\right) f(z_1) \dd z_1 \\[2ex]
& = &  \displaystyle
p^2 \lambda^2 \int_{\ballH(o,r+w)} \left(\frac{2\vartheta}{2\pi}\right) \cdot \areaH\left( \ballH(o,r+w) \right) \dd z_1 \\[2ex]
& = &  \displaystyle
p^2 \lambda^2 \left(\frac{2\vartheta}{2\pi}\right) \cdot \areaH\left( \ballH(o,r+w) \right)^2 \\[2ex]
& \leq &  \displaystyle
1000 \lambda^4 \vartheta e^{2r+2w} \\
& = & 
1000 \vartheta e^{2w},
\end{array} $$

\noindent
using rotational symmetry of the hyperbolic area measure in the third line; 
that $\areaH(\ballH(o,x)) = 2\pi(\cosh(x)-1) \leq \pi e^x$ and the bound on $p$ in the fifth line; and 
the choice of $r$ in the last.
\end{proof}

\begin{lemma}\label{lem:XIV}
For every $w,\lambda, h >0$ and all $p \leq 10\lambda$
the following holds, setting $r := 2\log(1/\lambda)$.
Writing 

$$ X_{\bf{IV}}  := 
 \left|\left\{ (z_1,z_2) \in \Zcal_b\times\Zcal_b : \begin{array}{l} z_1 \in \ballH(o,r+w), \text{ and;} \\
 z_1\neq z_2, \text{ and;}, \\
 \distH(z_1,z_2) < h.
 \end{array} \right\}\right|, $$

we have

$$ \Ee X_{\bf{IV}} \leq 1000 \lambda^2 e^{w+h}. $$

\end{lemma}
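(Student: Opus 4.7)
The plan is to follow the exact same template as the proof of Lemma~\ref{lem:XIII}, applying Corollary~\ref{cor:SlivMeck2col} to the sum defining $X_{\bf{IV}}$. We take $k=2$ and
$$ g(u_1,u_2,\Ucal) = 1_{\left\{ u_1 \in \ballH(o,r+w),\ u_1\neq u_2,\ \distH(u_1,u_2)<h\right\}}. $$
Crucially, $g$ does not depend on the third argument $\Ucal$, so the inner expectation simply equals $g(u_1,u_2,\cdot)$ itself, which turns the problem into a purely deterministic integral.

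Next I would use Fubini to do the $z_2$-integration first. The constraint is $z_2\in\ballH(z_1,h)$, so the inner integral $\int f(z_2)\,\dd z_2$ over that region equals $\areaH(\ballH(z_1,h))$. By the isometry invariance of hyperbolic area (equation~\eqref{eq:areapres} applied to any isometry sending $z_1$ to $o$), this equals $\areaH(\ballH(o,h))=2\pi(\cosh h-1)\leq\pi e^h$, uniformly in $z_1$. The remaining outer integral $\int_{\ballH(o,r+w)} f(z_1)\dd z_1 = \areaH(\ballH(o,r+w))=2\pi(\cosh(r+w)-1)\leq \pi e^{r+w}$ by~\eqref{eq:ball_area} and the inequality $\cosh(x)-1\leq e^x$ for $x\geq 0$.

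Putting the pieces together gives
$$ \Ee X_{\bf{IV}} \leq p^2\lambda^2 \cdot \pi e^h \cdot \pi e^{r+w} = \pi^2 p^2\lambda^2 e^{r+w+h}. $$
Using $p\leq 10\lambda$ and the defining choice $r=2\log(1/\lambda)$, so that $e^r=\lambda^{-2}$, we get
$$ \Ee X_{\bf{IV}} \leq 100\pi^2\,\lambda^4\,\lambda^{-2} e^{w+h} = 100\pi^2\,\lambda^2 e^{w+h} \leq 1000\,\lambda^2 e^{w+h}, $$
as required. There is no real obstacle here: the estimate is structurally identical to Lemma~\ref{lem:XIII}, except that the angular ``slice'' $\sect{o}{z_1}{\vartheta}$ is replaced by the local disk $\ballH(z_1,h)$, whose area is independent of $z_1$ by isometry invariance rather than by rotational symmetry.
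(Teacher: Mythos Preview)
Your proof is correct and follows essentially the same approach as the paper: apply Corollary~\ref{cor:SlivMeck2col}, evaluate the two nested area integrals as $\areaH(\ballH(o,r+w))\cdot\areaH(\ballH(o,h))$, bound each by $\pi e^{(\cdot)}$, and use $p\le 10\lambda$ together with $e^{r}=\lambda^{-2}$. The only cosmetic difference is that you spell out the isometry-invariance justification for $\areaH(\ballH(z_1,h))=\areaH(\ballH(o,h))$, which the paper leaves implicit.
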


\begin{proof}
Applying Corollary~\ref{cor:SlivMeck2col} again, we have

$$ \Ee X_{\bf{IV}} = p^2 \lambda^2 \int_\Dee\int_\Dee \Ee\left[g(z_1,z_2,\Zcal\cup\{z_1,z_2\})\right] f(z_1)f(z_2)\dd z_2\dd z_2, $$

\noindent
where $f$ is given by~\eqref{eq:fdef} and this time

$$ g(u_1,u_2,\Ucal) = 1_{\left\{\begin{array}{l} 
u_1 \in \ballH(o,r+w), \text{ and;} \\
u_1\neq u_2, \text{ and;}, \\
\distH(u_1,u_2) < h.
 \end{array}\right\}}. $$
 
In other words 

$$ \begin{array}{rcl}
  \Ee X_{\bf{IV}}  
  & = &  \displaystyle
  p^2 \lambda^2 \int_{\ballH(o,r+w)}\int_{\ballH(z_1,h)} f(z_1)f(z_2)\dd z_2 \dd z_1 \\
  & = & \displaystyle
  p^2 \lambda^2 \cdot \areaH\left(\ballH(o,r+w)\right) \cdot \areaH\left(\ballH(o,h)\right) \\
  & \leq & \displaystyle
  1000 \lambda^2 e^{w+h},
  \end{array} $$
  
\noindent
again using that $\areaH(\ballH(o,x)) = 2\pi\left(\cosh x - 1\right) \leq \pi e^x$ and the choice of $r$. 
\end{proof}

\begin{lemma}\label{lem:XV}
There exists a $\lambda_0>0$ such that, for all $0<\lambda<\lambda_0$, all $w\geq 0$ and all $p \leq 10\lambda$,
the following holds, setting $r := 2\log(1/\lambda)$.
Writing

$$ X_{\bf{V}}  := 
 \left|\left\{ z \in \Zcal_b : \begin{array}{l} \distH( z, o ) \geq  r+w, \text{ and;} \\
 \text{$\exists$ a disk $B$ with $o,z \in \partial B$ and $\Zcal \cap B = \emptyset$.} 
 \end{array} \right\}\right|, $$

we have

$$ \Ee X_{\bf{V}} \leq 1000 e^{w} e^{-e^{w/2}}. $$

\end{lemma}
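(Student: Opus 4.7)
The plan is to apply the Slivniak-Mecke formula (Corollary~\ref{cor:SlivMeck2col}) to $X_{\bf V}$ and, exploiting rotational symmetry of $\Zcal$ together with a change to hyperbolic polar coordinates, to rewrite
\[
\Ee X_{\bf V} = 2\pi p\lambda \int_{r+w}^{\infty} \Pee(E_{z_\rho})\, \sinh\rho \, \dd\rho,
\]
where $z_\rho$ is any fixed point at hyperbolic distance $\rho$ from $o$ and $E_z$ denotes the event that there exists a hyperbolic disk $B$ with $o,z \in \partial B$ and $B \cap \Zcal = \emptyset$. Since $p\lambda \leq 10\lambda^2 = 10 e^{-r}$ and $\sinh\rho \leq e^\rho$, the task reduces to a double-exponential tail estimate of the form $\Pee(E_z) \leq C\exp(-c\cdot e^{(\distH(o,z) - r)/2})$ for absolute constants $C$ and $c > 1$.

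To prove such a bound I would parametrize the family of disks through $o$ and $z$ by their hyperbolic centers on the perpendicular bisector $\ell$ of the segment $oz$. For $y \in \ell$ at signed hyperbolic distance $t$ from the midpoint $m$ of $oz$, the hyperbolic law of cosines (Lemma~\ref{lem:law_cosines}) applied to the right triangle $ymo$ gives $\cosh \rho(t) = \cosh(t)\cosh(d/2)$ for the radius $\rho(t)$ of the corresponding disk $B_y$, with $d := \distH(o,z)$. I discretize $\ell$ by taking $y_n$ to be the point on $\ell$ at signed hyperbolic distance $n$ from $m$, $n \in \Zed$. The crucial geometric observation is that whenever $y$ lies strictly between $y_n$ and $y_{n+1}$ on $\ell$, one has $B_y \supseteq B_{y_n} \cap B_{y_{n+1}}$; this follows from the classical Euclidean ``pencil of circles'' picture together with Fact~\ref{lem:conformal1}, which implies that our family $\{B_y\}$ is a sub-family of a Euclidean pencil through $o$ and $z$.

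A union bound then yields
\[
\Pee(E_z) \leq \sum_{n \in \Zed} \exp\!\left(-\lambda \cdot \areaH(B_{y_n} \cap B_{y_{n+1}})\right).
\]
For $n \geq 0$, the lens $B_{y_n} \cap B_{y_{n+1}}$ contains the half of $B_{y_n}$ on the side of the line through $o$ and $z$ containing $y_n$, whose area is at least $\pi(\cosh \rho(n) - 1) \geq (\pi/2)e^{n}\cosh(d/2) - \pi$; a symmetric bound holds for $n \leq -1$. The sum therefore decays doubly exponentially in $|n|$ and is dominated by the $n\in\{-1,0\}$ terms, for which the area is at least $\pi(\cosh(d/2) - 1) \geq (\pi/2 - o(1))\cdot e^{d/2}$. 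Since the choice $r = 2\log(1/\lambda)$ gives $\lambda e^{r/2} = 1$, this yields $\Pee(E_z) \leq C \exp(-(\pi/2 - o(1))\cdot e^{(d-r)/2})$.

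Inserting this into the integral and substituting $s = \rho - r$ followed by $u = e^{s/2}$ reduces the final computation to $\int_{e^{w/2}}^\infty u\exp(-(\pi/2)u)\,\dd u$, which is of order $e^{w/2}\exp(-(\pi/2) e^{w/2})$. Because $\pi/2 > 1$, this is at most $1000 e^w\exp(-e^{w/2})$ for every $w\geq 0$, once $\lambda$ is small enough to render the $o(1)$ corrections harmless. The main obstacle I anticipate is ensuring that the leading constant $c$ in $\exp(-c\cdot e^{(d-r)/2})$ comes out strictly greater than $1$: the surplus $\exp(-(c-1)e^{w/2})$ is what absorbs the polynomial prefactor $e^{w/2}$ and the absolute constants into the target $1000\cdot e^w$, so the area estimate for the central lenses $B_{y_0}\cap B_{y_{\pm 1}}$ and the choice of discretization spacing on $\ell$ must be tuned together to keep this surplus available.
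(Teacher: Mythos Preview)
Your approach is correct and would yield the stated bound, but it is considerably more elaborate than the paper's argument. The paper bypasses the discretisation of the perpendicular bisector entirely via a single geometric observation, denoted $(\spadesuit)$: for \emph{every} disk $B$ with $o,z\in\partial B$ one has $\BGab^{-}(o,z)\subseteq B$ or $\BGab^{+}(o,z)\subseteq B$, where $\BGab^{\pm}$ are the two halves of the Gabriel disk of $o,z$ cut by the geodesic $oz$. This immediately gives
\[
\Pee(E_z)\;\le\;2\exp\!\Big(-\tfrac{\lambda}{2}\,\areaH\big(\BGab(o,z)\big)\Big),
\]
and the remaining polar--coordinate integral is then identical to the one you sketch. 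In the language of your construction, $(\spadesuit)$ is exactly the statement that your two central lenses $B_{y_{-1}}\cap B_{y_0}$ and $B_{y_0}\cap B_{y_1}$ already cover all cases, so the tail of your sum over $n\in\Zed$ contributes nothing new. Your pencil argument, the verification that the cap of $B_{y_n}$ containing the centre has at least half the area, and the check that the hyperbolic and Euclidean orderings of centres on $\ell$ agree are all valid, but superfluous once one notices that the Gabriel half-disks alone do the job. What your route does buy is a slightly finer picture of how $\Pee(E_z)$ decomposes over the pencil, which could be useful if one needed sharper constants; for the present lemma the two-term bound with leading constant $\pi/2>1$ is already more than enough.
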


In the proof we'll make use of the following definition and observations, that we'll reuse later.
For $z_1,z_2 \in \Dee$ the {\em Gabriel disk} is the disk $\BGab(z_1,z_2) := \ballH(c,\distH(z_1,z_2)/2 )$ whose
center $c$ is the midpoint of the hyperbolic line segment between $z_1$ and $z_2$ and whose radius
is half the hyperbolic distance between $z_1$ and $z_2$.
Put differently, among all disks that have both $z_1$ and $z_2$ on their boundary, $\BGab(z_1,z_2)$ is the 
one of smallest hyperbolic radius.

The name ``Gabriel disk'' is in reference to the {\em Gabriel graph}, an object that has received some attention 
in the discrete and computational geometry literature. 
The Gabriel graph associated with a point set $V \subseteq \eR^2$ is the subgraph of the Delaunay graph whose edges
are all pairs $v_1,v_2 \in V$ for which the Euclidean disk whose center is the midpoint between $v_1$ and $v_2$ and 
whose  radius is half their distance contains no other points of $V$.

The hyperbolic line segment between $z_1$ and $z_2$ splits $\BGab(z_1,z_2)$ into two parts of equal hyperbolic area. 
We shall denote by $\BGab^-(z_1,z_2)$ the part that is on the left 
as we travel from $z_1$ to $z_2$ along the hyperbolic line segment, and  by $\BGab^+(z_1,z_2)$ the one that is on the right.
See Figure~\ref{fig:BGab}.

\begin{figure}[hbt]
\begin{center}
\input{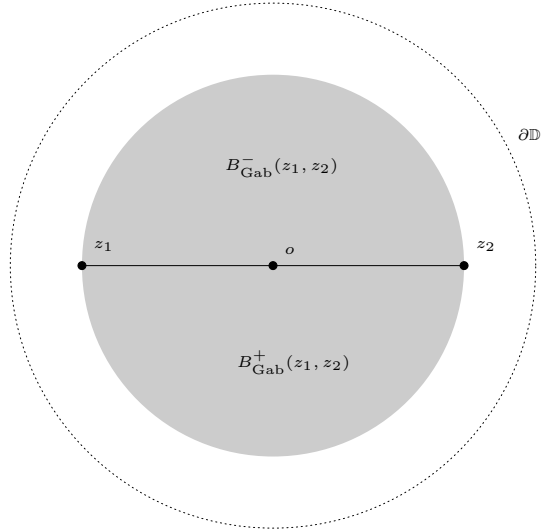}
\end{center}
 \caption{The Gabriel disk $\BGab(z_1,z_2)$, in the special case when 
 $z_1,z_2$ lie on the $x$-axis and their midpoint is the origin.\label{fig:BGab}} 
\end{figure}

We'll repeatedly make use of the following straightforward observation.

\begin{itemize}
\item[{\bf($\spadesuit$)}] For all $z_1,z_2\in\Dee$ and every disk $B$ such that $z_1,z_2 \in \partial B$, we 
have either $\BGab^-(z_1,z_2) \subseteq B$ or 
$\BGab^+(z_1,z_2) \subseteq B$ (or both).
\end{itemize}

\noindent
(This is easily seen by applying a suitable isometry that maps $z_1,z_2$ to the $x$-axis and their midpoint to the origin, so that
$\BGab^-, \BGab^+$ are mapped to ordinary, Euclidean half-disks -- as in Figure~\ref{fig:BGab}.)

In light of observation {\bf($\spadesuit$)}, in order to prove Lemma~\ref{lem:XV} it suffices to prove the following
statement, which we separate out as a lemma for convenient future reference.

\begin{lemma}\label{lem:XVtil}
There exists a $\lambda_0>0$ such that, for all $0<\lambda<\lambda_0$, all $w\geq 0$ and all $p \leq 10\lambda$,
the following holds, setting $r := 2\log(1/\lambda)$.
Writing

$$ \tilde{X}_{\bf{V}}  := 
 \left|\left\{ z \in \Zcal_b : \begin{array}{l} \distH( z, o ) \geq  r+w, \text{ and;} \\
 \text{$\BGab^-(z,o)\cap\Zcal=\emptyset$ or $\BGab^+(z,o)\cap\Zcal=\emptyset$.} 
 \end{array} \right\}\right|, $$

we have

$$ \Ee \tilde{X}_{\bf{V}} \leq 1000 e^{w} e^{-e^{w/2}}. $$

\end{lemma}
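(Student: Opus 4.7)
My plan is to prove Lemma~\ref{lem:XVtil} by a direct computation via the Slivniak--Mecke formula and a change to hyperbolic polar coordinates, exploiting the fact that each half of the Gabriel disk has area growing exponentially in $\distH(o,z)/2$.

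First, applying Corollary~\ref{cor:SlivMeck2col} with $k=1$ and $g(u,\Ucal)$ the indicator of the event defining $\tilde{X}_{\bf V}$, I would write
\[
\Ee \tilde{X}_{\bf V} = p\lambda \int_{\Dee} 1_{\{\distH(o,u)\geq r+w\}} \cdot \Pee\bigl[\BGab^-(u,o)\cap\Zcal=\emptyset \text{ or } \BGab^+(u,o)\cap\Zcal=\emptyset\bigr]\, f(u) \dd u.
\]
Here $u$ lies on the boundary of the Gabriel disk, not its interior, so the event indeed only depends on $\Zcal$ (not on whether $u$ itself is added). Each half $\BGab^\pm(u,o)$ has hyperbolic area $\pi(\cosh(\distH(o,u)/2)-1)$, so by the Poisson property and a union bound the probability in the integrand is at most $2\exp\bigl(-\lambda\pi(\cosh(\distH(o,u)/2)-1)\bigr)$.

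Next, the integrand depends only on $\rho=\distH(o,u)$, so I would switch to hyperbolic polar coordinates using~\eqref{eq:hyppolcoord} to obtain
\[
\Ee \tilde{X}_{\bf V} \leq 4\pi p\lambda \int_{r+w}^{\infty} \exp\!\bigl(-\lambda\pi(\cosh(\rho/2)-1)\bigr)\, \sinh\rho \dd\rho.
\]
For $\rho$ larger than an absolute constant (which is automatic once $r$, hence $\lambda^{-1}$, is large enough), one has the elementary bounds $\cosh(\rho/2)-1\geq e^{\rho/2}/3$ and $\sinh\rho\leq e^\rho$. Substituting $t=e^{\rho/2}$ (so $\dd\rho = 2\dd t/t$ and $e^\rho = t^2$) turns the integral into $8\pi p\lambda \int_{t_0}^\infty t\,\exp(-\lambda\pi t/3)\dd t$ with $t_0 = e^{(r+w)/2} = e^{w/2}/\lambda$. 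A second substitution $s=\lambda\pi t/3$ reduces this to the standard tail $\int_{s_0}^\infty s e^{-s}\dd s = (s_0+1)e^{-s_0}$, where $s_0 = \pi e^{w/2}/3$.

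Collecting the constants and using $p\leq 10\lambda$ (which cancels the $\lambda$'s precisely), I would obtain
\[
\Ee \tilde{X}_{\bf V} \leq \tfrac{72\, p}{\pi\lambda}\bigl(\tfrac{\pi}{3}e^{w/2}+1\bigr)\exp\!\bigl(-\tfrac{\pi}{3}e^{w/2}\bigr),
\]
which is at most $1000\,e^w\exp(-e^{w/2})$ once we note $\pi/3>1$ so that $\exp(-\pi e^{w/2}/3)\leq \exp(-e^{w/2})$ (with constant slack) and $e^{w/2}+1\leq 2e^{w/2}\leq 2e^w$. The ``main obstacle'' is really just bookkeeping: the only nontrivial point is that the factor $\pi/3$ in front of $e^{w/2}$ in the exponent, inherited from the $\pi(\cosh(\rho/2)-1)$ area of a half Gabriel disk divided by the constant $3$ used to bound $\cosh$, must end up strictly exceeding $1$ for the desired double-exponential decay $\exp(-e^{w/2})$ on the right-hand side to be attainable --- and it does, with room to spare.
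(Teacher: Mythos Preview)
Your proof is correct and follows essentially the same route as the paper's: Slivniak--Mecke, a union bound on the two half--Gabriel disks, hyperbolic polar coordinates, and the tail integral $\int_{s_0}^\infty s e^{-s}\dd s=(s_0+1)e^{-s_0}$. The only cosmetic differences are that the paper uses the single substitution $u=\lambda e^{\rho/2}$ (combining your two steps) and the slightly sharper constant $2\pi(\cosh x-1)\geq 2e^x$ in place of your $\pi(\cosh x-1)\geq \pi e^x/3$, leading to $40\pi(e^{w/2}+1)e^{-e^{w/2}}$ where you get $\tfrac{72p}{\pi\lambda}(\tfrac{\pi}{3}e^{w/2}+1)e^{-\pi e^{w/2}/3}$; both fit comfortably under $1000\,e^w e^{-e^{w/2}}$.
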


\begin{proofof}{Lemma~\ref{lem:XVtil}}
We let $\lambda_0 > 0$ be a small constant, to be determined in the course of the proof.
By Corollary~\ref{cor:SlivMeck2col}

$$ \Ee \tilde{X}_{\bf{V}} = p\lambda \int_\Dee\Ee\left[g(z,\Zcal\cup\{z\})\right] f(z)\dd z, $$

\noindent
where $f$ is again given by~\eqref{eq:fdef} and 

$$ g(u,\Ucal) := 1_{\left\{
\begin{array}{l} \distH( u, o ) \geq  r+w, \text{ and;} \\
 \text{$\BGab^-(u,o)\cap\Ucal=\emptyset$ or $\BGab^+(u,o)\cap\Ucal=\emptyset$.} 
 \end{array}
\right\}}. $$

We have

$$ \begin{array}{rcl} 
\Ee \tilde{X}_{\bf{V}} 
& = & \displaystyle 
p\lambda \int_{\Dee \setminus \ballH(o,r+w)}
\Pee\left( \BGab^-(o,z) \cap \Zcal = \emptyset \text{ or } \BGab^+(o,z) \cap \Zcal = \emptyset \right) f(z) \dd z \\[2ex]
& \leq & \displaystyle 
10 \lambda^2 \int_{\Dee \setminus \ballH(o,r+w)} 2 \exp\left[-\frac12 \cdot \lambda \cdot \areaH\left( \BGab(o,z) \right) \right]
f(z)\dd z \\[2ex]
& \leq & \displaystyle
20 \lambda^2 \int_{\Dee \setminus \ballH(o,r+w)} \exp\left[ -\lambda e^{\distH(o,z)/2}\right] f(z)\dd z,  
\end{array} $$

\noindent
where in the last line we use that $\lambda_0$ was chosen sufficiently small; that 
$2\pi(\cosh x - 1) = (1+o_x(1)) \pi e^x$ as $x\to\infty$
so that $2\pi(\cosh x - 1)\geq 2 e^x$ for $x$ sufficiently large; that $\distH(o,z) \geq r+w$; and that 
$r\geq 2\ln(1/\lambda_0)$.

Switching to hyperbolic polar coordinates (i.e.~$z(\alpha,\rho)=(\cos(\alpha)\cdot\tanh(\rho/2),\sin(\alpha)\cdot\tanh(\rho/2))$)
we find

$$ \begin{array}{rcl} 
\Ee \tilde{X}_{\bf{V}} 
& \leq & \displaystyle 
20 \lambda^2 \int_{r+w}^\infty\int_0^{2\pi} e^{ -\lambda e^{\rho/2}}\sinh(\rho)\dd\alpha\dd\rho \\[2ex]
& = & \displaystyle 
20 \lambda^2 \int_{r+w}^\infty e^{ -\lambda e^{\rho/2}} 2\pi \sinh(\rho)\dd\rho \\[2ex]
& \leq & \displaystyle 
20\pi \lambda^2 \int_{r+w}^\infty e^{ -\lambda e^{\rho/2}} e^{\rho} \dd\rho \\[2ex]
& = & \displaystyle
40\pi \int_{\lambda e^{(r+w)/2}}^\infty e^{-u} u \dd u \\[2ex]
& = & \displaystyle
40\pi \left( \lambda e^{(r+w)/2} + 1 \right) \exp\left[ - \lambda e^{(r+w)/2} \right] \\[2ex]
& = & \displaystyle 
40\pi \left( e^{w/2}+1 \right) e^{-e^{w/2}} \\[2ex]
& \leq & 
1000 e^w e^{-e^{w/2}},
\end{array} $$

\noindent 
using the substitution $u = \lambda e^{\rho/2}$ (so that $\dd\rho = \frac{2\dd u}{u}$) in the third line
\end{proofof}

\begin{lemma}\label{lem:XVI}
There exists a $\lambda_0 >0$ such that for all $0<\lambda<\lambda_0$, all $w\geq 0$ and all $p \leq 10\lambda$
the following holds, setting $r := 2\log(1/\lambda)$.
Writing

$$ X_{\bf{VI}}  := 
 \left|\left\{ z \in \Zcal_b : \begin{array}{l} r-w < \distH( z, o ) <  r+w, \text{ and;} \\
 \text{$\exists$ a disk $B$ with $o,z \in \partial B, \Zcal \cap B = \emptyset$ and $\diam_{\Haa^2}(B) \geq r+w$.} 
 \end{array} \right\}\right|, $$

we have

$$ \Ee X_{\bf{VI}} \leq 1000 e^{w} e^{-e^{w/2}}. $$

\end{lemma}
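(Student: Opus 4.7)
The plan is to apply the Slivniak--Mecke formula (Corollary~\ref{cor:SlivMeck2col}) exactly as in the proof of Lemma~\ref{lem:XVtil}, which reduces the problem to bounding, for each fixed $z$ with $r-w < \distH(o,z) < r+w$, the probability
\[
q(z) := \Pee\bigl(\exists\text{ disk } B:\, o,z\in\partial B,\,\Zcal\cap B=\emptyset,\,\diamH(B)\geq r+w\bigr).
\]
The obstacle compared to Lemma~\ref{lem:XVtil} is that the Gabriel disk $\BGab(o,z)$ now has diameter $\distH(o,z)<r+w$, so observation $(\spadesuit)$ alone is too weak: it only produces an empty region of area $\sim e^{\distH(o,z)/2}$, whereas we need to extract an empty region of area $\sim e^{(r+w)/2}$. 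The fix is to exploit that the witness disk $B$ is itself large, and to replace the Gabriel disk by a suitable ``half'' of a canonical disk of the exact diameter $r+w$.

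The key geometric step I would prove is a monotonicity statement for the pencil of disks through $o$ and $z$. Apply a hyperbolic isometry placing the midpoint of $oz$ at the origin and $o,z$ on the $x$-axis, and let $H^+, H^-$ denote the two hyperbolic half-planes bounded by the $x$-axis. By Fact~\ref{lem:conformal1}, every hyperbolic disk through $o,z$ is a Euclidean disk with Euclidean centre on the $y$-axis, and I parameterize these by the signed $y$-coordinate $t$ of the Euclidean centre. A short calculation shows that $(x,y)\in\Dee$ with $y\geq 0$ lies in the Euclidean disk of centre $(0,t)$ and radius $\sqrt{t^2+L^2}$ (where $L$ is half the Euclidean distance $\|o-z\|$) iff $x^2+y^2\leq L^2 + 2ty$, which is monotone non-decreasing in $t$ for $y\geq 0$. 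Consequently the ``upper halves'' $B^{(t)}\cap H^+$ form an increasing family in $t\geq 0$, and symmetrically for lower halves when $t\leq 0$. Let $B^+$ (resp.\ $B^-$) denote the unique disk through $o,z$ of diameter exactly $r+w$ with centre in $H^+$ (resp.\ $H^-$). Then any candidate witness $B$ for the event defining $q(z)$ has centre on one side of $oz$, and monotonicity gives $B\cap H^+\supseteq B^+\cap H^+$ (or the analogous inclusion on the $-$ side).

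It follows that
\[
q(z)\;\leq\;\Pee\bigl((B^+\cap H^+)\cap\Zcal=\emptyset\bigr) \;+\; \Pee\bigl((B^-\cap H^-)\cap\Zcal=\emptyset\bigr).
\]
Since the chord $oz$ cuts the hyperbolic disk $B^\pm$ into two segments, and the hyperbolic centre of $B^\pm$ lies on the $H^\pm$ side of $oz$, the segment $B^\pm\cap H^\pm$ is the larger of the two and therefore satisfies $\areaH(B^\pm\cap H^\pm)\geq \tfrac12\areaH(B^\pm)=\pi(\cosh((r+w)/2)-1)$. For $\lambda$ small enough this is at least $e^{(r+w)/2}$, so, using $r=2\log(1/\lambda)$,
\[
\lambda\cdot\areaH(B^\pm\cap H^\pm)\;\geq\;\lambda e^{(r+w)/2}\;=\;e^{w/2},
\]
and hence $q(z)\leq 2 e^{-e^{w/2}}$. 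Plugging into Slivniak--Mecke and bounding the volume of the annulus $\{r-w<\distH(o,z)<r+w\}$ by $2\pi(\cosh(r+w)-1)\leq \pi e^{r+w}$ gives
\[
\Ee X_{\bf{VI}}\;\leq\; p\lambda\cdot 2e^{-e^{w/2}}\cdot \pi e^{r+w}\;\leq\; 1000\, e^{w}\, e^{-e^{w/2}},
\]
where we use $p\leq 10\lambda$ and $\lambda^2 e^r=1$.

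The main technical obstacle is verifying the monotonicity of $B^{(t)}\cap H^+$ in the hyperbolic setting; once reduced to the Euclidean formulation above via the normalizing isometry (which maps $H^+$ to the Euclidean half $\{y\geq 0\}\cap\Dee$ and the perpendicular bisector to the $y$-axis), this becomes the one-line Euclidean inequality noted above. Everything else is routine manipulation in the spirit of Lemma~\ref{lem:XVtil}.
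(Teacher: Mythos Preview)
Your approach is correct and is essentially the paper's own: your pencil-monotonicity argument is precisely a proof of the paper's observation~$(\clubsuit)$, and your canonical region $B^\pm\cap H^\pm$ plays the same role as the paper's $\DD^\pm(o,z,r+w)$ in the reduction to Lemma~\ref{lem:XVItil}, yielding the identical area lower bound $\tfrac12\,\areaH\bigl(\ballH(o,(r+w)/2)\bigr)\geq e^{(r+w)/2}$. The only step you use without stating it is that the hyperbolic diameter of $B^{(t)}$ is strictly increasing in $|t|$ (so that $\diamH(B)\geq r+w$ forces $|t_B|\geq t_+$ before you can invoke the inclusion $B\cap H^+\supseteq B^+\cap H^+$); this is an easy check on the two $y$-axis boundary points of $B^{(t)}$.
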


In the proof we'll make use of the following definition and observations, that we'll reuse later.
For $z_1,z_2 \in \Dee$ and $\rho>0$ with $\distH(z_1,z_2) < \rho$, there
exist precisely two disks $B$ such that $z_1,z_2 \in \partial B$ and $\diam_{\Haa^2}(B) = \rho$.
We let $\DD(z_1,z_2,\rho)$ denote the union of these two disks.
(The notation $\DD$ stands for ``double disk''.)
The hyperbolic line segment between $z_1$ and $z_2$ splits $\DD(z_1,z_2,\rho)$ into two parts 
of equal hyperbolic area.
We denote by $DD^-(z_1,z_2,\rho)$ the part that is on the left as we travel from $z_1$ to $z_2$ along the 
hyperbolic line segment between them, and by $\DD^+(z_1,z_2,\rho)$ the part on the right. 
See Figure~\ref{fig:DD}.

\begin{figure}[hbt]
\begin{center}
\input{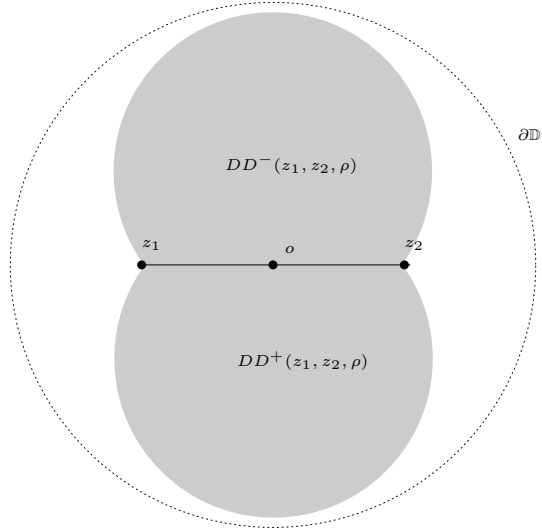}
\end{center}
 \caption{The set $\DD(z_1,z_2,\rho)$, in the special case when $z_1,z_2$ lie on the $x$-axis 
 with the origin $o$ as their midpoint.\label{fig:DD}}
\end{figure}

Another  observation that we'll use repeatedly is:

\begin{itemize}
\item[{\bf($\clubsuit$)}] For all $z_1\neq z_2 \in\Dee$ and $\rho>\distH(z_1,z_2)$ and every hyperbolic disk $B$ such that 
$z_1,z_2 \in \partial B$ 
and $\diam_{\Haa^2}(B) \geq \rho$, we have either $\DD^-(z_1,z_2,\rho) \subseteq B$ or $\DD^+(z_1,z_2,\rho) \subseteq B$ (or both).
\end{itemize}

\noindent
(Again this is easily seen by applying a suitable isometry that maps $z_1,z_2$ to the $x$-axis and their midpoint to the origin, 
as in Figure~\ref{fig:DD}. Fact~\ref{lem:conformal1} ensures that the image of the hyperbolic disk $B$ is a 
Euclidean disk with the images of $z_1,z_2$ on its boundary.)

In light of observation {\bf($\clubsuit$)}, in order to prove Lemma~\ref{lem:XVI} it suffices to prove the following
statement, which we separate out as a lemma for convenient future reference.

\begin{lemma}\label{lem:XVItil}
There exists a $\lambda_0 > 0$ such that for all $0<\lambda<\lambda_0$, all $w\geq 0$ and all $p \leq 10\lambda$
the following holds, setting $r := 2\log(1/\lambda)$.
Writing

$$ \tilde{X}_{\bf{VI}}  := 
 \left|\left\{ z \in \Zcal_b : \begin{array}{l} r-w < \distH( z, o ) <  r+w, \text{ and;} \\
 \text{$\DD^-(o,z,r+w)\cap\Zcal=\emptyset$ or $\DD^+(o,z,r+w)\cap\Zcal=\emptyset$.} 
 \end{array} \right\}\right|, $$

we have

$$ \Ee \tilde{X}_{\bf{VI}} \leq 1000 e^{w} e^{-e^{w/2}}. $$

\end{lemma}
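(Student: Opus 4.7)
My plan is to follow the template of the proof of Lemma~\ref{lem:XVtil} almost verbatim; the only new ingredient is a geometric lower bound on $\areaH(\DD^-)$. Applying Corollary~\ref{cor:SlivMeck2col} I rewrite
\[
\Ee\tilde{X}_{\bf{VI}} \;=\; p\lambda\int_{\Dee}\mathbf{1}_{\{r-w<\distH(o,z)<r+w\}}\cdot\Pee\bigl(\DD^-(o,z,r+w)\cap\Zcal=\emptyset \text{ or } \DD^+(o,z,r+w)\cap\Zcal=\emptyset\bigr)\,f(z)\,\dd z.
\]
The probability inside is handled by a union bound. Reflection across the hyperbolic geodesic through $o$ and $z$ is a hyperbolic isometry swapping $\DD^-$ and $\DD^+$, so the two sets have the same hyperbolic area and the probability factor is at most $2\exp(-\lambda\areaH(\DD^-(o,z,r+w)))$.

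The heart of the argument is the geometric claim $\areaH(\DD^-(o,z,r+w)) \geq \pi(\cosh((r+w)/2)-1)$ for every $z\in\Dee$ with $\distH(o,z)<r+w$. To prove this, rotate $\Dee$ about $o$ so that $z$ lies on the positive $x$-axis; the geodesic $oz$ is then the $x$-axis. Let $B_+$ and $B_-$ be the two hyperbolic disks of diameter $r+w$ whose boundaries contain $\{o,z\}$; by Fact~\ref{lem:conformal1} these are also Euclidean disks, and since reflection across the $x$-axis is a hyperbolic isometry swapping them, their Euclidean centres are mirror images $(c,h_e)$ and $(c,-h_e)$ with equal Euclidean radius $\rho_e$. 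An elementary Euclidean inequality $(y-h_e)^2\leq(y+h_e)^2$, valid for $y,h_e>0$, shows the upper crescent $B_-^{\text{above}}$ is contained in $B_+$, so $\DD^-=B_+^{\text{above}}\cup B_-^{\text{above}}=B_+^{\text{above}}$. Moreover, the hyperbolic centre $c^*$ of $B_+$ is not on the $x$-axis (otherwise the reflection would fix it, contradicting $B_+\neq B_-$); by symmetry we may take it above. The hyperbolic half-turn about $c^*$ is an isometry preserving $B_+$ that sends each point $p\in B_+^{\text{below}}$ to the point $p'$ with $c^*$ the hyperbolic midpoint of $pp'$, which necessarily has positive $y$-coordinate. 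Hence $B_+^{\text{below}}$ is carried injectively into $B_+^{\text{above}}$, giving $\areaH(B_+^{\text{above}})\geq\areaH(B_+^{\text{below}})$ and therefore $\areaH(\DD^-)\geq\tfrac12\areaH(B_+)=\pi(\cosh((r+w)/2)-1)$.

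With this in hand the rest is routine. Choosing $\lambda_0$ small enough that $(r+w)/2\geq r/2=\log(1/\lambda)\geq 2$ ensures $\cosh((r+w)/2)-1\geq\tfrac13 e^{(r+w)/2}$, so using $\lambda e^{r/2}=1$ we get $\lambda\areaH(\DD^-)\geq \tfrac{\pi}{3}e^{w/2}\geq e^{w/2}$ and the probability factor is at most $2e^{-e^{w/2}}$. The annulus $\{z:r-w<\distH(o,z)<r+w\}$ has hyperbolic area at most $2\pi\cosh(r+w)\leq 2\pi e^{r+w}$. Combining, using $p\leq 10\lambda$ and $p\lambda e^{r+w}=(p/\lambda)e^w\leq 10 e^w$:
\[
\Ee\tilde{X}_{\bf{VI}} \;\leq\; p\lambda\cdot 2e^{-e^{w/2}}\cdot 2\pi e^{r+w} \;=\; 4\pi\cdot(p/\lambda)\cdot e^w\cdot e^{-e^{w/2}} \;\leq\; 40\pi\,e^w e^{-e^{w/2}} \;\leq\; 1000\,e^w e^{-e^{w/2}}.
\]

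The step I expect to be the main obstacle is the geometric bound on $\areaH(\DD^-)$. A naive analogue of the proof of Lemma~\ref{lem:XVtil} using the Gabriel half-disk $\BGab^-(o,z)$ would only yield an area of order $e^{\distH(o,z)/2}$; since $\distH(o,z)$ can be as small as $r-w$, this degrades the exponent to $e^{-w/2}$, which is useless. Exploiting that the diameter of the disks constituting $\DD$ is \emph{fixed} at $r+w$ regardless of $\distH(o,z)$, and identifying $\DD^-$ with the larger of the two pieces into which $oz$ cuts $B_+$, is what makes the argument go through.
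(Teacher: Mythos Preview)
Your proof is correct and follows the same template as the paper's: apply Corollary~\ref{cor:SlivMeck2col}, bound the probability by $2\exp(-\lambda\,\areaH(\DD^-))$, lower-bound $\areaH(\DD^-)$ by half the area of a ball of radius $(r+w)/2$, and finish with the same arithmetic. The only difference is in the geometric step, where you work harder than necessary: the paper simply notes that the reflection across the geodesic $oz$ swaps $\DD^-$ and $\DD^+$, so $\areaH(\DD^-)=\areaH(\DD^+)=\tfrac12\,\areaH(\DD)\geq\tfrac12\,\areaH\bigl(\ballH(o,(r+w)/2)\bigr)$, since $\DD$ contains one of its two constituent balls --- there is no need to identify $\DD^-$ with $B_+^{\text{above}}$ or to invoke the hyperbolic half-turn.
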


\begin{proofof}{Lemma~\ref{lem:XVItil}}
Applying Corollary~\ref{cor:SlivMeck2col}, we have

$$ \Ee \tilde{X}_{\bf{VI}} = p\lambda \int_{\ballH(o,r+w)} \Pee( \DD^-(o,z,r+w) \cap \Zcal = \emptyset \text{ or } 
\DD^-(o,z,r+w) \cap \Zcal = \emptyset ) f(z) \dd z, $$

\noindent
where $f$ is as given by~\eqref{eq:fdef}.
Next we point out that, by symmetry

$$ \areaH( \DD^-(o,z,r+w) ) = \areaH( \DD^+(o,z,r+w) ) \geq \frac12 \areaH( \ballH(o,(r+w)/2 )) 
\geq e^{(r+w)/2}, $$

\noindent
where the last inequality holds provided we chose $\lambda_0$ sufficiently large, 
since $r \geq 2\ln(1/\lambda_0)$ and $\areaH(\ballH(0,x)) = 2\pi(\cosh x - 1) = (1+o_x(1)) \pi e^x$ as $x\to\infty$.

Hence

$$ \begin{array}{rcl} 
\Ee \tilde{X}_{\bf{VI}}
& \leq & p \lambda \areaH( \ballH(0,r+w) ) \cdot 2 e^{-\lambda e^{(r+w)/2}} \\
& \leq & 1000 \lambda^2 e^{r+w} e^{-\lambda e^{(r+w)/2}} \\
& = & 1000 e^w e^{-e^{w/2}}, 
\end{array} $$

\noindent
using the choice of $r$.
\end{proofof}

We are now ready to prove the following statement.

\begin{proposition}\label{prop:EXgr1}
For every $0<\eps<\frac{1}{1000}$ there exist $w,\vartheta>0$ such that the following holds. 
For every $h>0$ there exists a $\lambda_0=\lambda_0(\eps,w,\vartheta,h)$ such that, for all $0<\lambda<\lambda_0$, 
setting $p:=(1+\eps)(\pi/3)\lambda$ and $r := 2\ln(1/\lambda)$, we have that 

$$ \Ee X > 1. $$

\noindent
(With $X$ as defined in~\eqref{eq:Xdef}.)
\end{proposition}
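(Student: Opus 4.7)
The plan is to write $X$ as the total number $N$ of black Delaunay-neighbours of the origin minus those black neighbours that fail one of the additional conditions defining $\Xcal$, and then combine Isokawa's formula with the error bounds from Lemmas~\ref{lem:XI}--\ref{lem:XVI}.

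First I would set up a union-bound style decomposition. Let $N:=|\{z\in\Zcalb : o,z \text{ are adjacent in the Delaunay graph of } \Zcal\cup\{o\}\}|$. For any such $z$, choose a certifying empty disk $B$ with $o,z\in\partial B$ and $B\cap\Zcal=\emptyset$. The event $z\notin\Xcal$ forces at least one of: (a) $z\notin\ballH(o,r+w)\setminus\ballH(o,r-w)$, caught by $X_{\bf I}$ (if $\distH(o,z)\le r-w$) or $X_{\bf V}$ (if $\distH(o,z)\ge r+w$); (b) $z\in\sect{o}{v}{\vartheta}$, caught by $X_{\bf II}$; (c) there is another $z'\in\Zcalb\cap C(o,v,r,w,\vartheta)$ with $\angle z'oz\le\vartheta$, caught by $X_{\bf III}$; (d) some point of $\Zcal\setminus\{z\}$ lies in $\ballH(z,h)$, caught by a variant $X_{\bf IV}^{*}$ of $X_{\bf IV}$ in which the second Poisson factor ranges over all of $\Zcal$; or (e) every certifying disk for $z$ has $\diamH\ge r+w$, in which case observation $(\clubsuit)$ from the proof of Lemma~\ref{lem:XVItil} forces one of $\DD^{\pm}(o,z,r+w)$ to lie inside $B$ (hence be empty), so $z$ is caught by $X_{\bf VI}$. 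Summing over $z$,
$$X \;\ge\; N - X_{\bf I} - X_{\bf II} - X_{\bf III} - X_{\bf IV}^{*} - X_{\bf V} - X_{\bf VI}.$$

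Taking expectations, Isokawa's formula gives $\Ee N = p\bigl(6 + 3/(\pi\lambda)\bigr) = (1+\eps)(1+2\pi\lambda) \to 1+\eps$ as $\lambda\searrow 0$. The lemmas supply $\Ee X_{\bf I}\le 1000e^{-w}$, $\Ee X_{\bf II}\le 1000\vartheta e^w$, $\Ee X_{\bf III}\le 1000\vartheta e^{2w}$, and $\Ee X_{\bf V},\Ee X_{\bf VI}\le 1000 e^w e^{-e^{w/2}}$; a routine Slivniak--Mecke calculation (identical to the proof of Lemma~\ref{lem:XIV} except one factor of $p$ is replaced by $1$, since the second point need not be black) yields $\Ee X_{\bf IV}^{*}\le 1000\lambda e^{w+h}$.

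The parameters can then be chosen in the order $\eps\mapsto w\mapsto\vartheta\mapsto\lambda_0$: first fix $w=w(\eps)$ large enough that $\Ee X_{\bf I}+\Ee X_{\bf V}+\Ee X_{\bf VI}<\eps/10$ (using the double-exponential decay of $e^w e^{-e^{w/2}}$ together with the ordinary decay of $e^{-w}$); next fix $\vartheta=\vartheta(\eps,w)$ small enough that $\Ee X_{\bf II}+\Ee X_{\bf III}<\eps/10$; finally, for any $h>0$, take $\lambda_0=\lambda_0(\eps,w,\vartheta,h)$ small enough that $\Ee X_{\bf IV}^{*}<\eps/10$ and $\Ee N>1+9\eps/10$. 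Then $\Ee X > 1+9\eps/10 - 3\eps/10 > 1$. The only genuinely new piece of work is the bound on $\Ee X_{\bf IV}^{*}$, which is essentially a copy of the proof of Lemma~\ref{lem:XIV}; everything else is an exercise in combining pre-established bounds, and the main conceptual point is the clean dichotomy in step~(e) furnished by the ``double disk'' observation $(\clubsuit)$.
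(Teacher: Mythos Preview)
Your proposal is correct and follows essentially the same approach as the paper: write $X \ge D_b - (X_{\bf I}+\cdots+X_{\bf VI})$, invoke Isokawa's formula for $\Ee D_b$, apply Lemmas~\ref{lem:XI}--\ref{lem:XVI}, and choose the constants in the order $w$, then $\vartheta$, then $\lambda_0$.

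One small point worth noting: your use of $X_{\bf IV}^{*}$ (with the second point ranging over all of $\Zcal$, not just $\Zcalb$) is in fact the correct version, since the condition $\Zcal\cap\ballH(z,h)=\{z\}$ in the definition of $\Xcal$ refers to the full process $\Zcal$. The paper's proof writes $X_{\bf IV}$, but the bound it quotes ($\lambda_0^2 e^{w+h}$) would need to become $\lambda_0 e^{w+h}$ once the second factor of $p$ is dropped---exactly as you compute. Either way the term vanishes as $\lambda_0\searrow 0$, so the argument goes through unchanged; your version is simply the cleaner statement of the comparison.
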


\begin{proof}
As pointed out right after the definition of $X$, its probability distribution is the same for any choice of 
$u\neq v\in\Dee$. For definiteness we take $u=o$ and $v=(1/2,0)$.
We let $h>0$ be an arbitrary constant. We let $w,\lambda_0>0$ be large constants, and $\vartheta>0$ a small constant, 
to be determined in the course of the proof.

Let us denote by $D_b$ the number of black cells adjacent to the typical cell. (Recall that ``typical cell'' refers to the 
cell of the origin in the Voronoi tessellation for $\Zcal\cup\{o\}$.)
By Isokawa's formula

$$ \Ee D_b = p \Ee D = p \cdot \left( 6 + \frac{3}{\pi\lambda} \right) > 1+\eps, $$

\noindent 
using the choice of $p$ in the inequality.
Next, we point out that 

$$ X \geq D_b - \left( X_{\bf{I}} + X_{\bf{II}} + X_{\bf{III}} + X_{\bf{IV}}+X_{\bf{V}}+X_{\bf{VI}}\right). $$

\noindent
with $X_{\bf{I}}$--$X_{\bf{VI}}$ as defined in Lemmas~\ref{lem:XI}--\ref{lem:XVI}.
Applying these lemmas we see that, provided we chose $\lambda_0$ sufficiently small, 

$$ \Ee X > 1+\eps - 1000 \cdot \left( e^{-w} + \vartheta e^w + \vartheta e^{2w} + \lambda_0^2 e^{w+h} + 2 e^w e^{-e^{w/2}} \right)
> 1+\eps/2, $$

\noindent
where the second inequality holds provided we chose the constant $w$ sufficiently large and the constants 
$\vartheta, \lambda_0$ sufficiently small.
(To be more explicit : we can for instance first choose $w$ such that $e^{-w}, e^w e^{-e^{w/2}} < \eps / 10^5$ and then 
we choose $\vartheta$ such that $\vartheta e^{2w} < \eps/10^5$ and finally choose $\lambda_0$ such that 
$\lambda_0^2 e^{w+h} < \eps/10^5$ -- and at the same time $\lambda_0$ is small enough so that Lemma~\ref{lem:XV} and~\ref{lem:XVI} apply.)
\end{proof}

As pointed out earlier, Proposition~\ref{prop:ub}, the upper bound in our main theorem, follows directly from 
Propositions~\ref{prop:GWcouple} and~\ref{prop:EXgr1}.

\subsection{The lower bound\label{sec:lb}}

Here we will show the following proposition, which constitutes the remaining half of our main result. 

\begin{proposition}\label{prop:lb} 
For every $\eps>0$, there exists a $\lambda_0=\lambda_0(\eps)>0$ such that 
for all $0<\lambda<\lambda_0$, we have $p_c(\lambda) \geq (1-\eps)\cdot (\pi/3)\cdot \lambda$.
\end{proposition}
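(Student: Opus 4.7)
The plan is to implement the strategy the authors sketch in the introduction, centred on \emph{pseudo-edges}, their \emph{certificates}, and a good/bad dichotomy along paths. First I would introduce a notion of pseudo-edge between $z,z'\in\Zcal_b\cup\{o\}$ that is weaker than being a Delaunay edge but still carries an explicit local certificate $\cert(z,z')\subseteq\Dee$ with the property that the pseudo-edge is present if and only if $\cert(z,z')\cap\Zcal=\emptyset$. Natural candidates for certificates are built from the half Gabriel disks $\BGab^{\pm}(z,z')$ and the double disks $\DD^{\pm}(z,z',r+w)$ already used in Section~\ref{sec:ub}; one just has to ensure every Delaunay edge is captured by at least one such certificate. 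Along any pseudo-path $oz_1z_2\dots z_n$ I declare the pseudo-edge $z_{i-1}z_i$ \emph{good} if $r-w<\distH(z_{i-1},z_i)<r+w$ and $\angle z_{i-2}z_{i-1}z_i\geq\vartheta$ (with $r=2\ln(1/\lambda)$ and constants $w,\vartheta$ to be fixed), and \emph{bad} otherwise. Since every Delaunay edge is a pseudo-edge, to rule out percolation it suffices to show that the black pseudo-cluster of $o$ is almost surely finite.

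The next step is the structural dichotomy referenced in the introduction: if $o$ lies in an infinite black pseudo-cluster, then either (a)~there exists an infinite pseudo-path from $o$ all of whose pseudo-edges are good, or (b)~there exists an infinite linked sequence of chunks $P_1,P_2,\dots$ starting at $o$, where a chunk is a pseudo-path whose final edge is bad and all earlier edges are good, and the linking is as described in the introduction (the first vertex of $P_{i+1}$ is close to a certificate of $P_i$, the first certificate of $P_{i+1}$ may meet a certificate of $P_i$, and all further certificates of $P_{i+1}$ are disjoint from the certificates already used). This is a deterministic combinatorial statement: walk along an infinite pseudo-path in the cluster; every time a bad pseudo-edge is encountered, close the current chunk and open the next one. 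Either only finitely many bad edges appear (giving case (a)) or infinitely many chunks are produced (giving case (b)).

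I would then bound $\Ee N_n^{\text{good}}$, the expected number of good pseudo-paths of length $n$ starting at $o$, via Corollary~\ref{cor:SlivMeck2col}. The crucial geometric input is that, on a good path, Propositions~\ref{prop:treeistree}--\ref{prop:treedisksector} force the $n$ certificates to be pairwise (essentially) disjoint, so the probability that all certificates are empty factors as $\prod_{i=1}^n\exp(-\lambda\,\areaH(\cert_i))$. A change of variables to hyperbolic polar coordinates as in Section~\ref{sec:ub} then reduces each factor to the expected number of good black neighbours of a typical vertex, which by Isokawa's formula is $(1+o(1))\cdot p\cdot 3/(\pi\lambda)$; under $p=(1-\eps)(\pi/3)\lambda$ this is at most $1-\eps/2$ for $\lambda$ small. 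Hence $\Ee N_n^{\text{good}}\leq C(1-\eps/2)^n\to 0$, so case (a) occurs with probability zero.

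Finally I would bound the expected number of linked sequences of chunks of total length $n$. Each chunk ends with a bad pseudo-edge, and the bad event decomposes into the six rare sub-events already quantified in Lemmas~\ref{lem:XI}--\ref{lem:XVI}; each contributes an arbitrarily small factor by choosing $w$ large, $\vartheta$ small and $\lambda$ small. At every junction the linking condition restricts the start of the next chunk to a region of bounded hyperbolic area, contributing only a bounded constant per junction (this is where disjointness of the remaining certificates is used to apply Slivniak--Mecke). Summing over the lengths of the chunks and over $n$ shows that the expected number of linked sequences of length $n$ starting at $o$ is bounded by $C'(1-\eps/4)^n$, which tends to zero. Combining with the case-(a) bound via the dichotomy yields $\Pee(o\text{ lies in an infinite black pseudo-cluster})=0$, proving the proposition. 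The main obstacle I expect is the precise definition of the certificates and of the linking so that three things hold simultaneously: certificates are genuinely (pairwise) disjoint along good segments of a pseudo-path, the rare bad-edge factors beat the bounded cost of enumerating valid linkings, and every Delaunay edge is covered by some pseudo-edge so that the whole argument really applies to the original percolation cluster. This is precisely what the authors flag as the most novel part of the paper.
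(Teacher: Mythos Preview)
Your proposal follows the paper's approach closely and identifies the right objects (pseudo-edges, certificates, good/bad edges, chunks, linked sequences). However, two of your steps are genuinely too optimistic and would not go through as stated.

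First, your construction of chunks (``close the current chunk at each bad edge and open the next one'') does not yield the disjointness you need. If you simply partition an infinite pseudo-path at its bad edges, nothing prevents the certificates of chunk $P_{17}$ from overlapping those of chunk $P_3$; the path can revisit old regions. The paper's construction is different and essential: after closing a chunk $P_{i-1}$, one looks for the \emph{largest} index $s_i$ on the path that still ``interacts'' with $\cert(P_{i-1})$ (i.e.\ lies within $r/1000$ of it or has its certificate meeting it), and starts $P_i$ at $z_{s_i}$. This skips a portion of the path but guarantees that all certificates of $P_i$ beyond the first are disjoint from $\cert(P_1)\cup\dots\cup\cert(P_{i-1})$. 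Without this, the Slivnyak--Mecke factorisation you appeal to fails.

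Second, your claim that ``at every junction the linking condition restricts the start of the next chunk to a region of bounded hyperbolic area, contributing only a bounded constant per junction'' is false. The region in question is (a neighbourhood of) $\cert(P_{i-1})$, whose area grows linearly with the length $k_{i-1}$ of $P_{i-1}$ and, if the final bad edge of $P_{i-1}$ is of type~IV with $\distH\approx r+v$, carries an extra factor $e^{v/2}$. The junction cost is therefore of order $k_{i-1}\cdot e^{c\,v(\tau_{i-1})}$, not $O(1)$. In the paper this is handled by tracking not $\Ee S_{n,k}^{\tau}$ alone but the weighted sum
\[
\Sigma_n \;=\; 10^7\sum_{\ell\ge 1}\ell\Bigl(e^{2w_2}\Ee S_{n,\ell}^{\mathbf{II}}+e^{2w_2}\Ee S_{n,\ell}^{\mathbf{III}}+\sum_{v\ge w_2}e^{2v+2}\Ee S_{n,\ell}^{\mathbf{IV},v}\Bigr),
\]
and proving a recursion $\Sigma_{n+1}\le \tfrac12\,\Sigma_n$. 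The linear-in-$\ell$ junction cost is absorbed because the good-path part of each chunk contributes a factor $(1-\eps/3)^{\ell-1}$ and $\sum_\ell \ell(1-\eps/3)^{\ell-1}<\infty$; the $e^{cv}$ junction cost is absorbed because the probability of a type-IV bad edge of length $\approx r+v$ decays like $e^{-e^{v/2}}$. You have the right intuition that bad-edge rarity must beat the linking cost, but the bookkeeping is two-dimensional (in $\ell$ and in $v$), not a single bounded constant.
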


In the proof of Proposition~\ref{prop:ub} we could in a sense afford to ``ignore'' some adjacencies in the Voronoi tessellation. 
The supercritical Galton-Watson tree we constructed in the proof of Proposition~\ref{prop:GWcouple} uses only adjacencies of
convenient lengths and is such that if two edges share an endpoint, the angle they make is not too small. 

Now we need to show that no infinite black component exists almost surely, for $\lambda$ sufficiently small
and $p = (1-\eps)\cdot (\pi/3)\cdot \lambda$. 
We cannot a priori exclude the possibility that an infinite component exists but every infinite connected subgraph 
contains (has to contain)
 ``unusual'' edges. 
 In particular, a hypothetical infinite component
 might exist that does not contain an infinite $(r,w,\vartheta)$-tree. 
 
To make our life easier, we adopt a more generous notion of adjacency, in the form of {\em pseudopaths}.

\begin{definition}
Given parameters $r,w_1,w_2,\vartheta>0$, we say a (finite or infinite) sequence $u_0,u_1, \dots \in \Dee$ of distinct points 
is a {\em pseudopath} (wrt.~$r,w_1,w_2,\vartheta$ and $\Zcal$) if one of the following holds 
for each $i\geq 1$:

\begin{enumerate}
 \item[{\bf I}] $r-w_1 < \distH(u_{i-1},u_i) < r+w_2, \angle u_{i-2}u_{i-1}u_{i} \geq \vartheta$ and one of the following holds.
 There exists a disk $B$ such that $u_{i-1},u_{i}\in\partial B, 
 B \cap \Zcal = \emptyset$ and $\diam_{\Haa^2}(B) < r+w_2$, or; $\DD^-(u_{i-1},u_{i},r+w_2) \cap \Zcal = \emptyset$, or;
 $\DD^+(u_{i-1},u_{i},r+w_2) \cap \Zcal = \emptyset$.
\item[{\bf II}] $\distH(u_{i-1},u_{i}) \leq r-w_1$;
\item[{\bf III}] $r-w_1 < \distH(u_{i-1},u_{i}) < r+w_2$ and $\angle u_{i-2}u_{i-1}u_{i} < \vartheta$;
\item[{\bf IV}] $\distH(u_{i-1},u_{i}) \geq r+w_2$ and either $\BGab^-(u_{i-1},u_{i}) \cap \Zcal = \emptyset$, or
$\BGab^+(u_{i-1},u_{i}) \cap \Zcal = \emptyset$, (or both).
\end{enumerate}
\end{definition}

\noindent
(Of course the demand that $\angle u_{i-2}u_{i-1}u_{i} \geq \vartheta$ in {\bf I} only applies when $i\geq 2$, and 
similarly the case {\bf III} can only occur when $i\geq 2$.)
The length of a finite pseudopath $P=u_0,u_1,\dots,u_n$ is $n$, the number 
of points minus one, and we use the term {\em pseudo-edge} for a consecutive pair of points $u_{i-1}, u_i$ on a pseudopath.
In particular pseudopaths of length one are pseudo-edges, but not all pseudo-edges are pseudopaths of 
length one (because of {\bf III} which can only apply when the pseudopath has length $\geq 2$).
As in the case of ordinary paths, we denote by $V(P)$ the set of vertices of the pseudopath $P$.

With each pair of points $u,v \in \Dee$ of a pseudopath we associate a ``certificate'' 
$\cert(u,v) \subseteq \Dee$. This will be a region such that in order to verify that $uv$ is a pseudo-edge, in addition 
to the location of the previous point 
of the pseudopath if $uv$ is part of a pseudopath of length $\geq 2$, 
only $\Zcal \cap \cert(u,v)$ is relevant. 
Specifically, we set

$$ \cert(u,v) := \begin{cases} 
\DD(u,v,r+w_2) & \text{ if $r-w_1<\distH(u,v)<r+w_2$; } \\
\BGab(u,v) & \text{ if $\distH(u,v)\geq r+w_2$; } \\
\emptyset & \text{ otherwise. }
\end{cases}
$$ 

\noindent
(Note that if $\distH(u,v) < r+w_2$ and $B$ is a disk with $u,v\in\partial B$ and 
$\diam_{\Haa^2}(B) < r+w_2$ then $B \subseteq \DD(u,v,r+w_2)$.
Also note that if $\distH(u,v)\geq r-w_1$ then $\cert(u,v) \supseteq \BGab(u,v)$.)
For notational convenience we also set 

$$ \cert(u_0,\dots,u_k) := \cert(u_0,u_1) \cup \dots \cup \cert(u_{k-1},u_k), $$

\noindent
for every sequence $u_0,\dots,u_k \in \Dee$.

Pseudo-edges of type {\bf I} will be called {\em good}, and all other types of pseudo-edges are {\em bad}.
A pseudopath is good if all its pseudo-edges are good. 

A finite pseudopath $P = u_0,\dots,u_k$ is called a {\em chunk} if
the final pseudo-edge $u_{k-1}u_k$ is bad and all other pseudo-edges are good. 
So a chunk can for instance consist of a single bad pseudo-edge. 

\begin{definition}\label{def:seqchunks}
A {\em linked sequence of chunks} is a (finite or infinite) sequence $P_1, P_2,\dots$ of chunks such that
\begin{enumerate}
 \item $V(P_i)\cap V(P_j) =\cert(P_i) \cap \cert(P_j)=\emptyset$ if $|i-j|>1$, and;
 \item For each $i\geq 2$, writing $P_{i-1} = u_0^{i-1},\dots,u_{k_{i-1}}^{i-1}$ and $P_i = u_0^i,\dots,u_{k_i}^i$, 
 we have 
 
$$ \begin{array}{l} 
\{u_1^i,\dots,u_{k_i}^i\} \cap V(P_{i-1})=\emptyset, \\
\cert(u_1^i,\dots,u_{k_i}^i) \cap \cert(P_{i-1})  = \emptyset, \\
\distH( u_j^i, \cert(P_{i-1}))\geq \frac{r}{1000} \text{ for all $1\leq j \leq k_i$}, 
 \end{array} $$ 

\noindent
and in addition one of the following holds:
 \begin{itemize}
  \item[{\bf a)}] $u_0^i = u_{k_{i-1}}^{i-1}$, or
  \item[{\bf b)}] $u_0^i \not\in V(P_{i-1})$ and $\cert(u_0^i,u_1^i) \cap \cert( P_{i-1} ) \neq \emptyset$, or
  \item[{\bf c)}] $u_0^i \not\in V(P_{i-1}), \cert(u_0^i,u_1^i) \cap\cert( P_{i-1} )=\emptyset$ and 
  $\distH( u_0^i, \cert(P_{i-1})) < r/1000$.
 \end{itemize}
\end{enumerate}
\end{definition}

\noindent
We'll say a linked sequence of chunks is infinite if the sum of the lengths of the chunks is infinite. 
In other words, an infinite linked sequence of chunks either consists of infinitely many finite chunks, or 
consists of finitely many chunks of which the last chunk has infinite length.

\begin{proposition}\label{prop:lbprep}
For every $\lambda>0, 0\leq p \leq 1$ and $r,w_1,w_2,\vartheta>0$, almost surely, 
one of the following holds. 
\begin{enumerate} 
 \item\label{itm:lbprep1} The black component of $o$ in the Voronoi tessellation for $\Zcal \cup \{o\}$ is finite, or;
 \item\label{itm:lbprep2} There is a black, infinite, good pseudopath, or;
 \item\label{itm:lbprep3} There is a black, infinite linked sequence of chunks starting from $o$.
\end{enumerate}
\end{proposition}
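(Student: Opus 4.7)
The plan is to assume the black component of $o$ is infinite (so~\ref{itm:lbprep1} fails) and show that either~\ref{itm:lbprep2} or~\ref{itm:lbprep3} holds. Almost surely every Voronoi cell has finite degree, so an infinite component containing $o$ admits an infinite simple path $\pi=(o=v_0,v_1,v_2,\dots)$ in the black Delaunay subgraph, for instance by applying K\"onig's lemma to a BFS tree rooted at $o$. Every Delaunay edge is a pseudo-edge: the Delaunay witness disk $B$ with $v_{j-1},v_j\in\partial B$ and $B\cap\Zcal=\emptyset$, combined with observations~\textbf{($\spadesuit$)} and~\textbf{($\clubsuit$)} and the length/angle dichotomy, forces $v_{j-1}v_j$ into one of the four types \textbf{I}--\textbf{IV}. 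Thus $\pi$ is an infinite black pseudopath starting at $o$.

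If only finitely many pseudo-edges of $\pi$ are bad, then the infinite tail after the last bad pseudo-edge is a black infinite good pseudopath and~\ref{itm:lbprep2} follows. Otherwise $\pi$ has infinitely many bad pseudo-edges; cutting $\pi$ just after each bad pseudo-edge gives a natural decomposition $\pi=Q_1Q_2Q_3\cdots$ into chunks. Consecutive chunks share their boundary vertex, so linking condition~\textbf{a)} of Definition~\ref{def:seqchunks} holds for free, and since vertices of $\pi$ are distinct, the vertex-disjointness requirements (both in condition~1 and condition~2) hold automatically.

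What may still fail is certificate disjointness and the $r/1000$-distance condition between non-adjacent chunks. I would repair these by a maximality argument: apply Zorn's lemma to the poset of finite valid black linked sequences of chunks starting at $o$, ordered by extension, and take a maximal element. If this maximal element is infinite, \ref{itm:lbprep3} holds. Otherwise it is a finite sequence $P_1,\dots,P_n$ terminating in a chunk whose endpoint is some vertex $v$. Because the black cluster of $v$ is infinite, it contains an infinite black Delaunay continuation $\pi'$; and because $F:=\cert(P_1)\cup\cdots\cup\cert(P_{n-1})$ is a finite union of bounded sets in $\Dee$, the path $\pi'$ must eventually leave every bounded neighbourhood of $F$ permanently. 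If, beyond this permanent exit, $\pi'$ consists only of good pseudo-edges, we obtain~\ref{itm:lbprep2}; otherwise I would truncate $\pi'$ so as to build a next chunk $P_{n+1}$ linked to $P_n$ through linking type~\textbf{a)}, \textbf{b)}, or~\textbf{c)}, contradicting the maximality of $n$.

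The main obstacle is making this splicing step fully precise: one must produce a valid $P_{n+1}$ whose first pseudo-edge invokes one of the linking types with respect to $P_n$, whose interior certificates are disjoint from $F\cup\cert(P_n)$, whose interior vertices satisfy the $r/1000$-distance bound, and whose last pseudo-edge is bad. This requires a case analysis on which condition of Definition~\ref{def:seqchunks} first fails along $\pi'$ -- whether an interior vertex lies in or too close to $\cert(P_{i'})$ for some $i'\leq n-1$, or a certificate intersects $F$, or the failure occurs at a good versus bad pseudo-edge -- and, in each case, selecting either the truncation starting at $v$ (type~\textbf{a)}) or restarting at the first re-exit from $F$ (type~\textbf{b)} if the ensuing certificate still hits $\cert(P_n)$, type~\textbf{c)} if it only comes within $r/1000$), and then extending along $\pi'$ until the next subsequent bad pseudo-edge, which exists because we assumed $\pi'$ has infinitely many bad pseudo-edges (else~\ref{itm:lbprep2} already holds).
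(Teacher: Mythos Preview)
Your outline has the right overall shape, but there are two substantive gaps that make the argument incomplete as stated.

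\textbf{First gap: the ``permanently leaves'' claim.} You assert that the continuation $\pi'$ must eventually leave every bounded neighbourhood of $F=\cert(P_1)\cup\dots\cup\cert(P_{n-1})$ permanently. Finite degree (together with local finiteness of $\Zcal$) only tells you that the \emph{vertices} of $\pi'$ eventually leave any bounded ball. It does not control the \emph{certificates}: a type~\textbf{IV} pseudo-edge of length much larger than $r$ can have one endpoint arbitrarily far from $F$ while $\BGab$ still reaches back and meets $F$. The paper proves and invokes a separate almost-sure lemma: for every $x>0$, only finitely many pseudo-edges $z_1z_2$ with $z_1,z_2\in\Zcal$ have $\cert(z_1,z_2)\cap\ballH(o,x)\neq\emptyset$. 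This is what guarantees that along any infinite path there is a \emph{last} pseudo-edge whose certificate or endpoint interacts with a given bounded region, and it is essential input to the construction.

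\textbf{Second gap: the maximality argument loses the inductive structure.} Conditions~\textbf{b)} and~\textbf{c)} of Definition~\ref{def:seqchunks} require the first edge of $P_{n+1}$ to interact specifically with $\cert(P_n)$, not with $\cert(P_j)$ for some $j<n$. In your scheme, the ``restart at the first re-exit from $F$'' may well be a point whose certificate meets $\cert(P_j)$ for some $j<n$ but not $\cert(P_n)$; this is not a valid link, and there is no reason your case analysis produces one. The paper sidesteps this entirely by never leaving the original fixed path $P=z_0,z_1,\dots$: at each stage it sets $s_i$ to be the \emph{largest} index $\geq t_{i-1}$ that interacts with $P_{i-1}$ (well-defined precisely by the certificate-finiteness lemma above), and then $P_i:=z_{s_i},\dots,z_{t_i}$ with $t_i$ the next bad edge. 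The choice of $s_{i-1}$ inductively ensures no index beyond $s_{i-1}$ interacts with $P_1,\dots,P_{i-2}$; hence interactions at stage $i$ can only be with $P_{i-1}$, exactly as the definition requires, and condition~(i) for $|i-j|>1$ follows automatically. Your Zorn-based construction, by contrast, allows the path to change between stages and so forfeits this clean inductive guarantee; making the splicing step precise would essentially require rediscovering the ``largest interacting index'' idea anyway.
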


\noindent
For clarity, we emphasize that the infinite pseudopath mentioned in item~\ref{itm:lbprep2} does not need to contain 
the origin $o$.

Our strategy for the proof of Proposition~\ref{prop:lb} is of course to show, after having established 
Proposition~\ref{prop:lbprep}, that there is a 
choice of $r,w_1,w_2,\vartheta$ such that, for all small enough $\lambda$ and all $p\leq(1-\eps)\cdot(\pi/3)\cdot\lambda$,
almost surely options~\ref{itm:lbprep2} and~\ref{itm:lbprep3} of Proposition~\ref{prop:lbprep} do not occur. 
This will imply that the black component of the origin in the Voronoi tessellation for $\Zcal\cup\{o\}$ is a.s.~finite. 
A short argument will then show this also implies that, almost surely, all black components in the Voronoi tessellation for 
$\Zcal$ are finite.

For the proof of Proposition~\ref{prop:lbprep} we will use the following observation that is a 
straightforward consequence of the Slivniak-Mecke formula and Isokawa's formula.
We provide a proof for completeness.

\begin{lemma}\label{lem:degfiniteas}
For every $\lambda>0$ and $0\leq p\leq 1$, almost surely, every Voronoi cell is adjacent 
to a finite number of other Voronoi cells.
\end{lemma}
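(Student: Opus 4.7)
The plan is to combine Isokawa's formula with the Slivniak-Mecke formula (applied with $k=1$), together with the isometric invariance of the homogeneous Poisson process $\Zcal$. Note that the assertion is about the Voronoi tessellation itself and has nothing to do with the colours, so the value of $p$ plays no role in the argument.

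First, observe that by Isokawa's formula (Theorem~\ref{thm:Isokawa}), $\Ee D = 6 + 3/(\pi\lambda) < \infty$, so the typical degree $D$ is a.s.~finite, i.e.~$\Pee(D = \infty) = 0$. Next, I would argue that for every $x \in \Dee$ the degree $\deg(x;\Zcal\cup\{x\})$ in the Voronoi tessellation for $\Zcal\cup\{x\}$ has the same distribution as $D$. To see this, pick a hyperbolic isometry $\varphi:\Dee\to\Dee$ with $\varphi(o)=x$. Since $\varphi$ preserves hyperbolic area (equation~\eqref{eq:areapres}), the image $\varphi[\Zcal]$ is again a homogeneous Poisson process of intensity $\lambda$, so $\varphi[\Zcal]\cup\{x\} \isd \Zcal\cup\{x\}$. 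But $\varphi$ also preserves distances, hence preserves the combinatorial structure of the Voronoi tessellation, so the degree of $x$ in the tessellation for $\Zcal\cup\{x\}$ has the same law as the degree of $o$ in the tessellation for $\varphi^{-1}[\Zcal]\cup\{o\}\isd \Zcal\cup\{o\}$, which is $D$.

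Now fix a compact $A \subseteq \Dee$ (for instance $A = \ballH(o,n)$) and apply the Slivniak-Mecke formula (Theorem~\ref{thm:SlivMeck}) with $k=1$ and
$$ g(z,\Ucal) := 1_A(z) \cdot 1\bigl[\, \deg(z;\Ucal) = \infty \,\bigr]. $$
We obtain
$$ \Ee\Bigl[\, \bigl|\{ z \in \Zcal \cap A : \deg(z;\Zcal) = \infty\}\bigr| \,\Bigr]
= \lambda \int_A \Pee\bigl(\deg(x;\Zcal\cup\{x\}) = \infty\bigr) f(x) \dd x
= \lambda \int_A 0 \cdot f(x) \dd x = 0, $$
using the stationarity established in the previous step. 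Hence almost surely no point of $\Zcal \cap A$ has infinite Voronoi degree. Taking the countable union over $A = \ballH(o,n)$, $n\in\eN$, we conclude that almost surely every point of $\Zcal$ is adjacent to only finitely many other Voronoi cells.

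The only mildly delicate step is the stationarity statement for $\deg(x;\Zcal\cup\{x\})$; once that is in hand the argument reduces to a single application of Slivniak-Mecke and the finiteness of $\Ee D$ from Isokawa's formula.
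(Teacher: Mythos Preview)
Your proof is correct and follows essentially the same approach as the paper: apply Slivniak--Mecke with $k=1$ to the indicator of infinite degree, use isometry-invariance of $\Zcal$ to reduce $\deg(x;\Zcal\cup\{x\})$ to the typical degree $D$, and conclude from Isokawa's formula that the integrand vanishes. The only cosmetic difference is that the paper integrates directly over all of $\Dee$ (the integrand being identically zero makes this harmless) rather than restricting to balls $\ballH(o,n)$ and taking a countable union.
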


\begin{proof}
Let us write 

$$ I := \left|\left\{ z \in \Zcal : \text{deg}(z) = \infty \right\}\right|. $$

By the Slivniak-Mecke formula

$$ \Ee I = \lambda \int_\Dee \Ee\left[g(z,\Zcal\cup\{z\})\right] f(z)\dd u, $$

\noindent
where $f$ is as given by~\eqref{eq:fdef} and $g(u,\Ucal) := 1_{\left\{\text{deg}(u;\Ucal)=\infty\right\}}$
with $\text{deg}(u;\Ucal)$ denoting the degree of $u$ in the Delaunay graph of $\Ucal$.
By symmetry considerations, for every (fixed) $u \in \Dee$ we have

$$ \Ee\left[\text{deg}(u;\Zcal\cup\{u\})\right] = \Ee\left[\text{deg}(o,\Zcal\cup\{o\})\right]  
= \Ee D = 6 + \frac{3}{\pi\lambda} < \infty, $$

\noindent
where $D$ denotes the typical degree and we apply Isokawa's formula.
It follows that, for every $u \in \Dee$:

$$ \Ee\left[g(u,\Zcal\cup\{u\})\right] = \Pee\left( \text{deg}(u;\Zcal\cup\{u\}) = \infty \right) = 0. $$

\noindent
Hence also $\Ee I = 0$, which implies that $I=0$ almost surely.
\end{proof}

We'll also need the following 
observation in the proof of Proposition~\ref{prop:lbprep}.

\begin{lemma} 
For every $\lambda>0$, almost surely, for every $x>0$ the number of
pseudo-edges $z_1z_2$ with $z_1,z_2\in \Zcal$ for which $\cert(z_1,z_2)\cap \ballH(o,x) \neq \emptyset$ is 
finite.
\end{lemma}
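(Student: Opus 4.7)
The plan is to first reduce to countable statements and then split by edge length. Since $\ballH(o,x)\subseteq\ballH(o,\lceil x\rceil)$, it suffices to prove that, for each fixed $n\in\eN$, almost surely the number of pseudo-edges $z_1z_2$ with $z_1,z_2\in\Zcal$ and $\cert(z_1,z_2)\cap\ballH(o,n)\neq\emptyset$ is finite; the full statement then follows by a countable union over $n$.

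Fix such an $n$ and split the pseudo-edges according to $\distH(z_1,z_2)$. If $\distH(z_1,z_2)\leq r-w_1$ then $\cert(z_1,z_2)=\emptyset$ and such pseudo-edges contribute nothing. If $r-w_1<\distH(z_1,z_2)<r+w_2$, then $\cert(z_1,z_2)=\DD(z_1,z_2,r+w_2)$ is the union of two hyperbolic disks of radius $(r+w_2)/2$ whose boundaries both contain $z_1$ and $z_2$. A quick application of the triangle inequality shows that if one of these disks meets $\ballH(o,n)$ at a point $p$, its centre $c$ satisfies $\distH(o,c)\leq n+(r+w_2)/2$, and hence both $z_1$ and $z_2$ lie in $\ballH(o,n+r+w_2)$. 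Since $|\Zcal\cap\ballH(o,n+r+w_2)|$ is Poisson with finite mean, it is almost surely finite, so there are only finitely many such pairs.

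The substantial case is when $\distH(z_1,z_2)\geq r+w_2$, so that $\cert(z_1,z_2)=\BGab(z_1,z_2)$ and, in order for the pair to be a (type IV) pseudo-edge, one of $\BGab^\pm(z_1,z_2)$ must be disjoint from $\Zcal$. I would bound the expected count by the Slivniak--Mecke formula (Theorem~\ref{thm:SlivMeck}), using a union bound to pick up a factor $2\exp[-\tfrac12\lambda\areaH(\BGab(u_1,u_2))]$, which by~\eqref{eq:ball_area} is of order $\exp[-c\lambda e^{\rho/2}]$ for large $\rho:=\distH(u_1,u_2)$. Switching to hyperbolic polar coordinates around $u_1$, the midpoint $m$ of the segment $u_1u_2$ lies on the circle of radius $\rho/2$ around $u_1$, and the constraint $\BGab(u_1,u_2)\cap\ballH(o,n)\neq\emptyset$ (equivalent to $\distH(o,m)\leq n+\rho/2$) forces $\distH(o,u_1)\leq n+\rho$ by the triangle inequality. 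Bounding the angular integral trivially by $2\pi$, the computation reduces to
\[
C\lambda^2\int_{r+w_2}^\infty \areaH(\ballH(o,n+\rho))\cdot\sinh\rho\cdot\exp\bigl[-c\lambda e^{\rho/2}\bigr]\,\dd\rho,
\]
which, after the substitution $s=e^{\rho/2}$, becomes the tail of a gamma-type integral and is finite. Hence the expected count is finite and the count itself is almost surely finite.

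I expect the main obstacle to be the polar-coordinate bookkeeping in the last step, in particular turning the constraint on the midpoint into a usable constraint on $u_1$, and then checking that the pre-factors ($\sinh\rho$ and $\areaH(\ballH(o,n+\rho))$, both of order $e^\rho$) are comfortably dominated by the super-exponential $\exp[-c\lambda e^{\rho/2}]$. Beyond that, the calculation is structurally parallel to those already carried out in Lemmas~\ref{lem:XVtil} and~\ref{lem:XVItil}.
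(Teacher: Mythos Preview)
Your proposal is correct and follows essentially the same strategy as the paper: reduce to a fixed radius by monotonicity, dispose of edges with $\distH(z_1,z_2)<r+w_2$ by a finite-ball argument, and handle the long edges via Slivniak--Mecke together with the super-exponential decay $\exp[-c\lambda e^{\rho/2}]$ of the half-Gabriel-disk vacancy probability.

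The only organisational difference is in the long-edge bookkeeping. The paper orders the pair so that $\distH(o,z_1)>\distH(o,z_2)$, slices by $n=\lceil\distH(o,z_1)\rceil$, and bounds each slice by $(\lambda\,\areaH(\ballH(o,n)))^2\cdot 2e^{-ce^{n/2}}$, then sums over $n$. You instead parameterise directly by $\rho=\distH(u_1,u_2)$, use the midpoint constraint to get $\distH(o,u_1)\leq n+\rho$, and Fubini to arrive at the single integral $\int\areaH(\ballH(o,n+\rho))\sinh\rho\,e^{-c\lambda e^{\rho/2}}\,\dd\rho$. Your route avoids the symmetry/ordering step and the discrete slicing, at the cost of one application of Fubini; the paper's route makes the summability completely explicit. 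Both compute the same thing and your anticipated ``obstacle'' is not one: the $e^{2\rho}$ prefactor is indeed dominated by $e^{-c\lambda e^{\rho/2}}$, exactly as in Lemma~\ref{lem:XVtil}.
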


\begin{proof} Fix an arbitrary $x>0$.
Clearly the number of pseudo-edges $z_1z_2$ with $z_1,z_2\in \Zcal$ for which 
$\distH(z_1,z_2) < r+w_2$ and $\cert(z_1,z_2) \cap \ballH(o,x) \neq \emptyset$ 
is bounded by $X^2$ where $X := \Zcal \cap \ballH(o,x+2r+2w_2)$. 
Since $\Ee X = \lambda \cdot \areaH( \ballH(o,x+2r+2w_2) ) <\infty$, the random variable $X$ is finite almost surely.
 
Pseudo-edges $z_1z_2$ not counted by $X^2$ must satisfy $\distH(z_1,z_2) \geq r+w_2$ and hence
$\cert(z_1,z_2) =\BGab(z_1,z_2)$ and either $\BGab^-(z_1,z_1)\cap \Zcal=\emptyset$ or
$\BGab^+(z_1,z_2)\cap\Zcal=\emptyset$ (or both).
By symmetry considerations, it suffices to count

$$ Y := \left|\left\{ (z_1,z_2) \in \Zcal^2 : 
\begin{array}{l} 
\distH(z_1,o)>\distH(z_2,o), \text{ and; }\\
\BGab(z_1,z_2) \cap \ballH(o,x) \neq \emptyset, \text{ and; }\\
\text{$\BGab^-(z_1,z_1)\cap \Zcal=\emptyset$ or $\BGab^+(z_1,z_2)\cap\Zcal=\emptyset$.}
\end{array} \right\}\right|. $$

For each pair $(z_1,z_2)$ counted by $Y$ there is an $n\in\eN$ such that $n-1\leq \distH(o,z_1)\leq n$.
We must also have that $\distH(z_1,z_2) > \distH(o,z_1)-x$,
since $\diam(\BGab(z_1,z_2))=\distH(z_1,z_2)$ and $\BGab(z_1,z_2)\cap\ballH(o,x)\neq \emptyset$.

Writing

$$ Y_n := \left|\left\{ (z_1,z_2) \in \Zcal^2 : 
\begin{array}{l} 
\distH(z_1,o),\distH(z_2,o)\leq n \text{ and; }\\
\distH(z_1,z_2) > n-x-1, \text{ and; } \\
\text{$\BGab^-(z_1,z_1)\cap \Zcal=\emptyset$ or
$\BGab^+(z_1,z_2)\cap\Zcal=\emptyset$.}
\end{array} \right\}\right|, $$

\noindent
we thus have

$$ \Ee Y \leq \sum_n \Ee Y_n. $$

Applying the Slivniak-Mecke formula, we find that 

$$ \Ee Y_n 
= \lambda^2 \int_\Dee\int_\Dee 
\Ee\left[g(z_1,z_2,\Zcal\cup\{z_1,z_2\})\right] f(z_1)f(z_2) \dd z_1\dd z_2, 
$$ 

\noindent
where $f$ is as given by~\eqref{eq:fdef} and 

$$ g(u_1,u_2,\Ucal) 
:= 1_{\left\{\begin{array}{l} \distH(u_1,o),\distH(u_2,o)\leq n, \\ \distH(u_1,u_2) > n-x-1, \\
\text{$\BGab^-(u_1,u_1)\cap \Ucal=\emptyset$ or $\BGab^+(u_1,u_2)\cap\Ucal=\emptyset$.}
\end{array}\right\}}. $$

For every $z_1,z_2\in \Dee$ have 

$$ \begin{array}{rcl} 
\Ee\left[g(z_1,z_2,\Zcal\cup\{z_1,z_2\})\right] 
& \leq & \displaystyle 
1_{\left\{\begin{array}{l} \distH(z_1,o),\distH(z_2,o)\leq n, \\ \distH(z_1,z_2) > n-x-1 \end{array}\right\}}
\cdot 2 e^{-\lambda \cdot \areaH(\BGab(z_1,z_2))/2 } \\
& \leq & \displaystyle 
1_{\left\{\distH(z_1,o),\distH(z_2,o)\leq n\right\}} \cdot 2 e^{-c e^{n/2}},
\end{array} $$

\noindent 
for a suitably chosen small constant $c=c(x,\lambda)$.
(The last step in some more detail : for all $z_1,z_2$ for which $\distH(z_1,z_2)>n-x-1$, we have
$\areaH(\BGab(z_1,z_2)) \geq 2\pi(\cosh((n-x-1)/2)-1)$. Since $\cosh(t)=(1/2+o_t(1))e^t$ as $t\to\infty$, 
there is an $n_0$ such that $2\pi(\cosh((n-x-1)/2)-1)/2 \geq e^{(n-x-1)/2}$ for all $n\geq n_0$.
We can now choose a small $0<c< (\lambda/2)\cdot e^{-(x+1)/2}$ such that $2 e^{-c e^{n/2}} \geq 1$ for all $n\leq n_0$.)
It follows that 

$$ \begin{array}{rcl} \Ee Y_n & \leq & \left( \lambda \cdot \areaH(\ballH(o,n)) \right)^2 \cdot 2 e^{-c e^{n/2}} \\
& = &  \lambda^2 \cdot (2\pi)^2 \cdot \left( \cosh n - 1\right)^2  \cdot 2 e^{-c e^{n/2}} \\
& \leq & 2 \pi^2 \lambda^2 e^{2n} e^{-c e^{n/2}},
   \end{array} $$

\noindent
and hence

$$ \Ee Y \leq 2 \pi^2 \lambda^2 \sum_n e^{2n} e^{-c e^{n/2}} < \infty. $$

\noindent
So $Y$ is finite almost surely.

We have now shown that for every fixed $x>0$ we have $\Pee( N_x < \infty ) = 1$ where

$$ N_x := \left|\left\{\text{pseudo-edges $z_1,z_2\in \Zcal$ for which 
$\cert(z_1,z_2)\cap \ballH(o,x) \neq \emptyset$}\right\}\right|. $$

Since $N_x \leq N_y$ whenever $x<y$ we also have

$$ \Pee\left( \bigcap_{x>0} \{N_x<\infty\} \right)=\lim_{x\to\infty} \Pee(N_x < \infty) = 1, $$

\noindent
which is what needed to be shown.
\end{proof}

\begin{proofof}{Proposition~\ref{prop:lbprep}}
We fix arbitrary $\lambda>0, 0\leq p\leq 1$ and $r,w_1,w_2,\vartheta>0$. 
Since the typical degree has finite expectation by Isokawa's formula, almost surely, the cell of the origin 
in the Voronoi tessellation for $\Zcal\cup\{o\}$
is adjacent to at most finitely many other cells. For every point of $\Zcal$, the number of cells it is adjacent to in the Voronoi
tessellation for $\Zcal\cup\{o\}$ is at most one more than the number of cells it is adjacent to in the tessellation for $\Zcal$.
In particular, by Lemma~\ref{lem:degfiniteas}, also in the Voronoi tessellation for $\Zcal\cup\{o\}$, almost surely, each cell is
adjacent to at most finitely many other cells.

We consider an arbitrary realization of the Poisson process $\Zcal$ (in other words, a locally finite point set $\Zcal \subseteq \Dee$,
partitioned into two parts $\Zcal = \Zcalb\uplus\Zcalw$) 
for which each Voronoi cell of $\Zcal\cup\{o\}$ is adjacent to finitely many others, and such that, for every $x>0$, the ball 
$\ballH(o,x)$ intersects at most finitely many certificates of pseudo-edges.
We will show that in such a realization either the  black cluster of the origin is finite, or 
there exists an infinite, good, black pseudopath, or there exists an infinite 
linked sequence of chunks all of whose points are black (or more than one of the three options occurs).

Suppose thus that the origin $o$ lies in an infinite black component (otherwise we are done).
Since each cell is adjacent to finitely many others, there must exist an infinite
(ordinary) path $P = z_0,z_1, z_2, \dots$ with $z_0=o$ and $z_1,z_2,\dots \in \Zcal_b$ (distinct). 
This is of course also a pseudo-path. If only finitely many pseudo-edges of $P$ are bad then we are done, as $P$ will contain
a black, infinite, good pseudopath. 
Hence from now on we will assume that $P$ has infinitely many bad edges.
In particular, we can find a $t_1\geq 1$ such that 
the pseudo-edges $z_0z_1, z_1z_2,\dots,z_{t_1-2}z_{t_1-1}$ are good and $z_{t_1-1}z_{t_1}$ is bad.
We set $P_1 := z_0,z_1,\dots,z_{t_1}$.

Let us say that an index $i$ {\em interacts} with a subpath $Q = z_s,z_{s+1},\dots, z_t$ of $P$ if 
one of the following holds: $s\leq i\leq t$ or $\distH(z_i,\cert(Q)) < r/1000$ or
$\cert(z_i,z_{i+1}) \cap \cert(Q) \neq \emptyset$.

We now let $s_2\geq t_1$ be the largest index that interacts with $P_1$. 
That $s_2$ exists (is finite) follows from the fact that 

$$  
\bigcup_{u\in\cert(P_1)} \ballH\left(u,\frac{r}{1000}\right) \subseteq \ballH(o,x), $$

\noindent
for some $x>0$.
Since $P$ contains infinitely many bad pseudo-edges, there exists a $t_2 > s_2$ such that the 
pseudo-edges $z_{s_2}z_{s_2+1},\dots,z_{t_2-2}z_{t_2-1}$ are good and 
$z_{t_2-1}z_{t_2}$ is bad. We set $P_2 := z_{s_2}, z_{s_2+1},\dots,z_{t_2}$.
(A minor subtlety needs to be added here. 
When we say $z_{t_2-1}z_{t_2}$ is bad, we mean that it is bad wrt.~the path 
$z_{s_2}, z_{s_2+1},\dots,z_{t_2}$. We do this in order to handle the case where
$z_{s_2}z_{s_2+1}$ is a type {\bf III} pseudo-edge of $P$. If we took 
this single pseudo-edge as the path $P_2$, then $P_2$ would not be a chunk but 
a good pseudopath.)

We now let $s_3 \geq t_2$ be the largest index that interacts with $P_2$.  
Again $s_3$ exists as $\bigcup_{u\in\cert(P_2)} \ballH(u,r/1000)$ is contained in $\ballH(o,x)$ for some $x>0$.
Since $P$ contains infinitely many bad edges, there exists $t_3>s_3$ such that the pseudo-edges 
$z_{s_3}z_{s_3+1},\dots,z_{t_3-2}z_{t_3-1}$ are good and 
$z_{t_3-1}z_{t_3}$ is bad (wrt.~the path $z_{s_3}, z_{s_3+1},\dots,z_{t_3}$). 
We set $P_3 := z_{s_3}, z_{s_3+1},\dots,z_{t_3}$.

We continue defining $s_4,s_5,\dots$ and $t_4,t_5,\dots$ and $P_4, P_5,\dots$ analogously, producing an 
infinite linked sequence of chunks $P_1,P_2,\dots$.

(We point out that the choice of $s_i$ guarantees that no index $j>s_i$ interacts with any of $P_1,\dots,P_{i-1}$ while
$s_i$ interacts with $P_{i-1}$ but not $P_1,\dots,P_{i-2}$.
From this it easily follows that the demands {\bf(i)}, {\bf(ii)} of Definition~\ref{def:seqchunks} are met.)

Since we considered an arbitrary realization of $\Zcal$ that satisfies two conditions that both hold almost surely, it follows
that almost surely either the black cluster of the origin is finite or there exists an infinite linked sequence of 
chunks starting from the origin all of whose points are black.
\end{proofof}

The main thing that now remains, in order to prove Proposition~\ref{prop:lb}, is to show that for a suitable choice 
of $r,w_1,w_2,\vartheta$ and 
for $\lambda$ sufficiently small and $p=(1-\eps)(\pi/3)\lambda$ with $0<\eps<1$ a small constant, 
almost surely there is no black, infinite, good pseudopath and no black, infinite linked sequence of chunks starting from $o$.

As in the proof of the upper bound, from now on we will always take $r := 2\ln(1/\lambda)$.
The parameters $w_1,w_2,\vartheta>0$ will be constants that depend on $\eps$
and that we will choose appropriately in the end.
We will use an inductive approach bounding the expected number of good pseudopaths, respectively
linked sequences of chunks, of length $n$ starting from the origin.
With this in mind, we will prove a number of lemmas designed to deal with the different ways in which an 
additional point can be added to an existing good pseudopath, respectively linked sequence of chunks, of length $n-1$.
But, first we need to derive some additional facts about hyperbolic geometry.
Again, when reading the paper for the 
first time, the reader may wish to only read the definitions and lemma statements and skip over the proofs.

\subsubsection{More geometric ingredients}

Let us say that a sequence of points $u_0,u_1,\dots,u_k \in \Dee$ is a {\em pre-chunk} (wrt.~$r,w_1,w_2,\vartheta$) 
if they are placed in such a way that they will form 
a chunk whenever the random point set $\Zcal$
turns out favourably. That is, $u_0,u_1,\dots,u_k \in \Dee$ is a pre-chunk if $r-w_1<\distH(u_{i-1},u_i)<r+w_2$ for $i=1,\dots,k-1$ and 
$\angle u_{i-2}u_{i-1}u_i > \vartheta$ for all $2\leq i\leq k-1$ and 
in addition $\distH(u_{k-1},u_k)\leq r-w_1$ or $\distH(u_{k-1},u_k)\geq r+w_2$ or $r-w_1<\distH(u_{k-1},u_k)<r+w_2$ and 
$\angle u_{k-2}u_{k-1}u_k \leq \vartheta$. 
In particular, a pre-chunk is such that $u_0,\dots,u_{k-1}$ forms a $(r,w,\vartheta)$-path with $w := \max(w_1,w_2)$.

If  $r-w_1<\distH(u_{i-1},u_i)<r+w_2$ and 
$\angle u_{i-2}u_{i-1}u_i > \vartheta$ (in case $i\geq 2$) for $i=1,\dots, k$ then we speak of a {\em good pre-pseudopath}.

The first geometric observation in this section will be helpful for the situation when we want to add a new pseudo-edge
to an existing pseudopath, and will also be used by later proofs in this section.
It tells us that if we want to add a new pseudo-edge to a good pre-pseudopath, then either the 
intersection of the certificate of the new pseudo-edge and the certificates of the existing path 
is contained in a ball of constant radius (and hence has rather small area), or the new edge 
makes a small angle with the last edge of the existing pseudo-path. 
This will translate into useful bounds once we start
estimating the expected number of linked sequences of chunks later on.

\begin{lemma}\label{lem:certballH}
For every $w_1,w_2,\vartheta_1,\vartheta_2>0$ there are $r_0=r_0(w_1,w_2,\vartheta_1,\vartheta_2)$ and 
$h=h(w_1,w_2,\vartheta_1,\vartheta_2)$ such that 
the following holds for all $r\geq r_0$.
If $u_0,\dots,u_k$ is such that $u_0,\dots,u_{k-1}$ is a good pre-pseudopath wrt.~$r,w_1,w_2,\vartheta_1$ then either 

$$ \cert(u_0,\dots,u_{k-1}) \cap \cert(u_{k-1},u_k) \subseteq \ballH(u_{k-1},h), $$

\noindent
or 

$$ \angle u_{k-2}u_{k-1}u_k < \vartheta_2, $$ 

\noindent
(or both).
\end{lemma}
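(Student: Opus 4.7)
The plan is to cover each of $\cert(u_0,\dots,u_{k-1})$ and $\cert(u_{k-1},u_k)$ by the union of a bounded-radius hyperbolic ball around $u_{k-1}$ and a narrow sector — pointing toward $u_{k-2}$ in the first case and toward $u_k$ in the second — and then use the hypothesis $\angle u_{k-2}u_{k-1}u_k\geq\vartheta_2$ to make the two sectors disjoint. I fix an auxiliary parameter $\vartheta' := \vartheta_2/3$, set $w := \max(w_1,w_2)$, and choose $r_0$ large enough that every appeal below to Proposition~\ref{prop:treedisksector} and Lemma~\ref{lem:bounding} is justified. The case $k=1$ is vacuous, so I may assume $k\geq 2$.

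For the first containment, each good-edge certificate $\cert(u_{i-1},u_i) = \DD(u_{i-1},u_i,r+w_2)$ is contained in $\ballH(u_{i-1},r+w)\cap\ballH(u_i,r+w)$, because each of the two disks in the double disk has hyperbolic diameter $r+w_2$ and carries both endpoints on its boundary. Viewing $u_0,\dots,u_{k-1}$ as an $(r,w,\vartheta_1)$-tree and applying Proposition~\ref{prop:treedisksector} to the edge $u_{k-2}u_{k-1}$ with target angle $\vartheta'$ then yields a constant $h_1$ such that every $\ballH(x,r+w)$ with $x\in\{u_0,\dots,u_{k-2}\}$ lies inside $\ballH(u_{k-1},h_1)\cup\sect{u_{k-1}}{u_{k-2}}{\vartheta'}$. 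Combining these observations gives
\[
\cert(u_0,\dots,u_{k-1}) \subseteq \ballH(u_{k-1},h_1)\cup\sect{u_{k-1}}{u_{k-2}}{\vartheta'}.
\]

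For the second containment I split by the value of $\rho := \distH(u_{k-1},u_k)$. If $\rho\leq r-w_1$ the certificate is empty. If $r-w_1<\rho<r+w_2$, then $\cert(u_{k-1},u_k)=\DD(u_{k-1},u_k,r+w_2)\subseteq\ballH(u_k,r+w)$, and Lemma~\ref{lem:bounding} with $u=u_k$, $v=u_{k-1}$ produces a constant $h_2$ with $\cert(u_{k-1},u_k)\subseteq\ballH(u_{k-1},h_2)\cup\sect{u_{k-1}}{u_k}{\vartheta'}$. The remaining case $\rho\geq r+w_2$, where $\cert(u_{k-1},u_k)=\BGab(u_{k-1},u_k)$, is the subtle one: $u_{k-1}$ now lies on the boundary of the certificate, so Lemmas~\ref{lem:sec_subset} and~\ref{lem:bounding} do not apply directly. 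Here I would apply an isometry sending $u_{k-1}$ to $o$ and $u_k$ onto the positive $x$-axis; by Fact~\ref{lem:conformal1}, $\BGab$ becomes a Euclidean disk passing through the origin with Euclidean centre on the $x$-axis, and a short direct calculation shows that every point of it at angular coordinate $\alpha$ (at the origin) has Euclidean norm at most $\tanh(\rho/2)\cos\alpha\leq\cos\alpha$. Consequently, points of $\BGab$ outside $\sect{o}{u_k}{\vartheta'}$ have Euclidean norm at most $\cos\vartheta'$, which via~\eqref{eq:distHtanh} bounds their hyperbolic distance from $u_{k-1}$ by a constant $h_3=h_3(\vartheta')$. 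Enlarging $h_2$ to $\max(h_2,h_3)$ gives the analogous containment in this case as well.

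Intersecting the two containments and using that the choice $\vartheta'=\vartheta_2/3$ forces $\sect{u_{k-1}}{u_{k-2}}{\vartheta'}\cap\sect{u_{k-1}}{u_k}{\vartheta'}=\emptyset$ whenever $\angle u_{k-2}u_{k-1}u_k\geq\vartheta_2$, I conclude $\cert(u_0,\dots,u_{k-1})\cap\cert(u_{k-1},u_k)\subseteq\ballH(u_{k-1},h)$ with $h := \max(h_1,h_2)$. The main obstacle is the Gabriel-disk case of the second containment: the previously developed sector-covering tools all require the ball being covered to be centred at a point far from $u_{k-1}$, whereas the Gabriel disk has $u_{k-1}$ on its boundary, so one is forced into a direct coordinate computation exploiting that Euclidean norms on a disk through the origin decay like $\cos\alpha$ as one moves off the axis connecting the two boundary points.
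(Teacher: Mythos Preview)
Your proof is correct and follows the same overall strategy as the paper: cover $\cert(u_0,\dots,u_{k-1})$ by $\ballH(u_{k-1},h)\cup\sect{u_{k-1}}{u_{k-2}}{\vartheta'}$ via Proposition~\ref{prop:treedisksector}, cover $\cert(u_{k-1},u_k)$ by $\ballH(u_{k-1},h)\cup\sect{u_{k-1}}{u_k}{\vartheta'}$, and then use $\angle u_{k-2}u_{k-1}u_k\geq\vartheta_2>2\vartheta'$ to make the sectors disjoint.

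The one place you diverge is the Gabriel-disk case $\rho\geq r+w_2$, which you flag as ``the subtle one'' and handle by a direct coordinate computation. That computation is fine, but your stated reason for it --- that Lemma~\ref{lem:bounding} does not apply because $u_{k-1}$ sits on the boundary of the certificate --- is mistaken. Lemma~\ref{lem:bounding} has no hypothesis about where $v$ lies relative to the ball $\ballH(u,r+w)$; it only requires $r-w<\distH(u,v)<r+w$. The paper exploits this by setting $s:=\max(\distH(u_{k-1},u_k),\,r+w)$ and observing $\cert(u_{k-1},u_k)\subseteq\ballH(u_k,s)$ in every nonempty case; Lemma~\ref{lem:bounding} with its ``$r$'' parameter equal to $s$ (and its ``$w$'' parameter a suitable multiple of $w$) then gives the ball-plus-sector containment uniformly, with no case split needed. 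So your extra work in the Gabriel case is correct but unnecessary, and the paper's treatment is more economical.
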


Before giving the proof, we point out that by setting $\vartheta_1=\vartheta_2$ we obtain:

\begin{corollary}\label{cor:certballH}
For every $w_1,w_2,\vartheta>0$ there are $r_0=r_0(w_1,w_2,\vartheta)$ and 
$h=h(w_1,w_2,\vartheta)$ such that 
the following holds for all $r\geq r_0$.
If $u_0,\dots,u_k$ is a good pre-pseudopath (wrt.~$r,w_1,w_2,\vartheta$) then

$$ \cert(u_0,\dots,u_{k-1}) \cap \cert(u_{k-1},u_k) \subseteq \ballH(u_{k-1},h). $$

\end{corollary}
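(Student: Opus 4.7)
The plan is to prove the corollary directly, viewing everything from the vantage point of $u_{k-1}$, rather than extracting it from Lemma~\ref{lem:certballH}. I will show separately that every certificate $\cert(u_{i-1},u_i)$ with $i\leq k-1$ sits inside $\ballH(u_{k-1},h_1)\cup\sect{u_{k-1}}{u_{k-2}}{\vartheta'}$, and that $\cert(u_{k-1},u_k)$ sits inside $\ballH(u_{k-1},h_2)\cup\sect{u_{k-1}}{u_k}{\vartheta'}$, where $\vartheta'$ is a small constant (say $\vartheta/4$). The angular condition $\angle u_{k-2}u_{k-1}u_k>\vartheta$ built into ``good pre-pseudopath'' then forces the two sectors to be disjoint, collapsing the intersection into a single ball of constant radius around $u_{k-1}$.

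Set $w:=\max(w_1,w_2)$ and $\vartheta':=\vartheta/4$, and choose $r_0$ large enough for both Proposition~\ref{prop:treedisksector} (with parameters $(w,\vartheta,\vartheta')$) and Lemma~\ref{lem:bounding} (with parameters $(w,\vartheta')$) to be applicable. Since $u_0,\ldots,u_{k-1}$ forms an $(r,w,\vartheta)$-path in the sense of Section~\ref{sec:trees} (as explicitly noted in the paragraph defining pre-chunks), Proposition~\ref{prop:treedisksector} applied to the edge $u_{k-2}u_{k-1}$ produces a constant $h_1$ with
\[
\bigcup_{i=0}^{k-2}\ballH(u_i,r+w)\subseteq\ballH(u_{k-1},h_1)\cup\sect{u_{k-1}}{u_{k-2}}{\vartheta'}.
\]
Now every certificate $\cert(u_{i-1},u_i)=\DD(u_{i-1},u_i,r+w_2)$ lies inside $\ballH(u_i,r+w_2)\subseteq\ballH(u_i,r+w)$, because each of the two constituent disks of $\DD(u_{i-1},u_i,r+w_2)$ has radius $(r+w_2)/2$ and has $u_i$ on its boundary. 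So the display above absorbs the entire union $\cert(u_0,\ldots,u_{k-2})$ in one stroke.

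The delicate part, which I expect to be the main obstacle, is controlling the two certificates incident to $u_{k-1}$: both $\cert(u_{k-2},u_{k-1})$ and $\cert(u_{k-1},u_k)$ actually contain $u_{k-1}$ itself, so they cannot be pushed entirely into a sector away from $u_{k-1}$, and the ``ball around $u_{k-1}$'' term in the conclusion is genuinely unavoidable here. I would handle this with two applications of Lemma~\ref{lem:bounding}: with $(u,v)=(u_{k-2},u_{k-1})$ it yields a constant $h_2$ such that $\ballH(u_{k-2},r+w)\subseteq\ballH(u_{k-1},h_2)\cup\sect{u_{k-1}}{u_{k-2}}{\vartheta'}$, hence the same inclusion for $\cert(u_{k-2},u_{k-1})$, and the symmetric application with $(u,v)=(u_k,u_{k-1})$ gives $\cert(u_{k-1},u_k)\subseteq\ballH(u_{k-1},h_2)\cup\sect{u_{k-1}}{u_k}{\vartheta'}$. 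Intersecting the full bound for $\cert(u_0,\ldots,u_{k-1})$ with the bound for $\cert(u_{k-1},u_k)$ places everything inside $\ballH(u_{k-1},\max(h_1,h_2))\cup(\sect{u_{k-1}}{u_{k-2}}{\vartheta'}\cap\sect{u_{k-1}}{u_k}{\vartheta'})$, and the pre-pseudopath angle condition $\angle u_{k-2}u_{k-1}u_k>\vartheta>2\vartheta'$ renders the two sectors disjoint, so setting $h:=\max(h_1,h_2)$ finishes the proof.
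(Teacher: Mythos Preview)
Your proposal is correct and follows essentially the same route as the paper. The paper obtains the corollary in one line from Lemma~\ref{lem:certballH} by setting $\vartheta_1=\vartheta_2=\vartheta$ (so that the alternative ``$\angle u_{k-2}u_{k-1}u_k<\vartheta_2$'' is ruled out by the good pre-pseudopath hypothesis); your direct argument is precisely the proof of that lemma specialized to the situation where the full sequence $u_0,\dots,u_k$ is a good pre-pseudopath. One harmless redundancy: your separate appeal to Lemma~\ref{lem:bounding} for $\cert(u_{k-2},u_{k-1})$ is not needed, since $\cert(u_{i-1},u_i)\subseteq\ballH(u_{i-1},r+w)$ for every $i\leq k-1$, and hence $\cert(u_{k-2},u_{k-1})\subseteq\ballH(u_{k-2},r+w)$ is already absorbed by the Proposition~\ref{prop:treedisksector} step.
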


\begin{proofof}{Lemma~\ref{lem:certballH}}
We fix $0<\vartheta'<\min(\vartheta_2/2,1/1000)$. 
For any $r>0$, if $P=u_0,\dots,u_k$ is a pre-chunk wrt.~$r,w_1,w_2,\vartheta_1$
then $P' = u_0, \dots, u_{k-1}$ is a $(r,w,\vartheta_1)$-path, setting $w:=\max(w_1,w_2)$.
We also point out that, since $P'$ is a good pre-pseudopath, 
$\cert(u_{i-1},u_i)=\DD(u_{i-1},u_i,r+w_2) \subseteq \ballH(u_{i-1},r+w)\cap\ballH(u_i,r+w)$ 
for $i=1,\dots,k-1$.
By Proposition~\ref{prop:treedisksector} there exist constants $r_0=r_0(w,\vartheta_1,\vartheta')$ and $h=h(w,\vartheta_1,\vartheta')$ 
such that, whenever $r\geq r_0$, we have

$$ \begin{array}{rcl} 
\cert(u_0,\dots,u_{k-1})
& \subseteq & \bigcup_{i=0}^{k-2} \ballH(u_i,r+w) = \bigcup_{v \in P_{u_{k-2}\setminus u_{k-1}}} \ballH(v,r+w) \\
& \subseteq & \ballH(u_{k-1},h) \cup \sect{u_{k-1}}{u_{k-2}}{\vartheta'}.  
\end{array} $$

\noindent
Let us write $s := \max\left( \distH(u_{k-1},u_k), r+w \right)$.
By Lemma~\ref{lem:bounding} (applied with $r=s$), we can assume without loss of generality 
that the constant $h$ is such that:

$$ \begin{array}{rcl} 
\cert(u_{k-1},u_k) \subseteq \ballH(u_k,s)  
 \subseteq \ballH(u_{k-1},h) \cup \sect{u_{k-1}}{u_{k}}{\vartheta'}. 
\end{array} $$

\noindent
Since $\vartheta_2>2\vartheta'$, we have that if $\angle u_{k-2}u_{k-1}u_k \geq \vartheta_2$ then
$\sect{u_{k-1}}{u_{k-2}}{\vartheta'} \cap \sect{u_{k-1}}{u_{k}}{\vartheta'} = \emptyset$.
It follows that either $\angle u_{k-2}u_{k-1}u_k < \vartheta_2$ or
$\cert(u_0,\dots,u_{k-1})\cap\cert(u_{k-1},u_k) \subseteq \ballH(u_{k-1},h)$, as claimed.
\end{proofof}

Our next observation provides an upper bound on the number of edges in a pseudopath whose certificates intersect a 
given ball. 
This will be of use to us later on several times, for instance when we want to estimate or bound the 
expected number of pseudo-edges that can act as the start of a new chunk extending a given sequence of chunks.

\begin{lemma}\label{lem:bddintersect}
For every $w_1,w_2,\vartheta > 0$ there exists $r_0=r_0(w_1,w_2,\vartheta)>0$ such that, 
for every $r\geq r_0$, every pre-chunk $u_0,u_1,\dots,u_k$ and every ball $B=\ballH(c,s)$, it holds that 

$$ |\{1\leq i\leq k : \cert(u_{i-1},u_i) \cap B \neq \emptyset\}| \leq \frac{4s}{r} + 10. $$

\end{lemma}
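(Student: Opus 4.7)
The plan is to apply Proposition~\ref{prop:distHdistT} to the ``good'' part of the pre-chunk to force the relevant endpoints to be spread far apart along the path, and then combine this with a trivial containment of each certificate in a ball centered at one of its endpoints. Set $w := \max(w_1,w_2)$, and let $K = K(w,\vartheta)$ and $\rho_0 = \rho_0(w,\vartheta)$ be the constants supplied by Proposition~\ref{prop:distHdistT}. I will choose $r_0 \geq \rho_0$ large enough so that in particular $r - K \geq r/2$ and $4(K+w_2)/r \leq 1$ for every $r \geq r_0$.

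The first step is to observe that, by definition of pre-chunk, $u_0,\dots,u_{k-1}$ is a $(r,w,\vartheta)$-path, so Proposition~\ref{prop:distHdistT} gives
$$ \distH(u_i,u_j) \;\geq\; |i-j|\,(r-K) \qquad \text{for all } 0\leq i,j\leq k-1. $$
For the ``middle'' indices $i \in \{1,\dots,k-1\}$, the pseudo-edge $u_{i-1}u_i$ satisfies $r-w_1 < \distH(u_{i-1},u_i)<r+w_2$, hence $\cert(u_{i-1},u_i) = \DD(u_{i-1},u_i,r+w_2) \subseteq \ballH(u_{i-1},r+w_2)$. So if $\cert(u_{i-1},u_i)\cap B\neq\emptyset$ then $\ballH(u_{i-1},r+w_2)$ meets $\ballH(c,s)$, which by the triangle inequality gives $\distH(u_{i-1},c) \leq r+w_2+s$.

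The second step is a counting argument. Let $i_1<i_2<\dots<i_m$ be those indices in $\{1,\dots,k-1\}$ whose certificate meets $B$. By the previous paragraph, the points $u_{i_1-1},\dots,u_{i_m-1}$ all lie in $\ballH(c,r+w_2+s)$, so pairwise they are at distance at most $2(r+w_2+s)$. Combining with the lower bound from Proposition~\ref{prop:distHdistT} applied to the extreme pair,
$$ (i_m - i_1)(r-K) \;\leq\; \distH(u_{i_m-1},u_{i_1-1}) \;\leq\; 2(r+w_2+s), $$
so $m\leq i_m - i_1 + 1 \leq 2(r+w_2+s)/(r-K) + 1$. Using $r-K\geq r/2$, this is at most $2 + 4s/r + 4(K+w_2)/r + 1$, and using $4(K+w_2)/r\leq 1$ we conclude $m \leq 4s/r + 4$.

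Finally, the last edge $u_{k-1}u_k$ contributes at most one additional index (regardless of which of the three possible types it is), so the total count is at most $4s/r + 5 \leq 4s/r + 10$, as required. The only potentially delicate point is the last edge, whose certificate may be a Gabriel disk of unbounded diameter and is therefore not automatically contained in a ball of radius $r+w_2$ around $u_{k-1}$; but since there is only one such edge this contributes only the harmless $+1$ above, and no further estimate is needed.
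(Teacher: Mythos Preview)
Your proof is correct and follows essentially the same approach as the paper's: both use Proposition~\ref{prop:distHdistT} to lower-bound $\distH(u_i,u_j)$ for $0\leq i,j\leq k-1$, combine this with a triangle-inequality argument to bound the spread of indices whose certificates meet $B$, and handle the final (possibly type~\textbf{IV}) edge separately as a ``$+1$''. The only cosmetic difference is that the paper works with the centers $c_\ell^\pm$ of the two disks comprising $\DD(u_{\ell-1},u_\ell,r+w_2)$, whereas you use the simpler containment $\cert(u_{i-1},u_i)\subseteq\ballH(u_{i-1},r+w_2)$ and the endpoints $u_{i-1}$ directly; this streamlines the bookkeeping slightly but is the same argument.
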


\begin{proof}
By Proposition~\ref{prop:distHdistT} and the remark following the definition of pre-chunk, 
there exists an $r_0, K$ such that if $r \geq r_0$ and $0\leq i,j\leq k-1$ then 

$$ \distH( u_i, u_j ) \geq |i-j| \cdot (r-K) \geq |i-j| \cdot (r/2), $$

\noindent
for all sufficiently large $r$ and all $1\leq i,j \leq k$.

For $1\leq \ell \leq k-1$, let us denote by $c_\ell^+,c_\ell^-$ the centers of the two disks whose union is 
$\cert(u_{\ell-1},u_\ell)=\DD(u_{\ell-1},u_\ell,r+w_2)$.
If $1\leq i < j \leq k-1$ then

$$ \begin{array}{rcl} \distH(c_i^+,c_j^+), \distH(c_i^+,c_j^-), \distH(c_i^-,c_j^+), \distH(c_i^-,c_j^-)
& \geq & \distH( u_i, u_{j+1} ) - 2 \left(\frac{r+w_2}{2}\right) \\
& \geq & (j+1-i) \cdot (r/2) - 2r \\ 
& = & |i-j| \cdot (r/2) - (3/2)\cdot r, 
\end{array} $$

\noindent
(the second inequality holding for $r$ sufficiently large).

Now observe $B\cap \cert(u_{\ell-1},u_\ell) \neq \emptyset$ if and only if
$\min\left(\distH( c, c_\ell^+ ),\distH(c,c_\ell^-)\right) < (r+w_2)/2 + s$.
So if $B \cap \cert(u_{i-1},u_i)\neq\emptyset$ and $B \cap \cert(u_{j-1},u_j) \neq \emptyset$ then

$$ \begin{array}{rcl}
2r+2s & \geq & 2\cdot\left(\frac{r+w_2}{2}+s\right) \\
& \geq & \min\left( \distH(c_i^+,c_j^+), \distH(c_i^+,c_j^-), \distH(c_i^-,c_j^+), \distH(c_i^-,c_j^-)\right) \\
& \geq & |i-j| \cdot (r/2) - (3/2) \cdot r, 
\end{array} $$

\noindent
the first inequality holding provided $r_0$ was chosen sufficiently large. 
Rearranging, we find

$$ |i-j| \leq \frac{4s}{r} + 7. $$

This shows that the number of $1\leq i \leq k-1$ such that $\cert(u_{i-1},u_i) \cap B \neq \emptyset$ is 
at most $\frac{4s}{r} + 8$. Including also $\cert(u_{k-1},u_k)$, we obtain $\frac{4s}{r}+9$
which is smaller than the bound in the statement of the lemma.
\end{proof}

Using the previous lemma, we can now show that, for any ball whose radius is only a constant
larger than $(r+w_2)/2$ (the radius of the two balls whose union is the certificate of a
good pseudoedge), most of the area of the ball lies outside the certificate of any given
pseudopath.

\begin{lemma}\label{lem:bddintersecteps}
For every $\eps,w_1,w_2,\vartheta > 0$ there exists $r_0=r_0(\eps,w_1,w_2,\vartheta)>0$ such that, 
for every $r\geq r_0$, every good pre-pseudopath $u_0,u_1,\dots,u_k$ and every ball $B=\ballH(c,s)$ with radius $s\geq 
(r+w_2)/2+10\ln 10 - \ln\eps$, it holds that 

$$ \areaH( B \setminus \cert(u_0,\dots,u_k) ) \geq (1-\eps) \cdot \areaH(B). $$

\end{lemma}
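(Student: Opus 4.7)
The plan is to combine Lemma~\ref{lem:bddintersect} with crude area bounds. First, since $u_0,\dots,u_k$ is a good pre-pseudopath, every pseudo-edge satisfies $r-w_1<\distH(u_{i-1},u_i)<r+w_2$, so each certificate is of the form $\cert(u_{i-1},u_i)=\DD(u_{i-1},u_i,r+w_2)$, a union of two hyperbolic disks of radius $(r+w_2)/2$. Using $\cosh x-1\leq e^x/2$ for $x\geq 0$, this gives
$$\areaH(\cert(u_{i-1},u_i))\leq 4\pi(\cosh((r+w_2)/2)-1)\leq 2\pi e^{(r+w_2)/2}.$$
Second, because the proof of Lemma~\ref{lem:bddintersect} only uses that $u_0,\dots,u_{k-1}$ is a $(r,w,\vartheta)$-path (with $w=\max(w_1,w_2)$), the same argument applies verbatim to a good pre-pseudopath (alternatively, one appends a trivial short final vertex to produce an honest pre-chunk). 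Thus, assuming $r\geq r_0$ with $r_0$ as in Lemma~\ref{lem:bddintersect}, at most $4s/r+10$ of the certificates intersect $B$.

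Combining these two bounds,
$$\areaH\bigl(B\cap \cert(u_0,\dots,u_k)\bigr)\;\leq\;\bigl(4s/r+10\bigr)\cdot 2\pi e^{(r+w_2)/2}.$$
On the other side, for $r_0$ sufficiently large the hypothesis $s\geq (r+w_2)/2+10\ln 10-\ln\eps$ guarantees $s\geq \ln 4$, so $\areaH(B)=2\pi(\cosh s-1)\geq (\pi/2)e^s$. It therefore suffices to check
$$\bigl(4s/r+10\bigr)\cdot 2\pi e^{(r+w_2)/2}\;\leq\;\eps\cdot (\pi/2)e^s,\qquad\text{i.e.,}\qquad 16s/r+40\;\leq\;\eps\,e^{s-(r+w_2)/2}.$$

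To verify this inequality on the whole half-line $s\geq s_0:=(r+w_2)/2+10\ln 10-\ln\eps$, I would set
$$\phi(s)\;:=\;\eps\,e^{s-(r+w_2)/2}-16s/r-40$$
and use convexity. At $s=s_0$ the exponential equals $\eps\cdot e^{10\ln 10-\ln\eps}=10^{10}$, while $16s_0/r+40$ is bounded by a constant independent of $\eps$ once $r_0$ is chosen large enough (as $s_0$ is comparable to $r/2$ for large $r$), so $\phi(s_0)>0$. Also $\phi'(s_0)=10^{10}/4-16/r>0$ once $r\geq r_0$, and $\phi''>0$, so $\phi'>0$ on $[s_0,\infty)$; hence $\phi$ is increasing there and stays nonnegative. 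This completes the verification.

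No step is really an obstacle; the only subtlety is ensuring Lemma~\ref{lem:bddintersect} is invoked legitimately (good pre-pseudopaths versus pre-chunks), which is a technicality handled as noted. All the remaining work is exponential-versus-linear bookkeeping, with the constant $10\ln 10-\ln\eps$ tuned precisely so that the first term $\eps e^c$ on the right dominates the modest additive cost $16s/r+40$ at the threshold and thereafter grows exponentially while the cost grows only linearly.
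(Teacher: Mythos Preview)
Your proof is correct and follows essentially the same approach as the paper: both combine the count from Lemma~\ref{lem:bddintersect} with the crude bound $\areaH(\ballH(o,(r+w_2)/2))\leq \pi e^{(r+w_2)/2}$ and $\areaH(B)\geq \tfrac{\pi}{2}e^s$, then verify the resulting inequality at the threshold $s_0=(r+w_2)/2+10\ln 10-\ln\eps$. The only cosmetic differences are that the paper observes the ratio $((16/r)se^{-s}+40e^{-s})e^{(r+w_2)/2}$ is decreasing in $s$ rather than invoking convexity of your $\phi$, and you are more explicit than the paper about why Lemma~\ref{lem:bddintersect} (stated for pre-chunks) applies to good pre-pseudopaths; your $\phi'(s_0)$ should read $10^{10}-16/r$ rather than $10^{10}/4-16/r$, but this is immaterial.
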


\begin{proof}
We let $r_0$ be a large constant, to be chosen more precisely during the proof.

By Lemma~\ref{lem:bddintersect}, we have

$$ \begin{array}{rcl} \areaH( B \cap \cert(u_0,\dots,u_k) )
& \leq & \left(\frac{8s}{r}+20\right) \cdot \areaH(\ballH(o,(r+w_2)/2)) \\
& \leq & \left(\frac{8s}{r}+20\right) \cdot \pi e^{(r+w_2)/2}. 
\end{array} $$

As $\areaH(B) = 2\pi(\cosh s - 1 ) = (1+o_s(1)) \cdot \pi e^{s}$, we have, provided we chose $r_0$ sufficiently large,
that $\areaH(B) \geq \frac12 \pi e^s$.
Hence

$$ \frac{\areaH( B \cap \cert(u_0,\dots,u_k) ) }{\areaH(B)} 
\leq \left( (16/r) s e^{-s} + 40e^{-s}\right) \cdot e^{(r+w_2)/2}. $$

\noindent
Since the RHS is decreasing in $s$ for $s\geq 1$, we have

$$ \begin{array}{rcl} 
\frac{\areaH( B \cap \cert(u_0,\dots,u_k) ) }{\areaH(B)}
& \leq & \left( 16 \frac{ (r+w_2)/2+10\ln 10 -\ln\eps}{r} + 40 \right) \cdot e^{-10\ln10+\ln\eps} \\
& \leq & \left( 56 + 16 \frac{w_2/2+10\ln 10 -\ln\eps}{r_0} \right) \cdot 10^{-10} \cdot \eps \\
& < & \eps, 
\end{array} $$

\noindent
provided we chose $r_0$ sufficiently large.
\end{proof}

Next we present another observation that will be helpful for the situation when we want to add a new pseudo-edge
to an existing pseudopath.
It tells us that one of three things must happen, each of which will translate into useful bounds once we start
estimating the number of linked sequences of chunks later on.

\begin{corollary}\label{cor:BminCeitheror}
For every $\eps,w_1,w_2,\vartheta_1,\vartheta_2 > 0$ there exists $r_0=r_0(\eps,w_1,w_2,\vartheta_1,\vartheta_2)>0$ such that, 
for every $r\geq r_0$ and every pre-chunk $u_0,u_1,\dots,u_k$ wrt.~$r,w_1,w_2,\vartheta_1$ 
we have that either

$$ \areaH( \BGab(u_{k-1},u_k) \setminus \cert(u_0,\dots,u_{k-1}) ) \geq (1-\eps) \cdot \areaH(\BGab(u_{k-1},u_k)), $$

\noindent
or 

$$ \distH( u_{k-1},u_k ) \leq r-w_1, $$

\noindent
or

$$ r-w_1 < \distH( u_{k-1},u_k ) < r+w_2 + 20\ln 10 - 2\ln\eps \text{ and } \angle u_{k-2}u_{k-1}u_k < \vartheta_2. $$

\noindent
(or more than one of the above hold).
\end{corollary}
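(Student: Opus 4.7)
\begin{proofof}{Corollary~\ref{cor:BminCeitheror}}
The plan is to split on the value of $\distH(u_{k-1},u_k)$ into three cases, each targeting one of the three alternatives in the statement.

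First, if $\distH(u_{k-1},u_k)\leq r-w_1$, the second alternative holds with nothing to prove.

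Second, suppose $\distH(u_{k-1},u_k)\geq r+w_2+20\ln 10-2\ln\eps$. Then $\BGab(u_{k-1},u_k)$ is a hyperbolic ball of radius $s=\distH(u_{k-1},u_k)/2\geq (r+w_2)/2+10\ln 10-\ln\eps$, which is exactly the threshold required by Lemma~\ref{lem:bddintersecteps}. Since by the definition of a pre-chunk the initial segment $u_0,\dots,u_{k-1}$ is a good pre-pseudopath, applying that lemma to this pseudopath with $B=\BGab(u_{k-1},u_k)$ yields the first alternative directly, provided $r_0$ is chosen at least as large as the threshold supplied by Lemma~\ref{lem:bddintersecteps}.

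Third, suppose $r-w_1<\distH(u_{k-1},u_k)<r+w_2+20\ln 10-2\ln\eps$. Apply Lemma~\ref{lem:certballH} with the angles $\vartheta_1,\vartheta_2$ from the statement to $u_0,\dots,u_{k-1},u_k$. Either $\angle u_{k-2}u_{k-1}u_k<\vartheta_2$, and the third alternative holds, or the intersection $\cert(u_0,\dots,u_{k-1})\cap\cert(u_{k-1},u_k)$ is contained in a hyperbolic ball $\ballH(u_{k-1},h)$ whose radius depends only on $w_1,w_2,\vartheta_1,\vartheta_2$. In this latter situation, one checks that $\BGab(u_{k-1},u_k)\subseteq\cert(u_{k-1},u_k)$ (the Gabriel disk lies inside both halves of $\DD(u_{k-1},u_k,r+w_2)$ when the distance is at most $r+w_2$, and equals $\cert(u_{k-1},u_k)$ otherwise), so $\BGab(u_{k-1},u_k)\cap\cert(u_0,\dots,u_{k-1})\subseteq\ballH(u_{k-1},h)$. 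The latter ball has constant hyperbolic area $2\pi(\cosh h-1)$, whereas $\areaH(\BGab(u_{k-1},u_k))\geq 2\pi(\cosh((r-w_1)/2)-1)$ grows like $\pi e^{(r-w_1)/2}$. Enlarging $r_0$ so that this ratio drops below $\eps$ delivers the first alternative.

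The case $k=1$ is a minor bookkeeping item: Case~3 cannot occur there since a pre-chunk of length one cannot be of type III, and in Case~2 the certificate $\cert(u_0)$ is empty so the first alternative is trivially true. I do not anticipate a genuine obstacle; the work is essentially to package Lemmas~\ref{lem:certballH} and~\ref{lem:bddintersecteps} in the correct geometric regime, and the ``exponential area of the Gabriel disk versus constant-area obstruction'' comparison in the third case is the only place where a hyperbolic-geometric estimate is invoked directly, and it is an easy one.
\end{proofof}
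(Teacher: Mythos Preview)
Your proof is correct and follows the same three-case split on $\distH(u_{k-1},u_k)$ as the paper, invoking Lemma~\ref{lem:bddintersecteps} for long edges and Lemma~\ref{lem:certballH} plus a constant-versus-exponential area comparison for intermediate ones. The only cosmetic difference is that the paper applies Lemma~\ref{lem:certballH} with an enlarged parameter $w_2':=w_2+20\ln 10-2\ln\eps$ in place of $w_2$, whereas you apply it with $w_2$ directly and then use the inclusion $\BGab(u_{k-1},u_k)\subseteq\cert(u_{k-1},u_k)$; both routes yield the desired containment in a ball of constant radius.
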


\begin{proof}
We let $r_0$ be a large constant, to be determined in the course of the proof and we let $r\geq r_0$
be arbitrary and we let $u_0,\dots,u_k$ be an arbitrary pre-chunk wrt.~$r,w_1,w_2,\vartheta_1$.

Of course there is nothing to prove if $\distH( u_{k-1},u_k ) \leq r-w_1$.
If $\distH(u_{k-1},u_k)\geq r+w_2 + 20\ln 10 - 2\ln\eps$ then $\BGab(u_{k-1},u_k)$ has radius 
$\geq (r+w_2)/2+10\ln 10 - \ln\eps$ and we are done by Lemma~\ref{lem:bddintersecteps} -- assuming 
without loss of generality we chose $r_0$ larger that the bound provided in that lemma.

Let us thus assume $r-w_1 < \distH(u_{k-1},u_k) < r+w_2+20\ln 10 -\ln\eps =: r+w_2'$.
By Lemma~\ref{lem:certballH}, there are constants $r_0'=r_0'(w_1,w_2',\vartheta_1,\vartheta_2), h=h(w_1,w_2',\vartheta_1,\vartheta_2)$ 
such that if $r \geq r_0'$ then 
either 

$$\BGab(u_{k-1},u_k ) \cap \cert(u_0,\dots,u_{k-1}) \subseteq \cert(u_{k-1},u_k) \cap \cert(u_0,\dots,u_{k-1})
\subseteq \ballH(u_{k-1},h), $$ 

\noindent
or 

$$ \angle u_{k-2}u_{k-1}u_k < \vartheta_2. $$

\noindent
In the latter case we are clearly done. In the former case we note that 

$$ \frac{\areaH( \BGab(u_{k-1},u_k ) \cap \cert(u_0,\dots,u_{k-1}) )}{\areaH(\BGab(u_{k-1},u_k))}
\leq \frac{\cosh h - 1}{\cosh\left(\frac{r_0-w_1}{2}\right)-1} < \eps, $$

\noindent
provided $r_0\geq r_0'$ was chosen sufficiently large.
\end{proof}

For some of the remaining proofs needed for the lower bound, we'll use the following notion
which may be of independent interest. For sets $A, B \subseteq \Dee$ we let 

$$ \ahd(A,B):= \sup_{x\in B} \distH(x,A) = \sup_{x\in B} \inf_{y\in A}\distH(x,y).$$

The abbreviation $\ahd$ stands for ``asymmetric Hausdorff distance''. As the reader may already have recognized, the Hausdorff distance 
between $A, B$ equals $\max( \ahd(A,B), \ahd(B,A) )$.

The next two lemmas are a preparation for the third one, Lemma~\ref{lem:ahdeps}.
That lemma allows us to bound the area of intersection of a given ball with 
the certificates of a given (pre-)pseudopath. 
This will be of use to us when we estimate the expected number of linked sequences of chunks later on.

\begin{lemma}\label{lem:ahdprep}
For every $\eps>0$ there exists an $a=a(\eps)$ such that the following holds.
Let $B = \ballH(c_1,r_1)$ be a disk and $C = \partial\ballH(c_2,r_2)$ a hyperbolic circle 
such that $\ahd(B,C) \geq a$. Then we have $B \cap C \subseteq \sect{c_2}{c_1}{\eps}$. 
\end{lemma}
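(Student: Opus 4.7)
My plan is to reduce the statement to an elementary one-variable inequality via the hyperbolic law of cosines. First I would dispatch the trivial cases: if $B \cap C = \emptyset$ the conclusion is vacuous, and if $c_1 = c_2$ then $\ahd(B,C) \geq a > 0$ forces $r_2 > r_1$, which puts $B$ strictly inside the open disk bounded by $C$ and again gives $B \cap C = \emptyset$. So assume $d := \distH(c_1,c_2) > 0$ and $B \cap C \neq \emptyset$.

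The farthest point of $C$ from $c_1$ lies on the hyperbolic line through $c_1$ and $c_2$, so $\max_{x \in C} \distH(c_1,x) = d + r_2$ and $\ahd(B,C) = \max(0, d + r_2 - r_1)$. Hence the hypothesis becomes $r_1 \leq d + r_2 - a$. For any $x \in B \cap C$, set $\theta := \angle c_1 c_2 x \in [0,\pi]$. The law of cosines (Lemma~\ref{lem:law_cosines}) applied to the triangle $c_1 c_2 x$, together with $\distH(c_2,x) = r_2$ and $\distH(c_1,x) \leq r_1$, yields
$$\cos\theta \;\geq\; \frac{\cosh(d)\cosh(r_2) - \cosh(r_1)}{\sinh(d)\sinh(r_2)},$$
and membership in $\sect{c_2}{c_1}{\eps}$ is exactly $\theta \leq \eps$. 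Using the identities $2\cosh(d)\cosh(r_2) = \cosh(d+r_2) + \cosh(d-r_2)$ and $2\sinh(d)\sinh(r_2) = \cosh(d+r_2) - \cosh(d-r_2)$, the desired inequality $\cos\theta \geq \cos\eps$ rewrites as
$$(1 - \cos\eps)\cosh(d+r_2) + (1 + \cos\eps)\cosh(d-r_2) \;\geq\; 2\cosh(r_1).$$

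Bounding $\cosh(d-r_2) \geq 1$ and $\cosh(r_1) \leq \cosh(d+r_2-a)$, and writing $u := d + r_2 \geq a$, it suffices to establish the one-variable inequality
$$F(u) := (1-\cos\eps)\cosh(u) + (1+\cos\eps) - 2\cosh(u-a) \;\geq\; 0, \qquad u \geq a.$$
A direct computation gives $F(a) = (1-\cos\eps)(\cosh(a)-1) \geq 0$, and the bound $\sinh(u-a)/\sinh(u) = \cosh(a) - \coth(u)\sinh(a) \leq \cosh(a) - \sinh(a) = e^{-a}$ (valid for $u > 0$) shows that $F'(u) = (1-\cos\eps)\sinh(u) - 2\sinh(u-a) \geq 0$ on $[a,\infty)$ as soon as $e^{-a} \leq (1 - \cos\eps)/2$, i.e.\ $a \geq \ln 2 - \ln(1-\cos\eps)$. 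Taking $a = a(\eps)$ to be any constant above this threshold closes the argument.

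The conceptual content of the proof is the identification $\ahd(B,C) = d + r_2 - r_1$; the main thing to keep an eye on is the extremal configuration $d = r_2$ (equivalently, $c_1 \in C$), in which $\cosh(d-r_2)$ attains its smallest possible value $1$ and drives the choice of $a(\eps)$. The remaining work is a routine calculus verification, so I do not expect any real obstacle.
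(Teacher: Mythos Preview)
Your proof is correct and follows essentially the same strategy as the paper: identify $\ahd(B,C)=d+r_2-r_1$ and then apply the hyperbolic law of cosines to bound the angle $\angle c_1c_2x$ in terms of $e^{-\ahd(B,C)}$. The only difference is cosmetic: the paper replaces $\cosh$ and $\sinh$ by crude exponential bounds to obtain $\cos\alpha\geq 1-4e^{-\ahd(B,C)}$ in one line (giving $a>-\ln\frac{1-\cos\eps}{4}$), whereas you rewrite via the addition formulas and verify a one-variable monotonicity, which yields the slightly sharper threshold $a\geq -\ln\frac{1-\cos\eps}{2}$.
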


\begin{proof}
The statement is obviously true if $B\cap C = \emptyset$. Similarly it is also true if $B$ and $C$ 
intersect in a single point (the common point would line on the line joining $c_1$ and $c_2$). 
So from now on we can assume this is not the case.

Let $x_1, x_2$ denote the two intersection points of the circles $\partial B$ and $C$.
It suffices to show that, provided $\ahd(B,C) \geq a$ and $a$ is sufficiently large, 
$\alpha := \angle c_1c_2x_1 (= \angle c_1c_2x_2) \leq \eps$. 
For convenience we write $d := \distH(c_1,c_2)$. We next point out that 

$$ \ahd( B, C ) = r_2-r_1+d. $$

\noindent
(To see this we first note that by the triangle inequality 
$\distH(z,c_1) \leq \distH(z,c_2) + \distH(c_1,c_2) = r_2 + d$ for all $z \in C$.
In other words, $\distH(z,B) = \max(0,\distH(z,c_1)-r_1) \leq \max(0,r_2-r_1+d)$ for all $z \in C$.
Applying a suitable isometry if needed, we can assume without loss of generality that 
$c_2=o$ is the origin and $c_1$ lies on the negative $x$-axis. 
The point $p = (\tanh(r_2/2),0) \in C$ satisfies $\distH(z,B) = \distH(z,c_1)-r_1 = r_2-r_1+d$, as
$c_1,c_2,p$ lies on the $x$-axis which is a hyperbolic line.)

By the hyperbolic law of cosines,

\begin{align*}
e^{r_1}&\geq \cosh(r_1)\\
&=\cosh(r_2)\cosh(d)-\sinh(r_2)\sinh(d)\cos(\alpha)\\
&\geq \frac{e^{r_2+d}}{4}\cdot \left(1-\cos(\alpha)\right).
\end{align*}

Hence 

$$ \cos(\alpha)\geq 1-4e^{r_1-r_2-d} = 1-4e^{-\ahd(B,C)}.$$

The statement follows by taking $a > -\ln\left(\frac{1-\cos\eps}{4}\right)$.
\end{proof}

\begin{lemma}\label{lem:ahd}
We have 

$$ 
\lim_{a\to\infty}\inf\left\{ \frac{\areaH(B_2\backslash B_1)}{\areaH(B_2)}:  \text{$B_1, B_2 \subseteq \Dee$ (hyperbolic) disks, } 
\ahd(B_1,B_2)\geq a \right\}= 1.
$$

\end{lemma}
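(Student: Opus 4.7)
The plan is to bootstrap Lemma~\ref{lem:ahdprep} by applying it to every concentric sub-disk of $B_2$, and then to integrate in hyperbolic polar coordinates centred at the centre of $B_2$. Write $B_i = \ballH(c_i, r_i)$, $d := \distH(c_1, c_2)$ and $B_2(\rho) := \ballH(c_2, \rho)$ for $\rho \in [0, r_2]$. Since $\dist(x, B_1) = \max(0, \distH(x, c_1) - r_1)$ and $\distH(x, c_1)$ is maximized over $\overline{B_2(\rho)}$ at the antipodal point on $\partial B_2(\rho)$, we have
$$\ahd(B_1, B_2(\rho)) = \ahd(B_1, \partial B_2(\rho)) = \max(0, d + \rho - r_1).$$
In particular the hypothesis $\ahd(B_1, B_2) \geq a$ rewrites as $d - r_1 \geq a - r_2$, which is precisely what lets large $\ahd$ propagate down to sub-disks of sufficient radius.

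Fix $\eps'>0$ and let $a_0 = a_0(\eps')$ be the constant provided by Lemma~\ref{lem:ahdprep}. Set $\rho^* := \max(0, r_2 - (a - a_0))$. For every $\rho \in [\rho^*, r_2]$ we have $\ahd(B_1, \partial B_2(\rho)) \geq a_0$, and Lemma~\ref{lem:ahdprep} then yields $B_1 \cap \partial B_2(\rho) \subseteq \sect{c_2}{c_1}{\eps'}$; translating into arc length this says that the portion of the circle at hyperbolic distance $\rho$ from $c_2$ that lies in $B_1$ has length at most $2\eps' \sinh \rho$. For $\rho < \rho^*$ we only use the trivial bound $2\pi \sinh \rho$. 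Integrating over $\rho \in [0, r_2]$ then gives
\begin{align*}
\areaH(B_1 \cap B_2)
&\leq \int_0^{\rho^*} 2\pi \sinh \rho\, d\rho + \int_{\rho^*}^{r_2} 2\eps' \sinh \rho\, d\rho \\
&= 2\pi(\cosh \rho^* - 1) + 2\eps'(\cosh r_2 - \cosh \rho^*).
\end{align*}

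Dividing by $\areaH(B_2) = 2\pi(\cosh r_2 - 1)$ produces
$$\frac{\areaH(B_1 \cap B_2)}{\areaH(B_2)} \;\leq\; \frac{\cosh \rho^* - 1}{\cosh r_2 - 1} + \frac{\eps'}{\pi}.$$
If $\rho^* = 0$ the first term vanishes. If $\rho^* > 0$, then $r_2 \geq a - a_0$, which is large for $a$ large, so $\cosh r_2 - 1 \geq \tfrac14 e^{r_2}$ and the first term is at most $2 e^{\rho^* - r_2} = 2 e^{-(a - a_0)} \to 0$ as $a \to \infty$. Since $\areaH(B_2 \setminus B_1)/\areaH(B_2) = 1 - \areaH(B_1 \cap B_2)/\areaH(B_2)$, this gives $\liminf_{a \to \infty} \inf \areaH(B_2 \setminus B_1)/\areaH(B_2) \geq 1 - \eps'/\pi$, and letting $\eps' \to 0$ finishes the proof. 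The only non-routine step is the idea of bootstrapping Lemma~\ref{lem:ahdprep} across every concentric sub-disk of $B_2$; after that everything is a direct polar-coordinate integration, with the mild technical check that the formula for $\ahd(B_1, B_2(\rho))$ really does inherit from the full disk.
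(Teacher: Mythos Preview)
Your proof is correct and follows essentially the same strategy as the paper's: both decompose $B_2$ into an inner sub-disk and an outer annulus of concentric circles, apply Lemma~\ref{lem:ahdprep} on those circles to obtain a sector containment for $B_1 \cap \partial B_2(\rho)$, and then combine the sector bound on the annulus with a crude area bound on the inner disk. The only cosmetic differences are that you integrate in polar coordinates and take the optimal threshold $\rho^* = r_2 - (a-a_0)$, whereas the paper uses the cruder annulus width $a/1000$ and argues via set inclusion; the underlying idea and the key lemma invoked are identical.
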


\begin{proof}
It suffices to show that for every $\eps>0$ there exists $a_0$ such that $\ahd(B_1,B_2) \geq a_0$
implies that $\areaH(B_2\setminus B_1) \geq (1-\eps)\cdot \areaH(B_2)$.
We thus let $a_0$ be a large constant, to be determined in the course of the proof, and we let 
$B_1 = \ballH(c_1,r_1), B_2 = \ballH(c_2,r_2)$ be arbitrary disks with $a := \ahd(B_1,B_2) \geq a_0$.

If $r_2 \leq a/2$ then $B_1$ and $B_2$ are disjoint and we are done.
For the remainder of the proof, we therefore assume $r_2 > a/2$.

Each of the circles $C_s := \partial \ballH(c_2,s)$ with $r_2-\frac{a}{1000} \leq s \leq r_2$ satisfies 
$\ahd( B_1, C_s ) \geq \left(\frac{999}{1000}\right) a$.
Applying Lemma~\ref{lem:ahdprep}, having chosen $a_0$ sufficiently large, we 
can assume that $C_s \cap B_1 \subseteq \sect{c_2}{c_1}{\frac{\eps}{1000}}$ for all $r_2-\frac{a}{1000} \leq s \leq r_2$.
It follows that 

$$ B_1 \cap B_2 \subseteq \ballH\left(c_2,r_2-\frac{a}{1000}\right) \cup \sect{c_2}{c_1}{\frac{\eps}{1000}}. $$

Next we note that, by rotational symmetry of the hyperbolic area measure

\begin{equation}\label{eq:ahdeq1} 
\frac{\areaH\left( \sect{c_2}{c_1}{\frac{\eps}{1000}} \cap B_2 \right)}{\areaH(B_2)} = \frac{\eps}{1000\cdot \pi}. 
\end{equation}

We also have

\begin{equation}\label{eq:ahdeq2} 
\begin{array}{rcl}
\displaystyle \frac{\areaH(\ballH(c_2,r_2-\frac{a}{1000})}{\areaH(c_2,r_2)} 
& = & \displaystyle \frac{\cosh(r_2-\frac{a}{1000})-1}{\cosh(r_2)-1} \\[2ex]
& \leq & \displaystyle \frac{\cosh(r_2-\frac{a}{1000})}{\cosh(r_2)} \\[2ex]
& \leq & \displaystyle \frac{\cosh(\frac{499}{1000} a)}{\cosh(a/2)} \\[2ex]
& \leq & \displaystyle \frac{\eps}{1000},
\end{array}
\end{equation}

\noindent
where the second inequality follows from the fact that $x \mapsto \cosh(x-y)/\cosh(x)$ is decreasing 
in $x$ for $x \geq y$ and the last inequality holds provided we chose $a_0$ sufficiently large, and uses
that $\cosh(\frac{499}{1000} a)/\cosh(a/2) \to 0$ 
as $a\to\infty$.

Combining~\eqref{eq:ahdeq1} and~\eqref{eq:ahdeq2}, we see that 

$$ \areaH( B_1\cap B_2 ) < \eps \cdot \areaH(B_2), $$

\noindent
provided we chose $a_0$ sufficiently large.
This clearly implies the statement of the lemma.
\end{proof}

\begin{lemma}\label{lem:ahdeps}
For every $\eps, w_1,w_2,\vartheta > 0$ there exist $r_0=r_0(\eps, w_1,w_2,\vartheta), a = a(\eps,w,\vartheta) >0$ such that, 
for every pre-chunk $u_0,\dots,u_k$ and every
disk $B$ with $\ahd( \cert(u_0,\dots,u_k), B ) \geq a$  we have 

$$ \areaH\left( B \setminus \cert(u_0,\dots,u_k) \right) \geq 
(1-\eps) \areaH(B). $$

\end{lemma}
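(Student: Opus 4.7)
The plan is a three-case analysis on the hyperbolic radius $s$ of $B=\ballH(c,s)$, with $K:=2$ fixed and the constants $a,r_0$ to be chosen large in terms of $\eps,w_1,w_2,\vartheta$. I would first note that $\cert(u_0,\dots,u_k)$ is a union of at most $2k$ hyperbolic disks: each $\cert(u_{i-1},u_i)$ is either $\DD(u_{i-1},u_i,r+w_2)$ (two disks) or $\BGab(u_{i-1},u_i)$ (one disk), each of hyperbolic radius at most $(r+w_2)/2$. Every such constituent disk $D$ inherits $\ahd(D,B) \geq \ahd(\cert(u_0,\dots,u_k),B) \geq a$, since $D \subseteq \cert(u_0,\dots,u_k)$ forces $\distH(x,D) \geq \distH(x,\cert(u_0,\dots,u_k))$ pointwise.

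When $s \leq a/10$, I would pick $x^\ast \in B$ with $\distH(x^\ast,\cert(u_0,\dots,u_k)) \geq a$ (which exists by definition of $\ahd$). Every $y \in B$ satisfies $\distH(y,x^\ast) \leq 2s \leq a/5$ by the triangle inequality, hence $\distH(y,\cert(u_0,\dots,u_k)) \geq 4a/5 > 0$, so $B \cap \cert(u_0,\dots,u_k) = \emptyset$ outright and the claim is trivial.

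When $a/10 < s \leq Kr$, Lemma~\ref{lem:bddintersect} bounds the number of certificates meeting $B$ by $4s/r + 10 \leq 4K+10$, so the number of constituent disks meeting $B$ is at most $N := 8K+20 = 36$. For each such disk $D$, Lemma~\ref{lem:ahd} gives $\areaH(B\cap D) \leq (\eps/N)\,\areaH(B)$, provided $a$ was chosen large enough in terms of $\eps/N$; a union bound over the $\leq N$ intersecting disks yields $\areaH(B \cap \cert(u_0,\dots,u_k)) \leq \eps\,\areaH(B)$. When $s > Kr$ instead, I would abandon the $\ahd$ hypothesis and use the crude estimate $\areaH(B\cap\cert(u_0,\dots,u_k)) \leq \sum_D \areaH(D) \leq (8s/r+20)\pi e^{(r+w_2)/2}$ (sum over disks meeting $B$). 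Dividing by $\areaH(B) = 2\pi(\cosh s - 1)$, which is at least a positive constant times $e^s$ for $s$ bounded below, bounds the ratio by a constant times $(8s/r+20)e^{(r+w_2)/2-s}$. This function is decreasing on $[Kr,\infty)$ once $K>1/2$, and its value at $s=Kr$ equals $(8K+20)e^{(1/2-K)r + w_2/2}$, which tends to $0$ as $r\to\infty$, so choosing $r_0$ large forces the ratio below $\eps$.

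The hard part is parameter coordination rather than any single geometric step. Fixing $K=2$, the constant $a$ is pinned down by Lemma~\ref{lem:ahd} applied with target $\eps/36$; then $r_0$ is chosen large enough both for Lemma~\ref{lem:bddintersect} to apply and for the exponential decay in the third case to deliver the bound $\eps$. The three ranges of $s$ always cover $[0,\infty)$: regardless of whether $a/10 \leq Kr$ or not, the first and third cases already cover $[0,a/10] \cup (Kr,\infty)$, leaving the second (if nonempty) to close the middle window.
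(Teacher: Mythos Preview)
Your overall architecture matches the paper's: write $C=\cert(u_0,\dots,u_k)$ as a union of at most $2k$ disks, use Lemma~\ref{lem:ahd} on the individual disks when only boundedly many meet $B$, and use a crude total-area estimate when $s$ is large. The paper splits on $|I|:=|\{i:B_i\cap B\neq\emptyset\}|$ rather than on $s$, but via Lemma~\ref{lem:bddintersect} this amounts to the same thing.

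There is, however, a genuine gap in your third case. Your opening assertion that every constituent disk has hyperbolic radius at most $(r+w_2)/2$ is false: when the final pseudo-edge of the pre-chunk is of type~\textbf{IV} (i.e.\ $\distH(u_{k-1},u_k)\geq r+w_2$), the certificate is $\BGab(u_{k-1},u_k)$, a single disk of radius $\distH(u_{k-1},u_k)/2$, and this can be arbitrarily large. In your Case~3 you bound $\areaH(B\cap C)\leq \sum_D \areaH(D)\leq (8s/r+20)\,\pi e^{(r+w_2)/2}$, which uses $\areaH(D)\leq \pi e^{(r+w_2)/2}$ for every $D$; for the Gabriel disk this can fail by an arbitrary factor, and the ratio $\areaH(B\cap C)/\areaH(B)$ is then not controlled.

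The fix is exactly what the paper does: there is at most one constituent disk whose radius is not $(r+w_2)/2$, namely $\BGab(u_{k-1},u_k)$. For that one disk you still have $\ahd(\BGab(u_{k-1},u_k),B)\geq a$, so Lemma~\ref{lem:ahd} gives $\areaH(B\cap \BGab(u_{k-1},u_k))\leq (\eps/2)\,\areaH(B)$ directly (choose $a$ for target $\eps/2$, say). For all remaining disks, which genuinely have radius $(r+w_2)/2$, your crude estimate goes through and yields another $\eps/2$. Your Cases~1 and~2 are fine as written (Case~1 is in fact unnecessary, since your Case~2 argument works for all $s\leq 2r$ without a lower bound on $s$).
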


\begin{proof}
We fix a large constant $K>0$, to be determined more precisely during the course of the proof.
For convenience we'll write $C:=\cert(u_0,\dots,u_k)$ and $B = \ballH(c,s)$.
We let $B_1=\ballH(c_1,r_1),\dots,B_N=\ballH(c_N,r_N)$ be the balls that feature in the definition of $\cert(u_0,u_1),\dots,
\cert(u_{k-1},u_k)$. That is each $B_i$ is either $\BGab(u_{k-1},u_k)$ or a disk of diameter $r+w_2$ with 
$u_{j-1}, u_j \in \partial B_i$ for some $1\leq j\leq k$.
In particular $C=B_1\cup\dots\cup B_N$ and $N\leq 2k$.

By Lemma~\ref{lem:ahd} we can take the constant $a$ such that
the assumption that $\ahd( C, B ) \geq a$ implies that $\areaH( B\setminus B_i ) \geq (1-\eps/K)\areaH( B )$ for each $i=1,\dots,N$.

We set $I := \{ i : B_i \cap B \neq \emptyset \}$. 
If $|I| \leq K$ then the statement clearly holds. For the remainder of the proof we thus assume $|I| > K$.

By Lemma~\ref{lem:bddintersect}, assuming $r_0$ was chosen sufficiently large, we have
$|I| \leq 8(s/r) + 20$ (each certificate is either a single ball or the union of two balls). 
Hence, provided we chose $r_0$ and $K$ sufficiently large, $|I| > K$ implies $s \geq (K/10) \cdot r$.
We have 

$$ \begin{array}{rcl} \areaH\left( B \setminus C \right)
& \geq & \areaH(B) - \sum_{i\in I} \areaH( B \cap B_i ) \\
& \geq & \areaH(B) - \left(\frac{\eps}{K}\right)\areaH(B) - \sum_{i\in I : B_i \neq \BGab(u_{k-1},u_k)} \areaH(B_i) \\
& \geq & (1-\eps/K) \cdot \areaH(B) - \left(8(s/r)+20\right) \cdot \areaH\left( \ballH(o,(r+w_2)/2) \right) \\
& \geq & (1-\eps/K) \cdot \areaH(B) - s \cdot \areaH\left( \ballH(o,(r+w_2)/2) \right), 
\end{array} $$

\noindent
the last inequality holding assuming we have chosen $r_0$ and $K$ sufficiently large.
Next we point out that 

$$ \begin{array}{rcl} \frac{s \cdot \area\left( \ballH(o,(r+w_2)/2) \right)}{\areaH( B )} 
& = & \frac{s \cdot (\cosh((r+w_2)/2)-1)}{\cosh s - 1} \\
& \leq & 1000 \cdot e^{(r+w_2)/2} \cdot s \cdot e^{-s} \\
& \leq & 1000 \cdot e^{(r+w_2)/2} \cdot (K/10) \cdot r \cdot e^{-(K/10) r} \\
& = & 1000 \cdot e^{w_2/2} \cdot (K/10) \cdot r \cdot e^{-\left(\frac{K-5}{10}\right) r} \\
& \leq & 1000 \cdot e^{w_2/2} \cdot (K/10) \cdot r_0 \cdot e^{-\left(\frac{K-5}{10}\right) r_0} \\
& \leq & \eps/2, 
\end{array} $$

\noindent
provided $r_0$ and $K$ were chosen sufficiently large, using in the second line that 
$\cosh x - 1 = (1+o_x(1)) \cdot \frac12 \cdot e^x$ as $x \to \infty$; in the third line that $s\mapsto se^{-s}$ 
is decreasing for $s>1$;
and in the last line that $r \cdot e^{-cr} \to 0$ as $r\to\infty$ (for every $c>0$).
It follows that if $|I|>K$ then also 

$$ \areaH\left( B \setminus C \right) > (1-\eps) \cdot \areaH( B ), $$

\noindent
as claimed.
\end{proof}

We conclude this subsection, with two lemmas giving conditions under which hyperbolic angles are small.
Again, we will use these when estimating the expected number of linked sequences of chunks later on.

\begin{lemma}\label{lem:angle}
There is a universal constant $K>0$ such that the following holds.
If $B_1 = \ballH(c_1,r_1), B_2 = \ballH(c_2,r_2)$ and $p \in \partial B_2$ are such that
$B_1\cap B_2 \neq \emptyset$ then 
either $\distH(p,c_1) < r_1+K$ or $\angle c_1pc_2 <  10 \exp\left[ (r_1 - \distH(p,c_1))/2 \right]$.
\end{lemma}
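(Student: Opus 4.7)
The plan is to apply the hyperbolic law of cosines (Lemma~\ref{lem:law_cosines}) to the triangle with vertices $c_1, p, c_2$. Set $a := \distH(p,c_2) = r_2$, $b = d := \distH(p,c_1)$, $c := \distH(c_1,c_2)$, and $\theta := \angle c_1 p c_2$. The assumption $B_1 \cap B_2 \neq \emptyset$ gives $c \leq r_1+r_2$, while the triangle inequality $|a-b|\leq c$ rearranges to $d \leq r_1 + 2r_2$. Combined with $d \geq r_1 + K$ this forces $r_2 \geq K/2$, so by choosing $K$ large enough I may assume $r_2, d \geq 1$ throughout the calculation. The degenerate cases are easy: $\theta = 0$ makes the conclusion trivial, while $\theta = \pi$ would force $c = a + b = r_2 + d$, contradicting $c \leq r_1+r_2$ once $K > 0$.

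Rewriting Lemma~\ref{lem:law_cosines} using $\cosh(a-b) = \cosh a \cosh b - \sinh a\sinh b$ gives
\[
1-\cos\theta \;=\; \frac{\cosh c - \cosh(a-b)}{\sinh a \sinh b}.
\]
The right hand side is monotone in $c$ on $c \geq |a-b|$, so the bound $c \leq r_1+r_2$ together with the product-to-sum identity $\cosh x - \cosh y = 2\sinh(\tfrac{x+y}{2})\sinh(\tfrac{x-y}{2})$ yields
\[
\cosh(r_1+r_2) - \cosh|d-r_2| \;=\; 2\sinh\!\Bigl(\tfrac{r_1+d}{2}\Bigr)\sinh\!\Bigl(\tfrac{r_1+2r_2-d}{2}\Bigr),
\]
where a one-line case split in the sign of $d-r_2$ confirms both cases collapse to the same expression; the triangle inequality $d \leq r_1+2r_2$ makes the second factor nonnegative. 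Applying the elementary bounds $\sinh x \leq \tfrac12 e^x$ (all $x\geq 0$) to the numerator and $\sinh x \geq \tfrac13 e^x$ (all $x\geq 1$) to $\sinh r_2$ and $\sinh d$, the factors of $e^{r_2}$ cancel and I am left with
\[
1-\cos\theta \;\leq\; \tfrac{9}{2}\, e^{r_1-d}.
\]

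Finally, the elementary inequality $1-\cos\theta = 2\sin^2(\theta/2) \geq 2\theta^2/\pi^2$ (valid on $[0,\pi]$ via Jordan's inequality $\sin x \geq 2x/\pi$ on $[0,\pi/2]$) inverts the previous display to
\[
\theta \;\leq\; \pi\sqrt{9/4}\cdot e^{(r_1-d)/2} \;=\; \tfrac{3\pi}{2}\, e^{(r_1-d)/2} \;<\; 10\, e^{(r_1-d)/2},
\]
completing the proof once $K$ is large enough to force $r_2, d \geq 1$ (e.g., $K \geq 4$ suffices).  The main obstacle is purely bookkeeping: ensuring the exponential approximations $\sinh x \approx \tfrac12 e^x$ are quantitatively valid, which reduces to forcing $r_2$ and $d$ away from zero via the interplay between the hypothesis $d \geq r_1+K$ and the induced triangle inequality $d \leq r_1+2r_2$.
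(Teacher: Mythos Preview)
Your proof is correct and follows essentially the same approach as the paper: apply the hyperbolic law of cosines to the triangle $c_1pc_2$, combine with $\distH(c_1,c_2)<r_1+r_2$, and finish via exponential bounds on hyperbolic functions together with a quadratic lower bound on $1-\cos\theta$. The only organizational difference is that the paper splits into cases according to whether $\theta\geq\pi/2$ (invoking Lemma~\ref{lem:cosines} in that case) whereas you avoid the split by rewriting the law of cosines as $1-\cos\theta=(\cosh c-\cosh(a-b))/(\sinh a\sinh b)$ and using the triangle inequality to force $r_2,d\geq 1$ so that the lower bounds on $\sinh$ apply.
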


\begin{proof}
Let $K = K(\pi/2)$ be as provided by Lemma~\ref{lem:cosines}. 
For notational convenience we write $\alpha := \angle c_1pc_2, \rho := \distH(c_1,p)$ and $d := \distH(c_1,c_2)$.

The disks $B_1$ and $B_2$ intersect if and only if $d < r_1 + r_2$.
If $\alpha \geq \pi/2$ then Lemma~\ref{lem:cosines} tells us that
$d \geq \rho + r_2 - K$.
It follows that if $\alpha \geq \pi/2$ and $B_1\cap B_2 \neq \emptyset$ then 
$\rho < r_1+K$. 

Let us then assume $\alpha < \pi/2$.
If $B_1\cap B_2 \neq \emptyset$ then the hyperbolic cosine rule gives

$$ \begin{array}{rcl} e^{r_1+r_2} & > & \cosh d \\ 
& = & \cosh(\rho) \cosh(r_2) - \cos(\alpha)\sinh(\rho)\sinh(r_2) \\
& \geq & \frac{1}{4} e^{\rho+r_2} (1-\cos\alpha) \\
& \geq & \frac{1}{4\pi} e^{\rho+r_2} \alpha^2, \end{array} $$

\noindent
where we have used that $1-\cos(\alpha)\geq \frac{\alpha^2}{\pi}$ for all $0<\alpha<\pi/2$.
Rearranging and taking square roots gives $\alpha < \sqrt{4\pi} \cdot e^{(r_1-\rho)/2} < 10 e^{(r_1-\rho)/2}$.
\end{proof}

We'll use another rather straightforward consequence of the hyperbolic cosine rule:

\begin{lemma}\label{lem:distHvsangle}
For all $x_1,x_2 \in \Dee$ we have 

$$ 2\pi \cdot \exp\left[ \frac12\cdot \left( \distH(x_1,x_2)
- \distH(o,x_1)-\distH(o,x_2) \right)\right] > \angle x_1ox_2. $$

\end{lemma}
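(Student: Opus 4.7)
The plan is to reduce the inequality to a computation with hyperbolic trigonometric identities. Write $a := \distH(o,x_1)$, $b := \distH(o,x_2)$, $c := \distH(x_1,x_2)$ and $\alpha := \angle x_1 o x_2 \in [0,\pi]$; the cases $x_1 = o$ or $x_2 = o$ being vacuous, I assume $a,b > 0$. It suffices to prove the stronger bound $\alpha \leq \pi\,e^{(c-a-b)/2}$, since $\pi < 2\pi$.

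The hyperbolic law of cosines (Lemma~\ref{lem:law_cosines}), combined with the half-angle formula $\sin^2(\alpha/2) = (1-\cos\alpha)/2$, yields
\begin{equation*}
\sin^2(\alpha/2) \;=\; \frac{\cosh c - \cosh(a-b)}{2\sinh a\,\sinh b} \;=\; \frac{\cosh c - \cosh(a-b)}{\cosh(a+b)-\cosh(a-b)},
\end{equation*}
the second equality using the identity $2\sinh a\,\sinh b = \cosh(a+b)-\cosh(a-b)$. Applying the sum-to-product identity $\cosh x - \cosh y = 2\sinh((x+y)/2)\sinh((x-y)/2)$ to numerator and denominator, and setting $s := (a+b-c)/2$ (nonnegative by the triangle inequality, as are $a-s$ and $b-s$), this simplifies to
\begin{equation*}
\sin^2(\alpha/2) \;=\; \frac{\sinh(a-s)\,\sinh(b-s)}{\sinh(a)\,\sinh(b)}.
\end{equation*}

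The key observation is the elementary inequality
\begin{equation*}
\frac{\sinh(x-s)}{\sinh(x)} \;=\; \cosh s - \coth(x)\sinh s \;\leq\; \cosh s - \sinh s \;=\; e^{-s},
\end{equation*}
valid for all $x > 0$ and $s \geq 0$, since $\coth(x) \geq 1$. Applying this with $x=a$ and $x=b$ and multiplying gives $\sin^2(\alpha/2) \leq e^{-2s} = e^{c-a-b}$. Finally, Jordan's inequality $\sin y \geq (2/\pi) y$ on $[0,\pi/2]$ gives $\alpha \leq \pi\sin(\alpha/2) \leq \pi\, e^{(c-a-b)/2} < 2\pi\, e^{(c-a-b)/2}$, as desired.

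There is no substantive obstacle here; the only step that needs a moment of thought is spotting the change of variables $s = (a+b-c)/2$, which turns both sides of the target into simple exponentials and makes the inequality $\sinh(x-s) \leq e^{-s}\sinh(x)$ the single ingredient beyond the hyperbolic law of cosines.
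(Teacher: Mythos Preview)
Your proof is correct. Both your argument and the paper's rest on the hyperbolic law of cosines, but the execution differs. The paper proceeds by the crude chain $e^{c} > \cosh c \geq \tfrac14 e^{a+b}(1-\cos\alpha)$ together with the quadratic lower bound $1-\cos\alpha \geq \alpha^2/\pi$ for $\alpha<\pi/2$, and handles $\alpha\in[\pi/2,\pi]$ by a separate, even cruder case. You instead pass to the exact half-angle identity
\[
\sin^2(\alpha/2)=\frac{\sinh(a-s)\,\sinh(b-s)}{\sinh a\,\sinh b},\qquad s=\tfrac{a+b-c}{2},
\]
and then use the single clean inequality $\sinh(x-s)\le e^{-s}\sinh x$ together with Jordan's inequality, avoiding any case split. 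This buys you a sharper constant ($\pi$ in place of $2\pi$) and a uniform argument, at the cost of one extra algebraic identity. Either route is perfectly adequate for the paper's purposes.
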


\begin{proof}
For notational convenience, write $\rho_1 = \distH(o,x_1), \rho_2 = \distH(o,x_2)$ and $\gamma = \angle x_1ox_2$.
It suffices to show that $e^{\distH(x_1,x_2)} > \frac{1}{4\pi^2} \cdot e^{\rho_1+\rho_2} \cdot \gamma^2$.

If $\gamma < \pi/2$ then the hyperbolic cosine rule 
gives that 

$$ \begin{array}{rcl} 
e^{\distH(x_1,x_2)} & > & \cosh(\distH(x_1,x_2))  \\
& = & \cosh(\rho_1)\cosh(\rho_2)-\cos(\gamma)\sinh(\rho_1)\sinh(\rho_2) \\
& \geq & \frac14 \cdot e^{\rho_1+\rho_2} \cdot (1-\cos\gamma) \\
& \geq &  \frac{1}{4\pi} \cdot e^{\rho_1+\rho_2} \cdot \gamma^2.
\end{array} $$

\noindent
(Using that $1-\cos(\gamma)\geq \frac{\alpha^2}{\pi}$ for $0<\gamma<\pi/2$ in the third line.)

If on the other hand $\gamma \in [\pi/2,\pi]$ then similarly

$$ \begin{array}{rcl} 
e^{\distH(x_1,x_2)} 
& > & \cosh(\rho_1)\cosh(\rho_2)-\cos(\gamma)\sinh(\rho_1)\sinh(\rho_2) \\
& \geq & \cosh(\rho_1)\cosh(\rho_2) \\
& \geq & \frac14 \cdot e^{\rho_1+\rho_2} \\
& \geq &  \frac{1}{4\pi^2} \cdot e^{\rho_1+\rho_2} \cdot \gamma^2.
\end{array} $$

\end{proof}

\subsubsection{Counting good pseudopaths}

The next point of order is to bound the expected number of black, good pseudopaths starting from the origin.
We plan to use an inductive approach and begin with a couple of lemmas designed for the situation 
where we add a good pseudo-edge to an existing, good pseudo-path.

\begin{lemma}\label{lem:YIa}
For every $w_1,w_2,\vartheta>0$ and $0<\eps<1$ there exist $\lambda_0=\lambda_0(\eps,w_1,w_2,\vartheta)$ such that
the following holds for all $0<\lambda<\lambda_0$ and all $p \leq (1-\eps)\cdot(\pi/3)\cdot \lambda$,  
setting $r := 2\log(1/\lambda)$.

Let $u_0,\dots,u_k$ be an arbitrary good pre-pseudopath and, writing
$C := \cert(u_0,\dots,u_k)$, define

$$ Y_{\text{\bf{I-i}}} :=
 \left|\left\{ z \in \Zcal_b : \begin{array}{l} 
 r-w_1 \leq \distH(u_k,z) \leq r+w_2, \text{ and;} \\
 \angle u_{k-1}u_kz > \vartheta, \text{ and;} \\
 \text{$\exists$ a disk $B$ with } \Zcal \cap B \setminus C = \emptyset, \\
 u_k,z\in\partial B \text{ and } \diam_{\Haa^2}(B) < r+w_2.
 \end{array} \right\}\right|. $$

We have

$$ \Ee Y_{\text{\bf{I-i}}} \leq (1-\eps/2). $$

\end{lemma}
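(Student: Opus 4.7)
The strategy is to apply Corollary~\ref{cor:SlivMeck2col}, reduce the ``$C$-discounted'' disk event back to the standard Delaunay event, and then invoke Isokawa's formula. Applying Slivniak--Mecke gives
\[ \Ee Y_{\text{\bf I-i}} = p\lambda \int_A q(z) f(z) \dd z, \]
where $A := \{z\in\Dee : r-w_1 \leq \distH(u_k,z) \leq r+w_2,\ \angle u_{k-1}u_kz > \vartheta\}$ and $q(z)$ is the probability that some disk $B$ with $u_k,z\in\partial B$ and $\diam_{\Haa^2}(B)<r+w_2$ satisfies $\Zcal\cap B\setminus C = \emptyset$. For every $z\in A$, the sequence $u_0,\dots,u_k,z$ is itself a good pre-pseudopath, so provided $\lambda_0$ is small enough that $r = 2\log(1/\lambda)$ exceeds the threshold in Corollary~\ref{cor:certballH}, there is a constant $h = h(w_1,w_2,\vartheta)$ with $\cert(u_k,z)\cap C \subseteq \ballH(u_k,h)$. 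Any admissible witness disk $B$ lies in $\DD(u_k,z,r+w_2)=\cert(u_k,z)$, hence $B\cap C \subseteq \ballH(u_k,h)$.

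The key estimate I want to prove is
\[ q(z) \leq e^{\lambda\areaH(\ballH(o,h))} \cdot \Pee[E_0(z)], \]
where $E_0(z)$ is the undiscounted Delaunay event that some disk $B$ with $u_k,z\in\partial B$ satisfies $\Zcal\cap B = \emptyset$ (dropping even the diameter bound, which only enlarges the probability). To see this, note that if $B$ witnesses the event inside $q(z)$, then by the containment above $(\Zcal\setminus\ballH(u_k,h))\cap B \subseteq \Zcal\cap B\setminus C = \emptyset$. Thus the $q(z)$-event is contained in the event $E'$ that some such $B$ has $(\Zcal\setminus\ballH(u_k,h))\cap B = \emptyset$. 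Since $E'$ depends only on $\Zcal\setminus\ballH(u_k,h)$, it is independent of $\Zcal\cap\ballH(u_k,h)$, and $\Pee[E'] = \Pee[E' \mid \Zcal\cap\ballH(u_k,h) = \emptyset] = \Pee[E_0(z) \mid \Zcal\cap\ballH(u_k,h) = \emptyset]$. Dividing by the Poisson void probability $e^{-\lambda\areaH(\ballH(o,h))}$ gives the bound.

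Substituting this into the Slivniak--Mecke integral, enlarging $A$ to all of $\Dee$, and using that by Slivniak--Mecke applied to the typical degree (after an isometry sending $u_k$ to the origin) $\int_\Dee \Pee[E_0(z)] f(z)\dd z = \Ee D / \lambda$, we obtain
\[ \Ee Y_{\text{\bf I-i}} \leq e^{\lambda\areaH(\ballH(o,h))} \cdot p \cdot \Ee D. \]
By Isokawa's formula (Theorem~\ref{thm:Isokawa}), $\Ee D = 6 + 3/(\pi\lambda)$, so with $p = (1-\eps)(\pi/3)\lambda$ we get $p\cdot \Ee D = (1-\eps)(1 + 2\pi\lambda)$, while the prefactor is $1 + O(\lambda)$ as $\lambda\to 0$. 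Choosing $\lambda_0$ small enough forces the product below $1 - \eps/2$. The main obstacle is the clean comparison $q(z) \leq (1+O(\lambda))\Pee[E_0(z)]$: without the uniform containment $B\cap C \subseteq \ballH(u_k,h)$ supplied by Corollary~\ref{cor:certballH}, the discount from allowing $B$ to meet $C$ could grow with $r\to\infty$, which would wreck the bound since the certificate $C$ has area of order $k\cdot e^r$.
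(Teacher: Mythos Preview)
Your proof is correct and follows essentially the same route as the paper: apply Slivniak--Mecke, use Corollary~\ref{cor:certballH} to trap $C\cap\cert(u_k,z)$ inside $\ballH(u_k,h)$, compare the $C$-discounted disk event to the unconditioned Delaunay event at the cost of the factor $1/\Pee(\Zcal\cap\ballH(u_k,h)=\emptyset)$, and finish with Isokawa's formula. The only cosmetic slip is that your second ``$=$'' in the chain $\Pee[E']=\Pee[E'\mid\cdot]=\Pee[E_0(z)\mid\cdot]$ should be ``$\leq$'' once you drop the diameter bound in $E_0$, but the inequality goes the right way.
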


Before proceeding with the proof, let us clarify that in the above lemma we also allow $k=0$. In that case the demand
$\angle u_{k-1}u_kz > \vartheta$ of course does not apply.

\begin{proof}
We let $\lambda_0$ be a small constant
to be determined in the course of the proof, and take an arbitrary $\lambda < \lambda_0$ and $p \leq (1-\eps)\cdot(\pi/3)\cdot\lambda$. 
Once again we apply Corollary~\ref{cor:SlivMeck2col} to find that

$$ \Ee Y_{\text{\bf{I-i}}} = p\lambda \int_{\Dee} \Ee\left[g(z,\Zcal\cup\{z\})\right] f(z) \dd z, $$

\noindent
where $f$ is again as given by~\eqref{eq:fdef} and this time

$$ g(u,\Ucal) := 
1_{\left\{\begin{array}{l}  
 r-w_1\leq \distH(u_k,u) \leq r+w_2, \text{ and;} \\
 \angle u_{k-1}u_ku > \vartheta, \text{ and;} \\
 \text{$\exists$ a disk $B$ with } \Ucal \cap B \setminus C = \emptyset, \\
 u_k,u\in\partial B \text{ and } \diamH(B) < r+w_2.
 \end{array}
\right\}}. $$

By Corollary~\ref{cor:certballH}, assuming $\lambda_0$ was chosen sufficiently small, 
if $z \in\Dee$ is such that $\Ee\left[g(z,\Zcal\cup\{z\})\right] \neq 0$ then
$C \cap \cert(u_k,z) \subseteq \ballH(u_k,h)$.

Applying a suitable isometry if needed (which leaves the distribution of $\Zcal$ invariant), we can 
assume without loss of generality that $u_k=o$ is the origin.
Recall that any disk $B$ satisfying $o,z\in\partial B$ and 
$\diamH(B) < r+w$ satisfies $B \subseteq \DD(o,z,r+w)$.
So if $z \in \Dee$ is such that $r-w_1\leq \distH(0,z) \leq r+w_2$ and $\angle u_{k-1}oz > \vartheta$ then 

$$ \begin{array}{rcl} 
\Ee\left[g(z,\Zcal\cup\{z\})\right]
& = & 
\Pee\left( \begin{array}{l} \text{$\exists$ a disk $B$ with } \Zcal \cap B  = \emptyset, \\
 o,z\in\partial B \text{ and } \diam_{\Haa^2}(B) \leq r+w_2 \end{array} {\Big|} \Zcal \cap C = \emptyset \right) \\[2ex]
& = & 
\Pee\left( \begin{array}{l} \text{$\exists$ a disk $B$ with } \Zcal \cap B  = \emptyset, \\
 o,z\in\partial B \text{ and } \diam_{\Haa^2}(B) \leq r+w_2 \end{array} {\Big|} 
 \Zcal \cap C \cap \cert(o,z) = \emptyset \right) \\[2ex]
& \leq & \displaystyle 
\frac{
\Pee\left( \text{$\exists$ a disk $B$ with } \Zcal \cap B  = \emptyset \text{ and } o,z\in\partial B  \right)  
}{ 
\Pee\left( \Zcal \cap C \cap \cert(o,z) = \emptyset \right)
} \\[2ex]
& \leq & \displaystyle 
\frac{ 
\Pee\left( \text{$\exists$ a disk $B$ with } \Zcal \cap B  = \emptyset \text{ and } o,z\in\partial B \right) 
}{
\Pee\left( \Zcal \cap \ballH(o,h) = \emptyset \right)
} \\[2ex]
& = &  \displaystyle
\frac{ 
\Ee\left[\hat{g}(z,\Zcal\cup\{z\})\right]
}{
\Pee( \Zcal \cap\ballH(o,h) = \emptyset )
}, 
\end{array} $$

\noindent
where $\hat{g}(u,\Ucal) = 1$ if $ou$ is an edge of the Delaunay graph for $\Ucal \cup \{o\}$ and $\hat{g}(u,\Ucal)=0$ otherwise.
It follows that

$$ \begin{array}{rcl} 
\Ee Y_{\text{\bf{I-i}}} 
& \leq & \displaystyle 
\frac{p\lambda}{\Pee( \Zcal \cap\ballH(o,h) = \emptyset )} \cdot
\int_\Dee \Ee\left[\hat{g}(z,\Zcal\cup\{z\})\right] f(z) \dd z \\[2ex]
& = & \displaystyle 
\frac{p}{\Pee( \Zcal \cap\ballH(o,h) = \emptyset )} \cdot \left( 6 + \frac{3}{\pi\lambda} \right) \\[2ex]
& \leq & \displaystyle 
\frac{(1-\eps)\cdot (1+2\pi\lambda)}{1-\lambda\cdot\areaH(\ballH(o,h))},
\end{array} $$

\noindent
where the second line follows by Isokawa's formula (and the Slivniak-Mecke formula to phrase the
typical degree as an integral). 

Since $h$ is a constant we have $\lambda\cdot\areaH(\ballH(o,h)) \to 0$ as $\lambda\searrow 0$.
So, provided we chose $\lambda_0$ sufficiently small, we have 

$$\Ee Y_{\text{\bf{I-i}}} 
< 1 - \eps/2, $$

\noindent 
as claimed.
\end{proof}

\begin{lemma}\label{lem:YIb}
For every $w_1,w_2,\vartheta>0$ there exists a $\lambda_0=\lambda_0(w_1,w_2,\vartheta)$ such that
the following holds for all $\lambda<\lambda_0$ and all $p \leq 10\lambda$,  
setting $r := 2\log(1/\lambda)$.

Let $u_0,\dots,u_k$ be an arbitrary good pre-pseudopath and, writing
$C := \cert(u_0,\dots,u_k)$, define

$$ Y_{\text{\bf{I-ii}}} :=
 \left|\left\{ z \in \Zcal_b : \begin{array}{l} 
 r-w_1 < \distH(u_k,z) < r+w_2, \text{ and;} \\
 \angle u_{k-1}u_kz >\vartheta , \text{ and;} \\
 \text{either } \Zcal \cap \DD^-(u_k,z,r+w) \setminus C = \emptyset, \\
 \text{or } \Zcal \cap \DD^+(u_k,z,r+w) \setminus C = \emptyset, \\
 \text{(or both).}
 \end{array} \right\}\right|. $$

We have

$$ \Ee Y_{\text{\bf{I-ii}}} \leq 10^4 \cdot e^{w_2} \cdot e^{-e^{w_2/2}}. $$

\end{lemma}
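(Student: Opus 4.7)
The plan is to mimic the proof of Lemma~\ref{lem:XVItil}, with the added ingredient of Corollary~\ref{cor:certballH}
to control the interaction with the previously revealed region $C$. Let $\lambda_0$ be a small constant, to be chosen
later, and let $h = h(w_1,w_2,\vartheta)$ be the constant provided by Corollary~\ref{cor:certballH} (applied with
$w = \max(w_1,w_2)$). Assuming $\lambda_0$ is small enough that $r = 2\ln(1/\lambda) \geq r_0(w_1,w_2,\vartheta)$,
the corollary gives that for every $z \in \Dee$ counted in the definition of $Y_{\text{\bf{I-ii}}}$,
$$ C \cap \DD^\pm(u_k,z,r+w_2) \;\subseteq\; C \cap \cert(u_k,z) \;\subseteq\; \ballH(u_k,h). $$
In particular, $\areaH(\DD^\pm(u_k,z,r+w_2)\cap C) \leq 2\pi(\cosh h-1)$, a constant in $\lambda$.

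Next I would apply Corollary~\ref{cor:SlivMeck2col} to write
$$ \Ee Y_{\text{\bf{I-ii}}} \;=\; p\lambda \int_{\Dee} \Ee\bigl[g(z,\Zcal\cup\{z\})\bigr] f(z)\,\dd z, $$
where $g(u,\Ucal)$ is the indicator that $r-w_1 < \distH(u_k,u) < r+w_2$, $\angle u_{k-1}u_ku > \vartheta$, and
that at least one of $\DD^\pm(u_k,u,r+w_2)\setminus C$ contains no point of $\Ucal$.
Exactly as in the proof of Lemma~\ref{lem:XVItil}, by symmetry and the fact that
$2\pi(\cosh s - 1) = (1+o_s(1))\pi e^s$, we have $\areaH(\DD^\pm(u_k,z,r+w_2)) \geq e^{(r+w_2)/2}$ for $\lambda_0$ small
enough. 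Combining with the bound above on $\areaH(\DD^\pm\cap C)$ yields
$$ \areaH\bigl(\DD^\pm(u_k,z,r+w_2)\setminus C\bigr) \;\geq\; e^{(r+w_2)/2} - 2\pi(\cosh h - 1), $$
and hence, again for $\lambda_0$ sufficiently small (so that $\lambda \cdot 2\pi(\cosh h - 1) \leq 1$),
$$ \lambda \cdot \areaH\bigl(\DD^\pm(u_k,z,r+w_2)\setminus C\bigr) \;\geq\; e^{w_2/2}-1. $$
Applying a union bound over the two choices of sign and using that $\Zcal \cap (\DD^\pm \setminus C)$
is Poisson with the corresponding mean, this gives
$\Ee[g(z,\Zcal\cup\{z\})] \leq 2e\cdot \exp(-e^{w_2/2})$ for each admissible $z$.

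Finally I would integrate: using that $\{z : r-w_1 < \distH(u_k,z) < r+w_2\} \subseteq \ballH(u_k,r+w_2)$,
$\areaH(\ballH(u_k,r+w_2)) \leq \pi e^{r+w_2}$, the assumption $p \leq 10\lambda$, and the key identity
$\lambda^2 e^r = 1$ which comes from the choice $r = 2\ln(1/\lambda)$, one obtains
$$ \Ee Y_{\text{\bf{I-ii}}} \;\leq\; 10\lambda \cdot \lambda \cdot \pi e^{r+w_2} \cdot 2e\cdot e^{-e^{w_2/2}}
\;=\; 20\pi e\cdot e^{w_2}\cdot e^{-e^{w_2/2}} \;\leq\; 10^4 e^{w_2} e^{-e^{w_2/2}}, $$
as required. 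There is no real obstacle here beyond book-keeping; the one non-trivial step is invoking
Corollary~\ref{cor:certballH} to ensure that ``removing $C$'' from $\DD^\pm$ costs only a constant amount of area,
which is swamped by the exponentially large area of $\DD^\pm$ itself.
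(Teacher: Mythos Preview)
Your proof is correct and follows essentially the same route as the paper: both arguments invoke Corollary~\ref{cor:certballH} to confine $C\cap\DD^\pm(u_k,z,r+w_2)$ inside $\ballH(u_k,h)$, and then repeat the area estimate underlying Lemma~\ref{lem:XVItil}. The only cosmetic difference is that the paper packages the second step as a conditioning trick---writing $\Pee(\Zcal\cap\DD^\pm\setminus C=\emptyset)\leq \Pee(\Zcal\cap\DD^\pm=\emptyset)/\Pee(\Zcal\cap\ballH(u_k,h)=\emptyset)$ and then quoting $\Ee\tilde{X}_{\bf VI}$ verbatim---whereas you subtract the constant $\areaH(\ballH(u_k,h))$ directly from the lower bound on $\areaH(\DD^\pm)$; both give the same final estimate.
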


\begin{proof}
The proof is nearly identical to the proof of the previous lemma.
Reasoning as the proof of Lemma~\ref{lem:YIa}, there is a constant $h=h(w_1,w_2,\vartheta)$ such that 

$$ \Ee Y_{\text{\bf{I-ii}}} \leq \frac{\Ee\tilde{X}_{\bf{VI}} }{ 1 - \lambda \cdot \areaH( \ballH(o,h) ) }, $$

\noindent 
for all sufficiently small $\lambda$, where 
$\tilde{X}_{\bf{VI}}$ is as in Lemma~\ref{lem:XVItil}.

Since $1-\lambda \cdot \areaH( \ballH(o,h) ) > 1/10$ provided we chose $\lambda_0$ sufficiently small,
the result follows from Lemma~\ref{lem:XVItil}
\end{proof}

These last two lemmas allow us to prove the following statement using an inductive approach.
Here and in the rest of the paper we write

$$ G_k := \left|\left\{\text{black, good pseudopaths of length $k$ starting from $o$}\right\}\right|. $$

\begin{lemma}\label{lem:numbergoodpspths}
For every $0<\eps<1$ and $w_1,w_2,\vartheta>0$ there exists a 
$\lambda_0 = \lambda_0(\eps,w_1,w_2,\vartheta)$ such that the following holds 
for all $0<\lambda<\lambda_0$ and all $p\leq (1-\eps)\cdot(\pi/3)\cdot\lambda$, setting 
$r := 2\ln(1/\lambda)$.

$$ \Ee G_k \leq \left( 1 - \eps/2 + 10^4 e^{w_2} e^{-e^{w_2/2}} \right)^k. $$

\end{lemma}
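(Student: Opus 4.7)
The plan is to proceed by induction on $k$ with inductive step $\Ee G_k \leq M \cdot \Ee G_{k-1}$, where $M := 1 - \eps/2 + 10^4 e^{w_2} e^{-e^{w_2/2}}$ is the quantity from the statement. The base case $k=0$ is immediate: the only pseudopath of length zero starting at $o$ is the trivial one, so $G_0 = 1 = M^0$.

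For the inductive step, every good pseudopath $(o, u_1, \ldots, u_k)$ of length $k$ decomposes as a good pseudopath $(o, u_1, \ldots, u_{k-1})$ of length $k-1$ extended by a good final pseudo-edge. The key observation is that any such extension $u_k$ is counted by $Y_{\text{\bf{I-i}}} + Y_{\text{\bf{I-ii}}}$ for the pre-pseudopath $(o, u_1, \ldots, u_{k-1})$: each of the three possible witnesses (a), (b), (c) making $u_{k-1}u_k$ a good pseudo-edge immediately implies the weaker ``$\setminus C$'' version of emptiness appearing in the definitions of $Y_{\text{\bf{I-i}}}$ and $Y_{\text{\bf{I-ii}}}$. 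Hence
\[
G_k \;\leq\; \sum_{u_1,\ldots,u_{k-1}} 1_{\{(o,u_1,\ldots,u_{k-1})\text{ good pseudopath}\}} \cdot \bigl(Y_{\text{\bf{I-i}}}+Y_{\text{\bf{I-ii}}}\bigr),
\]
and applying Corollary~\ref{cor:SlivMeck2col} to the outer sum (which ranges over distinct $u_1,\ldots,u_{k-1}\in\Zcal_b$) gives
\[
\Ee G_k \;\leq\; (p\lambda)^{k-1}\int_{\Dee^{k-1}} \Ee\bigl[1_{\text{good}}\cdot(Y_{\text{\bf{I-i}}}+Y_{\text{\bf{I-ii}}})\bigr] \prod_{i=1}^{k-1} f(u_i)\,du_i,
\]
with all random quantities interpreted with respect to $\Zcal\cup\{u_1,\ldots,u_{k-1}\}$.

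The heart of the argument is to decouple the indicator from the $Y$ counts inside the integrand. For this I would appeal to Corollary~\ref{cor:certballH}: provided $\lambda_0$ is chosen small enough that $r-w_1$ exceeds the constant $h=h(w_1,w_2,\vartheta)$ furnished by that corollary, any candidate extension point $z$ contributing to $Y_{\text{\bf{I-i}}}$ or $Y_{\text{\bf{I-ii}}}$ satisfies the distance and angle conditions that make $(o,u_1,\ldots,u_{k-1},z)$ a good pre-pseudopath; the corollary then yields $\cert(u_{k-1},z)\cap C \subseteq \ballH(u_{k-1},h)$, where $C := \cert(o,u_1,\ldots,u_{k-1})$, and since $z$ lies on $\partial\cert(u_{k-1},z)$ at distance at least $r-w_1>h$ from $u_{k-1}$ we conclude $z\notin C$. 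Consequently $Y_{\text{\bf{I-i}}}$ and $Y_{\text{\bf{I-ii}}}$ are measurable with respect to $\Zcal$ restricted to $\Dee\setminus C$, while the indicator of goodness of the pre-pseudopath is measurable with respect to $\Zcal\cap C$, so by the independence of a Poisson process on disjoint regions the inner expectation factors as $\Pee[\text{good}]\cdot\Ee[Y_{\text{\bf{I-i}}}+Y_{\text{\bf{I-ii}}}] \leq M\cdot\Pee[\text{good}]$ by Lemmas~\ref{lem:YIa} and~\ref{lem:YIb}. Substituting back and recognising the remaining factor $(p\lambda)^{k-1}\int\Pee[\text{good}]\prod f(u_i)\,du_i$ as $\Ee G_{k-1}$ via another application of Corollary~\ref{cor:SlivMeck2col} closes the induction. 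I expect the main obstacle to be precisely this decoupling step: one has to check that the distance and angle constraints embedded in the definitions of $Y_{\text{\bf{I-i}}}$ and $Y_{\text{\bf{I-ii}}}$ really do let Corollary~\ref{cor:certballH} apply so that extension candidates provably avoid $C$, and to handle the minor measure-zero technicality that the points $u_1,\ldots,u_{k-1}$ themselves sit on boundaries of their own certificates.
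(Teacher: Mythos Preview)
Your proposal is correct and follows essentially the same approach as the paper's proof: decompose a length-$k$ good pseudopath into a length-$(k-1)$ one plus a final good pseudo-edge, decouple the two factors using that the pseudopath indicator depends only on $\Zcal\cap C$ while the extension count depends only on $\Zcal\setminus C$, and invoke Lemmas~\ref{lem:YIa} and~\ref{lem:YIb}. The only cosmetic difference is that the paper applies Corollary~\ref{cor:SlivMeck2col} to all $k$ points at once (so the extension point $z_k$ becomes a deterministic integration variable and the $z\notin C$ issue never arises), whereas you apply it to $k-1$ points and keep the extension as a random sum, which forces you to argue via Corollary~\ref{cor:certballH} that candidate extensions lie outside $C$; both routes yield the same recursive inequality $\Ee G_k\le M\cdot\Ee G_{k-1}$.
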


\begin{proof}
By Corollary~\ref{cor:SlivMeck2col} we have 

\begin{equation}\label{eq:EGnCampbell} 
\Ee G_k = (p\lambda)^n\int_\Dee\dots\int_\Dee \Ee\left[g_k(z_1,\dots,z_k;\Zcal\cup\{z_1,\dots,z_k\})\right]
f(z_1)\dots f(z_k)\dd z_k\dots\dd z_1, 
\end{equation}

\noindent
where $f$ is as given by~\eqref{eq:fdef} and $g_k$ is the indicator function of the event that $o,z_1,\dots,z_k$
form a good pseudopath (wrt.~the parameters $r,w_1,w_2,\vartheta$ and the point set 
$\Zcal \cup \{o,z_1,\dots,z_k\}$).
We note that 

\begin{equation}\label{eq:gnineq} 
\begin{array}{rcl}
g_k(z_1,\dots,z_k;\Zcal\cup\{z_1,\dots,z_k\}) 
& \leq & 
g_{k-1}(z_1,\dots,z_{k-1};\Zcal\cup\{z_1,\dots,z_{k-1}\}) \cdot \\
& & g_{Y_{\text{\bf{I}}}}(z_1,\dots,z_k,\Zcal), 
\end{array}
\end{equation}

\noindent
where 

$$
g_{Y_{\text{\bf{I}}}}(z_1,\dots,z_k,\Zcal) =  1_{\left\{\begin{array}{l} 
r-w_1<\distH(z_{k-1},z_k)<r+w_2, \text{ and;}\\
\angle z_{n-2}z_{k-1}z_k > \vartheta, \text{ and; }\\
\text{either $\exists$ a disk $B$ with $z_{k-1},z_k\in\partial B$,$\diamH(B) < r+w_2,$} \\
\text{ \hspace{5ex} $\Zcal \cap B \setminus \cert(o,z_1,\dots,z_{k-1}) =\emptyset$, } \\
\text{ \hspace{2ex} or $\Zcal \cap \DD^-(z_{k-1},z_k,r+w_2) \setminus \cert(o,z_1,\dots,z_{k-1}) =\emptyset$, } \\
\text{ \hspace{2ex} or $\Zcal \cap \DD^+(z_{k-1},z_k,r+w_2) \setminus \cert(o,z_1,\dots,z_{k-1}) =\emptyset$, } \\
\text{(or several of the above three hold.)}
\end{array} \right\}}. $$

\noindent
Next, we note that if $o,z_1,\dots,z_k$ is not a good pre-pseudopath
then $g_k(z_1,\dots,z_k,\Zcal\cup\{z_1,\dots,z_k\}) = 0$ and whenever $o,z_1,\dots,z_k$ is a good pre-pseudopath then 

\begin{equation}\label{eq:Egn} 
\begin{array}{c} 
\Ee\left[ g_k(z_1,\dots,z_{k-1};\Zcal\cup\{z_1,\dots,z_{k-1}\}) \cdot g_{Y_{\text{\bf{I}}}}(z_1,\dots,z_k,\Zcal) \right] \\
= \\
\Ee\left[ g_{k-1}(z_1,\dots,z_{k-1};\Zcal\cup\{z_1,\dots,z_{k-1}\}) \right] \cdot 
\Ee\left[ g_{Y_{\text{\bf{I}}}}(z_1,\dots,z_k,\Zcal)\right], 
\end{array} 
\end{equation}

\noindent
as $\Zcal\cap\cert(o,z_1,\dots,z_{k-1})$ and $\Zcal \setminus \cert(o,z_1,\dots,z_{k-1})$ are independent;
and the former determines the value of $g_k(z_1,\dots,z_{k-1};\Zcal\cup\{z_1,\dots,z_{k-1}\})$, while the 
latter determines $g_{Y_{\text{\bf{I}}}}(z_1,\dots,z_k,\Zcal)$.

Next, notice that by Corollary~\ref{cor:SlivMeck2col} and Lemmas~\ref{lem:YIa} and~\ref{lem:YIb}, we have 

\begin{equation}\label{eq:innerint} 
p\lambda \int_\Dee \Ee\left[ g_{Y_{\text{\bf{I}}}}(z_1,\dots,z_k,\Zcal)\right] f(z_k) \dd z_k
\leq 1-\eps/2 +10^4 e^{w_2} e^{-e^{w_2/2}}, 
\end{equation}

\noindent
for every $z_1,\dots,z_{k-1}$ such that $o,z_1,\dots,z_{k-1}$ is a good pre-pseudopath.
If $o,z_1,\dots,z_{k-1}$ is not a good pre-pseudopath then
$g_{k-1}(z_1,\dots,z_{k-1};\Zcal\cup\{z_1,\dots,z_{k-1}\})$ $=$ \\
$g_{k}(z_1,\dots,z_{n};\Zcal\cup\{z_1,\dots,z_{n}\})$ $= 0$.
Combining~\eqref{eq:EGnCampbell}--\eqref{eq:innerint}, we see that 

$$ \begin{array}{rcl} \Ee G_k
& \leq & \displaystyle 
(p\lambda)^n\int_\Dee\dots\int_\Dee \Ee\left[g_{k-1}(z_1,\dots,z_{k-1};\Zcal\cup\{z_1,\dots,z_{k-1}\})\right] \cdot \\
& & \hspace{15ex} \Ee\left[ g_{Y_{\text{\bf{I}}}}(z_1,\dots,z_k,\Zcal)\right]
f(z_1)\dots f(z_k)\dd z_k\dots\dd z_1 \\
& = & \displaystyle 
(p\lambda)^{n-1}\int_\Dee\dots\int_\Dee \Ee\left[g_{k-1}(z_1,\dots,z_{k-1};\Zcal\cup\{z_1,\dots,z_{k-1}\})\right] \cdot \\
& & \displaystyle \hspace{15ex} 
\left( p\lambda \int_\Dee \Ee\left[ g_{Y_{\text{\bf{I}}}}(z_1,\dots,z_k,\Zcal)\right] f(z_k)\dd z_k \right)
f(z_1)\dots f(z_{k-1})\dd z_{k-1}\dots\dd z_1 \\
& \leq & \displaystyle 
\left( 1-\eps/2 +10^4 e^{w_2} e^{-e^{w_2/2}} \right) \cdot \\
& & \displaystyle 
(p\lambda)^{n-1} \cdot 
\int_\Dee\dots\int_\Dee \Ee\left[g_{k-1}(z_1,\dots,z_{k-1};\Zcal\cup\{z_1,\dots,z_{k-1}\})\right] \cdot \\
& & \displaystyle \hspace{17ex} 
f(z_1)\dots f(z_{k-1})\dd z_{k-1}\dots\dd z_1 \\[2ex]
& = & \displaystyle
\left( 1-\eps/2 +10^4 e^{w_2} e^{-e^{w_2/2}} \right) \cdot \Ee G_{k-1}. 
\end{array} $$

%
%

\noindent
The lemma follows by iterating the recursive inequality.
\end{proof}

The previous lemma immediately gives that, provided we chose $w_2$ a sufficiently large constant (how large 
we need to take it depends on $\eps$), 
for all sufficiently small intensities $\lambda$,
almost surely, there are no infinite, black, good pseudopaths starting from the origin.
In fact something slightly stronger holds:

\begin{corollary}\label{cor:noinfgoodpaths}
For every $0<\eps<1$ there exists a $c=c(\eps)$ such that
for all $w_1,\vartheta>0$ and $w_2>c$ the following holds.

There exists a $\lambda_0=\lambda_0(\eps,w_1,w_2,\vartheta)$
such that, when $0<\lambda<\lambda_0$ and $p\leq (1-\eps)\cdot(\pi/3)\cdot\lambda$, 
setting $r:=2\ln(1/\lambda)$, almost surely there are no infinite, black, good pseudopaths.
\end{corollary}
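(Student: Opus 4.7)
The plan is to combine Lemma~\ref{lem:numbergoodpspths} with Markov's inequality and a countable-covering argument to rule out infinite good pseudopaths starting anywhere, not just from the origin.

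First, since $w_2 \mapsto 10^4 e^{w_2} e^{-e^{w_2/2}}$ tends to zero as $w_2\to\infty$ (the doubly exponential decay swamps the single exponential growth), I would choose $c = c(\eps)$ large enough that $10^4 e^{w_2} e^{-e^{w_2/2}} < \eps/4$ for every $w_2>c$. Then for $w_1,\vartheta>0$ arbitrary and $w_2>c$, Lemma~\ref{lem:numbergoodpspths} supplies $\lambda_0$ such that, whenever $0<\lambda<\lambda_0$ and $p\leq(1-\eps)\cdot(\pi/3)\cdot\lambda$, one has $\Ee G_k \leq \alpha^k$ with $\alpha := 1-\eps/4 < 1$.

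Next, I would upgrade this to pseudopaths that may start anywhere. For each $n\in\eN$, let $\tilde G_k^{(n)}$ denote the number of black good pseudopaths $z_0,z_1,\dots,z_k$ with $z_0,\dots,z_k\in\Zcalb$ and $z_0\in\ballH(o,n)$. An induction identical in structure to the one in the proof of Lemma~\ref{lem:numbergoodpspths}, but now applying Corollary~\ref{cor:SlivMeck2col} with $k+1$ points and integrating the outermost variable $z_0$ over $\ballH(o,n)$, yields
$$ \Ee \tilde G_k^{(n)} \leq p\lambda\cdot \areaH(\ballH(o,n))\cdot \alpha^k. $$
This works because Lemmas~\ref{lem:YIa}~and~\ref{lem:YIb}, which drive the induction, provide bounds that are invariant under hyperbolic isometry and therefore independent of the location of the previously placed vertex; each additional integration over the newest vertex contracts by a factor of at most $1-\eps/2+10^4e^{w_2}e^{-e^{w_2/2}}<\alpha$.

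Finally, for each $n$ let $A_n$ be the event that there exists an infinite, black, good pseudopath $z_0,z_1,z_2,\dots$ with $z_0\in\ballH(o,n)$. Any infinite good pseudopath has a first vertex, so the event ``some infinite black good pseudopath exists'' equals $\bigcup_{n\in\eN} A_n$. On $A_n$ we have $\tilde G_k^{(n)}\geq 1$ for every $k$, so Markov gives
$$ \Pee(A_n)\ \leq\ \Pee(\tilde G_k^{(n)}\geq 1)\ \leq\ \Ee \tilde G_k^{(n)}\ \leq\ p\lambda\cdot\areaH(\ballH(o,n))\cdot\alpha^k\ \longrightarrow\ 0 $$
as $k\to\infty$. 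Hence $\Pee(A_n)=0$ for every $n$, and a countable union finishes the proof. The only mildly subtle point is checking that the recursion of Lemma~\ref{lem:numbergoodpspths} really does decouple from the location of the initial vertex, but this is immediate from the isometry-invariance of the bounds in Lemmas~\ref{lem:YIa}--\ref{lem:YIb}; the first vertex contributes only the harmless finite prefactor $p\lambda\cdot\areaH(\ballH(o,n))$.
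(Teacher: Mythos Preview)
Your proof is correct and takes essentially the same approach as the paper. The only organisational difference is that the paper first deduces from Lemma~\ref{lem:numbergoodpspths} that the origin almost surely starts no infinite good pseudopath, and then applies the one-point Slivnyak--Mecke formula together with isometry invariance to conclude $\Ee N_x=0$ directly; you instead apply the $(k{+}1)$-point Slivnyak--Mecke formula and carry the factor $\alpha^k$ through before sending $k\to\infty$, which amounts to the same computation in a different order.
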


\begin{proof}
As pointed out earlier, we can choose $c=c(\eps)$ such that for all $w_2>c$ and $w_1,\vartheta>0$ there
is a $\lambda_0$ such that, whenever when $0<\lambda<\lambda_0$ and $p\leq (1-\eps)\cdot(\pi/3)\cdot\lambda$, 
almost surely there is no infinite, black, good pseudopath starting from the origin.

Let $x>0$ be arbitrary and write 

$$ N_x:= \left|\left\{ z \in \Zcal_b\cap\ballH(o,x) : 
\begin{array}{l} \text{there is an infinite, black, good} \\ 
\text{pseudopath starting from $z$ }\end{array} \right\}\right|. $$

\noindent
By Corollary~\ref{cor:SlivMeck2col}

$$ \Ee N_x =p\lambda \int_\Dee \Ee\left[g(z,\Zcal\cup\{z\})\right] f(z)\dd z, $$

\noindent
where $f$ is given by~\eqref{eq:fdef} as usual and 
$g(u,\Ucal)=1$ if $u \in \ballH(o,x)$ and $u \in \Ucal$ and there is an infinite, black, good, pseudopath 
(wrt.~$r,w_1,w_2,\vartheta$ and the point set $\Ucal$) starting from $u$;
and $g(u,\Ucal) = 0$ otherwise.
If $z \not\in \ballH(o,x)$ then $g(z,\Zcal\cup\{z\})=0$ by definition of $g$, and if $z \in \ballH(o,x)$ then 

$$ \Ee\left[ g(z,\Zcal\cup\{z\})\right] = \Ee\left[ g(o,\Zcal\cup\{o\}) \right] = 0, $$

\noindent
since the distribution of $\Zcal$ is invariant under the action of isometries of the Poincar\'e disk
($\Zcal \isd \varphi[\Zcal]$ if $\varphi$ is an isometry), and in particular under 
an isometry that maps $z$ to $o$; and we know that the origin a.s.~is not on any infinite, back, good, pseudopath as 
remarked just before the statement of the lemma.
It follows that $\Ee N_x = 0$ and hence also $N_x=0$ a.s., for each $x>0$ separately.
Thus

$$ \Pee\left(\text{$\exists$ infinite, black, good pseudopath}\right)
= \Pee\left( \bigcup_{x>0} \{N_x \neq 0\} \right) 
= \lim_{x\to\infty} \Pee( N_x \neq 0 ) = 0, $$

\noindent
as $\{N_x \neq 0 \} \subseteq \{N_y \neq 0\}$ whenever $x<y$.
\end{proof}

Recall that our strategy for the proof that $p_c \geq (1-\eps)\cdot(\pi/3)\cdot \lambda$ for small $\lambda$ is to prove 
that almost surely no black, infinite, good pseudopath and no black, infinite, linked sequence of chunks starting from the origin will 
exist (for an appropriate choice
of $w_1,w_2,\vartheta>0$ and $r=2\ln(1/\lambda)$).
In the light of the last corollary, we now turn our attention to linked sequences of chunks.

\subsubsection{Counting chunks starting from the origin}

Next, we bound the number of black chunks starting from the origin. 
We will exploit our results on good pseudopaths, using that a chunk is just a 
good pseudopath with a single bad pseudo-edge stuck to the end.
We begin with a few lemmas designed for adding a last, bad edge to an existing good pseudopath.
The names of the random variables described by the lemmas are $L$ with some subscript, where
$L$ stands for ``last'' and the subscript corresponds to the type of edge under consideration.

\begin{lemma}\label{lem:LIVa}
For all $w_1,w_2,\vartheta>0$ there exist $\lambda_0=\lambda_0(w_1,w_2,\vartheta)>0$ such that
the following holds for all $0<\lambda<\lambda_0$ and all $p \leq 10 \lambda$,  
setting $r := 2\log(1/\lambda)$.

Let $u_0,\dots,u_k \in \Dee$ be arbitrary, and set $C := \cert(u_0,\dots, u_k)$ and

$$ L_{\text{\bf{IV-i}},\geq d}  := 
 \left|\left\{ z \in \Zcal_b : \begin{array}{l} 
 \distH(u_k,z) \geq d, \text{ and;} \\
 \areaH( \BGab(u_k,z) \cap C ) \leq \frac{1}{10}\cdot \areaH( \BGab(u_k,z) ), \text{ and;} \\
 \Zcal \cap \BGab^-(u_k,z) \setminus C = \emptyset
 \text{ or } \Zcal \cap \BGab^+(u_k,z) \setminus C = \emptyset.
 \end{array} \right\}\right|. $$

We have

$$ \Ee L_{\text{\bf{IV-i}},\geq r+v} \leq 1000 \cdot e^{v/2} \cdot e^{-e^{v/2}}, $$

\noindent
for all $v \geq 0$.
\end{lemma}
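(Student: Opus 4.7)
The plan is to follow the same strategy as in the proof of Lemma \ref{lem:XVtil}: apply Corollary \ref{cor:SlivMeck2col} to express the expectation as an integral, bound the integrand by a Poisson emptiness probability, and then switch to hyperbolic polar coordinates to reduce to a standard exponential integral. Concretely, Corollary \ref{cor:SlivMeck2col} gives
\begin{align*}
\Ee L_{\text{\bf{IV-i}},\geq r+v} = p\lambda \int_\Dee \Ee\left[g(z,\Zcal\cup\{z\})\right] f(z)\,dz,
\end{align*}
where $g(u,\Ucal)$ is the indicator of the event described in the definition of $L_{\text{\bf{IV-i}},\geq r+v}$; in particular the integrand vanishes unless both $\distH(u_k,z)\geq r+v$ and $\areaH(\BGab(u_k,z)\cap C)\leq \tfrac{1}{10}\areaH(\BGab(u_k,z))$.

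The first key observation is that the deterministic area condition yields
\begin{align*}
\areaH\bigl(\BGab^\pm(u_k,z)\setminus C\bigr) \geq \tfrac{1}{2}\areaH(\BGab(u_k,z)) - \tfrac{1}{10}\areaH(\BGab(u_k,z)) = \tfrac{2}{5}\areaH(\BGab(u_k,z)),
\end{align*}
for both choices of sign. By the Poisson property and a union bound, whenever $z$ satisfies the deterministic conditions,
\begin{align*}
\Ee\bigl[g(z,\Zcal\cup\{z\})\bigr] \leq 2\exp\!\left[-\tfrac{2}{5}\lambda\,\areaH(\BGab(u_k,z))\right].
\end{align*}

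For the integration, after applying a hyperbolic isometry mapping $u_k$ to the origin (which leaves the distribution of $\Zcal$ invariant by \eqref{eq:isosubst}), we have $\areaH(\BGab(o,z))=2\pi(\cosh(\distH(o,z)/2)-1)$. Using the asymptotic $2\pi(\cosh x - 1)\sim \pi e^x$ and choosing $\lambda_0$ small enough (so that $r=2\ln(1/\lambda)$ is large), we can ensure $\tfrac{2}{5}\areaH(\BGab(o,z))\geq e^{\distH(o,z)/2}$ for all $z$ with $\distH(o,z)\geq r$. Switching to hyperbolic polar coordinates via \eqref{eq:hyppolcoord}, using $\sinh\rho\leq e^\rho/2$, and substituting $u=\lambda e^{\rho/2}$ (so that $d\rho=2\,du/u$, $e^\rho=u^2/\lambda^2$, and the lower limit $\rho=r+v$ becomes $u=e^{v/2}$ since $\lambda e^{r/2}=1$), we reduce to
\begin{align*}
\Ee L_{\text{\bf{IV-i}},\geq r+v} \leq \tfrac{4\pi p}{\lambda} \int_{e^{v/2}}^\infty u e^{-u}\,du = \tfrac{4\pi p}{\lambda}(e^{v/2}+1)e^{-e^{v/2}},
\end{align*}
which, after bounding $p/\lambda\leq 10$ and $e^{v/2}+1\leq 2e^{v/2}$ for $v\geq 0$, yields the stated inequality. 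The one technical delicacy is that the $\tfrac{1}{10}$ in the area assumption must be tight enough that, combined with the leading constant $\pi$ in $\areaH(\BGab)\sim\pi e^{\distH/2}$, the product $\tfrac{2\pi}{5}$ exceeds $1$; this is exactly what allows us to absorb the constant cleanly into the exponent $e^{-e^{v/2}}$ after taking $\lambda_0$ sufficiently small.
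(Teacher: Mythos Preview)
Your proof is correct and follows essentially the same route as the paper's own argument: apply Corollary~\ref{cor:SlivMeck2col}, use the area hypothesis to get $\areaH(\BGab^\pm(u_k,z)\setminus C)\geq\tfrac{2}{5}\areaH(\BGab(u_k,z))\geq e^{\distH(u_k,z)/2}$ for $\lambda$ small, then switch to hyperbolic polar coordinates and substitute $u=\lambda e^{\rho/2}$ to reduce to $\int_{e^{v/2}}^\infty ue^{-u}\,du$. The constants you track and the observation about $2\pi/5>1$ match the paper's computation.
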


\begin{proof}
We let $\lambda_0>0$ be a small constant to be determined in the course of the proof, and take an 
arbitrary $\lambda < \lambda_0$ and $p \leq 10\lambda$. 
Without loss of generality (applying a suitable isometry if needed) we can assume $u_k=o$ is the origin.

If $z$ satisfies $\distH(o,z) \geq r$ and $\areaH( \BGab(o,z) \cap C ) \leq \frac{1}{10}\cdot\areaH( \BGab(o,z) )$ 
then of course also

$$
\areaH( \BGab^-(o,z) \setminus C) \geq \left(\frac12-\frac{1}{10}\right)\cdot\areaH( \BGab(o,z) )
\geq e^{\distH(o,z)/2},
$$

\noindent
using that 

$$ \areaH( \BGab(o,z) ) = 2\pi\left( \cosh\left( \distH(o,z)/2 \right)-1\right) = 
(1+o_{\lambda_0}(1)) \pi e^{\distH(o,z)/2}, $$

\noindent
(the $o_{\lambda_0}(1)$ term referring to the situation where $\lambda_0$ tends to zero)  and
assuming that we have chosen $\lambda_0$ sufficiently small (so that $\distH(o,z)\geq r \geq 2\ln(1/\lambda_0)$ is large). 
Analogously we also have 

$$\areaH( \BGab^+(o,z) \setminus C) \geq e^{\distH(o,z)/2}, $$

\noindent
for all such $z$.
By Corollary~\ref{cor:SlivMeck2col} and this last observation:

$$ \begin{array}{rcl} 
\Ee L_{\text{\bf{IV-i}},\geq r+v} 
& = & \displaystyle 
p\lambda \int_{\Dee\setminus\ballH(o,r+v)} 
1_{\{\areaH( \BGab(o,z) \cap C ) \leq \frac{1}{10}\cdot\areaH( \BGab(o,z) )\}} \cdot \\[2ex]
& & \displaystyle \hspace{10ex} 
\Pee\left( \Zcal \cap \BGab^-(o,z) \setminus C = \emptyset
 \text{ or } \Zcal \cap \BGab^+(o,z) \setminus C = \emptyset\right) f(z) \dd z \\[2ex]
 & \leq & \displaystyle 
 p\lambda \int_{\Dee\setminus\ballH(o,r+v)} 2 e^{-\lambda e^{\distH(o,z)/2}} f(z) \dd z,
 \end{array} $$

\noindent
with $f$ given by~\eqref{eq:fdef}.

Applying a change to hyperbolic polar coordinates to $z$ 
we find

$$ \begin{array}{rcl} 
\Ee L_{\text{\bf{IV-i}},\geq r+v} 
& \leq & \displaystyle
2p\lambda \int_{r+v}^{\infty} \int_0^{2\pi} e^{-\lambda e^{\rho/2}} 
\sinh(\rho) \dd\alpha\dd\rho \\
& \leq & \displaystyle 
200 \lambda^2 \int_{r+v}^\infty e^{-\lambda e^{\rho/2}} e^\rho \dd\rho. 
\end{array} $$

Applying the substitution $t = \lambda e^{\rho/2}$ (so that $\dd\rho = \frac{2\dd t}{t}$), we find 

$$ \begin{array}{rcl} 
\Ee L_{\text{\bf{IV-i}},\geq r+v} 
& \leq & \displaystyle 
200 \lambda^2 \int_{\lambda e^{(r+v)/2}}^\infty e^{-t}\cdot\left(\frac{t}{\lambda}\right)^2 \frac{2\dd t}{t} \\[2ex]
& = & \displaystyle 
400 \int_{e^{v/2}}^\infty te^{-t} \dd t \\[2ex]
& = & \displaystyle 
400 (e^{v/2}+1) e^{-e^{v/2}} \\[2ex]
& \leq & \displaystyle 
1000 \cdot e^{v/2}\cdot e^{-e^{v/2}}. 
\end{array} $$

\end{proof}

\begin{corollary}\label{cor:YIV}
For all $w_1,w_2,\vartheta>0$ there exist $\lambda_0=\lambda_0(w_1,w_2,\vartheta)>0$ such that
the following holds for all $0<\lambda<\lambda_0$ and all $p \leq 10 \lambda$,  
setting $r := 2\log(1/\lambda)$.

Let $u_0,\dots,u_k \in \Dee$ be an arbitrary good pre-pseudopath, and set 
$C := \cert(u_0,\dots,u_k)$ and

$$ L_{\text{\bf{IV}},\geq d}  := 
 \left|\left\{ z \in \Zcal_b : \begin{array}{l} 
 \distH(u_k,z) \geq d, \text{ and; } \\
 \Zcal \cap \BGab^-(u_k,z) \setminus C = \emptyset \text{ or } \Zcal \cap \BGab^+(u_k,z) \setminus C = \emptyset.
 \end{array} \right\}\right|. $$

We have

$$ \Ee L_{\text{\bf{IV}},\geq r+v} \leq 10^4 \cdot e^{v/2} \cdot e^{-e^{v/2}}, $$

\noindent
for all $v \geq 0$.
\end{corollary}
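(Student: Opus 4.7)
To prove the corollary, I would split $L_{\text{\bf{IV}},\geq r+v} = L_{\text{\bf{IV-i}},\geq r+v} + L_{\text{bad}}$ according to whether the area condition $\areaH(\BGab(u_k,z) \cap C) \leq \tfrac{1}{10}\areaH(\BGab(u_k,z))$ of Lemma~\ref{lem:LIVa} is met. The first summand is immediately bounded by $1000 \cdot e^{v/2} e^{-e^{v/2}}$ by Lemma~\ref{lem:LIVa}, so the task reduces to showing $\Ee L_{\text{bad}} \leq 9\cdot 10^3 \cdot e^{v/2} e^{-e^{v/2}}$, where $L_{\text{bad}}$ counts those $z \in \Zcalb$ satisfying $\distH(u_k,z) \geq r+v$ and the empty-Gabriel-half condition but for which $\areaH(\BGab(u_k,z) \cap C) > \tfrac{1}{10}\areaH(\BGab(u_k,z))$.

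For such $z$, Lemma~\ref{lem:ahdeps} applied with $\eps = 1/10$ supplies a constant $a = a(w_1,w_2,\vartheta)$ with $\BGab(u_k,z) \subseteq C^{+a}$. Writing $\rho := \distH(u_k,z)$ and combining this inclusion with Lemma~\ref{lem:bddintersect} (at most $O(\rho/r+1)$ certificates meet $\BGab(u_k,z)$) and the crude estimate $\areaH(\cert(u_{i-1},u_i)) = O(e^{(r+w_2)/2})$ against $\areaH(\BGab(u_k,z)) \gtrsim e^{\rho/2}$ yields the inequality $e^{(\rho-r-w_2)/2} \leq O(\rho/r+1)$. Taking $\lambda_0$ small enough that $r = 2\ln(1/\lambda)$ dominates $w_2$, this forces $\rho \leq r + w_2 + C_1$ for some absolute constant $C_1$; in particular, whenever $v > w_2 + C_1$ the set $L_{\text{bad}}$ is empty and nothing remains to prove.

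The second ingredient is angular. Since the good pre-pseudopath $u_0,\dots,u_k$ is a $(r,\max(w_1,w_2),\vartheta)$-path, Proposition~\ref{prop:treedisksector}, applied with a parameter $\vartheta_2 > 0$ to be chosen small at the end (in terms of $w_2$ and $C_1$), gives constants $h, r_0$ such that $C \subseteq \ballH(u_k,h) \cup \sect{u_k}{u_{k-1}}{\vartheta_2}$ for $r \geq r_0$. A short calculation in the spirit of Lemma~\ref{lem:sec_subset}, exploiting that outside a bounded neighbourhood of the vertex the $a$-tube of a hyperbolic sector only widens its angle by an exponentially small amount, shows that for $\lambda_0$ small the $a$-neighbourhood of $C$, intersected with $\{\distH(u_k,\cdot) \geq r+v\}$, is contained in $\sect{u_k}{u_{k-1}}{2\vartheta_2}$. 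Hence any $z$ counted by $L_{\text{bad}}$ sits in the intersection of this narrow sector with the annulus $\{r+v \leq \distH(u_k,\cdot) \leq r+w_2+C_1\}$.

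The hyperbolic area of this intersection is $O(\vartheta_2 \cdot e^{r+w_2+C_1})$, so Corollary~\ref{cor:SlivMeck2col} together with the trivial bound $\Pee(\text{empty Gabriel half-disk}) \leq 2$ gives $\Ee L_{\text{bad}} \leq O(\vartheta_2 \cdot p\lambda \cdot e^{r+w_2+C_1}) = O(\vartheta_2 \cdot e^{w_2+C_1})$, using $p\lambda e^r \leq 10$ from $p \leq 10\lambda$ and $r = 2\ln(1/\lambda)$. Choosing $\vartheta_2$ sufficiently small as a function of $w_2$---permissible because $\vartheta_2$ is a free parameter in Proposition~\ref{prop:treedisksector}---brings this below $9\cdot 10^3 \cdot e^{v/2} e^{-e^{v/2}}$ uniformly on $v \in [0, w_2+C_1]$, completing the proof. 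The technical heart of the argument is the geometric claim about fattening hyperbolic sectors; the remainder is bookkeeping with the constants supplied by Lemmas~\ref{lem:ahdeps}, \ref{lem:bddintersect} and Proposition~\ref{prop:treedisksector}.
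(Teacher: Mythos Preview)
Your argument is correct and lands on exactly the same endgame as the paper: the points $z$ violating the area condition of Lemma~\ref{lem:LIVa} are confined to a narrow sector around $u_{k-1}$ and to a bounded annulus $\{r+v\le\distH(u_k,\cdot)\le r+w_2+\text{const}\}$, so their expected number is $O(\vartheta_2 e^{w_2})$ and can be pushed below $9\cdot 10^3 e^{v/2}e^{-e^{v/2}}$ by shrinking the free angle parameter.

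The difference is only in packaging. The paper invokes Corollary~\ref{cor:BminCeitheror} (with $\eps=\tfrac{1}{10}$, $\vartheta_2=\vartheta'$), which already delivers the trichotomy ``area condition holds, or distance $\le r-w_1$, or distance $<r+w_2+22\ln 10$ and angle $<\vartheta'$'', and then bounds $L_{\text{\bf IV-ii}}=|\Zcalb\cap\ballH(u_k,r+w)\cap\sect{u_k}{u_{k-1}}{\vartheta'}|$ via Lemma~\ref{lem:XII}. You instead rebuild that trichotomy from its ingredients: the contrapositive of Lemma~\ref{lem:ahdeps} to get $z\in C^{+a}$, a direct area count via Lemma~\ref{lem:bddintersect} to replace Lemma~\ref{lem:bddintersecteps}, and Proposition~\ref{prop:treedisksector} plus a sector–fattening estimate to replace the role of Lemma~\ref{lem:certballH}. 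Your step ``the $a$-tube of a sector only widens its angle by $O(e^{a-r})$ at distance $\ge r$'' is indeed an immediate consequence of Lemma~\ref{lem:distHvsangle}, so the sketch is sound. Two cosmetic remarks: Lemmas~\ref{lem:ahdeps} and~\ref{lem:bddintersect} are stated for pre-chunks rather than good pre-pseudopaths, but their proofs go through verbatim here since $u_0,\dots,u_k$ is a $(r,\max(w_1,w_2),\vartheta)$-path; and the appeal to Lemma~\ref{lem:ahdeps} is in fact only used to place $z$ itself in $C^{+a}$, not the whole Gabriel disk, so you could state it that way for clarity.
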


\begin{proof}
We let $\lambda_0, \vartheta'>0$ be small constants to be determined in the course of the proof, and take an 
arbitrary $0<\lambda<\lambda_0$ and $p \leq 10\lambda$. 
Without loss of generality (applying a suitable isometry if needed) we can assume $u_k=o$ is the origin.

Since $v\geq 0$, by Corollary~\ref{cor:BminCeitheror} with $\eps := \frac{1}{10}$ and $\vartheta_1=\vartheta, 
\vartheta_2=\vartheta'$, assuming we have chosen $\lambda_0$ sufficiently small, 
we have 

$$ L_{\text{\bf{IV}},\geq r+v} \leq L_{\text{\bf{IV-i}},\geq r+v} + L_{\text{\bf{IV-ii}}}, $$

\noindent
where $L_{\text{\bf{IV-i}},\geq r+v}$ is as defined in Lemma~\ref{lem:LIVa} above and

$$ L_{\text{\bf{IV-ii}}} := \left|\Zcal_b \cap \ballH(o,r+w) \cap \sect{o}{u_{k-1}}{\vartheta'}\right|. $$

\noindent
setting $w:=w_2+22\ln 10$.

In particular $L_{\text{\bf{IV}},\geq r+v} = L_{\text{\bf{IV-i}},\geq r+v}$ if $v\geq w$. In this case
we are clearly done by Lemma~\ref{lem:LIVa}.

To prove it also for $0\leq v < w$, we also need to bound $\Ee L_{\text{\bf{IV-ii}}}$.
Lemma~\ref{lem:XII} shows that 

$$ \Ee L_{\text{\bf{IV-ii}}} \leq 1000 \vartheta' e^w \leq 1000 \cdot e^{v/2}\cdot e^{-e^{v/2}},$$

\noindent
having chosen $\vartheta'$ sufficiently small for the second inequality.
(To be precise, having chosen $\displaystyle \vartheta_2 < e^{-w} \cdot \min_{0\leq x \leq w} e^{x/2}\cdot e^{-e^{x/2}}$.)

Adding the bounds on $\Ee L_{\text{\bf{IV-i}},\geq r+v}$ and $\Ee L_{\text{\bf{IV-ii}}}$ proves the result for $0\leq v \leq w$.
\end{proof}

We will denote

$$ C_k^{\text{\bf{II}}} := \left|\left\{\text{black chunks of length $k$, starting from $o$, with final pseudo-edge of 
type {\bf{II}}}\right\}\right|, $$%
$$ C_k^{\text{\bf{III}}} := \left|\left\{\text{black chunks of length $k$, starting from $o$, with final pseudo-edge of 
type {\bf{III}}}\right\}\right|, $$

\noindent
and, for all $v \geq w_2$:%
$$ C_k^{\text{\bf{IV}},v} := \left|\left\{\begin{array}{l}
\text{black chunks of length $k$, starting from $o$, with final pseudo-edge of type {\bf{IV}}}, \\ 
\text{ and the final pseudo-edge having length $\in[r+v,r+v+1)$}
\end{array}\right\}\right|. $$
%

Recall that $G_k$ denotes the number of black, good, pseudopaths starting from the origin of length $k$.

\begin{corollary}\label{cor:Ck}
 For every $w_1,w_2,\vartheta$ there exists a $\lambda_0=\lambda_0(w_1,w_2,\vartheta)>0$ such that the 
 following holds for all $0<\lambda<\lambda_0$ and $p\leq 10\lambda$, setting $r:=2\ln(1/\lambda)$.
%
 
 We have
 
 \begin{equation}\label{eq:CnII} 
 \Ee C_{k}^{\text{\bf{II}}} \leq   10^3 \cdot e^{-w_1} \cdot \Ee G_{k-1},
 \end{equation}
 
 \begin{equation}\label{eq:CnIII} 
 \Ee C_{k}^{\text{\bf{III}}} \leq 10^3 \cdot \vartheta \cdot e^{w_2} \cdot \Ee G_{k-1}, 
 \end{equation}
 
 \noindent
 and 
 
 \begin{equation}\label{eq:CnIV}  
 \Ee C_{k}^{\text{\bf{IV}},v} \leq 10^4 \cdot e^{v/2} \cdot e^{-e^{v/2}} \cdot \Ee G_{k-1},
 \end{equation} 
 
 \noindent
 for all $v\geq w_2$.
\end{corollary}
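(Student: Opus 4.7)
The plan is to adapt the inductive Slivniak--Mecke argument used to prove Lemma~\ref{lem:numbergoodpspths}. A chunk of length $k$ starting from $o$ is, by definition, a sequence $o,z_1,\dots,z_k$ such that $o,z_1,\dots,z_{k-1}$ forms a good pre-pseudopath (so the indicator $g_{k-1}$ from the proof of Lemma~\ref{lem:numbergoodpspths} is $1$ on it) and the terminal pseudo-edge $z_{k-1}z_k$ is bad of the prescribed type. By Corollary~\ref{cor:SlivMeck2col}, for $X\in\{\mathrm{II},\mathrm{III},(\mathrm{IV},v)\}$:
\[
\Ee C_k^X \;=\; (p\lambda)^k\int_\Dee\!\!\cdots\!\int_\Dee \Ee\bigl[g_k^X(z_1,\dots,z_k;\Zcal\cup\{z_1,\dots,z_k\})\bigr]\,f(z_1)\cdots f(z_k)\,\dd z_k\cdots \dd z_1,
\]
where $g_k^X$ is the indicator of the appropriate chunk event. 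I will bound $g_k^X \le g_{k-1}(z_1,\dots,z_{k-1};\Zcal\cup\{z_1,\dots,z_{k-1}\})\cdot h_k^X(z_{k-2},z_{k-1},z_k;\Zcal)$, where $h_k^X$ captures the terminal pseudo-edge condition, and is chosen so as to depend only on $\Zcal\setminus C_{k-1}$ where $C_{k-1}:=\cert(o,z_1,\dots,z_{k-1})$.

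Concretely, $h_k^{\mathrm{II}}:=\mathbf{1}\{\distH(z_{k-1},z_k)\le r-w_1\}$ and $h_k^{\mathrm{III}}:=\mathbf{1}\{r-w_1<\distH(z_{k-1},z_k)<r+w_2\}\cdot\mathbf{1}\{\angle z_{k-2}z_{k-1}z_k<\vartheta\}$ are purely geometric and do not involve $\Zcal$. For type IV I set
\[
h_k^{\mathrm{IV},v}:=\mathbf{1}\{r+v\le\distH(z_{k-1},z_k)<r+v+1\}\cdot\mathbf{1}\{\Zcal\cap(\BGab^-(z_{k-1},z_k)\setminus C_{k-1})=\emptyset\text{ or }\Zcal\cap(\BGab^+(z_{k-1},z_k)\setminus C_{k-1})=\emptyset\},
\]
which is a valid upper bound on the genuine type~IV condition (removing $C_{k-1}$ from each Gabriel half-disk only enlarges the family of admissible $z_k$) and depends only on $\Zcal\setminus C_{k-1}$. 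Since $g_{k-1}$ depends only on $\Zcal\cap C_{k-1}$ and Poisson restrictions to disjoint sets are independent, the expectation of the product factors. Integrating $z_k$ first and recognising the leftover integral as $\Ee G_{k-1}$, the desired inequalities reduce to the uniform-in-$(z_1,\dots,z_{k-1})$ bounds
\[
p\lambda\int_\Dee \Ee[h_k^X]\,f(z_k)\,\dd z_k \;\le\; \alpha_X,
\]
with $\alpha_{\mathrm{II}}=10^3 e^{-w_1}$, $\alpha_{\mathrm{III}}=10^3\vartheta e^{w_2}$, and $\alpha_{\mathrm{IV},v}=10^4 e^{v/2}e^{-e^{v/2}}$.

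Each of the three inner integrals is then a short calculation. After applying a hyperbolic isometry that sends $z_{k-1}$ to $o$: the type~II integral equals $p\lambda\cdot\areaH(\ballH(o,r-w_1))\le 10\lambda^2\cdot 2\pi(\cosh(r-w_1)-1)\le 20\pi e^{-w_1}$, using $\cosh x\le e^x$ and $r=2\ln(1/\lambda)$; the type~III integral, by rotational symmetry of hyperbolic area around $z_{k-1}$, equals $p\lambda\cdot(\vartheta/\pi)\cdot 2\pi(\cosh(r+w_2)-1)\le 20\vartheta e^{w_2}$ (cf.\ the proofs of Lemmas~\ref{lem:XI} and~\ref{lem:XII}); and the type~IV integral, applied to the good pre-pseudopath $o,z_1,\dots,z_{k-1}$, is bounded above by $\Ee L_{\mathrm{IV},\ge r+v}\le 10^4 e^{v/2}e^{-e^{v/2}}$ via Corollary~\ref{cor:YIV}. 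All three prefactors are comfortably under the stated constants.

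The main obstacle is the bookkeeping for type~IV: one must notice that the genuine type~IV event is \emph{not} independent of $g_{k-1}$ (both look at $\Zcal\cap C_{k-1}$), and use the weakening to ``Gabriel half-disk minus $C_{k-1}$ misses $\Zcal$'' to restore independence without losing anything more than a valid upper bound. The remaining computational steps are short applications of the area formula $\areaH(\ballH(o,x))=2\pi(\cosh x-1)$, the bound $\cosh x\le e^x$, the identity $e^r=\lambda^{-2}$, and, for type~IV, a direct invocation of Corollary~\ref{cor:YIV}.
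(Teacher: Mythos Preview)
Your proposal is correct and follows essentially the same route as the paper's proof: write $\Ee C_k^X$ via Corollary~\ref{cor:SlivMeck2col}, bound the chunk indicator by the good-pseudopath indicator $g_{k-1}$ (which is $\Zcal\cap C_{k-1}$-measurable) times a terminal-edge factor chosen to be $\Zcal\setminus C_{k-1}$-measurable, factor the expectation by Poisson independence, and reduce to the inner-integral bounds from Lemmas~\ref{lem:XI}, \ref{lem:XII} and Corollary~\ref{cor:YIV}. The paper's proof is the same in structure and in the key independence observation for type~{\bf IV}; the only cosmetic difference is that the paper names the good-pseudopath indicator $h_{k-1}$ here and cites Lemmas~\ref{lem:XI}--\ref{lem:XII} rather than redoing the area calculation inline.
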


\begin{proof} We will argue analogously to Lemma~\ref{lem:numbergoodpspths}.
We start by considering $C_k^{\text{\bf{II}}}$.
By Corollary~\ref{cor:SlivMeck2col}

$$ 
\Ee C_k^{\text{\bf{II}}} 
= \left(p\lambda\right)^k \int_\Dee \dots\int_\Dee \Ee\left[g_k^{\text{\bf{II}}}(z_1,\dots,z_k,\Zcal\cup\{z_1,\dots,z_k\})\right] 
\cdot f(z_1)\cdot\dots\cdot f(z_k) \dd z_k\dots\dd z_1,$$

\noindent
where $f$ is given by~\eqref{eq:fdef} and $g_k^{\text{\bf{II}}}$ is the indicator function 
that $o,z_1,\dots,z_k$ is a chunk (wrt.~the point set $\Zcal\cup\{o,z_1,\dots,z_k\}$), 
whose last pseudo-edge $z_{k-1}z_k$ is of type {\bf II}.

Let $h_{k-1}$ denote the indicator function that $o,z_1,\dots,z_{k-1}$ is a good pseudo-path wrt.~the point set 
$\Zcal\cup\{o,z_1,\dots,z_{k-1}\}$. 
We have

$$ \begin{array}{rcl} g_k^{\text{\bf{II}}}(z_1,\dots,z_k,\Zcal\cup\{z_1,\dots,z_k\})
& \leq & \displaystyle 
h_{k-1}(z_1,\dots,z_{k-1},\Zcal\cup\{z_1,\dots,z_{k-1}\}) \cdot \\ 
& & \displaystyle 
1_{\{\distH(z_{k-1},z_k) \leq r-w_1\}}. \end{array} $$

It follows that 

$$ \begin{array}{rcl} 
    
\Ee C_k^{\text{\bf{II}}} 
& \leq & \displaystyle 
\left(p\lambda\right)^{k-1} \int_\Dee \dots\int_\Dee \Ee\left[h_{k-1}(z_1,\dots,z_{k-1},\Zcal\cup\{z_1,\dots,z_{k-1}\})\right] 
\cdot f(z_1)\cdot\dots\cdot f(z_{k-1}) \cdot \\
& & \displaystyle \hspace{15ex}
\left( p\lambda \int_\Dee 1_{\{\distH(z_{k-1},z_k) \leq r-w_1\}} f(z_k)\dd z_k\right)
\dd z_{k-1}\dots\dd z_1 \\
& = & \displaystyle 
p\lambda \cdot \areaH(\ballH(o,r-w_1)) \cdot \\
& & \displaystyle 
(p\lambda)^{k-1} \cdot \int_\Dee \dots\int_\Dee \Ee\left[h_{k-1}(z_1,\dots,z_{k-1},\Zcal\cup\{z_1,\dots,z_{k-1}\})\right] 
\cdot \\
& & \displaystyle \hspace{17ex} 
f(z_1)\cdot\dots\cdot f(z_{k-1}) \dd z_{k-1}\dots\dd z_1 \\
& = & \displaystyle 
p\lambda \areaH(\ballH(o,r-w_1)) \cdot \Ee G_{k-1},
   \end{array} $$

\noindent
Applying Lemma~\ref{lem:XI} gives~\eqref{eq:CnII}.

The proof of~\eqref{eq:CnIII} is analogous, using the indicator function 
$1_{\{\distH(z_{k-1},z_k)<r+w_2, \angle z_{k-2}z_{k-1}z_k \leq \vartheta\}}$ in place of $1_{\{\distH(z_{k-1},z_k) \leq r-w_1\}}$, 
replacing $\areaH(\ballH(o,r-w_1))$ with $\areaH(\ballH(o,r+w_2)\cap\sect{o}{v}{\vartheta})$ (for $v\neq o$ arbitrary) and 
using Lemma~\ref{lem:XII} in place of Lemma~\ref{lem:XI}.

For the proof of~\eqref{eq:CnIV}, we use that if $g_k^{\text{\bf{IV}},v}$ denotes the indicator
function that $o,z_1,\dots,z_k$ form a chunk (wrt.~$\Zcal\cup\{o,z_1,\dots,z_k\}$) 
whose last pseudo-edge has length $\in [r+v,r+v+1)$ then

$$ \begin{array}{rcl} 
g_k^{\text{\bf{IV}},v}(z_1,\dots,z_k, \Zcal\cup\{z_1,\dots,z_k\}) 
& \leq & 
h_{k-1}(z_1,\dots,z_{k-1},\Zcal\cup\{z_1,\dots,z_{k-1}\}) \cdot  \\
& & 
g_{L_{\text{\bf{IV}},\geq r+v}}(z_1,\dots,z_k,\Zcal), 
\end{array} $$

\noindent 
where 

$$ g_{L_{\text{\bf{IV}},\geq r+v}}(z_1,\dots,z_k,\Zcal)
:= 1_{
\left\{\begin{array}{l}
       \distH(z_{k-1},z_k) \geq r+v, \text{ and;} \\
       \text{ either }\Zcal \cap \BGab^-(z_{k-1},z_k) \setminus \cert(o,z_1,\dots,z_{k-1}) = \emptyset, \\
       \text{ or } 
       \Zcal \cap \BGab^+(z_{k-1},z_k) \setminus \cert(o,z_1,\dots,z_{k-1}) = \emptyset, \\
       \text{(or both).}
      \end{array}
\right\}
}. $$

\noindent
Since $\Zcal \cap \cert(o,z_1,\dots,z_{k-1})$ and $\Zcal \setminus \cert(o,z_1,\dots,z_{k-1})$ are independent for 
any choice of $z_1,\dots,z_{k-1}$, we have

$$\begin{array}{rcl} 
\Ee\left[g_k^{\text{\bf{IV}},v}(z_1,\dots,z_k, \Zcal\cup\{z_1,\dots,z_k\})\right]
& \leq & 
\Ee\left[h_{k-1}(z_1,\dots,z_{k-1},\Zcal\cup\{z_1,\dots,z_{k-1}\})\right]  \cdot \\
& & \Ee\left[g_{L_{\text{\bf{IV}},\geq r+v}}(z_1,\dots,z_k,\Zcal)\right].
  \end{array} $$

\noindent 
If $z_1,\dots,z_{k-1}$ do not form a good pre-pseudopath then $h_{k-1}(z_1,\dots,z_{k-1},\Zcal\cup\{z_1,\dots,z_{k-1}\})=0$.
Otherwise we can apply Corollary~\ref{cor:YIV} (and Corollary~\ref{cor:SlivMeck2col})
to show that 

$$ p\lambda \int_\Dee \Ee\left[g_{L_{\text{\bf{IV}},\geq r+v}}(z_1,\dots,z_k,\Zcal)\right] f(z_k) \dd z_k
\leq 10^4 e^{v/2} e^{-e^{v/2}}. $$

\noindent 
In other words, 

$$ \begin{array}{rcl} 
\Ee C_k^{\text{\bf{IV}},v} 
& = & \displaystyle 
\left(p\lambda\right)^k \int_\Dee \dots\int_\Dee \Ee\left[g_k^{\text{\bf{IV}},\geq r+v}(z_1,\dots,z_k,\Zcal\cup\{z_1,\dots,z_k\})\right] 
\cdot f(z_1)\cdot\dots\cdot f(z_k) \dd z_k\dots\dd z_1 \\
& \leq & \displaystyle 
\left(p\lambda\right)^{k-1} \int_\Dee \dots\int_\Dee \Ee\left[h_{k-1}(z_1,\dots,z_{k-1},\Zcal\cup\{z_1,\dots,z_{k-1}\})\right] 
\cdot f(z_1)\cdot\dots\cdot f(z_{k-1}) \cdot \\
& & \displaystyle \hspace{15ex}
\left( p\lambda \int_\Dee \Ee\left[g_{L_{\text{\bf{IV}},\geq r+v}}(z_1,\dots,z_k,\Zcal)\right]  f(z_k)\dd z_k\right)
\dd z_{k-1}\dots\dd z_1 \\
& \leq & \displaystyle 
\Ee G_{k-1} \cdot 10^4 e^{v/2} e^{-e^{v/2}}, 
\end{array} $$

\noindent
as desired.
\end{proof}

\subsubsection{Counting linked sequences of chunks}

Next, we turn attention to counting the number of linked sequences of chunks.
The first few lemmas are designed for dealing with the first pseudo-edge 
of a new chunk that will be linked to an existing chunk.
The names of the random variables described in these lemmas are $F$ with some subscript, where
$F$ stands for ``first'' and the subscript corresponds to the type of the new pseudo-edge to be added and the type of link
under consideration. 
(The link types being {\bf a}, {\bf b}, {\bf c} corresponding to the cases listed in part {\bf(ii)} of 
Definition~\ref{def:seqchunks}.)

\begin{lemma}\label{lem:FIb}
For every $w_1,w_2,\vartheta>0$ there exist $\lambda_0=\lambda_0(w_1,w_2,\vartheta)$ such that
the following holds for all $0<\lambda<\lambda_0$ and $p \leq 10\lambda$, 
setting $r := 2\log(1/\lambda)$.

Let $u_0,\dots,u_k\in\Dee$ be an arbitrary pre-chunk and define

$$ F_{\text{\bf{I-b}}}  := 
 \left|\left\{ (z_1,z_2) \in \Zcal_b\times\Zcal_b :  
\begin{array}{l}
 0 < \distH(z_1,z_2) < r+w_2, \text{ and;} \\
 \cert(z_1,z_2) \cap \cert(u_0,\dots,u_k) \neq \emptyset.
 \end{array} 
 \right\}\right|. $$  

Then we have

$$ \Ee F_{\text{\bf{I-b}}} \leq 10^4 \cdot k \cdot \exp\left[ \frac32 w_2+\max\left(w_2,\distH(u_{k-1},u_k)-r\right)/2\right].  $$

\end{lemma}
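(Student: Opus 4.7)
The plan is to apply Corollary~\ref{cor:SlivMeck2col} to rewrite
\[
\Ee F_{\text{\bf{I-b}}} = (p\lambda)^2 \iint_{\Dee^2} 1_{\Acal}(z_1,z_2)\, f(z_1)f(z_2)\, \dd z_1\, \dd z_2,
\]
with $f$ as in~\eqref{eq:fdef} and $\Acal$ the joint event in the definition of $F_{\text{\bf{I-b}}}$. Since $p \leq 10\lambda$ and $r = 2\ln(1/\lambda)$, we have $(p\lambda)^2 \leq 100\, e^{-2r}$, so a product of two region-areas each $O(e^{r+C})$ yields $O(e^{2C})$ after multiplication by $(p\lambda)^2$. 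The condition $\cert(z_1,z_2) \cap \cert(u_0,\ldots,u_k) \neq \emptyset$ together with $0 < \distH(z_1,z_2) < r+w_2$ forces $\cert(z_1,z_2) = \DD(z_1,z_2,r+w_2)$. A union bound over $i \in \{1,\ldots,k\}$ on the events $\cert(z_1,z_2) \cap \cert(u_{i-1},u_i) \neq \emptyset$ extracts the factor $k$, reducing the claim to bounding each single-edge integral by $O(\exp[(3/2)w_2 + \max(w_2,L-r)/2])$, where $L := \distH(u_{k-1},u_k)$.

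For $i \leq k-1$, the certificate $\cert(u_{i-1},u_i) = \DD(u_{i-1},u_i,r+w_2)$ is the union of two balls of radius $(r+w_2)/2$. I split $z_1$-integration into the ``close'' region $R^{\mathrm{cl}} := \ballH(u_{i-1},r+w_2+K) \cup \ballH(u_i,r+w_2+K)$, with $K$ a large universal constant, and its complement. On $R^{\mathrm{cl}}$ one has $\areaH(R^{\mathrm{cl}}) = O(e^{r+w_2+K})$; combined with the annular $z_2$-region $\{\distH(z_1,z_2) < r+w_2\}$ of area $O(e^{r+w_2})$ and the $(p\lambda)^2$ prefactor, this gives $O(e^{2w_2+K})$, which fits within $e^{2w_2}$ up to a universal constant. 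On the complement, a ball $B_1 \subseteq \DD(z_1,z_2,r+w_2)$ of radius $R = (r+w_2)/2$ with center $c_1$ meeting a ball $B_2 \subseteq \cert(u_{i-1},u_i)$ with center $c_2$ and $u_{i-1} \in \partial B_2$ must satisfy $\distH(c_1,u_{i-1}) > R+K$ by triangle inequality, defeating the first case of Lemma~\ref{lem:angle} and forcing $\angle c_1 u_{i-1} c_2 \leq 10\, e^{-K/2}$. The analogous argument at $u_i$ confines $c_1$ to the intersection of two narrow sectors emanating from $u_{i-1}$ and $u_i$; thickening by $R$ yields a $z_1$-region whose area is smaller by a factor $e^{-K/2}$ than the naive ball-bound, giving a contribution $O(e^{-K/2 + 2w_2})$ that fits within $e^{2w_2}$ for $K$ sufficiently large.

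For $i = k$ with last pseudo-edge of type IV, $\cert(u_{k-1},u_k) = \BGab(u_{k-1},u_k)$ is a single ball of radius $L/2 \geq (r+w_2)/2$. The same close/far dichotomy applies with $B_2 = \BGab$ and $c_2 = m_k$, the midpoint of $u_{k-1}u_k$ at distance $L/2$ from each endpoint. The close case still yields $O(e^{2w_2+K})$ since $\areaH(R^{\mathrm{cl}})$ is $O(e^{r+w_2+K})$ regardless of $L$; in the far case the two-sector intersection analysis produces a $c_1$-region of area $O(e^{-K/2} \cdot e^{L/2})$, and thickening by $R$ gives a $z_1$-region of area $O(e^{-K/2} \cdot e^{(L + r + w_2)/2})$. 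Combined with the $z_2$-annulus and $(p\lambda)^2$ factors this is $O(e^{-K/2 + (L-r)/2 + (3/2)w_2})$, absorbed in $e^{(3/2)w_2 + (L-r)/2}$. Types II and III of the final pseudo-edge reduce to the $i \leq k-1$ analysis. Summing the per-edge bounds over $i \in \{1,\ldots,k\}$ yields the claim with the constant $10^4$. The main obstacle will be making the two-sector thickening area estimate rigorous in the presence of the two-ball structure of $\DD(z_1,z_2,r+w_2)$; this can be handled by a bounded union bound over the at most four pairs of balls $(B_1,B_2)$ realising the certificate overlap, at only a constant multiplicative cost.
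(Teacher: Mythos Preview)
Your overall setup (Slivnyak--Mecke, then a union bound over the constituent balls of $\cert(u_0,\dots,u_k)$) matches the paper's, but the per-edge analysis diverges from the paper's and, as written, has a real gap in the ``far'' case.

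The problem is quantitative. In the far case for $i\le k-1$, once $z_1\notin R^{\mathrm{cl}}$ you still only know $\distH(z_1,u_{i-1})\le 2(r+w_2)$ (since $\distH(z_1,c_2)\le (3/2)(r+w_2)$ and $\distH(c_2,u_{i-1})=(r+w_2)/2$). The annulus $\{r+w_2+K<\distH(z_1,u_{i-1})<2(r+w_2)\}$ therefore has area of order $e^{2(r+w_2)}$, not $e^{r+w_2+K}$. Your sector bound from Lemma~\ref{lem:angle} constrains the \emph{center} $c_1$, not $z_1$, and only at its weakest value $10e^{-K/2}$; applying a constant angular gain to an annulus of area $e^{2(r+w_2)}$ still leaves a contribution of order $(p\lambda)^2\cdot e^{2(r+w_2)-K/2}\cdot e^{r+w_2}\asymp e^{\,r+3w_2-K/2}$, which diverges as $\lambda\to 0$. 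Passing from the $c_1$-region to a $z_1$-region by ``thickening by $R$'' is also dangerous in hyperbolic geometry: an $R$-neighbourhood can multiply area by a factor of order $e^R$. The same issues recur, with larger exponents, in your type~\textbf{IV} far case.

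The paper avoids all of this by changing the vantage point: it centers at the ball center $c_i$ of $B_i$ (not at $u_{i-1},u_i$), writes $(z_1,z_2)$ in hyperbolic polar coordinates about $c_i$, and uses Lemma~\ref{lem:distHvsangle} to get the \emph{radius-dependent} angular bound
\[
|\alpha_1-\alpha_2|_{2\pi} \;<\; 2\pi\,e^{(r+w_2-\rho_1-\rho_2)/2}.
\]
This is the crucial step: the gain is not a fixed $e^{-K/2}$ but decays with $\rho_1+\rho_2$, and after integrating $e^{\rho_1+\rho_2}\cdot e^{(r+w_2-\rho_1-\rho_2)/2}=e^{(r+w_2)/2}e^{(\rho_1+\rho_2)/2}$ over $[0,s_i+r+w_2]^2$ one obtains exactly the factor $e^{(r+w_2)/2+s_i+r+w_2}$ that, combined with $(p\lambda)^2\le 100e^{-2r}$, gives $e^{s_i-r/2+(3/2)w_2}$ per ball and hence the stated bound. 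If you want to salvage your route, you would need to use the full $\rho$-dependent version of Lemma~\ref{lem:angle} and integrate it carefully, and you would also need a clean change of variables relating $(z_1,z_2)$ to $(c_1,\cdot)$; this is doable but strictly more work than the paper's direct polar-coordinate computation about $c_i$.
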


\begin{proof}
We can write $\cert(u_0,\dots,u_k) = B_1 \cup \dots \cup B_m$ with
$m \leq 2k$ and 
$B_1 = \ballH(c_1,s_1), \dots, B_m = \ballH(c_m,s_m)$ balls whose radii 
are either $(r+w_2)/2$ or $\distH(u_{k-1},u_k)/2$.

For $i=1,\dots,m$ we define

$$ F_i := \left|\left\{ (z_1,z_2) \in \Zcal_b\times\Zcal_b :  
\begin{array}{l}
 0 < \distH(z_1,z_2) < r+w_2, \text{ and;} \\
 \ballH(z_1,r+w) \cap B_i \neq \emptyset, \text{ and; }\\
 \ballH(z_2,r+w) \cap B_i \neq \emptyset. 
 \end{array} 
 \right\}\right|. $$  
 
\noindent
Observe that if $\distH(z_1,z_2) < r+w_2$ then $\cert(z_1,z_2)$ either equals $\DD(z_1,z_2,r+w_2)$ or $\emptyset$.
In particular we have $\cert(z_1,z_2) \subseteq \ballH(z_1,r+w) \cap \ballH(z_2,r+w)$.

We see that 

$$ F_{\text{\bf{I-b}}} \leq F_1 + \dots + F_m. $$ 

\noindent
We can thus focus on bounding the individual expectations $\Ee F_i$.
For the moment, we pick some $1\leq i \leq m$.   
Without loss of generality (applying a suitable isometry if needed) the center $c_i$ of $B_i$ is the origin $o$.
Applying Corollary~\ref{cor:SlivMeck2col} and a (double) switch to hyperbolic polar coordinates:

$$ \begin{array}{rcl}
\Ee F_i
& = & \displaystyle 
p^2 \lambda^2 \int_0^\infty \int_0^\infty \int_0^{2\pi} \int_0^{2\pi}
g(z_1(\alpha_1,\rho_1), z_2(\alpha_2,\rho_2))
\sinh(\rho_1)\sinh(\rho_2)\dd\alpha_2\dd\alpha_1\dd\rho_2\dd\rho_1 \\
& \leq & 
\displaystyle 
100 \lambda^4 \int_0^\infty \int_0^\infty \int_0^{2\pi} \int_0^{2\pi}
g(z_1(\alpha_1,\rho_1), z_2(\alpha_2,\rho_2))
e^{\rho_1+\rho_2}\dd\alpha_2\dd\alpha_1\dd\rho_2\dd\rho_1, 
\end{array} $$

\noindent
where

$$ g(u_1,u_2) 
:= 1_{\left\{
\begin{array}{l}
 0 < \distH(u_1,u_2) < r+w_2, \text{ and;} \\
 \ballH(u_1,r+w_2) \cap B_i \neq \emptyset, \text{ and; }\\
 \ballH(u_2,r+w_2) \cap B_i \neq \emptyset. 
 \end{array} 
\right\}}. $$

\noindent
Note that $\ballH(z_i,r+w_2) \cap B_i \neq \emptyset$ implies that $\rho_i = \distH(o,z_i) < s_i+r+w_2$.
Applying Lemma~\ref{lem:distHvsangle}, we see that $\distH(z_1,z_1) < r+w_2$ 
implies that $\angle z_1oz_2 = |\alpha_1-\alpha_2|_{2\pi} < 2\pi e^{(r+w_2-(\rho_1+\rho_2))/2}$.
(Here and elsewhere $|x|_{2\pi} := \min(|x|, 2\pi-|x|)$ for $-2\pi\leq x \leq 2\pi$.)
Therefore

$$ g(z_1,z_2)
\leq 1_{\left\{\begin{array}{l} 
\rho_1,\rho_2 < s_i+r+w_2, \\  
|\alpha_1-\alpha_2|_{2\pi} < 2\pi e^{(r+w_2-(\rho_1+\rho_2))/2}
\end{array}\right\}}. $$

It follows that 

\begin{equation}\label{eq:VIIeq1} 
\begin{array}{rcl}
\Ee F_i 
& \leq & \displaystyle 
100 \lambda^4 \int_0^{s_i+r+w_2} \int_0^{s_i+r+w_2} \int_0^{2\pi} \int_0^{2\pi}
1_{\left\{|\alpha_1-\alpha_2|_{2\pi}< 2\pi e^{(r+w_2-\rho_1-\rho_2)/2}\right\}} \cdot \\
& & \displaystyle 
\hspace{35ex} e^{\rho_1+\rho_2} \dd\alpha_2\dd\alpha_1\dd\rho_2\dd\rho_1.
\end{array} 
\end{equation}

By symmetry considerations

$$ 
\int_0^{2\pi} \int_0^{2\pi}
1_{\left\{|\alpha_1-\alpha_2|_{2\pi}< 2\pi e^{(r+w_2-(\rho_1+\rho_2))/2}\right\}}
\dd\alpha_2\dd\alpha_1
\leq 8 \pi^2 \cdot e^{(r+w_2-(\rho_1+\rho_2))/2}.  $$

\noindent
(We have an inequality and not an equality to account for the possibility that $2\pi e^{r+w_2-(\rho_1+\rho_2)} \geq \pi$.)
Filling this back into~\eqref{eq:VIIeq1}, we obtain

$$ \begin{array}{rcl} 
\Ee F_i 
& \leq &  \displaystyle 
10^4 \lambda^4 e^{(r+w_2)/2}\int_0^{s_i+r+w_2} \int_0^{s_i+r+w_2} e^{(\rho_1+\rho_2)/2} \dd\rho_2\rho_1 \\
& = & \frac{1}{4}\cdot 10^4 \lambda^4 e^{(r+w_2)/2} \cdot e^{s_i+r+w_2} \\
& = & \frac14 \cdot 10^4 \cdot \exp\left[\frac32 w_2+s_i-r/2\right] \\
& \leq & \frac14 \cdot 10^4 \cdot \exp\left[ \frac32 w_2+\max\left(w_2, \distH(u_{k-1},u_k)-r\right)/2\right].
\end{array} $$

\noindent
using in the last line that $2s_i$, the diameter of $B_i$, is either $r+w_2$ or $\distH(u_{k-1},u_k)$.
Adding the bounds on $\Ee Y_i'$ and using the inequality $m \leq 2k$ gives

$$ \begin{array}{rcl} 
\Ee F_{\text{\bf{I-b}}} 
& \leq & \Ee F_1+\dots+\Ee F_m \\
& \leq & 2k \cdot \frac14 \cdot 10^4 \cdot \exp\left[ \frac32 w_2+\max\left(w_2, \distH(u_{k-1},u_k)-r\right)/2\right] \\
& \leq & 10^4 \cdot k \cdot \exp\left[\frac32 w_2+\max\left(w_2, \distH(u_{k-1},u_k)-r\right)/2\right], 
\end{array} $$

\noindent
as claimed in the lemma statement.
\end{proof}

\begin{lemma}\label{lem:FIc}
For every $w_1,w_2,\vartheta>0$ there exist $\lambda_0=\lambda_0(w_1,w_2,\vartheta)$ such that
the following holds for all $0<\lambda<\lambda_0$ and $p \leq 10\lambda$, 
setting $r := 2\log(1/\lambda)$.

Let $u_0,\dots,u_k\in\Dee$ be an arbitrary pre-chunk and define

$$ F_{\text{\bf{I-c}}}  := 
 \left|\left\{ (z_1,z_2) \in \Zcal_b\times\Zcal_b :  
\begin{array}{l}
 0 < \distH(z_1,z_2) < r+w_2, \text{ and;} \\
 z_1z_2 \text{ is a pseudoedge, and; } \\
 \cert(z_1,z_2)\cap \cert(u_0,\dots,u_k) = \emptyset, \text{ and;} \\
 \distH(z_1,\cert(u_0,\dots,u_k))<r/1000.
 \end{array} 
 \right\}\right|. $$  

Then we have

$$ \Ee F_{\text{\bf{I-c}}} \leq k \cdot \exp\left[ \max\left(w_2,\distH(u_{k-1},u_k)-r\right)/2\right].  $$

\end{lemma}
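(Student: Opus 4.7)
The plan is to apply the Slivniak--Mecke formula and simply drop both the condition that $z_1z_2$ is a pseudoedge and the condition $\cert(z_1,z_2)\cap\cert(u_0,\ldots,u_k)=\emptyset$, retaining only the localization $\distH(z_1,\cert(u_0,\ldots,u_k))<r/1000$ together with the distance bound $\distH(z_1,z_2)<r+w_2$. By Corollary~\ref{cor:SlivMeck2col} this yields
$$\Ee F_{\text{\bf{I-c}}} \leq p^2\lambda^2 \int_{\Dee}\int_{\Dee} 1_{\{\distH(z_1,\cert(u_0,\ldots,u_k))<r/1000\}}\cdot 1_{\{\distH(z_1,z_2)<r+w_2\}}\,f(z_1)f(z_2)\,\dd z_2\,\dd z_1,$$
with $f$ given by~\eqref{eq:fdef}.

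Next I would decompose $\cert(u_0,\ldots,u_k) = B_1\cup\cdots\cup B_m$ as a union of $m\leq 2k$ hyperbolic balls $B_i=\ballH(c_i,s_i)$. Since $u_0,\ldots,u_k$ is a pre-chunk, for each $1\leq i\leq k-1$ the pseudoedge $u_{i-1}u_i$ is good, so $\cert(u_{i-1},u_i)=\DD(u_{i-1},u_i,r+w_2)$ contributes two balls of radius $(r+w_2)/2$; the final certificate $\cert(u_{k-1},u_k)$ is either empty, another $\DD$, or a $\BGab$ of radius $\distH(u_{k-1},u_k)/2$ (depending on whether the last pseudoedge has type II, III or IV). Hence each $s_i\leq s^*:=\max\{(r+w_2)/2,\,\distH(u_{k-1},u_k)/2\}$. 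Combining the inclusion $\{z_1:\distH(z_1,\bigcup_j B_j)<r/1000\}\subseteq\bigcup_i\ballH(c_i,s_i+r/1000)$ with the area estimate $2\pi(\cosh t-1)\leq 2\pi e^{t}$, the hypothesis $p\leq 10\lambda$, and the identity $\lambda^2=e^{-r}$ coming from $r=2\ln(1/\lambda)$, the double integral reduces to
$$\Ee F_{\text{\bf{I-c}}} \leq C\,k\cdot\exp\bigl[s^*+r/1000+w_2-r\bigr]$$
for an absolute constant $C$ (say $C=800\pi^2$).

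To conclude I would split into two cases. If $\distH(u_{k-1},u_k)\leq r+w_2$, then $s^*=(r+w_2)/2$ and $\max(w_2,\distH(u_{k-1},u_k)-r)=w_2$; the exponent above becomes $3w_2/2+r/1000-r/2$, which is at most $w_2/2-\ln C$ as soon as $r(1/2-1/1000)\geq w_2+\ln C$, i.e.~for $\lambda<\lambda_0$ with $\lambda_0$ small. If instead $\distH(u_{k-1},u_k)>r+w_2$, then $s^*=\distH(u_{k-1},u_k)/2$ and $\max(w_2,\distH(u_{k-1},u_k)-r)=\distH(u_{k-1},u_k)-r$, and the excess exponent $r/1000-r/2+w_2+\ln C$ is again negative for $\lambda$ small. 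The only delicate point is the verification that the certificate decomposes into $\leq 2k$ balls of the claimed radii across the three possible types of the final pseudoedge; the stated bound is then obtained directly from this routine area computation. Notably, neither the pseudoedge condition nor the certificate-disjointness condition in the definition of $F_{\text{\bf{I-c}}}$ plays any role here, because the localization $\distH(z_1,\cert(\cdot))<r/1000$ already supplies an $e^{-r/2+O(1)}$ factor that absorbs the right-hand side and all universal constants once $\lambda_0$ is small enough.
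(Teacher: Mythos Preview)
Your argument is correct and in fact more economical than the paper's own proof. Both proofs drop the certificate--disjointness condition, factor the resulting double integral as
\[
\Ee F_{\text{\bf{I-c}}}\;\le\;(p\lambda)\cdot\areaH(C')\;\cdot\;\Bigl(p\lambda\int_{\ballH(o,r+w_2)}(\cdots)f(z)\,\dd z\Bigr),
\]
and then absorb all constants into the factor $e^{-(499/1000)r}$ coming from $p\lambda\cdot\areaH(C')$. The difference lies entirely in how the inner integral is bounded. You simply drop the pseudoedge condition and use $p\lambda\cdot\areaH(\ballH(o,r+w_2))\le 10\pi e^{w_2}$. The paper instead keeps the pseudoedge condition and, noting that a pseudoedge $oz$ with $\distH(o,z)<r+w_2$ is either a Delaunay edge or of type~{\bf II} or has an empty half of $\DD(o,z,r+w_2)$, bounds the inner integral by $p\,\Ee D+\Ee X_{\text{\bf I}}+\Ee\tilde X_{\text{\bf VI}}$, invoking Isokawa's formula together with Lemmas~\ref{lem:XI} and~\ref{lem:XVItil}. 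Either way the inner integral is $O_{w_2}(1)$, so both routes reach the target bound; yours just avoids three external references. One small cosmetic point: your phrase ``supplies an $e^{-r/2+O(1)}$ factor'' is slightly imprecise, since the slack is $e^{-(499/1000)r+w_2}$ and $r/1000$ is not $O(1)$; but the conclusion---that this slack swallows the absolute constant $C$ and the extra $e^{w_2}$ once $\lambda_0$ is small---is exactly right.
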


\begin{proof}
As usual, we let $\lambda_0>0$ be a small constant, to be determined in the course of the proof.
As in the previous proof, we write $C := \cert(u_0,\dots,u_k) = B_1 \cup \dots \cup B_m$ with
$m \leq 2k$ and 
$B_1 = \ballH(c_1,s_1), \dots, B_m = \ballH(c_m,s_m)$ balls whose radii 
are either $(r+w_2)/2$ or $\distH(u_{k-1},u_k)/2$.
Let us write $C' := \{ u \in \Dee : \distH(u,C) < r/1000\}$.
By Corollary~\ref{cor:SlivMeck2col}

$$ \begin{array}{rcl} 
\Ee F_{\text{\bf{I-c}}} 
& \leq & \displaystyle 
(p\lambda)^2 \int_\Dee\int_\Dee
1_{\{z_1\in C'} \dot 1_{\left\{\begin{array}{l}z_1z_2 \text{ pseudoedge,} \\ \distH(z_1,z_2)<r+w_2\end{array}\right\}} 
f(z_1)f(z_2)\dd z_1\dd z_2 \\
& = & \displaystyle
(p\lambda)^2 \areaH(C') \cdot \int_{\ballH(o,r+w_2)} 1_{\{0z \text{ pseudoedge}\}} f(z)\dd z \\
& \leq & p\lambda \areaH(C') \cdot \left( p \Ee D + \Ee X_{\text{\bf{I}}} + \Ee\Xtil_{\text{\bf{VI}}} \right) \\
& \leq & 10 \cdot \lambda^2 \cdot 2k \cdot \pi e^{ \max(r+w_2,\distH(u_{k-1},u_k))/2+r/1000} \cdot \\
& & \displaystyle \quad \quad 
\left( 10 \lambda \cdot \left( 6 + \frac{3}{\pi\lambda}\right) + 1000 e^{-w_2} + 1000 e^{w_2}e^{-e^{w_2/2}} \right) \\
& = & \displaystyle 
k \cdot e^{\max\left(w_2,\distH(u_{k-1},u_k)-r\right)/2} \cdot e^{ - \left(\frac{499}{1000}\right)r } \cdot \\
& & \displaystyle \quad \quad 
\cdot \left( 120 \pi \lambda + 60 + 2000 \pi e^{-w_2} + 2000 \pi e^{w_2}e^{-e^{w_2/2}} \right) \\
& \leq & k \cdot e^{\max\left(w_2,\distH(u_{k-1},u_k)-r\right)/2},
\end{array} $$

\noindent 
where $D$ is the ``typical degree'' as given by~\eqref{eq:typdegdef}, $X_{\text{\bf{I}}}$ is as defined 
in Lemma~\ref{lem:XI} and $\Xtil_{\text{\bf{VI}}}$ as defined in Lemma~\ref{lem:XVItil}, and;
we apply Isokawa's formula (Theorem~\ref{thm:Isokawa}) and Lemmas~\ref{lem:XI} and~\ref{lem:XVItil}
and we use that $p \leq 10\lambda$ and that $\areaH( \ballH(u,s) ) \leq \pi e^s$ for all $u\in\Dee, s>0$ 
to obtain the fourth line. In the fifth line we use that $r = 2\ln(1/\lambda)$ and 
in the last line that we chose $\lambda_0=\lambda_0(w_1,w_2,\vartheta)$ sufficiently small (so that $r$ is large).
\end{proof}

%

%
%
%
%
%
%
%
%
%

\begin{lemma}\label{lem:FIVa}
For every $w_1,w_2,\vartheta>0$ there exist $\lambda_0=\lambda_0(w_1,w_2,\vartheta)$ such that
the following holds for all $0<\lambda<\lambda_0$ and $p \leq 10\lambda$, 
setting $r := 2\log(1/\lambda)$.

Let $u_0,\dots,u_k \in \Dee$ be an arbitrary pre-chunk and 
write $C := \cert(u_0,\dots,u_k)$ and

$$ F_{\text{\bf{IV-a}},\geq d}  := 
 \left|\left\{ z \in \Zcal_b  :  
\begin{array}{l}
 \distH(z,u_k) \geq d, \text{ and; } \\ 
 \distH(z,C) \geq r/1000, \text{ and; } \\
 \Zcal \cap \BGab^-(z,u_k) \setminus C = \emptyset 
 \text{ or } \Zcal \cap \BGab^+(z,u_k) \setminus C = \emptyset.
 \end{array} 
 \right\}\right|. $$  

Then we have

$$ \Ee F_{\text{\bf{IV-a}},\geq r+v} \leq 
10^3 \cdot e^{v/2} \cdot e^{-e^{v/2}},  $$

\noindent
for all $v \geq w_2$. 
\end{lemma}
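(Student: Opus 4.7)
The plan is to reduce the statement to Lemma~\ref{lem:LIVa}, which handles an essentially identical counting problem but with the geometric hypothesis that $\areaH(\BGab(u_k,z)\cap C)\leq \frac{1}{10}\areaH(\BGab(u_k,z))$ in place of the metric hypothesis $\distH(z,C)\geq r/1000$ that appears here. If the area bound can be deduced from the distance bound for all sufficiently small $\lambda$, then every $z\in\Zcal_b$ contributing to $F_{\text{\bf{IV-a}},\geq r+v}$ also contributes to $L_{\text{\bf{IV-i}},\geq r+v}$ (using the same $u_0,\dots,u_k$), so $F_{\text{\bf{IV-a}},\geq r+v}\leq L_{\text{\bf{IV-i}},\geq r+v}$ holds deterministically and the desired expectation bound follows immediately from Lemma~\ref{lem:LIVa}.

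To justify the reduction I would invoke Lemma~\ref{lem:ahdeps} with $\eps:=\frac{1}{10}$, applied to the pre-chunk $u_0,\dots,u_k$ and the disk $B:=\BGab(u_k,z)$. The key geometric observation is that $z$ lies on the boundary of $\BGab(u_k,z)$ and is therefore a limit of points in that disk, so
$$\ahd\bigl(C,\BGab(u_k,z)\bigr)\;=\;\sup_{x\in\BGab(u_k,z)}\distH(x,C)\;\geq\;\distH(z,C)\;\geq\;\frac{r}{1000}.$$
Since $r=2\ln(1/\lambda)\to\infty$ as $\lambda\searrow 0$, once $\lambda$ is smaller than some threshold $\lambda_0=\lambda_0(w_1,w_2,\vartheta)$ we will have $r/1000\geq a(\frac{1}{10},w_1,w_2,\vartheta)$, and the lower bound $r\geq r_0$ required by Lemma~\ref{lem:ahdeps} is also met. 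The lemma then delivers exactly the area bound $\areaH(\BGab(u_k,z)\cap C)\leq \frac{1}{10}\areaH(\BGab(u_k,z))$ required to invoke Lemma~\ref{lem:LIVa}.

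Applying Lemma~\ref{lem:LIVa} next (shrinking $\lambda_0$ once more if necessary so its own hypothesis holds) yields $\Ee L_{\text{\bf{IV-i}},\geq r+v}\leq 1000\,e^{v/2}\,e^{-e^{v/2}}$, which is within the $10^3\cdot e^{v/2}\cdot e^{-e^{v/2}}$ bound asserted in the statement, and the proof is complete.

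I do not anticipate a serious obstacle. Morally the assertion is just that a point at hyperbolic distance at least $r/1000$ from the certificate produces a Gabriel disk whose intersection with the certificate is a negligible fraction of its area, and this is exactly what Lemma~\ref{lem:ahdeps} was designed to give. The only small points of care are to treat $z$ correctly as a boundary point of the open disk $\BGab(u_k,z)$ when bounding the asymmetric Hausdorff distance, and to collect the various thresholds on $\lambda_0$ coming from Lemmas~\ref{lem:ahdeps} and~\ref{lem:LIVa} into a single final choice.
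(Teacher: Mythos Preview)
Your proposal is correct and follows essentially the same approach as the paper: both argue that $\distH(z,C)\geq r/1000$ forces $\ahd(C,\BGab(u_k,z))\geq r/1000$, invoke Lemma~\ref{lem:ahdeps} with $\eps=\tfrac{1}{10}$ to obtain the area bound, and then appeal directly to Lemma~\ref{lem:LIVa}. Your remark about $z$ lying on the boundary of the open Gabriel disk is a nice touch of care that the paper leaves implicit.
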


\begin{proof}
The demand that $\distH(z,C) > r/1000$ implies that 

$$ \ahd(C,\BGab(u_k,z)) > r/1000, $$ 

\noindent
as well. Provided $\lambda_0$ is sufficiently small (so that $r$ is sufficiently large), 
Lemma~\ref{lem:ahdeps} (with $\eps=1/10$) implies that 

$$ \areaH(\BGab(u_k,z) \cap C ) \leq \frac{1}{10} \cdot \areaH(\BGab(u_k,z)). $$

\noindent
The result now follows from immediately Lemma~\ref{lem:LIVa}.
\end{proof}

\begin{lemma}\label{lem:FIVb}
For every $w_1,w_2,\vartheta>0$ there exist $\lambda_0=\lambda_0(w_1,w_2,\vartheta)$ such that
the following holds for all $0<\lambda<\lambda_0$ and $p \leq 10\lambda$, 
setting $r := 2\log(1/\lambda)$.

Let $u_0,\dots,u_k \in \Dee$ be an arbitrary pre-chunk and 
write $C := \cert(u_0,\dots,u_k)$ and

$$ F_{\text{\bf{IV-b}},\geq d}  := 
 \left|\left\{ (z_1,z_2) \in \Zcal_b\times\Zcal_b :  
\begin{array}{l}
 \distH(z_1,z_2) \geq d, \text{ and; } \\ 
 \distH( z_1, C ) \geq r/1000, \text{ and;} \\
 \BGab(z_1,z_2) \cap C \neq \emptyset, \text{ and; }\\
 \Zcal \cap \BGab^-(z_1,z_2) \setminus C = \emptyset 
 \text{ or } \Zcal \cap \BGab^+(z_1,z_2) \setminus C = \emptyset.
 \end{array} 
 \right\}\right|. $$  

Then we have

$$ \Ee F_{\text{\bf{IV-b}},\geq r+v} \leq 
10^6 \cdot k \cdot e^{v} \cdot e^{-e^{v/2}} \cdot e^{\max(w_2,\distH(u_{k-1},u_k)-r)/2},  $$

\noindent
for all $v \geq w_2$. 
\end{lemma}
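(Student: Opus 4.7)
The plan is to decompose $C=\cert(u_0,\dots,u_k)$ as a union $B_1\cup\dots\cup B_m$ with $m\leq 2k$, where each $B_i=\ballH(c_i,s_i)$ has radius $s_i\in\{(r+w_2)/2,\distH(u_{k-1},u_k)/2\}$, and to bound
$$ \Ee F_{\text{\bf{IV-b}},\geq r+v}\leq \sum_{i=1}^{m}\Ee F^{(i)}, $$
where $F^{(i)}$ is defined exactly as $F_{\text{\bf{IV-b}},\geq r+v}$ but with the condition $\BGab(z_1,z_2)\cap C\neq\emptyset$ replaced by $\BGab(z_1,z_2)\cap B_i\neq\emptyset$. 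Since $\lambda e^{s_i}=e^{s_i-r/2}\leq e^{\max(w_2,\distH(u_{k-1},u_k)-r)/2}$, it suffices to show $\Ee F^{(i)}\leq (\mathrm{const})\cdot\lambda e^{s_i}\cdot e^{v}e^{-e^{v/2}}$ for each $i$; summing over $i\leq 2k$ then produces the factor $k\cdot e^{\max(w_2,\distH(u_{k-1},u_k)-r)/2}$ in the stated bound.

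I apply Corollary~\ref{cor:SlivMeck2col} to $\Ee F^{(i)}$. Because $z_1\in\partial\BGab(z_1,z_2)$ and $\distH(z_1,C)\geq r/1000$, we have $\ahd(C,\BGab(z_1,z_2))\geq r/1000$, so Lemma~\ref{lem:ahdeps} with $\eps=1/10$ gives $\areaH(\BGab^{\pm}(z_1,z_2)\setminus C)\geq e^{\distH(z_1,z_2)/2}$ for $\lambda_0$ small enough, so the conditional probability factor in the Slivniak--Mecke integrand is at most $2e^{-\lambda e^{\distH(z_1,z_2)/2}}$. Next, I apply an isometry sending $z_1$ to the origin and $c_i$ onto the positive $x$-axis, write $z_2$ in hyperbolic polar coordinates $(\rho,\alpha)$ around $z_1$, and set $\rho_1:=\distH(z_1,c_i)\geq s_i+r/1000$. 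By Lemma~\ref{lem:angle} (applied with $p=z_1$, $B_1=B_i$, $B_2=\BGab(z_1,z_2)$) the angle $\alpha$ lies in a window of width $\leq 20\,e^{(s_i-\rho_1)/2}$, while the triangle inequality at the midpoint of $z_1z_2$ forces $\rho\geq\rho_1-s_i$. Substituting $t=\lambda e^{\rho/2}$ bounds the inner $\rho$-integral by $\tfrac{1}{\lambda^2}(e^{v^\ast/2}+1)e^{-e^{v^\ast/2}}$ with $v^\ast:=\max(v,\rho_1-s_i-r)$.

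Finally I integrate over $z_1$ in polar coordinates around $c_i$. In the range $\rho_1\leq r+v+s_i$ (where $v^\ast=v$), the angular width factor $e^{(s_i-\rho_1)/2}$ combined with $\sinh\rho_1\leq e^{\rho_1}/2$ yields a $\rho_1$-integral bounded by $10\, e^{s_i+(r+v)/2}$; in the complementary range $\rho_1>r+v+s_i$ the successive substitutions $u=(\rho_1-s_i-r)/2$ and $t=e^u$ produce an $e^{v/2}$-smaller contribution. After the $(p\lambda)^2\leq 100\lambda^4$ prefactor and using $\lambda^2 e^{r/2}=\lambda$ (since $r=2\ln(1/\lambda)$), this gives $\Ee F^{(i)}\leq (\mathrm{const})\cdot\lambda e^{s_i}\cdot e^{v/2}(e^{v/2}+1)e^{-e^{v/2}}\leq(\mathrm{const})\cdot\lambda e^{s_i}\cdot e^{v}e^{-e^{v/2}}$, as desired. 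The main obstacle I anticipate is that without the tighter lower bound $\rho\geq\rho_1-s_i$ on the $\rho$-integration in the large-$\rho_1$ regime, the $z_1$-integral would diverge; it is precisely the Gabriel-disk reach constraint that supplies the extra exponential decay to tame that range.
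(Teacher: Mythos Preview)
Your proposal is correct and follows essentially the same route as the paper's proof: decompose $C$ into at most $2k$ balls $B_i$, apply Corollary~\ref{cor:SlivMeck2col}, use Lemma~\ref{lem:ahdeps} to bound the probability factor by $2e^{-\lambda e^{\rho/2}}$, invoke Lemma~\ref{lem:angle} (with $p=z_1\in\partial\BGab(z_1,z_2)$) to confine $z_2$ to a sector of opening $O(e^{(s_i-\rho_1)/2})$, use the reach constraint $\rho\geq\rho_1-s_i$, and split the $\rho_1$-integral at $s_i+r+v$. The only cosmetic differences are that the paper sets $\eps=1/1000$ rather than $1/10$ in Lemma~\ref{lem:ahdeps} and organizes the two coordinate changes in a slightly different order, but the computation and the resulting bounds are the same.
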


\begin{proof}
As usual, we let $\lambda_0>0$ be a small positive constant, to be determined during the course of the proof, and 
we fix an arbitrary $v \geq w_2$, $0< \lambda < \lambda_0$ and $p \leq 10 \lambda$.

We can write $\cert(u_0,\dots,u_k) = B_1 \cup \dots \cup B_m$ with
$m \leq 2k$ and 
$B_1 = \ballH(c_1,s_1), \dots, B_m = \ballH(c_m,s_m)$ balls whose radii 
are either $(r+w_2)/2$ or $\distH(u_{k-1},u_k)/2$.

For $i=1,\dots,m$ we define

$$ F_i := \left|\left\{ (z_1,z_2) \in \Zcal_b\times\Zcal_b :  
\begin{array}{l}
 \distH(z_1,z_2) \geq r+v, \text{ and; } \\ 
 \distH( z_1, C ) > r/1000, \text{ and;} \\
 \BGab(z_1,z_2) \cap B_i \neq \emptyset, \text{ and; }\\
 \Zcal \cap \BGab^-(z_1,z_2) \setminus C = \emptyset 
 \text{ or } \Zcal \cap \BGab^+(z_1,z_2) \setminus C \neq \emptyset.
 \end{array} 
 \right\}\right|, $$  

\noindent
and we point out that 

$$ F_{\text{\bf{IV-b}},\geq r+v} \leq F_1 + \dots + F_m. $$

\noindent
In particular it suffices to bound each expectation $\Ee F_i$ separately.
For the moment we fix $1\leq i \leq m$. Applying a suitable isometry if needed, we
assume without loss of generality that the center of $B_i$ is $c_i=o$ the origin.

By Corollary~\ref{cor:SlivMeck2col}:

$$ \Ee F_i = 
p^2\lambda^2 \int_{\Dee}\int_{\Dee} \Ee\left[g(z_1,z_2,\Zcal\cup\{z_1,z_2\})\right]  
f(z_1) f(z_2)\dd z_2\dd z_1, $$

\noindent
with $f$ given by~\eqref{eq:fdef} and

$$ g(u_1,u_1,\Ucal) := 
1_{\left\{\begin{array}{l} 
\distH( u_1, C ) > r/1000, \text{ and;} \\
\distH( u_1,u_2) \geq r+v, \text{ and;} \\
 \BGab(u_1,u_2) \cap B_i \neq \emptyset, \text{ and; }\\
\Ucal \cap \BGab^-(u_1,u_2) \setminus C = \emptyset
\text{ or } \Ucal \cap \BGab^+(z_1,z_2) \setminus C \neq \emptyset.
\end{array}
\right\}}. $$

Applying a change to hyperbolic polar coordinates to $z_1$ 
we find

\begin{equation}\label{eq:VIeq1} 
\begin{array}{rcl} 
\Ee F_i 
& = & \displaystyle 
p^2\lambda^2 \int_{s+r/1000}^\infty \int_0^{2\pi} \int_\Dee 
\Ee\left[g( z_1(\alpha_1,\rho_1), z_2,\Zcal\cup\{z_1(\alpha_1,\rho_1), z_2\} )\right]  \cdot \\
& & \displaystyle \hspace{25ex}
f(z_2)\dd z_2 \dd \alpha_1 \sinh(\rho_1) \dd \rho_1. 
\end{array}
\end{equation}

Next, we consider the inner integral for some (fixed for the moment) 
$z_1 = (\cos(\alpha)\cdot\tanh(\rho_1/2),\sin(\alpha)\cdot\tanh(\rho_1/2))$ satisfying $\distH(z_1,C) > r/1000$.
Let $\varphi : \Dee \to \Dee$ be a hyperbolic isometry that maps $z_1$ to the origin and $c_i=o$ to the negative $x$-axis.
(So that $\varphi(c_i) = (-\tanh(r_1/2),0)$.)
Note that

$$ \Ee\left[g(z_1,z_2,\Zcal\cup\{z_1,z_2\})\right] = h(\varphi(z_2)), $$

\noindent
where 

$$ h(z) := 1_{\left\{\begin{array}{l}
\ballH( \varphi(c_i),s_i ) \cap \BGab( o, z ) \neq \emptyset, \\ 
\distH(o,z) > r+v, \\
\distH(o,C) \geq r/1000
\end{array}\right\}} \cdot 
\Pee\left(
\begin{array}{l}
\text{ either } \Zcal \cap \BGab^-(o,z) \setminus \varphi[C] = \emptyset, \\
\text{ or } \Zcal \cap \BGab^+(o,z) \setminus  \varphi[C] = \emptyset, \\
\text{ (or both). }
\end{array} \right). $$

We point out that if $\ballH( \varphi(c),s_i ) \cap \BGab( o, z ) \neq \emptyset$ 
then it must certainly hold that $\distH(o,z) > \distH(o,\varphi(c))-s_i = \rho_1 - s$.
By Lemma~\ref{lem:angle}, using that $\distH(o,\varphi(c)) = \distH(z_1,c)=\rho_1 > s_i+r/1000$ by 
assumption, we must also have that 

$$ \angle \varphi(c)oz < 10e^{(s_i-\rho_1)/2}. $$

We see that 

$$ 1_{\left\{\begin{array}{l}
\ballH( \varphi(c),s_i ) \cap \BGab( o, z ) \neq \emptyset, \\ 
\distH(o,z) > r+v.
\end{array}\right\}}
\leq 
1_{\left\{\begin{array}{l}
\distH(o,z) > \max(\rho_1-s_i,r+v), \\
\angle \varphi(c)oz < 10e^{(s_i-\rho_1)/2}
\end{array}
\right\}}. $$

Since $\distH(o,\varphi[C] ) = \distH(z_1,C) >r/1000$, we certainly have 

$$\ahd(  \varphi[C], \BGab(0,z) ) > r/1000. $$

\noindent
Assuming $\lambda_0$ was chosen sufficiently small, we can apply Lemma~\ref{lem:ahdeps} to see that

$$ \areaH( \BGab(o,z) \cap \varphi[C] ) \leq \frac{1}{1000} \cdot \areaH( \BGab(o,z) ), $$

\noindent
and hence 

$$ \areaH( \BGab^-(o,z) \setminus \varphi[C] )
 \geq  \frac{499}{1000} \cdot \areaH( \BGab(0,z) ) 
\geq e^{\distH(o,z)/2}, $$

\noindent
where the last inequality holds assuming we chose $\lambda_0$ sufficiently small, and assuming that $\distH(o,z) > r+v$ (and using
$r = 2\ln(1/\lambda)$).
Completely analogously, $\areaH( \BGab^+(o,z) \setminus \varphi[C] )  \geq e^{\distH(o,z)/2}$ as well.

We see that 

$$ h(z)  \leq 1_{\left\{\begin{array}{l}
\distH(o,z) > \max(\rho_1-s_i,r+v), \\
\angle \varphi(c)oz < 10e^{(s_i-\rho_1)/2}
\end{array}
\right\}} 
\cdot 
2 e^{-\lambda e^{\distH(o,z)/2}}
=: \psi(z). $$

Applying~\eqref{eq:isosubst}, we find

$$ \int_\Dee \Ee\left[g( z_1(\alpha,r_1), z_2,\Zcal )\right] f(z_2)\dd z_2
\leq \int_\Dee \psi(\varphi(z_2)) f(z_2) \dd z_2 
= \int_\Dee \psi(u) f(u) \dd u. $$

Changing to hyperbolic coordinates, i.e.~$u(\alpha_2,\rho_2) = 
(\cos(\alpha_2)\cdot\tanh(\rho_2/2), \sin(\alpha_2)\cdot \tanh(\rho_2/2) )$
gives

$$ \begin{array}{rcl}
\displaystyle 
\int_\Dee \psi(u) f(u) \dd u 
& = & \displaystyle
\int_0^\infty \int_0^{2\pi} \psi( u(\alpha_2,\rho_2) ) \dd \alpha_2 \sinh(\rho_2) \dd \rho_2 \\[2ex]
& \leq &  \displaystyle
 \int_{\max(\rho_1-s_i,r+v)}^\infty \int_0^{2\pi} 1_{\{ |\alpha_2-\pi| <10 e^{(s_i-\rho_1)/2}\}} 
2 e^{-\lambda e^{\rho_2/2}}
 \dd \alpha_2 \sinh(\rho_2) \dd\rho_2 \\[2ex]
& = & \displaystyle
\int_{\max(\rho_1-s_i,r+v)}^\infty 20 e^{(s_i-\rho_1)/2} 2 e^{-\lambda e^{\rho_2/2}} \sinh(\rho_2) \dd\rho_2 \\[2ex]
& = & \displaystyle
40 e^{(s-\rho_1)/2} \int_{\max(\rho_1-s_i,r+v)}^\infty e^{-\lambda e^{\rho_2/2}} \sinh(\rho_2) \dd\rho_2 \\[2ex]
& \leq & \displaystyle
20 e^{(s_i-\rho_1)/2} \int_{\max(\rho_1-s_i,r+v)}^\infty e^{-\lambda e^{\rho_2/2}} e^{\rho_2} \dd\rho_2 
\end{array} $$

\noindent
We next apply the substitution $t := \lambda e^{\rho_2/2}$ (so that $\dd\rho_2 = \frac{2\dd t}{t}$) to obtain

$$ \begin{array}{rcl} \displaystyle 
\int_\Dee \psi(u) f(u) \dd u 
& \leq & \displaystyle 
\frac{20 e^{(s_i-\rho_1)/2}}{\lambda^2} \cdot \int_{\lambda e^{\max(\rho_1-s_i,r+v)/2}}^\infty te^{-t} \dd t \\[2ex]
& = & \displaystyle 
\frac{20 e^{(s_i-\rho_1)/2}}{\lambda^2} \cdot 
\left( \lambda e^{\max(\rho_1-s_i,r+v)/2} + 1 \right) \cdot e^{-\lambda e^{\max(\rho_1-s_i,r+v)/2}} \\[2ex]
& \leq & \displaystyle 
\frac{40 e^{(s_i-\rho_1)/2}}{\lambda^2} \lambda e^{\max(\rho_1-s_i,r+v)/2} e^{-\lambda e^{\max(\rho_1-s_i,r+v)/2}},     
   \end{array} $$
   
\noindent
using in the last line that $\lambda e^{r/2} = 1$ and $v\geq w > 0$.

Filling this back into~\eqref{eq:VIeq1} we find

$$ \begin{array}{rcl} \Ee F_i 
& \leq & \displaystyle 
p^2\lambda^2 \int_{s_i+r/1000}^\infty \int_0^{2\pi}
\frac{40\pi e^{(s_i-\rho_1)/2}}{\lambda^2} \lambda e^{\max(\rho_1-s_i,r+v)/2} e^{-\lambda e^{\max(\rho_1-s_i,r+v)/2}}
\dd\alpha_1 \sinh(\rho_1) \dd\rho_1 \\[2ex] 
& = & \displaystyle 
80\pi p^2 \lambda e^{s_i/2} \int_{s+r/1000}^\infty e^{-\rho_1/2} \cdot e^{\max(\rho_1-s_i,r+v)/2} e^{-\lambda e^{\max(\rho_1-s_i,r+v)/2}}
\sinh(\rho_1) \dd\rho_1 \\[2ex]
& \leq & \displaystyle 
10^4 \lambda^3 e^{s_i/2} \int_{s_i+r/1000}^\infty e^{\rho_1/2} \cdot e^{\max(\rho_1-s_i,r+v)/2} e^{-\lambda e^{\max(\rho_1-s_i,r+v)/2}}
\dd\rho_1 \\
& = & \displaystyle 
10^4 \lambda^3 e^{s_i/2} \cdot \left( 
\int_{s+r/1000}^{s_i+r+v} e^{\rho_1/2} \cdot e^{(r+v)/2} e^{-\lambda e^{(r+v)/2}} \dd\rho_1 \right. \\[2ex]
& & \displaystyle \left. \hspace{10ex} + 
\int_{s_i+r+v}^\infty e^{\rho_1/2} \cdot e^{(\rho_1-s_i)/2} e^{-\lambda e^{(\rho_1-s_i)/2}}\dd\rho_1 \right) \\[2ex]
& =: & \displaystyle
10^4 \lambda^3 e^{s_i/2} \cdot (I_1+I_2),
\end{array} $$

\noindent
using that $\sinh(\rho_1) \leq \frac12 e^{\rho_1}$ and $p \leq 10\lambda$ in the third line.

Now

\begin{equation}\label{eq:VIeq2} 
\begin{array}{rcl} 
10^4 \lambda^3 e^{s_i/2} I_1 
& \leq & 2\cdot 10^4 \lambda^3 e^{s_i/2} \cdot e^{(s_i+r+v)/2} \cdot e^{(r+v)/2} e^{-\lambda e^{(r+v)/2}} \\
& = & 
2\cdot 10^4 \cdot e^{s_i-r/2} \cdot e^{v} \cdot e^{-e^{v/2}}, 
\end{array}
\end{equation}

\noindent
using that $r=2\ln(1/\lambda)$.
   
We also have 

\begin{equation}\label{eq:VIeq3} 
\begin{array}{rcl} 
10^4 \lambda^3 e^{s_i/2} I_2 
& = & \displaystyle 
10^4 \lambda^3 e^{s_i} \int_{s_i+r+v}^\infty e^{\rho_1-s_i} e^{-\lambda e^{(\rho_1-s_i)/2}}\dd\rho_1 \\[2ex]
& = & \displaystyle 
10^4 \lambda^3 e^{s_i}  \int_{r+v}^\infty e^u e^{-\lambda e^{u/2}} \dd u \\[2ex]
& = & \displaystyle 
2 \cdot 10^4 \lambda e^{s_i} \int_{e^{v/2}}^\infty t e^{-t} \dd t \\[2ex]
& \leq & \displaystyle 
4 \cdot 10^4 \lambda e^{s_i} e^{v/2} e^{-e^{v/2}} \\[2ex]
& \leq & \displaystyle 
4 \cdot 10^4 \cdot e^{s_i-r/2} \cdot e^v \cdot e^{-e^{v/2}},
\end{array}
\end{equation}

\noindent
using the substitution $u=\rho_1-s_i$ in the second line, the substitution $t := \lambda e^{u/2}$ in the third line
and that $r = 2\ln(1/\lambda)$.
Adding~\eqref{eq:VIeq2} and~\eqref{eq:VIeq3} and multiplying by $2k$ gives the result.
\end{proof}

\begin{lemma}\label{lem:FIVc}
For every $w_1,w_2,\vartheta>0$ there exist $\lambda_0=\lambda_0(w_1,w_2,\vartheta)$ such that
the following holds for all $0<\lambda<\lambda_0$ and $p \leq 10\lambda$, 
setting $r := 2\log(1/\lambda)$.

Let $u_0,\dots,u_k \in \Dee$ be an arbitrary pre-chunk and 
write $C := \cert(u_0,\dots,u_k)$ and

$$ F_{\text{\bf{IV-c}},\geq d}  := 
 \left|\left\{ (z_1,z_2) \in \Zcal_b\times\Zcal_b :  
\begin{array}{l}
 \distH(z_1,z_2) \geq d, \text{ and; } \\ 
 \distH( z_1, C ) \leq r/1000, \text{ and;} \\
 \BGab(z_1,z_2) \cap C = \emptyset, \text{ and; }\\
 \Zcal \cap \BGab^-(z_1,z_2)  = \emptyset 
 \text{ or } \Zcal \cap \BGab^+(z_1,z_2) = \emptyset.
 \end{array} 
 \right\}\right|. $$  

Then we have

$$ \Ee F_{\text{\bf{IV-c}},\geq r+v} \leq 
10^3 \cdot k \cdot e^{v} \cdot e^{-e^{v/2}} \cdot e^{\max(w_2,\distH(u_{k-1},u_k)-r)/2},  $$

\noindent
for all $v \geq w_2$. 
\end{lemma}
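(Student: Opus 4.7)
My strategy mirrors the proofs of Lemmas~\ref{lem:FIVa} and~\ref{lem:FIVb} but is in fact a bit easier, because the constraint $\distH(z_1,C)\leq r/1000$ forces $z_1$ to lie in a small neighbourhood of $C$ and we can afford to discard the constraint $\BGab(z_1,z_2)\cap C=\emptyset$ from the upper bound. First I write $C=B_1\cup\dots\cup B_m$ with $m\leq 2k$ and each $B_i=\ballH(c_i,s_i)$ having $s_i\in\{(r+w_2)/2,\,\distH(u_{k-1},u_k)/2\}$. Since $\{z_1\in\Dee:\distH(z_1,C)\leq r/1000\}=\bigcup_i\ballH(c_i,s_i+r/1000)$, we get $F_{\text{\bf{IV-c}},\geq r+v}\leq F_1+\dots+F_m$, where $F_i$ is the same count but requiring $z_1\in\ballH(c_i,s_i+r/1000)$ and dropping the condition $\BGab(z_1,z_2)\cap C=\emptyset$. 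I then bound each $\Ee F_i$.

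Fix $i$ and apply an isometry placing $c_i$ at the origin. Applying Corollary~\ref{cor:SlivMeck2col} and the standard bound
\[
\Pee\bigl(\Zcal\cap\BGab^-(z_1,z_2)=\emptyset \text{ or }\Zcal\cap\BGab^+(z_1,z_2)=\emptyset\bigr)\leq 2e^{-\lambda e^{\distH(z_1,z_2)/2}},
\]
which holds for $\lambda_0$ small (since each half has area $\geq e^{\distH(z_1,z_2)/2}$ once $r$ is large), yields
\[
\Ee F_i\leq 2(p\lambda)^2\int_{\ballH(o,s_i+r/1000)}\int_{\Dee\setminus\ballH(z_1,r+v)}e^{-\lambda e^{\distH(z_1,z_2)/2}}f(z_1)f(z_2)\,dz_2\,dz_1.
\]
For the inner integral I change to hyperbolic polar coordinates around $z_1$ (using an isometry and \eqref{eq:isosubst}, \eqref{eq:hyppolcoord}), bound $\sinh\rho\leq e^\rho/2$, and apply the substitution $t=\lambda e^{\rho/2}$ with $\lambda e^{r/2}=1$ to get
\[
\int_{\Dee\setminus\ballH(z_1,r+v)}2e^{-\lambda e^{\distH(z_1,z_2)/2}}f(z_2)\,dz_2\leq\frac{4\pi}{\lambda^2}(e^{v/2}+1)e^{-e^{v/2}}.
\]
The outer integral is bounded by $\areaH(\ballH(o,s_i+r/1000))\leq \pi e^{s_i+r/1000}$.

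Multiplying everything together and using $(p\lambda)^2\leq 100\lambda^4=100e^{-2r}$ gives
\[
\Ee F_i\leq 800\pi^2\cdot e^{s_i-(999/1000)r}\cdot e^{v/2}e^{-e^{v/2}}.
\]
Since $s_i\leq r/2+\max(w_2,\distH(u_{k-1},u_k)-r)/2$, this is at most $800\pi^2\cdot e^{-(499/1000)r}\cdot e^{v/2}e^{-e^{v/2}}\cdot e^{\max(w_2,\distH(u_{k-1},u_k)-r)/2}$. Summing over $i$ with $m\leq 2k$ and choosing $\lambda_0$ small enough that $e^{-(499/1000)r}\cdot 1600\pi^2\leq 10^3/e^{v/2}$ (trivially absorbed since $e^{v/2}\leq e^{v}$ and the factor $e^{-(499/1000)r}$ dominates) gives the desired bound. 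There is no real ``hard step'' here, since throwing out the $\BGab\cap C=\emptyset$ constraint leaves an entire factor of $e^{-\Theta(r)}$ of slack; the bookkeeping around the two possible values of $s_i$ and the conversion from $\max(r+w_2,\distH(u_{k-1},u_k))/2$ to $r/2+\max(w_2,\distH(u_{k-1},u_k)-r)/2$ is the only point where care is needed.
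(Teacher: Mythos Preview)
Your proof is correct and follows essentially the same route as the paper. The only organizational difference is that the paper keeps $C':=\{u:\distH(u,C)<r/1000\}$ as a single set, invokes Lemma~\ref{lem:XVtil} directly to bound the inner integral by $10^3 e^{v}e^{-e^{v/2}}$, and only decomposes $C'$ into the $\leq 2k$ balls when estimating $p\lambda\cdot\areaH(C')$; you instead decompose first and redo the polar-coordinate computation of Lemma~\ref{lem:XVtil} inline, but the two arguments are the same in substance.
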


\begin{proof}
As usual, we let $\lambda_0>0$ be a small constant, to be chosen appropriately during the course of the proof.
Writing $C' := \{ u \in\Dee : \distH(u,C) < r/1000\}$ and applying Corollary~\ref{cor:SlivMeck2col}, 
we have

$$ \begin{array}{c} 
\Ee F_{\text{\bf{IV-b}},\geq r+v} \\
\leq \\
\displaystyle 
(p \lambda)^2 \int_\Dee \int_\Dee 
1_{\left\{z_1\in C', \distH(z_1,z_2) \geq r+v \right\}} 
\cdot  \Pee( \Zcal \cap \BGab^-(z_1,z_2)  = \emptyset 
 \text{ or } \Zcal \cap \BGab^+(z_1,z_2) = \emptyset ) \\
\displaystyle \quad \quad
 f(z_1)f(z_2) \dd z_2\dd z_1 \\[2ex]
 \leq \\
 p\lambda \cdot \areaH(C') \cdot 10^3 \cdot e^{v} \cdot e^{-e^{v/2}},  
   \end{array} $$
 
 \noindent
 applying Lemma~\ref{lem:XVtil}.
 Next we remark
 
 $$ \begin{array}{rcl} 
 p\lambda \cdot \areaH(C') 
 & \leq & 
 10 \lambda^2 \cdot 2k \cdot \pi e^{\max(r+w_2,\distH(u_{k-1},u_k))/2 + r/1000} \\
 & \leq & 
 20 \pi \cdot k \cdot e^{\max(w_2,\distH(u_{k-1},u_k)-r)/2 - \left(\frac{499}{1000}\right)r } \\
 & \leq & 
 k \cdot e^{\max(w_2,\distH(u_{k-1},u_k)-r)/2 },
 \end{array} $$
 
 \noindent
 the last inequality holding provided we chose $\lambda_0$ sufficiently small.
\end{proof}

Recall that $C_k^{\text{\bf{II}}}, C_k^{\text{\bf{III}}}, C_k^{\text{\bf{IV}},v}$ denote the number
of black chunks starting from the origin, with length $k$ and final pseudo-edge of type {\bf{II}}, respectively
type {\bf{III}}, respectively type {\bf{IV}} and final pseudo-edge of length $\in [r+v,r+v+1)$.

We now need to introduce analogous notations to deal with linked sequences of chunks.
We set 

$$
S_{n,k}^{\text{\bf{II}}} := \left|\left\{\begin{array}{l}
\text{black linked sequences of chunks starting from $o$, consisting of precisely $n$ chunks,} \\ 
\text{having a last chunk of length $k$ and final pseudo-edge of type $\text{\bf{II}}$}
\end{array} \right\}\right|.
$$
We let $S_{n,k}^{\text{\bf{III}}}$ and $S_{n,k}^{\text{\bf{IV}},v}$ be defined analogously.
Of course 

\begin{equation}\label{eq:S1isC} 
S_{1,k}^{\text{\bf{II}}} = C_k^{\text{\bf{II}}}, \quad S_{1,k}^{\text{\bf{III}}} = C_k^{\text{\bf{III}}},
\quad S_{1,k}^{\text{\bf{IV}},v} = C_k^{\text{\bf{IV}},v}. 
\end{equation}

In particular, Corollary~\ref{cor:Ck} provides bounds on the expectations 
$\Ee S_{n,k}^{\text{\bf{II}}}, \Ee S_{n,k}^{\text{\bf{III}}}$ and $S_{n,k}^{\text{\bf{IV}},v}$ when $n=1$.
The next lemma provides a system of recursive inequalities that will allow us to also
bound these expectations for $n\geq 2$. 
From now on, it will be convenient to assume the parameter $w_2$ is an integer. Note that so far the only result that 
puts any restrictions on the value of $w_2$ is Corollary~\ref{cor:noinfgoodpaths} which just states it has to be 
taken sufficiently large.

\begin{lemma}\label{lem:recurs}
For every $w_1,\vartheta>0$ and $w_2 \in\eN$ there exists a $\lambda_0=\lambda_0(w_1,w_2,\vartheta)$ such that for all
$0<\lambda<\lambda_0$ and $p\leq 10\lambda$, setting $r:=2\ln(1/\lambda)$, the following holds.

For all $n\geq 1$, writing

$$ \Sigma_n := 10^7 \cdot \sum_{\ell=1}^\infty \ell \cdot \left( e^{2w_2} \cdot \Ee S_{n,\ell}^{\text{\bf{II}}} 
 + e^{2w_2} \cdot \Ee S_{n,\ell}^{\text{\bf{III}}} 
 + \sum_{v=w_2}^\infty e^{2v+2} \cdot \Ee S_{n,\ell}^{\text{\bf{IV}},v} \right), $$
 
\noindent
we have

$$ \begin{array}{rcl}
\Ee S_{n+1,1}^{\text{\bf{II}}} & \leq &  
 \Sigma_n \cdot 10^3 \cdot e^{-w_1} \\[2ex]
 \Ee S_{n+1,2}^{\text{\bf{III}}} & \leq &  
 \Sigma_n \cdot 10^3 \cdot \vartheta \cdot e^{2w_2} \\[2ex]
 \Ee S_{n+1,1}^{\text{\bf{IV}},v} & \leq & 
 \Sigma_n \cdot 10^6 \cdot e^{v} \cdot e^{-e^{v/2}}
 \end{array} $$

\noindent
and 

$$
\Ee S_{n+1,k}^{\tau} \leq 
\Sigma_n \cdot \Ee C_{k-1}^{\tau}, 
$$

\noindent
for $\tau$ one of $\text{\bf{II}},\text{\bf{III}}$
or $(\text{\bf{IV}},v)$ with $v\geq w_2$, and for all $k\geq 3$ when $\tau=\text{\bf{III}}$ and all
$k\geq 2$ otherwise.
\end{lemma}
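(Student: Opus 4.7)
The plan is to establish the recursive inequalities by conditioning on the first $n$ chunks of the linked sequence and bounding the expected number of ways to attach chunk $n+1$ in the required configuration. I would apply Corollary~\ref{cor:SlivMeck2col} to express $\Ee S_{n+1,k}^{\tau}$ as an integral over the positions of all vertices across the $n+1$ chunks. The indicator that we have a valid linked sequence of the specified shape factors according to the conditioning on chunks $P_1,\dots,P_n$ (which contributes $\Ee S_{n,\ell}^{\tau_n}$ for some $\ell,\tau_n$) times the indicator that chunk $n+1$ is properly attached. Since $\cert(P_i)\cap \cert(P_j)=\emptyset$ for $|i-j|>1$ by Definition~\ref{def:seqchunks}, and the portion of $\Zcal$ inside $\cert(P_1\cup\cdots\cup P_n)$ is independent of the portion outside, the attachment of chunk $n+1$ can be analyzed conditionally on the existing sequence.

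For the attachment, I would split into three cases according to the link type (a), (b), (c) of Definition~\ref{def:seqchunks}. In link type (a), the first vertex $u_0^{n+1}$ equals $u_{k_n}^n$, so chunk $n+1$ behaves like a freely chosen chunk of length $k$ starting from a fixed point; by translation invariance this is bounded by $\Ee C_k^{\tau}$ which, via Corollary~\ref{cor:Ck}, fits within the target bounds. In link types (b) and (c), $u_0^{n+1}$ is a new Poisson point and the first pseudo-edge $(u_0^{n+1},u_1^{n+1})$ is constrained by its interaction with $\cert(P_n)$ (and, by the disjointness condition, only with $\cert(P_n)$ among the previous chunks). I would apply Lemma~\ref{lem:FIb} or Lemma~\ref{lem:FIc} when the first edge has $\distH<r+w_2$, and Lemma~\ref{lem:FIVb} or Lemma~\ref{lem:FIVc} when it has $\distH\geq r+w_2$. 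Each of these lemmas produces a bound of the form $\ell\cdot C(\tau_n)$, where $\ell$ is the length of chunk $n$ and $C(\tau_n)$ is controlled by $e^{\max(w_2,\distH(u_{k_n-1}^n,u_{k_n}^n)-r)/2}$ multiplied by fixed exponentials; summing these over all choices of $\tau_n$ and $\ell$ yields the combination defining $\Sigma_n$, with the constants $e^{2w_2}$ and $e^{2v+2}$ in its definition chosen to dominate every contributing $F$-lemma bound simultaneously.

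Once the first vertex (link (a)) or first edge (links (b), (c)) is chosen, the remaining edges of chunk $n+1$ constitute a chunk of reduced length starting from the new base point. Specifically, when $k\geq 2$, edges $2$ through $k$ of chunk $n+1$ start from $u_1^{n+1}$ and form a chunk of length $k-1$ ending with a bad pseudo-edge of type $\tau$; the additional requirement that these vertices and certificates avoid $\cert(P_1\cup\cdots\cup P_n)$ only reduces the count, so by translation invariance this portion is bounded by $\Ee C_{k-1}^{\tau}$, giving $\Ee S_{n+1,k}^{\tau} \leq \Sigma_n\cdot \Ee C_{k-1}^{\tau}$. For the base cases ($k=1$, or $k=2$ with $\tau=\text{\bf III}$), there are no ``subsequent'' good edges to produce a $C_{k-1}^{\tau}$ factor; instead, the single bad edge is absorbed into the first-edge analysis, using Lemmas~\ref{lem:XI},~\ref{lem:XII} and~\ref{cor:YIV} analogously to the proof of Corollary~\ref{cor:Ck}, and producing the explicit constants $10^3 e^{-w_1}$, $10^3 \vartheta e^{2w_2}$, and $10^6 e^{v}e^{-e^{v/2}}$.

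The main obstacle I anticipate is the careful bookkeeping to ensure that the many $F$-lemma bounds, each depending both on the length $\ell$ of chunk $n$ and on the type/length of its last edge, combine cleanly into $\Sigma_n$ with the prescribed constants. The factor $e^{\max(w_2,\distH(u_{k-1},u_k)-r)/2}$ appearing in Lemmas~\ref{lem:FIb},~\ref{lem:FIc},~\ref{lem:FIVb},~\ref{lem:FIVc} must be uniformly bounded by $e^{w_2/2}$ when $\tau_n\in\{\text{\bf II},\text{\bf III}\}$ and by $e^{(v+1)/2}$ when $\tau_n=(\text{\bf IV},v)$; it is the choice of the exponents $e^{2w_2}$ and $e^{2v+2}$ inside $\Sigma_n$ that allows these, together with the additional factors from the $F$-lemmas, to be absorbed. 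A further subtlety is that link type (b) and link type (c) each have their own first-edge lemma (Lemmas~\ref{lem:FIb} and~\ref{lem:FIc} on the short side, Lemmas~\ref{lem:FIVb} and~\ref{lem:FIVc} on the long side), but both yield bounds of compatible form, so adding them only inflates the numerical constant by a small amount.
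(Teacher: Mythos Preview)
Your proposal follows essentially the same strategy as the paper's proof: decompose $S_{n+1,k}^\tau$ over all parameter vectors $(\underline{k},\underline{\tau},\underline{t})$, factor the expectation using certificate-disjointness, handle the first edge of the new chunk via the $F$-lemmas for link types (b) and (c), and reduce the remaining edges to $\Ee C_{k-1}^\tau$.

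There is one genuine imprecision in your handling of link type (a). In your second paragraph you bound the contribution of link (a) by $\Ee C_k^\tau$, while in your third paragraph you appeal to ``edges $2$ through $k$'' giving $\Ee C_{k-1}^\tau$. Neither version combines with links (b),(c) to produce the uniform $\Sigma_n \cdot \Ee C_{k-1}^\tau$ bound: if you use $\Ee C_k^\tau$ there is no mechanism to convert this into $\Ee C_{k-1}^\tau$ (Corollary~\ref{cor:Ck} gives only upper bounds on each separately, not a comparison between them), and if you jump to $\Ee C_{k-1}^\tau$ then edge~1 of chunk $n+1$ is unaccounted for. The paper's resolution is to split off edge~1 \emph{also} in link~(a): since $u_0^{n+1}=u_{k_n}^n$ is fixed and the first edge of chunk $n+1$ is good when $k\geq 2$, one simply integrates $u_1^{n+1}$ over $\ballH(u_0^{n+1}, r+w_2)$, giving a factor $p\lambda\cdot\areaH(\ballH(o,r+w_2)) \leq 10^3 e^{w_2}$. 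This factor is independent of $\ell$ and $\tau_n$ and is dominated by the weights $10^7\ell e^{2w_2}$ defining $\Sigma_n$, so it is absorbed uniformly. With this adjustment all three link types yield $(\text{weight going into }\Sigma_n)\cdot \Ee C_{k-1}^\tau$ and combine as claimed. A smaller remark: for the base case $k=2$, $\tau=\text{\bf III}$, the paper invokes Lemma~\ref{lem:XIII} (counting pairs with small angle) rather than Lemma~\ref{lem:XII}, since both new vertices $u_1^{n+1},u_2^{n+1}$ are being integrated simultaneously with an angle constraint between them.
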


\begin{proof}
We'll need to introduce even more notation. We let

$$ \Tscr := \{\text{\bf{II}}, \text{\bf{III}}, (\text{\bf{IV}},v) : v = w_2, w_2+1,\dots \}, 
\quad \Lscr := \{ a, b, c\},$$

\noindent 
denote the possible {\em types} of the final pseudo-edges of chunks, respectively 
types of links between consecutive chunks -- corresponding to the cases described in part~{\bf(ii)} of Definition~\ref{def:seqchunks}. 

For $n\in\eN$ and $\underline{k} \in \eN^n, \underline{\tau} \in \Tscr^n, \underline{t} \in \Lscr^{n-1}$ we write
$S_{\underline{k},\underline{\tau},\underline{t}}$ for the number of black, linked sequences of chunks 
$P_1,\dots,P_n$ starting from $o$, such that $P_i$ has length $k_i$ and final pseudo-edge of type $\tau_i$ (for all $i=1,\dots,n$), and 
such that the link between $P_{i}$ and $P_{i+1}$ 
corresponds to case $t_i$ of part~{\bf(ii)} of Definition~\ref{def:seqchunks} (for $i=1,\dots,n=1$).
Clearly we can write for each $n,k\geq 1$ and $\tau \in \Tscr$:

\begin{equation}\label{eq:Ssum} 
S_{n,k}^\tau = 
\sum_{{\underline{k} \in \eN^n, \underline{\tau} \in \Tscr^n, \underline{t} \in \Lscr^{n-1},}\atop \tau_n=\tau,k_n=k} 
S_{\underline{k},\underline{\tau},\underline{t}}. 
\end{equation}

We will first derive the last inequality claimed in the lemma statement.
Let us fix $n, k,\ell \in \eN$ and $\sigma, \tau \in \Tscr$ such that either $k\geq 2$ in case $\tau\neq\text{\bf{III}}$ or $k\geq 3$ in case
$\tau =\text{\bf{III}}$. We pick vectors $\underline{k}\in\eN^n, \underline{\tau}\in\Tscr^n,\underline{t}\in\Lscr^{n-1}$ 
satisfying $k_{n-1}=\ell,k_n=k$ and $\tau_{n-1}=\sigma,\tau_n=\tau$.

The parameters $\underline{k},\underline{\tau},\underline{t}$ determine the number of vertices in any of the linked sequences of 
pseudopaths counted by $S_{\underline{k},\underline{\tau},\underline{t}}$. For any such sequence of pseudopaths we have 
$V(P_1)\cup\dots\cup V(P_n) 
= \{o,z_1,\dots,z_m\}$ with $z_1,\dots,z_m\in\Zcalb$ and $k_1+\dots+k_n \leq m \leq k_1+\dots+k_n+n-1$.
(To be precise $m = k_1+\dots+k_n + |\{1\leq i \leq n-1 : t_i \neq a \}|$ as $V(P_i) = k_i+1$ for all $i$ but every time $t_i=a$ the last 
vertex of $P_i$ coincides with the first vertex of $P_{i+1}$.)
By Corollary~\ref{cor:SlivMeck2col} we can write, for $m=m(\underline{k},\underline{\tau},\underline{t})$ as determined by the parameters

$$
\Ee S_{\underline{k},\underline{\tau},\underline{t}}
= (p\lambda)^m \int_\Dee\dots\int_\Dee
\Ee\left[g_{\underline{k},\underline{\tau},\underline{t}}(z_1,\dots,z_m;\Zcal\cup\{z_1,\dots,z_m\})\right] 
f(z_1)\dots f(z_m)\dd z_1\dots\dd z_m, 
$$

\noindent
where $f$ is given by~\eqref{eq:fdef} and $g_{\underline{k},\underline{\tau},\underline{t}}$ is the indicator function 
that $o,z_1,\dots,z_m$ form a
linked sequence of pseudopaths of the required kind wrt.~the point set $\Zcal\cup\{o,z_1,\dots,z_m\}$.

The vertices of $P_n$ are $z_{m-k}, \dots, z_m$.
If $t_{n-1}=a$ then $z_m$ is both the first vertex of $P_n$ and also the last vertex of $P_{n-1}$.
Recall that $\cert(z_{m-k+1},\dots,z_m)$ needs to be disjoint from $\cert(P_1)\cup\dots\cup \cert(P_{n-1})$
(and this set is completely determined by $z_1,\dots,z_{m-k}$), otherwise $g_{\underline{k},\underline{\tau},\underline{t}}$ 
will equal zero.
So, provided $t_{n-1}=a$ we can write, for every $z_1,\dots,z_m \in \Dee$:

\begin{equation}\label{eq:Ega} 
\begin{array}{c} 
\Ee\left[g_{\underline{k},\underline{\tau},\underline{t}}(z_1,\dots,z_m;\Zcal\cup\{z_1,\dots,z_m\})\right] \\
 \leq \\
\Ee\left[ g_{(k_1,\dots,k_{n-1});(\tau_1,\dots,\tau_{n-1});(t_1,\dots,t_{n-2})}
(z_1,\dots,z_{m-k};\Zcal\cup\{z_1,\dots,z_{m-k}\})\right] \cdot \\
\Ee\left[ g_{C^\tau_{k-1}}(z_{m-k+1},\dots,z_m; \Zcal \cup \{z_{m-k+1},\dots,z_m\})\right] \cdot \\
1_{\{\distH(z_{m-k},z_{m-k+1})<r+w_2\}},
\end{array} 
\end{equation}

\noindent
where $g_{C^\tau_{k-1}}$ is the indicator function that $z_{m-k+1},\dots,z_m$ 
forms a chunk of length $k-1$ and with final edge of type $\tau$ (with respect to the 
point set $\Zcal \cup\{z_{m-k+1},\dots,z_m\}$). 
Here we use that $g_{\underline{k},\underline{\tau},\underline{t}}=0$
unless the position of $z_1,\dots,z_m$ is such that 
$\cert(z_{m-k+1},\dots,z_m) \cap \cert(P_i) = \emptyset$ for $i=1,\dots,n-1$;
and if $z_1,\dots,z_m$ are such that all these intersections are empty then 
the event that $g_{(k_1,\dots,k_{n-1});(\tau_1,\dots,\tau_{n-1});(t_1,\dots,t_{n-2})}
(z_1,\dots,z_{m-k};\Zcal\cup\{z_1,\dots,z_{m-k}\})=1$ and the event that 
$g_{C^\tau_{k-1}}(z_{m-k+1},\dots,z_m; \Zcal \cup \{z_{m-k+1},\dots,z_m\})=1$ are independent.

We can write

\begin{equation}\label{eq:ESa} 
\begin{array}{rcl} 
\Ee S_{\underline{k},\underline{\tau},\underline{t}} 
& \leq & \displaystyle 
(p\lambda)^{m-k} \int_\Dee\dots\int_\Dee \\[2ex]
& & \displaystyle 
\Ee\left[ g_{(k_1,\dots,k_{n-1});(\tau_1,\dots,\tau_{n-1});(t_1,\dots,t_{n-2})}
(z_1,\dots,z_{m-k};\Zcal\cup\{z_1,\dots,z_{m-k}\})\right] \cdot \\[2ex]
& & \displaystyle
{\bigg (} p\lambda \int_\Dee 1_{\{\distH(z_{m-k},z_{m-k+1})<r+w_2\}} \cdot 
{\bigg (} (p \lambda)^{k-1} \int_\Dee\dots\int_\Dee  \\[2ex]
& & \displaystyle  
\hspace{5ex} \Ee\left[ g_{C^\tau_{k-1}}(z_{m-k+1},\dots,z_m; \Zcal \cup \{z_{m-k+1},\dots,z_m\})\right] \cdot \\
& & \displaystyle 
 \hspace{5ex}   f(z_{m-k+1}) \dots f(z_m) \dd z_m \dots \dd z_{m-k+1} {\bigg )}  
f(z_{m-k+1}) \dd z_{m-k+1} {\bigg )} \cdot \\[2ex]    
& & \displaystyle 
\hspace{5ex} f(z_1)\dots f(z_{m-k}) \dd z_{m-k}\dots\dd z_1
\end{array} \end{equation}

\noindent
For every fixed $z_1,\dots,z_{m-k+1}$, applying an isometry that maps $z_{m-k+1}$ to $o$
and Corollary~\ref{cor:SlivMeck2col}, we have 

\begin{equation}\label{eq:ECkmin1} 
\begin{array}{c} 
\displaystyle 
(p\lambda)^{k-1} \int_\Dee\dots\int_\Dee
\Ee\left[ g_{C^\tau_{k-1}}(z_{m-k+1},\dots,z_m; \Zcal \cup \{z_{m-k+1},\dots,z_m\})\right] \cdot \\
\displaystyle \hspace{5ex}
f(z_{m-k+2}) \dots f(z_m) \dd z_m \dots \dd z_{m-k+2} \\
= \\
\displaystyle 
(p\lambda)^{k-1} \int_\Dee\dots\int_\Dee
\Ee\left[ g_{C^\tau_{k-1}}(o, z_{1},\dots,z_{k-1}; \Zcal \cup \{o,z_{1},\dots,z_{k-1}\})\right] \cdot \\
\displaystyle \hspace{5ex}
f(z_{1}) \dots f(z_{k-1}) \dd z_1 \dots \dd z_{k-1} \\
= \\
\Ee C_{k-1}^\tau.  
\end{array} 
\end{equation}

\noindent
Of course, for every fixed $z_{m-k} \in \Dee$, we have

$$ \begin{array}{rcl} 
\displaystyle 
p\lambda \int_\Dee 1_{\{\distH(z_{m-k},z_{m-k+1})<r+w_2\}}  f(z_{m-k+1}) \dd z_{m-k+1} 
& = &  
\displaystyle 
p\lambda \cdot \areaH( \ballH(o,r+w_2) ) \\
& \leq & 
10^3 \cdot e^{w_2}, 
\end{array} $$

\noindent
by Lemma~\ref{lem:XII}.
It follows that:

\begin{equation}\label{eq:Sa} \begin{array}{rcl} 
\Ee S_{\underline{k},\underline{\tau},\underline{t}} 
& \leq  & 
\displaystyle 
10^3 \cdot e^{w_2} \cdot \Ee C_{k-1}^\tau \cdot
(p\lambda)^{m-k} \int_\Dee\dots\int_\Dee \\[2ex]
& & \displaystyle 
\Ee\left[ g_{(k_1,\dots,k_{n-1});(\tau_1,\dots,\tau_{n-1});(t_1,\dots,t_{n-2})}
(z_1,\dots,z_{m-k};\Zcal\cup\{z_1,\dots,z_{m-k}\})\right] \cdot \\[2ex]
& & \displaystyle 
f(z_1)\dots f(z_{m-k}) \dd z_{m-k}\dots\dd z_1 \\[2ex]
& = & \displaystyle 
10^3 \cdot e^{w_2} \cdot \Ee C_{k-1}^\tau \cdot \Ee S_{(k_1,\dots,k_n),(\tau_1,\dots,\tau_{n-1}),(t_1,\dots,t_{n-2})}.
\end{array} \end{equation}

\noindent
(Provided $t_{n-1}=a$.)

If $t_{n-1}=b$ then $z_{m-k}$, the first vertex of $P_n$, does not lie on $P_{n-1}$ and 
we need that $\distH(z_{m-k},z_{m-k+1}) < r+w_2$ and $\cert(z_{m-k},z_{m-k+1})$ intersects $\cert(P_{n-1}) 
= \cert(z_{m-k_{n-1}-k-2}, \dots, z_{m-k-1})$. 
So instead of~\eqref{eq:Ega} we can now write

\begin{equation}\label{eq:Egb} 
\begin{array}{c} 
\Ee\left[g_{\underline{k},\underline{\tau},\underline{t}}(z_1,\dots,z_m;\Zcal\cup\{z_1,\dots,z_m\})\right] \\
 \leq \\
\Ee\left[ g_{(k_1,\dots,k_{n-1});(\tau_1,\dots,\tau_{n-1});(t_1,\dots,t_{n-1})}
(z_1,\dots,z_{m-k};\Zcal\cup\{z_1,\dots,z_{m-k}\})\right] \cdot \\
\Ee\left[ g_{C^\tau_{k-1}}(z_{m-k+1},\dots,z_m; \Zcal \cup \{z_{m-k+1},\dots,z_m\})\right] \cdot \\
1_{\left\{\begin{array}{l}\cert(z_{m-k},z_{m-k+1})\cap \cert(z_{m-k_{n-1}-k-2}, \dots, z_{m-k-1})\neq \emptyset, \atop 
\distH(z_{m-k},z_{m-k+1})<r+w_2 \end{array}\right\}} 
\end{array} 
\end{equation}

\noindent
Arguing as in~\eqref{eq:ESa} and reusing~\eqref{eq:ECkmin1} we find that

$$ \begin{array}{rcl} 
\Ee S_{\underline{k},\underline{\tau},\underline{t}} 
& \leq  & 
\displaystyle 
\Ee C_{k-1}^\tau \cdot
(p\lambda)^{m-(k+1)} \int_\Dee\dots\int_\Dee \\[2ex]
& & \displaystyle 
\Ee\left[ g_{(k_1,\dots,k_{n-1});(\tau_1,\dots,\tau_{n-1});(t_1,\dots,t_{n-2})}
(z_1,\dots,z_{m-k};\Zcal\cup\{z_1,\dots,z_{m-k-1}\})\right] \cdot \\[2ex]
& & \displaystyle 
{\bigg (} (p\lambda)^2 \int_\Dee\int_\Dee
1_{\left\{\begin{array}{l}\cert(z_{m_k},z_{m-k+1})\cap \cert(z_{m-k_{n-1}-k-2}, \dots, z_{m-k-1})\neq \emptyset, \atop 
\distH(z_{m_k},z_{m-k+1})<r+w_2 \end{array}\right\}} \cdot \\[2ex]
& & \displaystyle f(z_{m-k})f(z_{m-k+1})
\dd z_{m-k} \dd z_{m-k+1} {\bigg )} \cdot \\
& & \displaystyle 
f(z_1)\dots f(z_{m-k}) \dd z_{m-k}\dots\dd z_1 
\end{array} $$ 

For any fixed $z_1,\dots,z_{m-k-1} \in \Dee$ we have that if
$z_{m-k_{n-1}-k-2}, \dots, z_{m-k-1}$ is not a pre-chunk then
$g_{(k_1,\dots,k_{n-1});(\tau_1,\dots,\tau_{n-1});(t_1,\dots,t_{n-2})}
(z_1,\dots,z_{m-k};\Zcal\cup\{z_1,\dots,z_{m-k-1}\}) = 0$ and otherwise 
we can apply Lemma~\ref{lem:FIb} to get

$$ \begin{array}{c} \displaystyle 
(p\lambda)^2 \int_\Dee\int_\Dee
1_{\left\{\begin{array}{c} \cert(z_{m_k},z_{m-k+1})\cap \cert(z_{m-k_{n-1}-k-2}, \dots, z_{m-k-1})\neq \emptyset, \\ 
\distH(z_{m_k},z_{m-k+1})<r+w_2\end{array}\right\}} \cdot \\[3ex]
 \displaystyle f(z_{m-k})f(z_{m-k+1}) \dd z_{m-k} \dd z_{m-k+1} \\
\leq \\
10^5 \cdot k_{n-1} \cdot e^{2v(\tau_{n-1})+2} \\
\leq \\
10^6 \cdot k_{n-1} \cdot e^{2v(\tau_{n-1})}, 
\end{array} $$

\noindent
where $v(\sigma) = w_2$ if $\sigma \in \{\text{\bf{II}}, \text{\bf{III}}\}$ and 
otherwise $v(\sigma)$ is determined via $\sigma =: (\text{\bf{IV}},v(\sigma)-1)$.
(That is, if $\sigma =  (\text{\bf{IV}},x)$ for some $x\geq w_2$ then $v(\sigma) = x+1$.)
We conclude that if $t_{n-1}=b$ then 

\begin{equation}\label{eq:Sb} 
\Ee S_{\underline{k},\underline{\tau},\underline{t}} 
\leq 10^6 \cdot k_{n-1} \cdot e^{2v(\tau_{n-1})} \cdot \Ee C_{k-1}^\tau \cdot
\Ee S_{(k_1,\dots,k_{n-1});(\tau_1,\dots,\tau_{n-1});(t_1,\dots,t_{n-2})}. 
\end{equation}

If $t_{n-1}=c$ then we replace~\eqref{eq:Egb} with

\begin{equation}\label{eq:Egc} 
\begin{array}{c} 
\Ee\left[g_{\underline{k},\underline{\tau},\underline{t}}(z_1,\dots,z_m;\Zcal\cup\{z_1,\dots,z_m\})\right] \\
 \leq \\
\Ee\left[ g_{(k_1,\dots,k_{n-1});(\tau_1,\dots,\tau_{n-1});(t_1,\dots,t_{n-1})}
(z_1,\dots,z_{m-k};\Zcal\cup\{z_1,\dots,z_{m-k}\})\right] \cdot \\
\Ee\left[ g_{C^\tau_{k-1}}(z_{m-k+1},\dots,z_m; \Zcal \cup \{z_{m-k+1},\dots,z_m\})\right] \cdot \\
1_{\left\{\begin{array}{l}\distH(z_{m-k},\cert(z_{m-k_{n-1}-k-2}, \dots, z_{m-k-1}))<r/1000, \\ 
\distH(z_{m-k},z_{m-k+1})<r+w_2, \\
\cert(z_{m-k},z_{m-k+1}) \cap \cert(z_{m-k_{n-1}-k-2}, \dots, z_{m-k-1}) = \emptyset, \\
z_{m-k}z_{m-k+1} \text{ is a pseudoedge. }
\end{array}\right\}}.
\end{array} 
\end{equation}

\noindent
Arguing as before, but using Lemma~\ref{lem:FIc} in place of Lemma~\ref{lem:FIb} gives:

\begin{equation}\label{eq:Sc}
\Ee S_{\underline{k},\underline{\tau},\underline{t}} 
\leq k_{n-1} \cdot e^{v(\tau_{n-1})/2} \cdot \Ee C_{k-1}^\tau \cdot
\Ee S_{(k_1,\dots,k_{n-1});(\tau_1,\dots,\tau_{n-1});(t_1,\dots,t_{n-2})}.
\end{equation}

\noindent
(Provided $t_{n-1}=c$.)

Combining~\eqref{eq:Ssum} and~\eqref{eq:Sa},~\eqref{eq:Sb},~\eqref{eq:Sc} gives

$$ \begin{array}{rcl} 
\Ee S^\tau_{n,k}
& = & \displaystyle 
\sum_{{\underline{k} \in \eN^n, \underline{\tau} \in \Tscr^n, \underline{t} \in \Lscr^{n-1},}\atop \tau_n=\tau,k_n=k} 
\Ee S_{\underline{k},\underline{\tau},\underline{t}} \\[5ex]
& \leq & \displaystyle 
\sum_{{k_1,\dots,k_{n-1}\in\eN}\atop{{\tau_1,\dots,\tau_{n-1}\in\Tscr}\atop t_1,\dots,t_{n-2}\in\Lscr}} 
\Ee C_{k-1}^\tau \cdot 
\left(10^3 e^{w_2} + 10^6 \cdot k_{n-1} \cdot e^{2v(\tau_{n-1})} + k_{n-1} \cdot e^{v(\tau_{n-1})/2} \right) \cdot \\
& & \displaystyle \hspace{20ex} 
\Ee S_{(k_1,\dots,k_{n-1});(\tau_1,\dots,\tau_{n-1});(t_1,\dots,t_{n-2})} \\[5ex]
& \leq & \displaystyle 
\sum_{{k_1,\dots,k_{n-1}\in\eN}\atop{{\tau_1,\dots,\tau_{n-1}\in\Tscr}\atop t_1,\dots,t_{n-2}\in\Lscr}} 
\Ee C_{k-1}^\tau \cdot 10^7 \cdot k_{n-1} \cdot e^{2v(\tau_{n-1})} \cdot 
\Ee S_{(k_1,\dots,k_{n-1});(\tau_1,\dots,\tau_{n-1});(t_1,\dots,t_{n-2})} \\[5ex]
& = & \displaystyle 
10^7 \cdot \Ee C_{k-1}^\tau \cdot \sum_{\ell=1}^\infty  \ell \cdot 
\sum_{\sigma\in\Tscr} e^{2v(\sigma)} \cdot
\sum_{{k_1,\dots,k_{n-1}\in\eN}\atop{{{\tau_1,\dots,\tau_{n-1}\in\Tscr}\atop t_1,\dots,t_{n-2}\in\Lscr}\atop k_{n-1}=\ell,\tau_{n-1}=\sigma}} 
 \Ee S_{(k_1,\dots,k_{n-1});(\tau_1,\dots,\tau_{n-1});(t_1,\dots,t_{n-2})} \\[5ex]
& = & \displaystyle 
\Ee C_{k-1}^\tau \cdot 10^7 \cdot \sum_{\ell=1}^\infty  \ell \cdot 
\sum_{\sigma\in\Tscr} e^{2v(\sigma)} \cdot \Ee S_{n-1,\ell}^\sigma \\[5ex]
& = & \displaystyle 
\Ee C_{k-1}^\tau \cdot 10^7 \cdot \sum_{\ell=1}^\infty  \ell \cdot 
\left( e^{2w_2} \cdot \Ee S_{n-1,\ell}^{\text{\bf{II}}}
+ e^{2w_2} \cdot \Ee S_{n-1,\ell}^{\text{\bf{III}}} 
+ \sum_{v\geq w_2} e^{2v+2} \cdot \Ee S_{n-1,\ell}^{\text{\bf{IV}},v} \right) \\[5ex]
& = & \displaystyle
\Ee C_{k-1}^\tau \cdot \Sigma_n,
\end{array} $$ 

\noindent
establishing the last inequality in the lemma statement.

The first inequality (the case when $k=1$ and $\tau=\text{\bf{II}}$) follows analogously, replacing
$g_{C_{k-1}^\tau}$ in~\eqref{eq:Ega},~\eqref{eq:Egb},~\eqref{eq:Egc} by 
$1_{\{\distH( z_{m-1},z_m ) < r-w_1\}}$ and applying Lemma~\ref{lem:XII} to the 
innermost integral in the analogues of~\eqref{eq:ESa}.

The second inequality (the case when $k=2$ and $\tau=\text{\bf{III}}$) follows in the same way, 
now using the indicator function that $\distH(z_{m-2},z_{m-1}),\distH(z_{m-1},z_m) < r+w_2$ and 
$\angle z_{m-2}z_{m-1}z_m < \vartheta$ and applying Lemma~\ref{lem:XIII}.

For the case when $k=1$ and $\tau=(\text{\bf{IV}},v)$ we replace 
$g_{C_{k-1}^\tau}$ by the indicator function corresponding to $F_{\text{IV-a},\geq v}$ from Lemma~\ref{lem:FIVa} in case
$t_{n-1}=a$; by the indicator function corresponding to $F_{\text{IV-b},\geq v}$ from Lemma~\ref{lem:FIVb} in case
$t_{n-1}=b$; and the indicator function corresponding to $F_{\text{IV-c},\geq v}$ form Lemma~\ref{lem:FIVc} in case
$t_{n-1}=c$; and then apply Lemma~\ref{lem:FIVa}, respectively Lemma~\ref{lem:FIVb}, respectively Lemma~\ref{lem:FIVc}
to the innermost integral in the analogues of~\eqref{eq:ESa}.
\end{proof}

\begin{lemma}\label{lem:noinflinkedseq}
For every $0<\eps<1$ and $c>0$ there exist $\lambda_0,w_1,\vartheta>0$ and $w_2>c$ such that 
$w_2 \in\eN$ and for all $0<\lambda<\lambda_0$ and $p\leq(1-\eps)\cdot(\pi/3)\cdot\lambda$, 
setting $r := 2\ln(1/\lambda)$, 
almost surely there are no infinite, black, linked sequences of chunks starting from the origin $o$.
\end{lemma}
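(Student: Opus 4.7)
The plan is to show that the expected number of black linked sequences of chunks starting at $o$ and consisting of exactly $n$ chunks decays geometrically in $n$, by choosing the parameters carefully and iterating Lemma~\ref{lem:recurs}. Combined with Markov's inequality and Corollary~\ref{cor:noinfgoodpaths}, this will rule out an infinite black linked sequence of chunks.

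First I would select the parameters in the order $w_2, \vartheta, w_1, \lambda_0$. Let $c_0(\eps)$ denote the constant $c$ appearing in Corollary~\ref{cor:noinfgoodpaths}, and take $w_2 \in \eN$ with $w_2 > \max(c, c_0(\eps))$ so large that both $q := 1 - \eps/2 + 10^4 e^{w_2} e^{-e^{w_2/2}} \leq 1 - \eps/3$ (so Lemma~\ref{lem:numbergoodpspths} gives $\Ee G_\ell \leq q^\ell$) and the tail $\sum_{v \geq w_2} e^{5v/2+2} e^{-e^{v/2}}$ is as small as needed. Then pick $\vartheta$ with $\vartheta e^{3w_2}$ small, $w_1$ with $e^{2w_2 - w_1}$ small, and take $\lambda_0$ small enough that all earlier lemmas apply simultaneously.

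Next I would bound $\Sigma_1$. Using $S_{1,\ell}^\tau = C_\ell^\tau$ from~\eqref{eq:S1isC} and Corollary~\ref{cor:Ck} together with $\Ee G_{\ell-1} \leq q^{\ell-1}$ and $\sum_\ell \ell q^{\ell-1} = (1-q)^{-2} \leq (3/\eps)^2$, I obtain
\[
\Sigma_1 \;\leq\; \frac{10^{11}}{(1-q)^2} \Bigl( e^{2w_2 - w_1} + \vartheta e^{3w_2} + \sum_{v \geq w_2} e^{5v/2+2} e^{-e^{v/2}} \Bigr) \;=:\; A,
\]
which is arbitrarily small by the parameter choice. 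For the recursion, Lemma~\ref{lem:recurs} reduces bounding $\Sigma_{n+1}$ to bounding $\Ee S_{n+1,k}^\tau$: the small-$k$ contributions ($k=1$ for types $\text{\bf{II}}$ and $\text{\bf{IV}}$, $k=2$ for $\text{\bf{III}}$) directly give $\Sigma_n$ times a summand of $A$, while for larger $k$ I substitute $\Ee S_{n+1,k}^\tau \leq \Sigma_n \Ee C_{k-1}^\tau$, invoke Corollary~\ref{cor:Ck}, and sum over $k$ via the geometric bound on $\Ee G_{k-2}$. This yields a bound of the same three-term form as $A$ (with an extra $(1-q)^{-2}$ prefactor), so by the parameter choice I can arrange $\Sigma_{n+1} \leq K \Sigma_n$ with $K < 1/2$, and hence $\Sigma_n \leq K^{n-1} \Sigma_1 \to 0$.

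Finally, let $I_n$ denote the number of black linked sequences of exactly $n$ chunks starting at $o$. Since every coefficient in the definition of $\Sigma_n$ is at least $10^7$, we have $\Ee I_n \leq 10^{-7} \Sigma_n \to 0$, so $\Pee(I_n \geq 1) \to 0$ by Markov's inequality. An infinite black linked sequence of chunks from $o$ must either contain infinitely many finite chunks---in which case $I_n \geq 1$ for every $n$, which has probability $0$---or terminate in an ``infinite chunk'', which is then an infinite black good pseudopath, ruled out by Corollary~\ref{cor:noinfgoodpaths} since we chose $w_2 > c_0(\eps)$. The main obstacle I expect is the bookkeeping in the recursive step: in particular, verifying that the sum over $v$ of the type $\text{\bf{IV}}$ contributions, weighted by $e^{2v+2}$, remains controlled after substituting the $e^{v/2} e^{-e^{v/2}}$ factor from Corollary~\ref{cor:Ck}, so that no new tail divergences appear and $K$ really is bounded by a constant multiple of $A$.
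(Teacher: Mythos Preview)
Your proposal is correct and follows essentially the same route as the paper: choose $w_2$ large (then $\vartheta$ small, then $w_1$ large, then $\lambda_0$ small), use Corollary~\ref{cor:Ck} and Lemma~\ref{lem:numbergoodpspths} to bound $\Sigma_1$, iterate Lemma~\ref{lem:recurs} to get $\Sigma_{n+1}\leq K\Sigma_n$ with $K<1/2$, and conclude via $\Ee S_n\leq\Sigma_n\to 0$. The obstacle you flag is exactly the one the paper works through, and your anticipated tail sum $\sum_{v\geq w_2} e^{O(v)} e^{-e^{v/2}}$ is precisely what controls the type~{\bf IV} contributions in both the base case and the recursion; your extra remark invoking Corollary~\ref{cor:noinfgoodpaths} for the ``infinite last chunk'' case is harmless (indeed slightly more careful than the paper, which relies on chunks being finite by definition so that this case is already covered separately in Corollary~\ref{cor:lbtypical}).
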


\begin{proof}
We will make use of the bounds provided by Lemma~\ref{lem:numbergoodpspths}, Corollary~\ref{cor:Ck} and Lemma~\ref{lem:recurs}.
We will choose the integer $w_2>c$ so large that 
$10^4 e^{w_2} e^{-w_2/2} < \eps/6$. So in particular, for $\lambda$ sufficiently small, the expected number of good, black 
pseudopaths starting from the origin satisfies

$$ \Ee G_k \leq (1-\eps/3)^k. $$

Let $\Sigma_n$ be as in the statement of Lemma~\ref{lem:recurs}.
By the observation~\eqref{eq:S1isC} and Corollary~\ref{cor:Ck}, we have

$$ \begin{array}{rcl} 
\Sigma_1 
& = & \displaystyle
10^7 \cdot \sum_{\ell=1}^\infty \ell \cdot \left( e^{2w_2} \cdot \Ee C_{\ell}^{\text{\bf{II}}} 
 + e^{2w_2} \cdot \Ee C_{\ell}^{\text{\bf{III}}} 
 + \sum_{v\geq w_2} e^{2v} \cdot \Ee C_{\ell}^{\text{\bf{IV}},v} \right) \\
& \leq & \displaystyle
10^7 \cdot \left( 10^3 \cdot e^{2w_2} \cdot \left(e^{-w_1}+\vartheta e^{w_2}\right) + 10^4 \cdot 
\sum_{v=w_2}^\infty e^{(5/2)v} e^{-e^{v/2}} \right)
\cdot \left( \sum_{\ell=1}^\infty \ell\cdot(1-\eps/3)^{\ell-1}\right)  \\
& < & 
\infty. 
\end{array} $$

\noindent
The recursive relations given by Lemma~\ref{lem:recurs} show that for all $n\geq 1$:

$$ \begin{array}{rcl} 
\Sigma_{n+1} 
& \leq & \displaystyle 
\Sigma_n \cdot 10^7 \cdot \left( e^{2w_2} \cdot 10^3 \cdot e^{-w_1} + 2 e^{2w_2} \cdot 10^3 \cdot \vartheta \cdot e^{w_2} 
+ 10^6 \cdot \sum_{v=w_2} e^{3v} e^{-e^{v/2}} + \right. \\
& & \displaystyle \hspace{10ex} \left.
\sum_{\ell=2}^\infty  e^{2w_2} \cdot \ell \cdot \Ee C_{\ell-1}^{\text{\bf{II}}}
+ \sum_{\ell=3}^\infty e^{2w_2} \cdot \ell \cdot \Ee C_{\ell-1}^{\text{\bf{III}}}
+ \sum_{\ell=2}^\infty e^{2v} \cdot \ell \cdot \Ee C_{\ell-1}^{\text{\bf{IV}},v}
\right) \\[2ex]
& \leq & \displaystyle
\Sigma_n \cdot 10^7 \cdot \left( 10^3 \cdot e^{2w_2-w_1} + 2 \cdot 10^3 \cdot \vartheta \cdot e^{3w_2} 
+ 10^6 \cdot \sum_{v=w_2}^\infty e^{3v} e^{-e^{v/2}} + \right. \\
& & \displaystyle 
\left.
  \left(10^3 \cdot e^{2w_2-w_1} + 10^3 \cdot \vartheta \cdot e^{3w_2}
  + 10^4 \cdot \sum_{v=w_2}^\infty e^{(5/2)v} \cdot e^{-e^{v/2}} \right)
  \cdot \left( \sum_{\ell=2}^\infty \ell \cdot (1-\eps/3)^{\ell-2} \right) \right) \\[2ex]
& \leq & \displaystyle 
\Sigma_n \cdot 10^{13} \cdot \left( e^{2w_2-w_1} + \vartheta \cdot e^{3w_2} 
+ \sum_{v=w_2}^\infty e^{3v} e^{-e^{v/2}} \right) \cdot  
\left( 1+\sum_{\ell=2}^\infty \ell \cdot (1-\eps/3)^{\ell-2} \right) \\[2ex]
& = & \displaystyle 
\Sigma_n \cdot 10^{13} \cdot \left( e^{2w_2-w_1} + \vartheta \cdot e^{3w_2} 
+ \sum_{v=w_2}^\infty e^{3v} e^{-e^{v/2}} \right) \cdot  
\left( \frac{9+3\eps+\eps^2}{\eps^2} \right) \\[2ex]
& \leq & \displaystyle 
\Sigma_n \cdot 10^{15} \cdot \eps^{-2} \cdot \left( e^{2w_2-w_1} + \vartheta \cdot e^{3w_2} 
+ \sum_{v=w_2}^\infty e^{3v} e^{-e^{v/2}} \right) 
\end{array} $$

\noindent
For the sake of the presentation, we introduce an additional small constant $\delta>0$.
Without loss of generality we can assume chose $w_2>c$ large enough so that

$$ \sum_{v=w_2}^\infty e^{3v} \cdot e^{-e^{v/2}} < \delta. $$

We can also assume $w_1,\vartheta$ are such that $e^{2w_2-w_1}, \vartheta e^{3w_2} < \delta$.
Filling these bounds into the bound on $\Sigma_{n+1}$ gives 

$$ \Sigma_{n+1} \leq 
\Sigma_n \cdot 10^{15} \cdot \left(\frac{3\delta}{\eps^2}\right) 
\leq
\Sigma_n \cdot \left(\frac12\right), $$
  
\noindent
the last line holding because we have chosen $\delta$ appropriately.

To recap, we can choose $w_1,w_2,\vartheta$ in such a way that 
$\Sigma_n \leq \left(\frac12\right)^{n-1}\cdot \Sigma_1$ for all $n$; and $\Sigma_1<\infty$.
In particular $\Sigma_n \to 0$ as $n\to\infty$.
 
Let us denote by $S_n$ the total number of black, linked sequences of chunks starting from the origin and
consisting of precisely $n$ chunks. Obviously

$$ \Ee S_n = \sum_{k=1}^\infty \Ee S_{n,k}^{\text{\bf{II}}} + \sum_{k=1}^\infty \Ee S_{n,k}^{\text{\bf{III}}}
+ \sum_{k=1}^\infty\sum_{v=w_2}^\infty \Ee S_{n,k}^{\text{\bf{IV}},v} \leq \Sigma_n. $$

\noindent
But then we also have that 

$$ \Ee S_n \xrightarrow[n\to\infty]{} 0. $$

\noindent
It follows that, almost surely, there are is no infinite, black, linked sequence of chunks starting from $o$.
\end{proof}

Combining Proposition~\ref{prop:lbprep}, Corollary~\ref{cor:noinfgoodpaths} and Lemma~\ref{lem:noinflinkedseq}
immediately gives:

\begin{corollary}\label{cor:lbtypical}
For every $\eps>0$ there is a $\lambda_0=\lambda_0(\eps)$ such that 
for all $0<\lambda<\lambda_0$ and all $p\leq(1-\eps)\cdot(\pi/3)\cdot\lambda$, almost surely,
the black cluster of $o$ in the Voronoi tessellation for $\Zcal\cup\{o\}$ is finite.
\end{corollary}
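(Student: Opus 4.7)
The plan is to show that Corollary~\ref{cor:lbtypical} follows essentially by putting together the three preceding main results and choosing the parameters $w_1, w_2, \vartheta$ consistently.

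First, I would fix $\eps>0$ and let $c=c(\eps)$ be the constant provided by Corollary~\ref{cor:noinfgoodpaths}. Applying Lemma~\ref{lem:noinflinkedseq} to this $\eps$ and $c$, I would extract parameters $w_1, \vartheta>0$ and $w_2 > c$ (with $w_2 \in \eN$) together with a $\lambda_0^{(1)}>0$ such that, whenever $0<\lambda<\lambda_0^{(1)}$ and $p\leq (1-\eps)\cdot(\pi/3)\cdot \lambda$ and $r = 2\ln(1/\lambda)$, almost surely there is no infinite, black, linked sequence of chunks starting from~$o$. Since $w_2>c$, Corollary~\ref{cor:noinfgoodpaths} also supplies a $\lambda_0^{(2)}>0$ such that, for the same choices of $w_1, w_2, \vartheta$ and for $0<\lambda<\lambda_0^{(2)}$, almost surely there are no infinite, black, good pseudopaths at all.

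Next, I would set $\lambda_0 := \min(\lambda_0^{(1)}, \lambda_0^{(2)})$ and take any $0<\lambda<\lambda_0$ and $p\leq (1-\eps)\cdot(\pi/3)\cdot\lambda$, with $r=2\ln(1/\lambda)$ and the parameters $w_1, w_2, \vartheta$ fixed as above. Proposition~\ref{prop:lbprep} then tells us that, almost surely, at least one of the following three events occurs: the black cluster of $o$ is finite; there exists an infinite, black, good pseudopath (not necessarily through $o$); or there exists an infinite, black, linked sequence of chunks starting from~$o$. The last two events have probability zero by our applications of Corollary~\ref{cor:noinfgoodpaths} and Lemma~\ref{lem:noinflinkedseq} respectively, so only the first event can occur with positive probability, and hence it occurs almost surely.

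There is no real obstacle here since all the hard work was done in the three earlier statements; the only subtlety is to apply them in the correct order so that the parameter $w_2$ chosen by Lemma~\ref{lem:noinflinkedseq} is simultaneously large enough to invoke Corollary~\ref{cor:noinfgoodpaths}. This is handled by first invoking Corollary~\ref{cor:noinfgoodpaths} abstractly to produce the threshold $c(\eps)$, then feeding that $c(\eps)$ into Lemma~\ref{lem:noinflinkedseq}. Taking the minimum of the two $\lambda_0$'s produced ensures both almost-sure conclusions hold simultaneously, at which point Proposition~\ref{prop:lbprep} immediately yields the desired statement.
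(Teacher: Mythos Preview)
Your proposal is correct and matches the paper's approach exactly: the paper simply states that the corollary follows immediately by combining Proposition~\ref{prop:lbprep}, Corollary~\ref{cor:noinfgoodpaths}, and Lemma~\ref{lem:noinflinkedseq}. You have additionally spelled out the only subtlety, namely that one first extracts the threshold $c(\eps)$ from Corollary~\ref{cor:noinfgoodpaths} and then feeds it into Lemma~\ref{lem:noinflinkedseq} so that the resulting $w_2$ works for both results simultaneously.
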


For completeness we point out how Proposition~\ref{prop:lb} follows from Corollary~\ref{cor:lbtypical}.

\begin{proofof}{Proposition~\ref{prop:lb}}
This follows from Corollary~\ref{cor:lbtypical} by a near verbatim repeat of the proof of Corollary~\ref{cor:noinfgoodpaths}.
We only mention the changes that need to be made.
Now, we let $N_x$ be the number of $z\in\Zcal_b \cap \ballH(o,x)$ that are part of an infinite black component of 
the Voronoi tessellation, and we let $g$ be the indicator function that $z\in \ballH(o,x)$ and 
that $z$ lies in an infinite, black cluster in the Voronoi tessellation for $\Zcal\cup\{z\}$.
\end{proofof}

\section{Suggestions for further work\label{sec:discussion}}

Our Theorem~\ref{thm:main} states that $p_c(\lambda) = (\pi/3)\lambda + o(\lambda)$ as $\lambda\searrow 0$, answering a 
question of Benjamini and Schramm~\cite{benjamini2001percolation}.
A natural direction for research is to try and find more terms in the expansion.

\begin{problem}\label{prob:prob1}
Determine a constant $c$ such that $p_c(\lambda) = (\pi/3)\lambda + c\lambda^2 + o(\lambda^2)$ as
$\lambda\searrow 0$, or show no such constant exists.
\end{problem}

In our proofs we have either taken $p \geq (1-\eps)\cdot(\pi/3)\cdot\lambda$, for the upper bound on 
$p_c(\lambda)$, or $p\leq(1-\eps)\cdot(\pi/3)\cdot\lambda$, for the lower bound. We have always taken $\eps$ constant in 
our paper and have not made any effort to see how fast we can send $\eps$ to zero as a function of $\lambda$ 
before our proof technique breaks down.
Of course many of our bounds are rather crude and it seems likely that more fine-grained proof techniques will need to be 
developed.

As mentioned in the introduction, the critical value for percolation in $\Zed^d$ for high dimension $d$ is related, at least
on an intuitive level, to our result. And, as also mentioned, a trivial comparison 
to branching processes shows the critical value for bond percolation on $\Zed^d$ 
satisfies $p_c(\Zed^d) \geq \frac{1}{2d-1} > \frac{1}{2d}$, the reciprocal of the degree.
(It was in fact shown by Van der Hofstad and Slade~\cite{VdhofstadSlade06} that
$p_c(\Zed^d) = \frac{1}{2d} + \frac14 d^{-2} + \frac{7}{16} d^{-3} + O( d^{-4} )$. 
So we even have $p_c(\Zed^d) > \frac{1}{2d-1}$ for large enough $d$.)
Inspired by this, and the fact that the typical degree is actually strictly larger that $\frac{3}{\pi\lambda}$, we
offer the following conjecture:

\begin{conjecture}\label{conj:conj1}
There exists a $\lambda_0>0$ such that $p_c(\lambda) > (\pi/3)\cdot\lambda$ for all $0<\lambda<\lambda_0$.
\end{conjecture}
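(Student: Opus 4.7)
The natural first attempt is the branching-process comparison: in the exploration of the black component of the typical cell, the number of unexplored black neighbours of any given vertex is dominated by a Poisson random variable with mean $p\cdot(\Ee D-1)$, where $\Ee D-1=5+3/(\pi\lambda)$ by Isokawa's formula. At $p=(\pi/3)\lambda$ this mean equals $1+(5\pi/3)\lambda$, which is strictly greater than $1$, so the dominating branching process is supercritical. Unlike bond percolation on $\Zed^d$, where $p_c>1/(2d)$ follows trivially from $p_c\geq 1/(2d-1)$, the naive tree comparison gives no information here. Any proof of the conjecture must therefore exploit a genuinely negative second-order correction coming from correlations between neighbours that are invisible at the level of the first moment.

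The approach I would take is to strengthen the chunks and linked-sequences machinery of Section~\ref{sec:lb}, working directly at the borderline $p=(\pi/3)\lambda$ and aiming for a decay estimate driven by a sub-leading geometric gain. Concretely, I would revisit Lemma~\ref{lem:YIa}: its proof bounds the refined one-step count $\Ee Y_{\text{\bf{I-i}}}$ by $p\cdot\Ee D/\Pee(\Zcal\cap\ballH(o,h)=\emptyset)$, which at $p=(\pi/3)\lambda$ exceeds $1$. The plan is to replace $\Ee D$ by the expected number of Delaunay neighbours of $o$ that \emph{simultaneously} satisfy the good-pseudo-edge conditions: distance in $(r-w_1,r+w_2)$, angular separation exceeding $\vartheta$ from the previously revealed edge, and the appropriate empty-certificate condition bounding the Gabriel or double-disk relative to the already-explored region. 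A second-order Slivniak--Mecke expansion should yield, after careful bookkeeping,
$$
\Ee Y_{\text{\bf{I-i}}}^{\text{refined}}
= p\cdot\Bigl(\tfrac{3}{\pi\lambda}+C(w_1,w_2,\vartheta,h)\Bigr)+o(\lambda),
$$
for an explicit $C$. Conjecture~\ref{conj:conj1} would then follow, via the induction of Lemma~\ref{lem:numbergoodpspths} and the recursion of Lemma~\ref{lem:recurs}, provided one can tune the parameters so that $pC<0$ at $p=(\pi/3)\lambda$; equivalently, $C$ must be strictly negative and large enough in magnitude to absorb the $O(\lambda)$ inflation coming from the denominator $1/\Pee(\Zcal\cap\ballH(o,h)=\emptyset)$.

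The main obstacle is precisely this sign. The short neighbours at distance $\leq r-w_1$ contribute positively to $\Ee D$ but negatively to the refined count, so they help; however the angular restriction only excises a fraction $O(\vartheta/\pi)$ of long neighbours, and the empty-certificate restriction is typically automatic for generic long neighbours. Isolating the genuine negative $O(1)$-gap between the number of good pseudo-edges and the raw typical degree---and checking that it dominates the $O(1)$ loss from the denominator---requires sharp asymptotics for the second-order term in the expected typical-cell combinatorics as $\lambda\searrow 0$, going beyond Isokawa's formula. A more ambitious but potentially cleaner route is to first resolve Problem~\ref{prob:prob1}: an explicit identification of the $\lambda^2$-coefficient in the expansion of $p_c(\lambda)$ would settle Conjecture~\ref{conj:conj1} directly via the sign of that coefficient, and might be accessible through a lace-expansion-style analysis of the two-point function in the spirit of van der Hofstad and Slade's sharp result for $\Zed^d$~\cite{VdhofstadSlade06}.
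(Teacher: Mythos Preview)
This statement is a \emph{conjecture} in the paper, listed in Section~\ref{sec:discussion} among suggestions for further work; the paper offers motivation but no proof. Your proposal is likewise not a proof but an honest research outline, and you flag the central gap yourself: you would need the refined one-step mean in the analogue of Lemma~\ref{lem:YIa} to be strictly less than $1$ at $p=(\pi/3)\lambda$, which amounts to showing that the second-order constant $C(w_1,w_2,\vartheta,h)$ is negative and large enough in magnitude to beat the inflation from conditioning on $\Zcal\cap\ballH(o,h)=\emptyset$. You do not establish this sign, and neither does the paper; that is precisely why the statement is posed as a conjecture.

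One correction to your opening paragraph: the claim that the number of unexplored black neighbours at each step is stochastically dominated by a Poisson variable with mean $p\cdot(\Ee D-1)$ is itself unjustified for Poisson--Voronoi percolation. As the paper emphasizes in the introduction, revealing part of the Poisson process can make new Delaunay adjacencies both \emph{more} and \emph{less} likely, so there is no clean monotone domination of the kind available for bond percolation on $\Zed^d$ or the Gilbert model. This does not affect your conclusion---you correctly observe that a supercritical dominating process yields no lower bound on $p_c$ anyway---but it means that even the ``naive'' comparison you invoke is already problematic here, which is part of why the conjecture is nontrivial.

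Your two suggested routes (sharpening Lemma~\ref{lem:YIa} to second order within the linked-sequences machinery, or attacking Problem~\ref{prob:prob1} directly via a lace-expansion-style analysis) are reasonable and in the same spirit as the paper's own commentary surrounding the conjecture. But as it stands the proposal is a plan of attack, not a proof: the decisive step---controlling the sign of the second-order term---remains open.
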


Of course the answer to Problem~\ref{prob:prob1} is likely to also tell us whether this conjecture holds or not.
But, perhaps Conjecture~\ref{conj:conj1} can be settled via a different route.

We reiterate a natural conjecture of Benjamini and Schramm.

\begin{conjecture}[\cite{benjamini2001percolation}]\label{conj:conj2}
$p_c(\lambda)$ is strictly increasing.
\end{conjecture}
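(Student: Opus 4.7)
Conjecture~\ref{conj:conj2} seems to be of a quite different character from our main theorem, and I do not see how to derive it directly from the methods of this paper. The plan I would attempt is a differential-inequality approach in the spirit of Russo's formula and the Aizenman--Grimmett ``essential enhancement'' theorem, adapted to the continuum hyperbolic setting with a varying intensity. First I would fix $p$ and consider the percolation function $\theta(\lambda,p) := \Pee_{\lambda,p}(o \text{ lies in an infinite black cluster of the tessellation for } \Zcal\cup\{o\})$; strict monotonicity of $p_c$ would follow from showing $\partial_\lambda \theta < 0$ on the open region where $\theta(\lambda,p)>0$, combined with the continuity of $p_c$ already proved by Benjamini and Schramm.

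The coupling step is natural: for $\delta>0$, write $\Zcal_{\lambda+\delta} = \Zcal_\lambda \cup \Zcal'_\delta$ with $\Zcal'_\delta$ an independent Poisson process of intensity $\delta$ on $\Haa^2$, and expand $\theta(\lambda+\delta,p)-\theta(\lambda,p)$ to first order in $\delta$. A variant of the Slivniak--Mecke formula should express this expansion as an integral, over single-point insertions $x \in \Dee$, of the change in the percolation event caused by adding a fresh Poisson point at $x$. Geometrically each such insertion carves a new Voronoi cell at $x$: it deletes some previously existing Delaunay edges (those between cells whose shared boundary lay inside the new cell) and creates some new ones (edges from $x$ to each of its new neighbours, together with possibly some previously non-adjacent pairs that now both border $x$). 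The aim would then be to show that, averaged over the random colour of $x$ and over its position, these local modifications strictly decrease the probability of percolation whenever that probability is positive.

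The hard part will be the genuine non-monotonicity of Poisson--Voronoi percolation in $\lambda$: unlike site or bond percolation on a fixed graph, the underlying Delaunay graph itself changes when points are added, and a single insertion can simultaneously help some connections and hinder others. Quantifying the net effect seems to require finer control on the local geometry of the infinite cluster, conditional on its existence, than is currently available outside the two limiting regimes $\lambda\searrow 0$ treated in this paper and $\lambda\to\infty$ treated in~\cite{HansenMuller1}. A more modest intermediate target would be to establish strict monotonicity of $p_c$ only on a neighbourhood of $0$; this does not follow from Theorem~\ref{thm:main} as stated, but it would follow from a solution to Problem~\ref{prob:prob1} in which the $\lambda^2$ correction has a nonzero leading constant. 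I would therefore pursue Problem~\ref{prob:prob1} in parallel with the differential-inequality approach, in the hope that a resolution of the former provides a toehold for a continuation argument toward the full conjecture.
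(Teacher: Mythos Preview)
The paper does not prove Conjecture~\ref{conj:conj2}: it is stated in Section~\ref{sec:discussion} as an open problem reiterated from~\cite{benjamini2001percolation}, with no proof or proof sketch offered. Your response is therefore appropriate in recognising that the conjecture does not follow from the methods of the paper, and in declining to claim a proof.

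As for the research programme you sketch: the differential-inequality/essential-enhancement approach is a natural first thought, but you have already identified the central obstruction yourself, namely that adding a Poisson point changes the Delaunay graph non-monotonically, so there is no obvious analogue of a ``pivotal'' configuration with a definite sign. The Aizenman--Grimmett machinery relies on the underlying graph being fixed while only the percolation variables change; here both the graph and the colours depend on $\lambda$, and disentangling these two effects is precisely the difficulty. Your fallback suggestion --- to first establish strict monotonicity only near $\lambda=0$ via a resolution of Problem~\ref{prob:prob1} with a nonzero $\lambda^2$ coefficient --- is sensible, though note that even this would require methods well beyond those in the present paper, since our arguments only control $p_c(\lambda)$ up to $o(\lambda)$ and are not fine enough to see the next-order term.
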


Another natural conjecture in the same vein is:

\begin{conjecture}\label{conj:conj3}
 $p_c(\lambda)$ is differentiable.
\end{conjecture}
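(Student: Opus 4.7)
The plan is to upgrade the continuity of $p_c(\lambda)$ established by Benjamini and Schramm to differentiability via a combination of Poisson-coupling bounds and an implicit function argument driven by a Russo-Margulis type formula. Throughout, let $\theta(\lambda,p)$ denote the probability that the origin lies in an infinite black cluster of the Voronoi tessellation of $\Zcal\cup\{o\}$.

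First I would establish local Lipschitz continuity of $p_c$ by coupling: for $h>0$ write $\Zcal_{\lambda+h} = \Zcal_\lambda \cup \Zcal_h'$ with $\Zcal_h'$ an independent Poisson process of intensity $h$. Each added point of $\Zcal_h'$ only perturbs the neighbour relation of cells within some (random, finite) hyperbolic neighbourhood, and by Isokawa's formula the expected number of affected cells is $O(1/\lambda)$. Tracking how the ensuing rewiring of black/white adjacencies shifts the expected density of black edges should give a bound of the form $|p_c(\lambda+h)-p_c(\lambda)| = O(h/\lambda)$ for small $h$, improving Benjamini--Schramm continuity to local Lipschitz continuity.

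Second, I would apply a Poissonian Russo-Margulis formula to write both $\partial\theta/\partial \lambda$ and $\partial\theta/\partial p$ in terms of pivotal configurations --- black points whose removal (respectively whose recolouring) would disconnect the black cluster of $o$ from infinity. The partial with respect to $\lambda$ involves an extra Slivniak-Mecke-type integral over added points, handled by the same machinery used throughout Sections~\ref{sec:ub} and~\ref{sec:lb}. Combined with sharpness of the phase transition --- exponential decay of the black cluster for $p<p_c$ and a mean-field lower bound on $\theta$ for $p>p_c$ --- the defining relation $\theta(\lambda,p_c(\lambda))=0^+$ (regularized via a susceptibility-type order parameter) together with the implicit function theorem would yield differentiability of $p_c$, with an explicit expression $\frac{dp_c}{d\lambda} = -\left(\frac{\partial\Phi}{\partial\lambda}\right)/\left(\frac{\partial\Phi}{\partial p}\right)$ for a suitable proxy $\Phi$.

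The main obstacle is sharpness of the phase transition in the hyperbolic setting. In Euclidean lattices this has been established through OSSS/decision-tree inequalities (Duminil-Copin--Raoufi--Tassion), but hyperbolic Poisson-Voronoi percolation exhibits a nontrivial non-uniqueness phase $p_c<p<1-p_c$ with infinitely many infinite clusters, and the standard order parameters $\theta$ and susceptibility behave asymmetrically across this window. A supplementary difficulty is that $p_c(\lambda)\to 0$ as $\lambda\to 0$, so the coupling bound degenerates precisely in the regime where Theorem~\ref{thm:main} is sharpest; any proof of differentiability for small $\lambda$ will likely need to exploit the near-Galton-Watson structure of the cluster of the origin developed in the present paper, perhaps by showing that at $p=p_c(\lambda)$ the cluster size distribution converges (after rescaling) to that of a critical Poisson branching process and then using continuity of offspring means in $(\lambda,p)$.
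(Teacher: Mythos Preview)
The statement you are attempting to prove is Conjecture~\ref{conj:conj3}, which the paper presents in Section~\ref{sec:discussion} as an \emph{open problem}, not a theorem. There is no proof in the paper to compare your proposal against; the authors explicitly list differentiability of $p_c(\lambda)$ among ``suggestions for further work,'' alongside the related Conjectures~\ref{conj:conj1} and~\ref{conj:conj2} and Problem~\ref{prob:prob1}.

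Your write-up is not a proof but a programme, and you yourself identify the essential gap: you would need sharpness of the phase transition for hyperbolic Poisson-Voronoi percolation, and you would need to make rigorous sense of the ``defining relation $\theta(\lambda,p_c(\lambda))=0^+$'' as an equation to which the implicit function theorem applies. Neither of these is available. The OSSS/decision-tree machinery you mention has not been carried out in this setting, and even if one had sharpness, $\theta(\lambda,p)$ is identically zero for $p\le p_c(\lambda)$ and strictly positive above, so there is no smooth equation $\Phi(\lambda,p)=0$ cutting out the curve $p=p_c(\lambda)$ in the naive way you suggest; your ``suitable proxy $\Phi$'' is doing all the work and is left entirely unspecified. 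The Lipschitz bound $|p_c(\lambda+h)-p_c(\lambda)|=O(h/\lambda)$ is also not justified: knowing that an added Poisson point rewires $O(1/\lambda)$ adjacencies in expectation does not by itself translate into a quantitative shift of the critical probability. In short, what you have written is a reasonable wish-list of ingredients, consistent with why the authors pose this as a conjecture, but it is far from a proof.
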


\noindent
We were tempted to write ``smooth'' in place of ``differentiable'', but opted against it to give whoever 
attempts to prove the conjecture the best chances.

Of course Poisson-Voronoi percolation can also be defined on $d$-dimensional hyperbolic space $\Haa^d$, and
we expect that the main result of the current paper will generalize.
In two dimensions, the typical degree is the same as the number of $1$-faces of the typical cell.
In $d$-dimensions the relevant corresponding quantity is the number of $(d-1)$-faces of the typical cell. 

\begin{conjecture}\label{conj:conj4}
For every $d$, as the intensity $\lambda\searrow 0$, the critical value for Poisson-Voronoi percolation on $\Haa^d$ 
is asymptotically equal to the reciprocal of the expected number of $(d-1)$-faces of the typical cell.
\end{conjecture}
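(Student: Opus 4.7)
The plan is to mimic the two-dimensional proof structure, consisting of a matching upper and lower bound, with all key parameters scaled appropriately for dimension $d$. In $\Haa^d$, a hyperbolic ball of radius $\rho$ has volume asymptotic to $c_d\, e^{(d-1)\rho}$ as $\rho\to\infty$, so the Gabriel ball of a ``typical'' Delaunay edge should have volume on the order of $1/\lambda$, which suggests taking $r := \tfrac{2}{d-1}\ln(1/\lambda)$ as the canonical edge length. Writing $N_{d-1}(\lambda) := \Ee[\#\{(d-1)\text{-faces of the typical cell}\}]$, the target is $p_c(\lambda)\cdot N_{d-1}(\lambda) \to 1$ as $\lambda\searrow 0$.

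For the upper bound, I would generalize Proposition~\ref{prop:GWcouple}. Fix $p = (1+\eps)/N_{d-1}(\lambda)$ and build an exploration that, from each vertex $u$ already placed, selects black neighbours $z$ such that (i) $\distH(u,z)\in(r-w,r+w)$, (ii) the rays $uz$ are pairwise at spherical angle $\ge\vartheta$ on the unit tangent sphere at $u$, and (iii) $\ballH(z,h)\cap\Zcal=\{z\}$, together with a bounded Gabriel ball certifying the Delaunay adjacency. The key geometric fact that must be ported from the $d=2$ case is the higher-dimensional analog of Proposition~\ref{prop:treedisksector}: that in a $(\rho,w,\vartheta)$-tree, the ball of radius $\rho+w$ around any vertex is contained in a bounded ball around its parent together with a narrow spherical cone pointing away from the parent. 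This follows from a $d$-dimensional version of Lemmas~\ref{lem:sec_subset}--\ref{lem:sector_contain}, which go through essentially verbatim since their proofs rely on the cosine rule and volume concentration near $\partial\Dee^d$ (in the Poincaré ball model), both of which have direct $d$-dimensional analogs. The resulting sub-process is honestly Galton-Watson, and a $d$-dimensional analog of Isokawa's formula (partial results exist; see Isokawa's own three-dimensional work and Calka--Chapron--Enriquez) gives $\Ee X > 1$ for $\lambda$ small.

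For the lower bound, I would define pseudo-edges and linked sequences of chunks exactly as in Section~\ref{sec:lb}, with $r$ replaced by $\tfrac{2}{d-1}\ln(1/\lambda)$ and $\BGab$, $\DD$, $\cert$ all reinterpreted with $d$-dimensional hyperbolic balls. Proposition~\ref{prop:lbprep} is soft and transfers directly. The lemmas bounding $\Ee X_{\mathbf{I}}$--$\Ee X_{\mathbf{VI}}$ and $\Ee Y_{\mathbf{I}\text{-}\mathbf{i}}$ etc.\ are computations of integrals in hyperbolic polar coordinates; replacing $\sinh\rho\,d\alpha\,d\rho$ by $\sinh^{d-1}(\rho)\,d\sigma_{d-1}(\alpha)\,d\rho$ and tracking the constant $N_{d-1}(\lambda)$ in place of $\Ee D$ inside Lemma~\ref{lem:YIa}, every estimate should yield the same shape of bound, now with the critical threshold $p \le (1-\eps)/N_{d-1}(\lambda)$ producing a contraction factor $<1$ in the good-pseudopath count. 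The recursion in Lemma~\ref{lem:recurs} for linked chunks is combinatorial and the induction carries through with the same bookkeeping.

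The main obstacle I anticipate is the lower bound, specifically the analog of Lemma~\ref{lem:FIb} and the angular argument in Lemma~\ref{lem:angle}. In $d=2$ the estimate ``if two disks have a common intersection, the subtended half-angle decays like $e^{(r_1-\distH(p,c_1))/2}$'' is clean because angles live on a circle. In $\Haa^d$ one must replace this by a surface-measure estimate on the $(d-1)$-sphere, and the associated volume-of-cone calculations get bulkier; the geometric content is the same, but the exponents tracking $(d-1)$ must be threaded carefully through every use of the hyperbolic cosine rule and every change to polar coordinates. A secondary obstacle is the absence (to my knowledge) of a fully explicit closed-form analog of Isokawa's formula for $N_{d-1}(\lambda)$ valid for all $\lambda$; however, the argument only needs the leading asymptotics $N_{d-1}(\lambda) \sim C_d\,\lambda^{-1}$ as $\lambda\searrow 0$, which should be extractable from a direct Slivniak--Mecke computation paralleling Chapter~3 of~\cite{Benthesis} and suffices to prove the conjecture in the stated asymptotic form.
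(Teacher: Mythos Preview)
The statement you are attempting to prove is labeled \textbf{Conjecture}~\ref{conj:conj4} in the paper and appears in Section~\ref{sec:discussion}, ``Suggestions for further work.'' The paper does \emph{not} prove it; it is posed as an open problem, with the authors remarking only that one ``might be able to leverage some of the existing work on the expected $f$-vectors of the typical cell'' and that it may be possible to resolve the conjecture without knowing the expected number of $(d-1)$-faces precisely. There is therefore no proof in the paper against which to compare your proposal.

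As a research plan your outline is sensible and is exactly the kind of generalization the authors appear to have in mind: port the $(\rho,w,\vartheta)$-tree machinery, the exploration coupling to a supercritical Galton--Watson process for the upper bound, and the pseudopath/linked-chunk bookkeeping for the lower bound, replacing $\sinh\rho$ by $\sinh^{d-1}\rho$ and $r=2\ln(1/\lambda)$ by $r=\tfrac{2}{d-1}\ln(1/\lambda)$. You have also correctly identified the two principal obstacles: the angular/cone estimates (Lemmas~\ref{lem:angle}, \ref{lem:FIb}, and friends) require a genuine reworking since angles on $S^{d-1}$ behave differently from angles on $S^1$, and the asymptotics $N_{d-1}(\lambda)\sim C_d/\lambda$ would need to be established independently (no closed-form Isokawa-type formula is cited for general $d$). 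These are real gaps, not mere bookkeeping, and the paper offers no guidance on how to close them. Your proposal should be read as a plausible strategy for an open problem, not as a proof.
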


\noindent
It might be possible to leverage some of the existing work on the expected $f$-vectors of the typical cell 
in~\cite{CalkaChapronEnriquez,Zakhar,Isokawa3d}. But, of course it might also be possible to prove or disprove 
the conjecture without knowing the expected number of $(d-1)$-faces precisely.

In a recent separate paper, we proved the conjecture of Benjamini and Schramm that $p_c(\lambda)\to1/2$ as $\lambda\to\infty$
for planar, hyperbolic Poisson-Voronoi percolation.
It seems natural to expect that this result generalizes to arbitrary dimensions.

\begin{conjecture}\label{conj:conj5}
For every $d$, as the intensity $\lambda\to\infty$, the critical value for Poisson-Voronoi percolation on $\Haa^d$ 
tends to the critical value for Poisson-Voronoi percolation on $\eR^d$.
\end{conjecture}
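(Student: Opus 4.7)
The plan is to prove, for every $\eps > 0$, that
\[
p_c(\eR^d) - \eps \;\leq\; p_c(\Haa^d,\lambda) \;\leq\; p_c(\eR^d) + \eps
\]
once $\lambda$ is sufficiently large. The driving observation is a quantitative local Euclidean approximation: rescaling $\Haa^d$ by the factor $\lambda^{1/d}$ turns it into a space of constant curvature $-\lambda^{-2/d}$, while turning a Poisson point process of intensity $\lambda$ into one of intensity $1$. On any fixed compact region, this rescaled tessellation couples to the standard Euclidean Poisson--Voronoi tessellation of intensity $1$ in $\eR^d$ with a total-variation error that is $O(\lambda^{-2/d})$ per unit volume, because $\areaH(\ballH(o,r)) = \omega_d r^d(1+O(r^2))$ and a Poisson process on a manifold is characterized by its intensity measure. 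Since the Euclidean critical value $p_c(\eR^d)$ is intensity-independent by scale invariance, it suffices to transfer finite-size criteria for supercritical and subcritical behaviour from $\eR^d$ to $\Haa^d$ through this coupling.

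For the upper bound I would fix $p' > p_c(\eR^d)$. Sharpness of the phase transition for Euclidean Poisson--Voronoi percolation (of Duminil-Copin--Raoufi--Tassion type, extended to Boolean and Voronoi models in every dimension) produces an $R_0$ such that, at parameter $p'$, a Euclidean cube of side $R_0$ is crossed by a black Delaunay path with probability exceeding the Grimmett--Marstrand threshold that guarantees percolation after a block-renormalization to Bernoulli site percolation on $\Zed^d$. For $\lambda$ large, the local coupling transports this crossing event to a hyperbolic cube of side $R_0 \lambda^{-1/d}$ with probability still above that threshold. I would then tile a large hyperbolic ball by translated isometric copies of such cubes arranged in a locally $\Zed^d$-like pattern; since goodness of a tile depends only on $\Zcal$ in a bounded enlargement, tiles at bounded combinatorial distance are independent, and the standard renormalization argument produces an infinite black cluster almost surely. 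Hence $p_c(\Haa^d,\lambda) \leq p'$ for large $\lambda$.

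For the lower bound I would fix $p' < p_c(\eR^d)$ and use the other half of Euclidean sharpness: exponential decay of two-point connection probabilities, $\Pee_{p'}^{\eR^d}(o \leftrightarrow x) \leq C e^{-c\|x\|}$. Rescaling, this gives $\Pee_{\lambda,p'}^{\Haa^d}(u \leftrightarrow v) \leq C e^{-c \lambda^{1/d} \distH(u,v)}$ on mesoscopic scales $\distH(u,v) \leq R_0 \lambda^{-1/d}$ for $\lambda$ large enough. I would then renormalize at scale $\ell := R\lambda^{-1/d}$ for a large constant $R$, declaring a cell good if its local point configuration is Euclidean-like and no long black connection crosses an enlargement of it. Any infinite black path in the original tessellation must project to a self-avoiding path in the renormalized cell graph all of whose vertices are good, and bounded-range decoupling plus a union bound over such paths shows this probability is zero provided the exponential decay coming from concatenating $n$ good mesoscopic cells beats the exponential growth $e^{(d-1)n\ell}$ of the number of $n$-step self-avoiding walks in $\Haa^d$; concretely, $cR > (d-1)$ suffices, which holds once $R$ is taken large enough before sending $\lambda\to\infty$.

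The main obstacle is the lower bound, specifically showing that the mesoscopic Euclidean decay rate survives the perturbation to hyperbolic geometry with a constant good enough to dominate the exponential surface growth intrinsic to $\Haa^d$. The renormalization scale must be small enough that $\Haa^d$ looks Euclidean at that scale (so that the exponential decay constant $c$ coming from Euclidean sharpness is essentially preserved), but large enough that $cR$ can be pushed past $(d-1)$. In principle the window of admissible $R$ opens up as $\lambda\to\infty$, but quantifying the trade-off requires a uniform rate in the local coupling and in the Euclidean sharpness constants. A secondary technical point is handling the long-range dependencies created by rare but very large Voronoi cells: one needs good tail bounds on the diameter of the typical cell at intensity $\lambda$ in $\Haa^d$ (obtainable from standard Poisson tail estimates and the $d$-dimensional analogue of Isokawa's formulas) in order to decouple neighbouring mesoscopic cells and justify the Peierls-type summation. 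In $d=2$ both of these steps can be circumvented using planar duality, as was done in our earlier paper~\cite{HansenMuller1}; a dimension-free treatment will have to rely entirely on quantitative sharpness estimates.
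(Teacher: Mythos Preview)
The statement you are attempting to prove is Conjecture~\ref{conj:conj5}, which appears in the paper's concluding section on suggestions for further work. The paper does \emph{not} prove this statement; it is presented as an open problem. There is therefore no proof in the paper against which to compare your proposal.

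Your write-up is not a proof but a research sketch, and you appear to be aware of this: you correctly flag the central difficulty, namely that in the lower-bound renormalization one must arrange for the Euclidean exponential-decay rate at the mesoscopic scale to beat the exponential volume growth $e^{(d-1)n\ell}$ of hyperbolic space, and that this requires uniform quantitative control both in the local Euclidean coupling and in the sharpness constants. You also note that the $d=2$ case handled in~\cite{HansenMuller1} relied on planar duality (RSW-type arguments), a tool unavailable in higher dimensions. These are precisely the reasons the statement remains a conjecture. One concrete issue your sketch glosses over: the Grimmett--Marstrand/block-renormalization machinery on the upper-bound side is tied to the amenable lattice $\Zed^d$, and tiling a large hyperbolic ball ``in a locally $\Zed^d$-like pattern'' does not by itself produce an \emph{infinite} structure on which that argument runs; the nonamenability of $\Haa^d$ means the renormalized graph is itself hyperbolic-like, so one would need to verify that the block percolation threshold on \emph{that} graph is crossed, not merely the $\Zed^d$ threshold.
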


A complicating issue here is that for Poisson-Voronoi percolation on $\eR^d$ there is a large  
gap between the best known lower and upper bounds~\cite{BalisterBollobas10, BalisterBollobasQuas05}.
But, again, it may be possible to prove the conjecture without first determining the precise critical value in the 
Euclidean case.

\section*{Acknowledgements}

We thank the anonymous referee for many helpful comments that have greatly improved the paper.

 \bibliographystyle{plain}
 \bibliography{voronoibib}

\begin{thebibliography}{10}

\bibitem{AhlbergBaldasso18}
D.~Ahlberg and R.~Baldasso.
\newblock Noise sensitivity and {V}oronoi percolation.
\newblock {\em Electron. J. Probab.}, 23:Paper No. 108, 21, 2018.

\bibitem{AhlbergEtAl16}
D.~Ahlberg, S.~Griffiths, R.~Morris, and V.~Tassion.
\newblock Quenched {V}oronoi percolation.
\newblock {\em Adv. Math.}, 286:889--911, 2016.

\bibitem{BalisterBollobas10}
P.~Balister and B.~Bollob\'{a}s.
\newblock Bond percolation with attenuation in high dimensional {V}orono\u{\i}
  tilings.
\newblock {\em Random Structures Algorithms}, 36(1):5--10, 2010.

\bibitem{BalisterBollobasQuas05}
P.~Balister, B.~Bollob\'{a}s, and A.~Quas.
\newblock Percolation in {V}oronoi tilings.
\newblock {\em Random Structures Algorithms}, 26(3):310--318, 2005.

\bibitem{visibility1}
I.~Benjamini, J.~Jonasson, O.~Schramm, and J.~Tykesson.
\newblock Visibility to infinity in the hyperbolic plane, despite obstacles.
\newblock {\em ALEA Lat. Am. J. Probab. Math. Stat.}, 6:323--342, 2009.

\bibitem{Elliot2}
I.~Benjamini, Y.~Krauz, and E.~Paquette.
\newblock Anchored expansion of {D}elaunay complexes in real hyperbolic space
  and stationary point processes.
\newblock Probab. Theory Relat. Fields, to appear. Available from
  \url{https://doi.org/10.1007/s00440-021-01076-y}.

\bibitem{Elliot}
I.~Benjamini, E.~Paquette, and J.~Pfeffer.
\newblock Anchored expansion, speed and the {P}oisson-{V}oronoi tessellation in
  symmetric spaces.
\newblock {\em Ann. Probab.}, 46(4):1917--1956, 2018.

\bibitem{BSconformal}
I.~Benjamini and O.~Schramm.
\newblock Conformal invariance of {V}oronoi percolation.
\newblock {\em Comm. Math. Phys.}, 197(1):75--107, 1998.

\bibitem{benjamini2001percolation}
I.~Benjamini and O.~Schramm.
\newblock Percolation in the hyperbolic plane.
\newblock {\em Journal of the American Mathematical Society}, 14(2):487--507,
  2001.

\bibitem{bollobas2006critical}
B.~Bollob{\'a}s and O.~Riordan.
\newblock The critical probability for random {V}oronoi percolation in the
  plane is 1/2.
\newblock {\em Probability theory and related fields}, 136(3):417--468, 2006.

\bibitem{bollobas2006percolation}
B.~Bollob{\'a}s and O.~Riordan.
\newblock {\em Percolation}.
\newblock Cambridge University Press, 2006.

\bibitem{BroadbentHammersley57}
S.R. Broadbent and J.M. Hammersley.
\newblock Percolation processes. {I}. {C}rystals and mazes.
\newblock {\em Proc. Cambridge Philos. Soc.}, 53:629--641, 1957.

\bibitem{CalkaChapronEnriquez}
P.~Calka, A.~Chapron, and N.~Enriquez.
\newblock Poisson-{V}oronoi tessellation on a {R}iemannian manifold.
\newblock {\em Int. Math. Res. Not. IMRN}, (7):5413--5459, 2021.

\bibitem{CoxDurrett83}
J.T. Cox and R.~Durrett.
\newblock Oriented percolation in dimensions {$d\geq 4$}: bounds and asymptotic
  formulas.
\newblock {\em Math. Proc. Cambridge Philos. Soc.}, 93(1):151--162, 1983.

\bibitem{Duminil19}
H.~Duminil-Copin, A.~Raoufi, and V.~Tassion.
\newblock Exponential decay of connection probabilities for subcritical
  {V}oronoi percolation in {$\Bbb{R}^d$}.
\newblock {\em Probab. Theory Related Fields}, 173(1-2):479--490, 2019.

\bibitem{FlammantArxiv}
L.~Flammant.
\newblock Hyperbolic radial spanning tree.
\newblock Preprint, available from \url{arXiv:2012.03467}.

\bibitem{Freedman97}
M.H. Freedman.
\newblock Percolation on the projective plane.
\newblock {\em Math. Res. Lett.}, 4(6):889--894, 1997.

\bibitem{Gilbert61}
E.~N. Gilbert.
\newblock Random plane networks.
\newblock {\em J. Soc. Indust. Appl. Math.}, 9:533--543, 1961.

\bibitem{Zakhar}
T.~Godland, Z.~Kabluchko, and C.~Th{\"{a}}le.
\newblock Beta-star polytopes and hyperbolic stochastic geometry.
\newblock Preprint, available from \url{arXiv:2109.01035}.

\bibitem{mongolia}
C.~Goldschmidt.
\newblock A short introduction to random trees.
\newblock {\em Mongolian Mathematical Journal}, pages 53--72, 2016.

\bibitem{Gordon91}
D.M. Gordon.
\newblock Percolation in high dimensions.
\newblock {\em J. London Math. Soc. (2)}, 44(2):373--384, 1991.

\bibitem{Grimmettboek}
G.~Grimmett.
\newblock {\em Percolation}, volume 321 of {\em Grundlehren der mathematischen
  Wissenschaften [Fundamental Principles of Mathematical Sciences]}.
\newblock Springer-Verlag, Berlin, second edition, 1999.

\bibitem{Benthesis}
B.T. Hansen.
\newblock Hyperbolic {V}oronoi percolation.
\newblock PhD thesis, University of Groningen, 2021. Available from
  \url{https://doi.org/10.33612/diss.178634667}.

\bibitem{HansenMuller1}
B.T. Hansen and T.~M{\"{u}}ller.
\newblock The critical probability for {V}oronoi percolation in the hyperbolic
  plane tends to 1/2.
\newblock Random Structures and algorithms, to appear.

\bibitem{HaraSlade90}
T.~Hara and G.~Slade.
\newblock Mean-field critical behaviour for percolation in high dimensions.
\newblock {\em Comm. Math. Phys.}, 128(2):333--391, 1990.

\bibitem{Isokawa3d}
Y.~Isokawa.
\newblock Poisson-{V}oronoi tessellations in three-dimensional hyperbolic
  spaces.
\newblock {\em Adv. in Appl. Probab.}, 32(3):648--662, 2000.

\bibitem{Isokawa00}
Y.~Isokawa.
\newblock Some mean characteristics of {P}oisson-{V}oronoi and
  {P}oisson-{D}elaunay tessellations in hyperbolic planes.
\newblock {\em Bulletin of the Faculty of Education, Kagoshima University.
  Natural science}, 52:11--25, 2000.

\bibitem{Kesten90}
H.~Kesten.
\newblock Asymptotics in high dimensions for percolation.
\newblock In {\em Disorder in physical systems}, Oxford Sci. Publ., pages
  219--240. Oxford Univ. Press, New York, 1990.

\bibitem{Lalley98}
S.P. Lalley.
\newblock Percolation on {F}uchsian groups.
\newblock {\em Ann. Inst. H. Poincar\'{e} Probab. Statist.}, 34(2):151--177,
  1998.

\bibitem{Lalley01}
S.P. Lalley.
\newblock Percolation clusters in hyperbolic tessellations.
\newblock {\em Geom. Funct. Anal.}, 11(5):971--1030, 2001.

\bibitem{last2017lectures}
G.~Last and M.D. Penrose.
\newblock {\em Lectures on the Poisson process}, volume~7.
\newblock Cambridge University Press, 2017.

\bibitem{Matousekboek}
J.~Matou\v{s}ek.
\newblock {\em Lectures on discrete geometry}, volume 212 of {\em Graduate
  Texts in Mathematics}.
\newblock Springer-Verlag, New York, 2002.

\bibitem{MeesterPenroseSarkar97}
R.~Meester, M.D. Penrose, and A.~Sarkar.
\newblock The random connection model in high dimensions.
\newblock {\em Statist. Probab. Lett.}, 35(2):145--153, 1997.

\bibitem{Meijering}
J.L. Meijering.
\newblock Interface area, edge length, and number of vertices in crystal
  aggregates with random nucleation.
\newblock Philips Res. Rep. 8, 1953.

\bibitem{Penrose93}
M.D. Penrose.
\newblock On the spread-out limit for bond and continuum percolation.
\newblock {\em Ann. Appl. Probab.}, 3(1):253--276, 1993.

\bibitem{Penrose96}
M.D. Penrose.
\newblock Continuum percolation and {E}uclidean minimal spanning trees in high
  dimensions.
\newblock {\em Ann. Appl. Probab.}, 6(2):528--544, 1996.

\bibitem{SchneiderWeil}
R.~Schneider and W.~Weil.
\newblock {\em Stochastic and integral geometry}.
\newblock Probability and its Applications (New York). Springer-Verlag, Berlin,
  2008.

\bibitem{stillwell2012geometry}
J.~Stillwell.
\newblock {\em Geometry of surfaces}.
\newblock Springer Science \& Business Media, 2012.

\bibitem{StoyanKendallMecke87}
D.~Stoyan, W.S. Kendall, and J.~Mecke.
\newblock {\em Stochastic geometry and its applications}.
\newblock Wiley Series in Probability and Mathematical Statistics: Applied
  Probability and Statistics. John Wiley \& Sons, Ltd., Chichester, 1987.
\newblock With a foreword by D. G. Kendall.

\bibitem{Strassen65}
V.~Strassen.
\newblock The existence of probability measures with given marginals.
\newblock {\em Ann. Math. Statist.}, 36:423--439, 1965.

\bibitem{Tassion16}
V.~Tassion.
\newblock Crossing probabilities for {V}oronoi percolation.
\newblock {\em Ann. Probab.}, 44(5):3385--3398, 2016.

\bibitem{Thale14}
C.~Th\"{a}le.
\newblock Hausdorff dimension of visible sets for well-behaved continuum
  percolation in the hyperbolic plane.
\newblock {\em Braz. J. Probab. Stat.}, 28(1):73--82, 2014.

\bibitem{thurston1997three}
W.P. Thurston.
\newblock {\em Three-dimensional geometry and topology}.
\newblock Princeton university press, 1997.

\bibitem{Tykesson07}
J.~Tykesson.
\newblock The number of unbounded components in the {P}oisson {B}oolean model
  of continuum percolation in hyperbolic space.
\newblock {\em Electron. J. Probab.}, 12:no. 51, 1379--1401, 2007.

\bibitem{visibility2}
J.~Tykesson and P.~Calka.
\newblock Asymptotics of visibility in the hyperbolic plane.
\newblock {\em Adv. in Appl. Probab.}, 45(2):332--350, 2013.

\bibitem{VahidiaslWierman90}
M.Q. Vahidi-Asl and J.C. Wierman.
\newblock First-passage percolation on the {V}orono\u{\i} tessellation and
  {D}elaunay triangulation.
\newblock In {\em Random graphs '87 ({P}ozna\'{n}, 1987)}, pages 341--359.
  Wiley, Chichester, 1990.

\bibitem{VdHofstadvol1}
R.~van~der Hofstad.
\newblock {\em Random graphs and complex networks. {V}ol. 1}.
\newblock Cambridge Series in Statistical and Probabilistic Mathematics, [43].
  Cambridge University Press, Cambridge, 2017.

\bibitem{VdhofstadSlade06}
R.~van~der Hofstad and G.~Slade.
\newblock Expansion in {$n^{-1}$} for percolation critical values on the
  {$n$}-cube and {$\Bbb Z^n$}: the first three terms.
\newblock {\em Combin. Probab. Comput.}, 15(5):695--713, 2006.

\bibitem{Vanneuville19}
H.~Vanneuville.
\newblock Annealed scaling relations for {V}oronoi percolation.
\newblock {\em Electron. J. Probab.}, 24:Paper No. 39, 71, 2019.

\bibitem{Zvavitch}
A.~Zvavitch.
\newblock The critical probability for {V}oronoi percolation.
\newblock MSc. thesis, Weizmann Institute of Science, 1996, available from
  \url{http://www.math.kent.edu/~zvavitch/master_version_dvi.zip}.

\end{thebibliography}

\end{document}